\numberwithin{equation}{section}
\newtheorem{theorem}{Theorem}[section]
\newtheorem{proposition}[theorem]{Proposition}
\newtheorem{conjecture}[theorem]{Conjecture}
\newtheorem{corollary}[theorem]{Corollary}
\newtheorem{lemma}[theorem]{Lemma}
\newtheorem{maintheorem}[theorem]{Main Theorem}
\theoremstyle{definition}
\newtheorem{remark}[theorem]{Remark}
\newtheorem{definition}[theorem]{Definition}
\def\endproof{\hfill$\square$\medskip}
\def\AA{\mathcal{A}}
\def\FF{\mathbb{F}}
\def\ZZ{\mathbb{Z}}
\def\QQ{\mathbb{Q}}
\def\CC{\mathbb{C}}
\def\bb{\mathfrak{b}}
\def\gg{\mathfrak{g}}
\def\ii{\mathbf{i}}
\def\kk{\Bbbk}
\def\UU{\mathcal{U}}
\def\PP{\mathcal{P}}
\newcommand{\bfa}{\mathbf{a}}
\newcommand{\bfB}{\mathbf{B}}
\newcommand{\bfb}{\mathbf{b}}
\newcommand{\bfd}{\mathbf{d}}
\newcommand{\bfE}{\mathbf{E}}
\newcommand{\bfe}{\mathbf{e}}
\newcommand{\bfi}{\mathbf{i}}
\newcommand{\bfS}{\mathbf{S}}
\newcommand{\bfX}{\mathbf{X}}
\newcommand{\ex}{\mathbf{ex}}
\newcommand{\cA}{\mathcal{A}}
\newcommand{\cC}{\mathcal{C}}
\newcommand{\cF}{\mathcal{F}}
\newcommand{\cH}{\mathcal{H}}
\newcommand{\cK}{\mathcal{K}}
\newcommand{\cL}{\mathcal{L}}
\newcommand{\cP}{\mathcal{P}}
\newcommand{\cQ}{\mathcal{Q}}
\newcommand{\cT}{\mathcal{T}}
\newcommand{\cU}{\mathcal{U}}
\renewcommand{\dim}{\operatorname{dim}}
\newcommand{\End}{\operatorname{End}}
\newcommand{\Ext}{\operatorname{Ext}}
\renewcommand{\ker}{\operatorname{ker}}
\newcommand{\Hom}{\operatorname{Hom}}
\newcommand{\rep}{\operatorname{rep}}
\newcommand{\half}{\frac{1}{2}}
\newcommand{\st}{\,:\,}
\def\sgn{\operatorname{sgn}}
\author{Arkady Berenstein}
\address{Department of Mathematics, University of Oregon,
Eugene, OR 97403, USA} 
\email{arkadiy@math.uoregon.edu}
\author{Dylan Rupel}
\address{\noindent Department of Mathematics, Northeastern University,
  Boston, MA 02115, USA}
\email{d.rupel@neu.edu}
\date{August 13, 2013}
\thanks{The first author was supported in part by the NSF grant DMS-1101507}
\renewcommand{\@evenhead}{\tiny \thepage \hfill  A.~BERENSTEIN and  D.~RUPEL \hfill}
\renewcommand{\@oddhead}{\tiny \hfill Quantum cluster characters of Hall algebras
 \hfill \thepage}
\title{Quantum cluster characters of Hall algebras}
\begin{document}

\begin{abstract}
The aim of the present paper is to introduce a generalized quantum cluster character, which assigns to each object $V$ of a finitary Abelian category $\cC$ over a finite field $\FF_q$ and any sequence $\ii$ of simple objects in $\cC$ the element $X_{V,\ii}$ of the corresponding algebra $P_{\cC,\ii}$ of $q$-polynomials. We prove that if $\cC$ was hereditary, then the assignments $V\mapsto X_{V,\ii}$ define algebra homomorphisms from the (dual) Hall-Ringel algebra of $\cC$ to the $P_{\cC,\ii}$, which generalize the well-known Feigin homomorphisms from the upper half of a quantum group to $q$-polynomial algebras. 

If $\cC$ is the representation category of an acyclic valued quiver $(Q,\bfd)$ and $\ii=(\ii_0,\ii_0)$, where $\ii_0$ is a repetition-free source-adapted sequence, then we prove that the $\bfi$-character $X_{V,\ii}$ equals the quantum cluster character $X_V$ introduced earlier by the second author in \cite{rupel1} and \cite {rupel2}. Using this identification, we deduce a quantum cluster structure on the quantum unipotent cell corresponding to the square of a Coxeter element. As a corollary, we prove a conjecture from the joint paper \cite{bz5} of the first author with A.~Zelevinsky for such quantum unipotent cells. As a byproduct, we construct the quantum twist and prove that it preserves the triangular basis introduced by A.~Zelevinsky and the first author in \cite{bz6}.

\end{abstract}

\dedicatory{To the memory of Andrei Zelevinsky}

\maketitle

\tableofcontents

\section{Introduction}

The aim of this paper is two-fold:

$\bullet$ Generalize the Feigin homomorphism to Hall algebras of hereditary categories.

$\bullet$ Establish a quantum cluster structure on the image of the homomorphism. 

\medskip

The Feigin homomorphism $\Psi_\ii:U_+\to P_\ii$ was proposed by B.~Feigin in 1992 as an elementary tool for the study of quantized enveloping algebras $U_+(\gg)$, where $\gg$ is a Kac-Moody algebra, $\ii=(i_1,\ldots,i_m)$ is a sequence of simple roots of $\gg$,  and $P_\ii$ is an appropriate $q$-polynomial algebra in $t_1,\ldots,t_m$ (see \cite{ber,im,jo} or Section \ref{sec:main} below for details). Ultimately, Feigin's homomorphism assigns to each simple generator $E_i$, $i=1,\ldots,n$ of $U_+$ the linear $q$-polynomial $\sum\limits_{k:i_k=i} t_k$.  

It turns out that if $\ii$ is a reduced word for an element $w$ of the Weyl group $W$, then $I_w:=\ker\Psi_\ii$ depends only on $w$ and $\Psi_\ii$ defines an isomorphism between the skew-field of fractions of $U_w=U_+/I_w$ and the skew-field of fractions of $P_\ii$ (see e.g., \cite[Theorem 0.5]{ber}). 

This result can be viewed as an indicator of a possible quantum cluster structure on $U_w$, which, therefore, is more convenient to identify with the quotient of the dual algebra $\CC_q[N]$, where $N$ is a maximal unipotent subgroup of the corresponding Kac-Moody group $G$. For generic $q$ this identification is done for free because $\CC_q[N]\cong U_+$, but if $q=1$, then $\CC_q[N]$ specializes to the coordinate algebra $\CC[N]$ and $U_w$ -- to the coordinate algebra of the closure of the unipotent cell $N^w=B_-wB_-\cap N$,  where $B_-$ is a Borel subgroup of $G$ complementary to $N$ (see \cite{ber,bz3,bz4} for details). Since the (upper) cluster structure on $\CC[N^w]$ has been established in \cite{BFZ-cluster} by using a ``classical" analogue of $\Psi_\ii$, it is natural to expect an analogous result for each $U_w$.

One of the advantages of Feigin's homomorphism is that it allows to replace a very complicated algebra $U_w$ with the isomorphic subalgebra $\AA_\ii:=\Psi_\ii(\CC_q[N])$ of $P_\ii$ and look in a more efficient way for its quantum cluster structure inside the much simpler algebra $P_\ii$. In fact, we always have coefficients $C_1,\ldots,C_n$ in $\AA_\ii$ which are monomials in $t_1,\ldots,t_m$.  These form an Ore subset which allows to define the localization $\overline \AA_\ii$  of ${\mathcal A}_\ii$ by $C_1,\ldots,C_n$.  Our main result is the following.
 
\begin{theorem}
\label{th:cluster isomorphism intro}(Theorem \ref{th:cluster isomorphism})
For $\ii=(\ii_0,\ii_0)$, where $\ii_0=(i_1,\ldots,i_n)$ is any ordering of simple roots of $\gg$, the algebra $\overline \AA_\ii$ is 
an (acyclic) upper cluster algebra of rank $n$ with respect to an initial quantum cluster ${\bf X}=\{X_1,\ldots,X_n,C_1,\ldots,C_n\}$, where each $X_j$ is a monomial in $t_1^{-1},\ldots,t_{2n}^{-1}$.
\end{theorem}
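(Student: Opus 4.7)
The plan is to reduce to the quantum acyclic-equals-upper framework of \cite{bz5}, leveraging the identification $X_{V,\ii} = X_V$ (the quantum cluster character of \cite{rupel1, rupel2}) which is available precisely because $\ii = (\ii_0, \ii_0)$ and $\ii_0$ can be realized as source-adapted for some acyclic orientation. First I would fix such an orientation, let $Q$ be the resulting valued quiver, and specify the initial quantum seed: the $n$ frozen variables are the coefficient monomials $C_1, \ldots, C_n \in \AA_\ii$ already in hand, which the Hall-algebra identification realizes as $X_{P_j, \ii}$ for indecomposable projective $Q$-representations $P_j$, and the $n$ mutable cluster variables $X_1, \ldots, X_n$ are the values $X_{0,\ii}$ in the appropriate weight components, which are pure Laurent monomials in $t_1^{-1}, \ldots, t_{2n}^{-1}$ thanks to the doubled-word structure. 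The exchange matrix $B$ is the $2n \times n$ matrix whose principal $n \times n$ block encodes $Q$ and whose coefficient rows are determined by the projective resolutions of the simples.

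Second, I would verify, for each mutable direction $k$, the quantum exchange relation
\[ X_k \cdot X_k^{\mathrm{mut}} \;=\; q^\bullet \, \xx^{[\bfb_k]_+} \;+\; q^\bullet \, \xx^{[-\bfb_k]_+}, \]
where $X_k^{\mathrm{mut}}$ is the candidate mutated cluster variable and $\bfb_k$ is the $k$-th column of $B$. By the algebra homomorphism $V \mapsto X_{V,\ii}$ on the Hall-Ringel algebra established earlier in the paper, the Hall-algebraic multiplicative identity coming from the non-split and split short exact sequences $0 \to S_k \to M \to \tau^{-1} S_k \to 0$ translates directly into this binomial; this is essentially the exchange computation of \cite{rupel2} reinterpreted under the identification $X_{V,\ii} = X_V$. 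Once the $n$ exchange relations hold, the quantum acyclic upper cluster algebra theorem of \cite{bz5} collapses the lower bound and the upper bound, and the identification of $\overline{\AA}_\ii$ with this algebra follows from a two-sided inclusion: every $X_{V,\ii}$ is a $q$-polynomial in $t_1, \ldots, t_{2n}$, hence a Laurent polynomial in the initial seed after inverting the $C_j$, giving one direction; conversely, every exchange neighbour of the initial seed is an explicit element of the Hall-algebra image, hence belongs to $\AA_\ii$.

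The main obstacle will be the compatibility check between the exchange matrix $B$ and the quasi-commutation matrix $\Lambda$ read off from the $q$-commutation of the monomials $X_i, C_j$ inside $P_\ii$. Specifically, one must show that $B^t \Lambda$ is a diagonal matrix with strictly positive entries, so that $(B,\Lambda)$ forms a compatible quantum seed in the sense of \cite{bz5}. This positivity is a delicate combinatorial matching between the Euler form of $Q$ and the $q$-commutation pattern dictated by the doubled word $(\ii_0, \ii_0)$; it is exactly the feature that singles out the doubled-word hypothesis of the theorem, and without it neither the rank nor the compatibility would come out correctly. Once compatibility is verified, the remainder of the argument is essentially formal from the acyclic framework.
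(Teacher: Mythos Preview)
Your overall architecture matches the paper's: fix an acyclic orientation making $\ii_0$ source-adapted, set up a compatible pair $(\Lambda,\tilde B)$, invoke the acyclic upper-equals-lower theorem of \cite{bz5}, and run a two-sided inclusion. The compatibility verification you flag as the ``main obstacle'' is indeed a substantial technical computation (the paper devotes Section~\ref{sec:Special compatible pairs} to it), and your identification of the mutable $X_j$ as monomials in the $t_k^{-1}$ and of the one-step mutations with cluster characters of simples is correct. However, your two-sided inclusion argument has a genuine gap on each side.

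For $\overline{\AA}_\ii\subseteq\UU_\ii$, you write that every $X_{V,\ii}$ is a $q$-polynomial in $t_1,\ldots,t_{2n}$, hence Laurent in the initial seed. But membership in a single cluster torus is not membership in the \emph{upper} cluster algebra, which requires Laurentness in every seed. The paper fixes this by observing that the algebra generators $[S_k]^*$ map to $t_k+t_{n+k}$, and then computing that these are exactly the one-step mutations $X'_k$ of the initial seed (Lemma in Section~\ref{subsect: proof of Theorem cluster isomorphism}); cluster variables are automatically in $\UU_\ii$. For the reverse inclusion $\UU_\ii\subseteq\overline{\AA}_\ii$, you want the lower bound generated by $X_1,\ldots,X_n,X'_1,\ldots,X'_n,C_j^{\pm1}$ to sit inside $\overline{\AA}_\ii$. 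The neighbours $X'_k$ are indeed in $\AA_\ii$ as images of simples, but the initial $X_k$ themselves are monomials in $t_k^{-1}$ and there is no a~priori reason they lie in the localization $\overline{\AA}_\ii$: that assertion is precisely the quantum chamber ansatz (Corollary~1.2), which is a \emph{consequence} of the theorem, not an input. The paper circumvents this by passing to a different acyclic seed $\Sigma'$ obtained after the mutation sequence $\mu_k\cdots\mu_1\mu_n\cdots\mu_1$ (Proposition~\ref{pr:feigin pullback}), chosen so that every cluster variable of $\Sigma'$ \emph{and} every neighbour has a positive-root dimension vector, hence equals $\Psi_\ii([V]^*)$ for an exceptional $V$ and lies in $\AA_\ii$ itself (via \cite{cx}). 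Finding such a seed requires a careful degree computation (Lemma~\ref{le:degrees of X}) and a specific choice of $k$ depending on which simple roots satisfy $c\alpha_i>0$; this step is absent from your outline and is the crux of part~(c).
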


The acyclicity of $\overline \AA_{(\ii_0,\ii_0)}$ will be obvious from the definition of the cluster structure and monomiality of coefficients $C_i$ is known for any $\ii$ (\cite[Theorem 3.1]{ber}), but the monomiality of the initial cluster variables is highly non-trivial, in particular, it implies the following ``quantum chamber ansatz" (cf \cite{bz5}).
 
\begin{corollary} 
In the assumptions of Theorem \ref{th:cluster isomorphism intro} for each $i=1,\ldots,n$ there exists an element $\tilde X_i\in \CC_q[N]$ such that $\Psi_\ii(\tilde X_i)=X_iC$, where $C$ is a monomial of $C_1,\ldots,C_n$.
\end{corollary}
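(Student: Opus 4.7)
\medskip

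\noindent\textbf{Proof plan.} The corollary should follow in an essentially formal way from Theorem~\ref{th:cluster isomorphism intro} combined with the description of $\overline\AA_\ii$ as the Ore localization $\AA_\ii[C_1^{-1},\ldots,C_n^{-1}]$ of $\AA_\ii=\Psi_\ii(\CC_q[N])$ at the multiplicative set generated by the coefficient monomials $C_1,\ldots,C_n$. My plan is to (i) use Theorem~\ref{th:cluster isomorphism intro} to place each initial cluster variable $X_i$ inside $\overline\AA_\ii$, (ii) invoke the universal property of Ore localization to clear denominators, and (iii) pull the result back along $\Psi_\ii$ to obtain $\tilde X_i$.

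For step (ii), I would observe that because the $C_j$'s are quasi-commuting monomials in $t_1,\ldots,t_{2n}$, they $q$-commute with every element of $P_\ii$, so the multiplicative set they generate is a two-sided Ore set in $\AA_\ii$. Consequently, any element of $\overline\AA_\ii$ has a presentation $a\cdot C^{-1}$ with $a\in\AA_\ii$ and $C$ a product of non-negative powers of the $C_j$'s. Applied to $X_i\in\overline\AA_\ii$ this yields a monomial $C$ and an element $a_i\in\AA_\ii$ with $a_i=X_iC$, where any stray power of $q$ arising from the reordering may be absorbed into $a_i$. Since $\AA_\ii=\Psi_\ii(\CC_q[N])$ by definition, $a_i=\Psi_\ii(\tilde X_i)$ for some $\tilde X_i\in\CC_q[N]$, and then $\Psi_\ii(\tilde X_i)=X_iC$ as required.

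The main obstacle is therefore entirely contained in Theorem~\ref{th:cluster isomorphism intro} itself: once the cluster structure with initial cluster $\{X_1,\ldots,X_n,C_1,\ldots,C_n\}$ is established and, crucially, the monomiality of the $X_i$ in $t_1^{-1},\ldots,t_{2n}^{-1}$ is verified, the corollary is essentially a one-line consequence of Ore localization, as outlined above. The terminology ``quantum chamber ansatz'' is then justified: each inverse-$t$ monomial $X_i$, twisted by a coefficient monomial $C$, is realized as $\Psi_\ii(\tilde X_i)$ for a canonical element $\tilde X_i$ of the quantum coordinate ring $\CC_q[N]$, in direct analogy with the classical chamber ansatz of \cite{bz5}.
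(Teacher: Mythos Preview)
Your proposal is correct and matches the paper's treatment: the paper states this corollary immediately after Theorem~\ref{th:cluster isomorphism intro} without a separate proof, regarding it as a direct consequence of the theorem via exactly the Ore-localization argument you describe. One small remark: your argument does not actually use the monomiality of the $X_i$ themselves---only that $X_i\in\overline\AA_\ii$ and that the $C_j$ generate an Ore set---so the word ``crucially'' in your final paragraph overstates the logical role of monomiality in the deduction (though monomiality is of course what makes the resulting ``chamber ansatz'' meaningful).
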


Using these results, we settle a particular case of Conjecture 10.10 from \cite{bz5}. 

\begin{theorem}
\label{th:corollary bz intro}
If $\ii=(\ii_0,\ii_0)$, then the restriction of the homomorphism in \cite[Proposition 10.9]{bz5} to $\overline \AA_\ii$ is an isomorphism 
$\overline \AA_\ii\widetilde \to \CC_q[N^{c^2}]$, where $c$ is the Coxeter element in $W$ corresponding to the reduced word $\ii_0$.
\end{theorem}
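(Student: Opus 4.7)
The plan is to construct an isomorphism $\overline{\AA}_\ii\widetilde\to \CC_q[N^{c^2}]$ by localizing the Feigin homomorphism in the evident way, and then to identify this isomorphism with the restriction of the homomorphism of \cite[Proposition 10.9]{bz5}.

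First, since $\ii_0$ is a repetition-free reduced word for the Coxeter element $c$, the concatenation $\ii=(\ii_0,\ii_0)$ is a reduced word for $w=c^2$. By \cite[Theorem 0.5]{ber} the kernel of $\Psi_\ii:U_+\to P_\ii$ is exactly $I_{c^2}$, so $\Psi_\ii$ descends to an injection $U_{c^2}=U_+/I_{c^2}\hookrightarrow P_\ii$ with image $\AA_\ii$. By \cite[Theorem 3.1]{ber} the coefficients $C_1,\ldots,C_n\in \AA_\ii$ are monomials in $t_1,\ldots,t_{2n}$; under the standard Ore-localization realization of $\CC_q[N^{c^2}]$ as $U_{c^2}$ inverted at the distinguished unipotent quantum minors $D_1,\ldots,D_n$ (the description used in \cite{bz5}), the $D_i$ are precisely the preimages under $\Psi_\ii$ of the $C_i$. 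Consequently $\Psi_\ii$ extends to an injection $\widetilde\Psi_\ii:\CC_q[N^{c^2}]\hookrightarrow \overline{P_\ii}$ whose image is exactly $\overline{\AA}_\ii$, and its inverse $\widetilde\Psi_\ii^{-1}:\overline{\AA}_\ii\widetilde\to \CC_q[N^{c^2}]$ is our candidate for the asserted isomorphism.

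Next, identify $\widetilde\Psi_\ii^{-1}$ with the restriction of the BZ homomorphism. Both are assembled from the Feigin map, and any discrepancy is controlled by the quantum twist constructed as a byproduct in this paper together with a rescaling by monomials in the $C_j$. I would verify agreement on an algebra generating set of $\overline{\AA}_\ii$: on the frozen coefficients $C_i$ this is automatic, since both sides send them to $D_i^{\pm 1}$; on the initial cluster variables $X_i$ from Theorem \ref{th:cluster isomorphism intro} it reduces to the ``quantum chamber ansatz'' corollary stated above, which produces explicit preimages $\tilde X_i\in \CC_q[N^{c^2}]$ with $\Psi_\ii(\tilde X_i)=X_iC$ for $C$ a monomial in the $C_j$. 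Since the two maps then agree on an algebra generating set and both take values in $\CC_q[N^{c^2}]$, they coincide, and bijectivity of the restricted BZ map is inherited from the bijectivity of $\widetilde\Psi_\ii$ onto its image $\overline{\AA}_\ii$.

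The principal obstacle is this second step, namely the precise comparison between the Feigin homomorphism of \cite{ber} and the BZ homomorphism of \cite{bz5}: the two use different normalizations (algebraic versus geometric) and differ by the quantum twist. Reconciling them requires both the explicit description of this twist (developed elsewhere in the paper) and the chamber ansatz corollary, which is used to compute Feigin images of the relevant generators of $\CC_q[N^{c^2}]$. Once that finite check is in hand, the isomorphism follows as a restriction of the injection $\widetilde\Psi_\ii$ constructed in the first step, and Conjecture 10.10 of \cite{bz5} is verified in the $w=c^2$ case.
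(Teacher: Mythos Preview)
Your proposal is correct and follows essentially the same route as the paper. The paper's proof (Section~\ref{subsec:proof corollary bz}) also proceeds by first establishing that the localized Feigin map $\underline\Psi_\ii$ is an isomorphism $\kk_q[N^{c^2}]\widetilde\to\UU_\ii=\overline\AA_\ii$ (via Theorem~\ref{th:cluster isomorphism}), and then identifying the BZ homomorphism explicitly as $\underline\Psi_\ii^{-1}\circ\eta$, where $\eta$ is the quantum twist of Theorem~\ref{th:twist double coxeter}.

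One small point of imprecision: when you write that on frozens ``both sides send them to $D_i^{\pm 1}$,'' the $\pm$ is hiding a genuine discrepancy. The BZ map sends the coefficient $X_{n+i}$ to $\underline\Delta_{c^2\omega_i}$, while $\underline\Psi_\ii^{-1}$ sends it to $\underline\Delta_{c^2\omega_i}^{-1}$ (since $\Psi_\ii(\Delta_{c^2\omega_i})=t_{\omega_i}^{-1}=X_{n+i}^{-1}$ by Proposition~\ref{pr:psi tlambda}). This inversion on coefficients is precisely part of what $\eta$ does, so the BZ map is not $\underline\Psi_\ii^{-1}$ but rather $\underline\Psi_\ii^{-1}\circ\eta$. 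Your ``finite check on generators'' is then the computation $\eta(X_j)=\Psi_\ii(\Delta_{s_{i_1}\cdots s_{i_j}\omega_{i_j}})$, which is exactly the content of Theorem~\ref{th:twist double coxeter} (the twist theorem), not the chamber ansatz corollary. Once that is in hand, the BZ map is a composition of two isomorphisms and the result follows.
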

We prove Theorem \ref{th:corollary bz intro} (see also Theorem \ref{th:corollary bz}) in Section \ref{subsec:proof corollary bz}.

It is natural to  expect (Conjecture \ref{conj:isomorphism Uii}) that  Theorem \ref{th:cluster isomorphism intro} and hence the  ``quantum chamber ansatz" hold without any assumptions on a reduced word $\ii$. It would be interesting to compare the conjecture  with the results of \cite{gls} where a cluster structure was established (for $\gg$ with symmetric Cartan matrix) on quantum Schubert cells $U_q(w)=\CC_q[N\cap wN_-w^{-1}]$, which are ``close relatives" of quantum unipotent cells $U_w$.

Our proof of Theorem \ref{th:cluster isomorphism intro} led us to the generalization of Feigin's homomorphism to Hall-Ringel algebras $\cH({\mathcal C})$ for most hereditary Abelian categories ${\mathcal C}$, which are natural generalizations and extensions of the quantum algebras $U_+$.  We first assign to each isomorphism class $[V]\in \cH({\mathcal C})$ a certain element $X_{V,\ii}$ of $P_\ii$ (we refer to it as the {\it quantum cluster $\ii$-character of $V$}), which is, in a sense, a generating function for flags in the object $V$ (formula  \eqref{eq:Feigin character}). We prove (Theorem \ref{th:Generalized Feigin homomorphism}) that, under certain co-finiteness conditions, for each hereditary category ${\mathcal C}$ the assignment $[V]\mapsto X_{V,\ii}$ defines a homomorphism of algebras 
\begin{equation}
\label{eq:Psi_ii intro}
\Psi_{\cC,\ii}:\cH({\mathcal C})\to P_\ii
\end{equation}
which, in the case when ${\mathcal C}=\rep_\FF(Q,\bfd)$ for an acyclic valued quiver $(Q,\bfd)$, restricts to Feigin's homomorphism from $U_+\subset \cH({\mathcal C})$ to $P_\ii$ (Corollary \ref{cor:classical_feigin}). 

Using the homomorphism \eqref{eq:Psi_ii intro}, we directly construct all non-initial cluster variables in $\overline \AA_{(\ii_0,\ii_0})$ as images $\Psi_\ii([E])$ where $[E]\in U_+$ runs over {\bf all} isomorphism classes of exceptional representations of $(Q,\bfd)$. This is achieved by identifying flags with Grassmannians of subobjects in $V$ and employing a quantum version of the famous Caldero-Chapoton formula (\cite{cc,ck}) developed by the second author in \cite{rupel1,rupel2} and independently by Qin in \cite{qin}.  It would be interesting to compare this with a similar bijection between flags and Grassmannians constructed in \cite{gls2}.

We hope to prove Conjecture \ref{conj:isomorphism Uii}, e.g., construct cluster variables in $\overline \AA_\ii$, in a similar fashion, by identifying our quantum cluster character $X_{V,\ii}$ with some quantization of a Caldero-Chapoton type character formula.

As a byproduct, we construct the {\it quantum twist} automorphism $\eta$ of $\overline \AA_\ii$ in the acyclic case and prove that it preserves the triangular basis of $\overline \AA_\ii$ (Corollary \ref{cor:twist of triangular basis}). We expect that the twist always exists (Conjecture  \ref{conj:isomorphism Uii}(c))  and preserves the canonical basis ${\bf B}_\ii$ of $\overline \AA_\ii$ (Conjecture \ref{conj:twist of basis}).

\subsection{Acknowledgments}
We are grateful to Christof Geiss, Jacob Greenstein, and Andrei Zelevinsky (who sadly passed away on April 10, 2013) for stimulating discussions. An important part of this work was done during the authors' visit to the MSRI in the framework of the ``Cluster algebras'' program and they thank the Institute and the organizers for their hospitality and support. The first author benefited from
the hospitality of Institut des Hautes \'Etudes Scientiques  and Max-Planck-Institut f\"ur Mathematik, which he
gratefully acknowledges. The authors are immensely grateful to  Gleb Koshevoy for stimulating discussions on the final stage of work on this paper.

\section{Definitions and main results}
\label{sec:main}

Let $\cC$ be a small finitary Abelian category of finite global dimension, that is,  $|\Ext^i(U,V)|<\infty$ for all $V,W\in \cC$, $i\ge 0$ and $\Ext^i(U,V)=0$ for all but finitely many $i\in \ZZ_{\ge 0}$ (where we follow the convention $\Ext^0(U,V)=\Hom(U,V)$).  

For $V,W\in \cC$ define (the square root of) the multiplicative Euler-Ringel form $\langle V,W \rangle$ by:
$$\langle V,W \rangle=\prod_{i=0}^\infty |\Ext^i(V,W)|^{\half(-1)^i}\ .$$
In fact, $\langle V,W \rangle$ depends only on Grothendieck classes $|V|$ and $|W|$ in the Grothendieck group $\cK(\cC)$ of the category $\cC$ and can be viewed as a bicharacter $\langle\cdot,\cdot\rangle:\cK(\cC)\times \cK(\cC)\to \kk^\times$, where we fix any field $\kk$  of characteristic $0$ containing all $\langle V,W\rangle^{\half}$ for $V,W\in \cC$.  

Let ${\bf S}=\{S_i\st i\in I\}$ be a set of pairwise non-isomorphic objects of ${\mathcal C}$. For any sequence  $\ii=(i_1,\ldots,i_m)\in I^m$ we define the skew-polynomial algebra $P_\ii=P_{{\mathcal C},{\bf S},\ii}$ over $\kk$ to be generated by $t_1,\ldots,t_m$ subject to the relations 
\begin{equation}
\label{eq:Pii}
t_\ell t_k=\langle S_{i_k},S_{i_\ell}\rangle\langle S_{i_\ell},S_{i_k}\rangle\cdot  t_k t_\ell\quad \text{ for $k<\ell$.}
\end{equation}

Furthermore, for each object $V\in {\mathcal C}$  we define the {\it (quantum cluster) $\ii$-character} $X_{V,\ii}$ of $V$ in (the completion of) $P_\ii$  by 
\begin{equation}
\label{eq:Feigin character}
X_{V,\ii}=\sum_{\bfa=(a_1,\ldots,a_m)\in \ZZ_{\ge 0}^m}  \prod_{1\le k<\ell\le m}\left(\frac{\langle S_{i_k},S_{i_\ell}\rangle}{\langle S_{i_\ell},S_{i_k}\rangle}\right)^{\half a_k a_\ell} \cdot |\cF_{\ii,\bfa}(V)|\cdot  t_1^{a_1}\cdots t_m^{a_m}\ ,
\end{equation}
where $\cF_{\ii,\bfa}(V)$ is the set of all flags of subobjects in $V$ of type $(\ii,\bfa)$, that is
\begin{equation}
\label{eq:flags}
\cF_{\ii,\bfa}(V)=\{0=V_m\subset V_{m-1}\subset \cdots \subset V_1 \subset V_0=V \st V_{k-1}/V_k\cong S_{i_k}^{\oplus a_k},k=1,\ldots,m\}.
\end{equation}

Note that the sum in \eqref{eq:Feigin character} is finite if $V$ has finitely many subobjects, otherwise $X_{V,\ii}$ belongs to a suitable completion of $P_\ii$. 

For each object $V$ of $\cC$ denote by $[V]$ its isomorphism class and let $\cH({\mathcal C})$ be the $\kk$-vector space freely spanned by all $[V]$ (this is the Hall-Ringel algebra, see section \ref{sec:gen_Feigin} for details).  Denote by ${\mathcal H}^*({\mathcal C})$ the finite {\it dual} Hall-Ringel algebra, which is the space of linear functions $\cH(\cC)\to \kk$ with a basis of  all delta-functions $\delta_{[V]}$ labeled by isomorphism classes $[V]$ of objects of ${\mathcal C}$. It is convenient to slightly rescale the basis of $\cH^*(\cC)$ as follows:
\begin{equation}
\label{eq:V*}
[V]^*=\langle V,V\rangle^{-\half}f(|V|)\cdot\delta_{[V]},
\end{equation} 
where $f:\cK(\cC)\to \kk^\times$ is a homomorphism of groups defined by $f(|S|)=\langle S,S\rangle^\half$ for all simple objects $S$ of $\cC$.
It turns out that the assignment 
\begin{equation}
\label{eq:V to X_Vii}
[V]^*\mapsto X_{V,\ii}
\end{equation}
often defines an algebra homomorphism from  ${\mathcal H}^*({\mathcal C})$ to $P_\ii$.
\begin{theorem}
\label{th:Generalized Feigin homomorphism}
Assume that ${\mathcal C}$ is hereditary and cofinitary and let  $\bfS=\{S_i\,|\,i\in I\}$ be a set of simple objects having no self-extensions.  Then for any sequence $\ii=(i_1,\ldots,i_m)\in I^m$ the assignment \eqref{eq:V to X_Vii} defines an algebra homomorphism 
$$\Psi_{{\mathcal C},\ii}\st{\mathcal H}^*({\mathcal C})\to P_\ii\ .$$
\end{theorem}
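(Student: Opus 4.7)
The strategy is to factor $\Psi_{\cC,\ii}$ as a composition
\[
\cH^*(\cC) \xrightarrow{\,\Delta^{(m-1)}\,} \cH^*(\cC)^{\otimes m} \xrightarrow{\,\phi_\ii\,} P_\ii
\]
of two algebra homomorphisms. Here $\Delta^{(m-1)}$ is the $(m-1)$-fold iterate of the coproduct on $\cH^*(\cC)$ dual to Ringel's multiplication on $\cH(\cC)$, and $\phi_\ii$ is the evaluation map sending a basis element $[U_1]^*\otimes\cdots\otimes[U_m]^*$ to zero unless each $U_k\cong S_{i_k}^{\oplus a_k}$, in which case it maps to $\lambda_{\bfa}\cdot t_1^{a_1}\cdots t_m^{a_m}$ for explicit scalars $\lambda_{\bfa}$. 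Cofinitarity guarantees that $\Delta^{(m-1)}$ is well-defined (each $[V]^*$ has only finitely many nonzero components), and the hereditary hypothesis lets one invoke Green's theorem: $\cH^*(\cC)$ becomes a twisted bialgebra, so $\Delta^{(m-1)}$ is automatically an algebra homomorphism into the twisted tensor power $\cH^*(\cC)^{\otimes m}$.

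First I would verify that this composition reproduces $X_{V,\ii}$. Dualizing the iterated Hall product yields
\[
\Delta^{(m-1)}([V]^*) \;=\; \sum_{V_\bullet}\text{(twist)}\cdot [V_{m-1}]^*\otimes [V_{m-2}/V_{m-1}]^*\otimes\cdots\otimes [V/V_1]^*,
\]
summed over length-$m$ flags of subobjects in $V$. Applying $\phi_\ii$ retains exactly the flags belonging to $\cF_{\ii,\bfa}(V)$, and a direct comparison of twists pins down the constants $\lambda_{\bfa}$ so that the composition yields \eqref{eq:Feigin character}; the rescaling \eqref{eq:V*} is arranged precisely so this matching is clean.

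The heart of the argument is then verifying that $\phi_\ii$ is itself an algebra homomorphism from the twisted tensor power to $P_\ii$. For tensor factors in distinct positions $k<\ell$, the twist arising when commuting $[S_{i_k}]^*$ past $[S_{i_\ell}]^*$ is exactly $\langle S_{i_k},S_{i_\ell}\rangle\langle S_{i_\ell},S_{i_k}\rangle$, which matches \eqref{eq:Pii}. For a single tensor factor (the subalgebra generated by $[S_{i_k}^{\oplus a}]^*$, $a\ge 0$), the no-self-extension hypothesis makes $\End(S_{i_k})$ a finite field extension with $\Ext^1(S_{i_k},S_{i_k})=0$, so the Hall product on these basis elements reduces to Gaussian binomials, and a direct computation shows that $[S_{i_k}^a]^*\mapsto \lambda_a t_k^a$ is an algebra map for the correct normalization.

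The main obstacle will be coherent bookkeeping of Euler-form twists appearing in four places: the exponent in \eqref{eq:Feigin character}, the rescaling \eqref{eq:V*}, the commutation relations \eqref{eq:Pii}, and Green's bialgebra braiding on $\cH(\cC)$. The hereditary hypothesis gives the multiplicativity $\langle V,V\rangle=\langle W,W\rangle\langle U,U\rangle\langle W,U\rangle\langle U,W\rangle$ for any short exact sequence $0\to W\to V\to U\to 0$, and the prefactor $\langle V,V\rangle^{-\half}f(|V|)$ in \eqref{eq:V*} is engineered so that, iterating this identity along a flag, all twists telescope into the single exponent prescribed by \eqref{eq:Feigin character}. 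Once this telescoping is established, matching the two sides becomes a direct comparison.
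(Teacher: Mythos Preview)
Your approach is correct and is the dual of the paper's. The paper works on the $\cH(\cC)$ side: it builds exponentials $E_{S_{i_k}}=\exp_{q_{i_k}}([S_{i_k}])$ in the completion $\hat\cH(\cC)$, shows each is grouplike, and then applies an abstract result from its Appendix (Theorem~\ref{th:abstract_Feigin}) asserting that for any ``$\cP$-adapted'' family of grouplike elements in a braided bialgebra $\cU$, the pairing $x\mapsto x(\tilde E_1\cdots\tilde E_m)$ defines an algebra homomorphism $\cU^*\to\cP$. Your factorization $\phi_\ii\circ\Delta^{(m-1)}$ is exactly the dual reformulation: pairing $x$ with a product $E_{i_1}^{(a_1)}\cdots E_{i_m}^{(a_m)}$ amounts to applying $\Delta^{(m-1)}$ and then evaluating each tensor factor against the corresponding exponential, which is precisely your $\phi_\ii$. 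Both proofs rest on Green's theorem (Proposition~\ref{pr:green}) for the bialgebra compatibility, and both face the same twist bookkeeping---the paper's Lemma~\ref{le:iter_mult} and the computation following it are exactly your ``telescoping'' step. What your route buys is directness: no completions, no exponential grouplike elements, no abstract Appendix framework. What the paper's route buys is reusability: its Theorem~\ref{th:abstract_Feigin} applies to arbitrary grouplike elements in any braided bialgebra, not only to exponentials of simples in a Hall algebra, and the paper later exploits this generality (e.g.\ the operator formulas in Proposition~\ref{pr:primitive actions proofs}). One small point to watch in your execution: the single-factor map $\cH^*(\cC)\to\kk[t_k]$ being an algebra homomorphism requires that the span of $\{\delta_{[V]}:V\not\cong S_{i_k}^{\oplus a}\}$ is a two-sided ideal, which holds precisely because subobjects and quotients of $S_{i_k}^{\oplus a}$ are again of that form---this is where the no-self-extension hypothesis enters on your side.
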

 
We will prove Theorem~\ref{th:Generalized Feigin homomorphism} in Section~\ref{sec:gen_Feigin} using generalities on bialgebras in braided monoidal categories presented in the Appendix (Section \ref{sec:appendix}). We will also provide some explicit formulas for $\Psi_{{\mathcal C},\ii}$ in terms of the $\cH(\cC)$-action on $\cH(\cC)^*$ (Proposition \ref{le:primitive actions proofs}). 
Note that for the category $\rep_\FF (Q,\bfd)$  of finite dimensional $\FF$-linear representations of any finite (valued) quiver $Q$ is hereditary (see e.g.,~\cite{hub}), so that 
Theorem~\ref{th:Generalized Feigin homomorphism} is applicable to such a category and to any collection ${\bf S}$ of simple objects if $Q$ has no vertex loops. 

The homomorphism $\Psi_{{\mathcal C},\ii}$ interpolates between a number of known homomorphisms.  
Denote by $U_+^*$ the subalgebra of $\cH^*(\cC)$ generated by the simple objects $[S_i]^*$, $i\in I$.

\begin{corollary}
\label{cor:classical_feigin}
The restriction of $\Psi_{{\mathcal C},\ii}$ to the subalgebra $U_+^*$ is the homomorphism $\Psi_\ii\st U_+^*\to P_\ii$ determined by (for $j\in I$):
\begin{equation}
\label{eq:Feigin homomorphism}
\Psi_\ii([S_j]^*)=\sum_{k\st i_k=j} t_k.
\end{equation}
\end{corollary}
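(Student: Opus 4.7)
The plan is to exploit that $\Psi_{\cC,\ii}$ is already known to be an algebra homomorphism by Theorem~\ref{th:Generalized Feigin homomorphism}. Since $U_+^*\subset \cH^*(\cC)$ is by definition generated by the elements $[S_j]^*$ with $j\in I$, it suffices to verify the formula $\Psi_{\cC,\ii}([S_j]^*)=\sum_{k\st i_k=j}t_k$ on these generators, and then the statement about $\Psi_\ii$ follows by multiplicativity.

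The key computation is therefore $X_{S_j,\ii}$ for a simple object $S_j\in\bfS$. I would proceed by inspecting the flag formula \eqref{eq:Feigin character} directly. Because $S_j$ is simple, its only subobjects are $0$ and $S_j$ itself, so any flag $0=V_m\subset V_{m-1}\subset\cdots\subset V_0=S_j$ of type $(\ii,\bfa)$ has exactly one strict inclusion, say at step $k$, where $V_{k-1}=S_j$ and $V_k=0$. For this flag to be of type $(\ii,\bfa)$, the quotient $V_{k-1}/V_k\cong S_j$ must be isomorphic to $S_{i_k}^{\oplus a_k}$, which forces $i_k=j$, $a_k=1$, and $a_\ell=0$ for $\ell\ne k$. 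Hence $\bfa=\bfe_k$ for some $k$ with $i_k=j$, and in each such case $|\cF_{\ii,\bfe_k}(S_j)|=1$.

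Next I would observe that the braiding prefactor
$$\prod_{1\le k<\ell\le m}\left(\frac{\langle S_{i_k},S_{i_\ell}\rangle}{\langle S_{i_\ell},S_{i_k}\rangle}\right)^{\half a_k a_\ell}$$
is an empty product whenever $\bfa=\bfe_k$ has at most one nonzero entry, so each such term contributes $t_k$. Summing over the admissible $k$ gives $X_{S_j,\ii}=\sum_{k\st i_k=j}t_k$. Finally, I would unwind the normalization \eqref{eq:V*}: for a simple object $V=S_j$ one has $f(|S_j|)=\langle S_j,S_j\rangle^{\half}$ by definition of $f$, so the two factors cancel and $[S_j]^*=\delta_{[S_j]}$. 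Therefore the assignment \eqref{eq:V to X_Vii} sends $[S_j]^*$ precisely to $\sum_{k\st i_k=j}t_k$, which is the Feigin formula~\eqref{eq:Feigin homomorphism}.

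I do not expect any substantial obstacle: the whole argument is the combination of the already-proved multiplicativity of $\Psi_{\cC,\ii}$ with an almost tautological evaluation on simples. The only place where one must be careful is the rescaling in the definition of $[V]^*$, which is why I would explicitly verify the cancellation $\langle S_j,S_j\rangle^{-\half}f(|S_j|)=1$ rather than leave it implicit.
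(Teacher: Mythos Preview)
Your proposal is correct and is precisely the verification the paper leaves implicit: the corollary is stated without proof, as an immediate consequence of Theorem~\ref{th:Generalized Feigin homomorphism} applied to $V=S_j$ and the flag formula~\eqref{eq:Feigin character}. Your final normalization check $\langle S_j,S_j\rangle^{-\half}f(|S_j|)=1$ is already absorbed into the identity $\Psi_{\cC,\ii}([V]^*)=X_{V,\ii}$ established in the proof of Theorem~\ref{th:Generalized Feigin homomorphism}, so it is redundant but harmless.
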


Note that if ${\mathcal C}=\rep_\FF (Q,\bfd)$ for an acyclic valued quiver $(Q,\bfd)$ and ${\bf S}$ is the set of simple representations, then $U_+^*$ is the dual of the quantized enveloping algebra and $\Psi_\ii$ is the {\it Feigin homomorphism} used in \cite{ber,im,jo} to relate the skew-fields of fractions of $U_+^*$ and $P_\ii$. 

Also note that for ${\mathcal C}=\rep_\FF (Q,\bfd)$, if ${\bf S}$ is the set of all simple object in $\cC$, and $\ii=\ii_0$ is the repetition-free source-adapted sequence, then we obtain the following result (see \cite[Lemma 3.3]{reineke} for details).  
 
\begin{corollary}
The homomorphism $\Psi_{\rep_\FF (Q,\bfd),\ii_0}$ equals the Reineke homomorphism $\int$.
\end{corollary}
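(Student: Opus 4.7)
The plan is to compare formula \eqref{eq:Feigin character} specialized to $\ii=\ii_0$ with the explicit formula defining Reineke's integration map $\int$, and show that the two agree term by term on every basis vector $[V]^*\in\cH^*(\cC)$. Since both $\Psi_{\cC,\ii_0}$ and $\int$ are algebra homomorphisms defined coefficient-by-coefficient on the basis $\{[V]^*\}$---the former by Theorem \ref{th:Generalized Feigin homomorphism}, the latter by Reineke's theorem---this matching of coefficients is all that is required.

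First I would collapse the sum in \eqref{eq:Feigin character} using the repetition-free hypothesis. Because $\ii_0=(i_1,\ldots,i_n)$ enumerates each vertex of $Q$ exactly once, the condition $V_{k-1}/V_k\cong S_{i_k}^{\oplus a_k}$ forces $a_k=\dim_{i_k}V$ for every $k$; otherwise the classes $|V_{k-1}/V_k|$ in $\cK(\cC)$ cannot sum to $|V|$. Hence only a single exponent $\bfa$ contributes to $X_{V,\ii_0}$, and the coefficient of $t_1^{a_1}\cdots t_n^{a_n}$ is $|\cF_{\ii_0,\bfa}(V)|$ multiplied by the braiding prefactor. This single-monomial shape matches that of $\int[V]$, and the flag variety underlying Reineke's formula is literally $\cF_{\ii_0,\bfa}(V)$.

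The substantive step is matching the twist prefactors, and this is where the source-adapted hypothesis enters. By definition, $i_k$ is a source of the subquiver of $Q$ on $\{i_k,i_{k+1},\ldots,i_n\}$, so $Q$ contains no arrow $i_\ell\to i_k$ with $k<\ell$. For an acyclic valued quiver this forces $\Ext^1(S_{i_\ell},S_{i_k})=0$ for $k<\ell$, and combined with $\Hom(S_{i_\ell},S_{i_k})=0$ for distinct simples, it yields $\langle S_{i_\ell},S_{i_k}\rangle=1$ whenever $k<\ell$. Substituted into \eqref{eq:Feigin character}, the twist reduces to
$$\prod_{1\le k<\ell\le n}\langle S_{i_k},S_{i_\ell}\rangle^{\half a_k a_\ell},$$
which is exactly the Euler-form weight appearing in Reineke's formula as recorded in \cite[Lemma 3.3]{reineke}. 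After accounting for the rescaling \eqref{eq:V*} on both sides, the two homomorphisms agree on each $[V]^*$, proving the corollary. The only genuine obstacle is the Ext-vanishing consequence of source-adaptedness; everything else is bookkeeping against Reineke's definition.
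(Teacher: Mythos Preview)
The paper does not give a proof of this corollary: it simply records the statement and points to \cite[Lemma~3.3]{reineke} for details. So there is no paper-proof to compare against, and your task is really to supply the missing verification. Your overall strategy---specialize \eqref{eq:Feigin character} to the repetition-free source-adapted word and match it against Reineke's explicit formula---is exactly the right one, and the two key reductions you identify (unique $\bfa$ from repetition-freeness, $\langle S_{i_\ell},S_{i_k}\rangle=1$ for $k<\ell$ from source-adaptedness) are both correct.

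There are, however, two points you should tighten. First, Reineke's $\int$ is defined on the Hall algebra $\cH(\cC)$, not on its dual $\cH^*(\cC)$; the comparison only makes sense after transporting one map through the isomorphism $\cH(\cC)\widetilde\to\cH^*(\cC)$, $[V]\mapsto |\Aut(V)|\cdot\delta_{[V]}$ (used later in the paper, e.g.\ in the proof of Proposition~\ref{pr:from Vij to Delta}). Your phrase ``after accounting for the rescaling \eqref{eq:V*} on both sides'' gestures at this but does not actually carry it out; you need to track the $|\Aut(V)|$ and $\langle V,V\rangle^{-\half}f(|V|)$ factors and check they combine to the scalar in Reineke's formula. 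Second, Reineke's $\int([V])$ is a pure monomial with no flag count in it, so your sentence ``the flag variety underlying Reineke's formula is literally $\cF_{\ii_0,\bfa}(V)$'' is misleading. What actually happens is that source-adaptedness forces $|\cF_{\ii_0,\bfa}(V)|=1$: since $i_1$ is a source, the unique subobject $V_1\subset V$ with $V/V_1\cong S_{i_1}^{\oplus a_1}$ is the one with $(V_1)_{i_1}=0$ and $(V_1)_j=V_j$ otherwise, and induction on the remaining vertices gives uniqueness of the whole flag. Once you say this explicitly, $X_{V,\ii_0}$ is visibly a single monomial and the comparison with Reineke's monomial becomes the scalar bookkeeping of the previous point.
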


Note that for equally valued quivers $Q$ our homomorphisms $\Psi_{\rep_\FF (Q,\bfd),\ii}$ are related to the generalizations of $\int$ announced by Jiarui Fei (\cite[Proposition 6.1]{Fei}). 
 
Now we relate the $\ii$-character $X_{V,\ii}$ with the {\it quantum cluster character} $X_V$ defined by the second author in \cite{rupel1,rupel2}.  Namely, according to Definition 5.1 of \cite{rupel2}, for $V\in {\mathcal C}$, $X_V=X_V({\mathcal C})$ is an element of a certain based quantum torus ${\mathcal T}_{\mathcal C}$ given by:
\begin{equation}
\label{eq:character-integral}
X_V=\sum_{E\subset V} \langle V,V/E\rangle^{-1} \cdot X^{-|E|^*-{}^*|V/E|}\ ,
\end{equation}
where $|V|$ denotes the class of $V$ in the Grothendieck group $\cK({\mathcal C})$ and $(\cdot)^*$, $^*{(\cdot)}$ are certain dualities on $\cK({\mathcal C})$ (see Section~\ref{subsec:qcluster characters} for details).  

Given a lattice $L$ and a unitary bicharacter $\chi:L\times L\to \kk^\times$ define the {\it based quantum torus} ${\mathcal T}_\chi$ to be a $\kk$-algebra with the basis $X^e$, $e\in L$ subject to the relations:
\begin{equation}
\label{eq:based quantum torus}
X^eX^f=\chi(e,f)X^{e+f}
\end{equation}
for all $e,f\in \ZZ^m$.  Clearly, if $L=\ZZ^m$, then ${\mathcal T}_\chi$ is generated by $X_k^{\pm 1}:=X^{\pm \varepsilon_k}$, 
where $\{\varepsilon_1,\ldots,\varepsilon_m\}$ is the standard basis of $\ZZ^m$, subject to the relations:
\begin{equation}
\label{eq:q-torus presentation}
X_k X_\ell=q_{k\ell}X_\ell X_k
\end{equation}
for all $1\le k<\ell \le m$, where $q_{k\ell}=\frac{\chi(\varepsilon_k,\varepsilon_\ell)}{\chi(\varepsilon_\ell,\varepsilon_k)}$.  
And vice versa, any algebra with a presentation \eqref{eq:q-torus presentation} is isomorphic to some ${\mathcal T}_\chi$.  In what follows, we will always require that $\chi$ is {\it unitary}: $\chi(e,e)=1$ and $\chi(e,f)\chi(f,e)=1$ for all $e,f\in \ZZ^m$, so that $\chi$ is uniquely determined by the triangular array $(q_{k\ell})$.

Denote by ${\mathcal L}_\ii$ is the based quantum torus $P_\ii[t_1^{-1},\ldots,t_m^{-1}]$ with $t_k=t^{\varepsilon_k}$ for all $1\le k\le m$.  The following obvious fact will be used several times.
\begin{lemma}
\label{le:mon_change} 
For any sequence $\ii\in I^m$ and any $\ZZ$-linear automorphism $\varphi$ of $\ZZ^m$ there exists a unique unitary bicharacter $\chi_{\ii,\varphi}$ such that $\varphi$ extends to an isomorphism of based quantum tori $\hat\varphi_\ii\st {\mathcal L}_\ii\widetilde \to {\mathcal T}_{\chi_{\ii,\varphi}}$ defined by 
$$\hat\varphi(t^{\bf a})=X^{\varphi({\bf a})}$$
for ${\bf a}\in \ZZ^m$ (e.g., $X_k=X^{\varepsilon_k}=\hat \varphi(t^{\varphi^{-1}(\varepsilon_k)})$ for $k=1,\ldots,m$).
\end{lemma}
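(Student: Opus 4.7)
The plan is to make the push-forward bicharacter construction precise. First I would observe that the defining relations \eqref{eq:Pii} extend uniquely to a unitary bicharacter $\chi_\ii\st\ZZ^m\times\ZZ^m\to\kk^\times$, characterized on the standard basis by $\chi_\ii(\varepsilon_k,\varepsilon_k)=1$ and, for $k<\ell$, by $\chi_\ii(\varepsilon_k,\varepsilon_\ell)\chi_\ii(\varepsilon_\ell,\varepsilon_k)^{-1}=\langle S_{i_k},S_{i_\ell}\rangle\langle S_{i_\ell},S_{i_k}\rangle$, together with unitarity $\chi_\ii(e,f)\chi_\ii(f,e)=1$; these conditions determine $\chi_\ii$ uniquely (e.g.\ by fixing $\chi_\ii(\varepsilon_k,\varepsilon_\ell)$ for $k\le\ell$ and using unitarity to define it for $k>\ell$). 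With this bicharacter, $\cL_\ii$ is literally the based quantum torus $\cT_{\chi_\ii}$ under the identification $t^{\bf a}=X^{\bf a}$.

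Next, I define $\chi_{\ii,\varphi}$ by transport of structure along $\varphi$: set
\[
\chi_{\ii,\varphi}(e,f):=\chi_\ii\bigl(\varphi^{-1}(e),\varphi^{-1}(f)\bigr)\qquad(e,f\in\ZZ^m).
\]
Since $\varphi^{-1}$ is $\ZZ$-linear, bilinearity and unitarity of $\chi_\ii$ are preserved, so $\chi_{\ii,\varphi}$ is again a unitary bicharacter on $\ZZ^m$. Now the map $\hat\varphi_\ii\st\cL_\ii\to\cT_{\chi_{\ii,\varphi}}$ sending $t^{\bf a}\mapsto X^{\varphi({\bf a})}$ is a well-defined $\kk$-linear bijection (since $\varphi$ is an automorphism of $\ZZ^m$, the images $X^{\varphi({\bf a})}$ form a $\kk$-basis of $\cT_{\chi_{\ii,\varphi}}$), and it is multiplicative because
\[
\hat\varphi_\ii(t^{\bf a})\hat\varphi_\ii(t^{\bf b})=X^{\varphi({\bf a})}X^{\varphi({\bf b})}=\chi_{\ii,\varphi}\bigl(\varphi({\bf a}),\varphi({\bf b})\bigr)X^{\varphi({\bf a}+{\bf b})}=\chi_\ii({\bf a},{\bf b})X^{\varphi({\bf a}+{\bf b})}=\hat\varphi_\ii(t^{\bf a}t^{\bf b}),
\]
where the penultimate equality uses the definition of $\chi_{\ii,\varphi}$.

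Finally, for uniqueness, suppose $\chi'$ is any unitary bicharacter on $\ZZ^m$ for which the formula $t^{\bf a}\mapsto X^{\varphi({\bf a})}$ defines an algebra isomorphism $\cL_\ii\widetilde\to\cT_{\chi'}$. Applying this to products $t^{\bf a}t^{\bf b}$ and comparing coefficients of $X^{\varphi({\bf a}+{\bf b})}$ forces $\chi'(\varphi({\bf a}),\varphi({\bf b}))=\chi_\ii({\bf a},{\bf b})$ for all ${\bf a},{\bf b}\in\ZZ^m$; since $\varphi$ is surjective this pins down $\chi'$ on all of $\ZZ^m\times\ZZ^m$, giving $\chi'=\chi_{\ii,\varphi}$. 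The lemma is essentially a bookkeeping statement, so there is no real obstacle; the only point requiring any care is the simultaneous bookkeeping of bilinearity and unitarity when transferring $\chi_\ii$ across $\varphi$, which is handled automatically by the formula above.
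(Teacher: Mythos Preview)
Your proof is correct and complete; the paper itself gives no proof, simply stating the lemma as an ``obvious fact'' and moving on. Your transport-of-structure argument via $\chi_{\ii,\varphi}(e,f):=\chi_\ii(\varphi^{-1}(e),\varphi^{-1}(f))$ is exactly the natural way to unpack what the authors left implicit, and the multiplicativity check and uniqueness argument are both sound.
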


Let $A$ be any integer $I\times I$-matrix with $a_{ii}=2$ for $i\in I$.  For any sequence $\ii\in I^m$ we define $\ZZ$-linear automorphisms $\varphi_\ii,\rho_\ii$ of $\ZZ^m$ by
\begin{equation}
\label{eq:phi}
\varphi_\ii(\varepsilon_k)= -\varepsilon_k-\varepsilon_{k^-}-\sum\limits_{\ell\st \ell<k<\ell^+} a_{i_\ell,i_k}\varepsilon_\ell,\quad
\rho_\ii(\varepsilon_k)= \varepsilon_k-\sum\limits_{\ell\le k\st \ell^+=m+1} a_{i_\ell i_k}\varepsilon_\ell
\end{equation}
for $1\le k\le m$ (with the convention $\varepsilon_s=0$ if $s\notin [1,m]$). Here we used the notation from \cite{bz5}:
\begin{equation}
\label{eq:kpm}
k^+=\min\{\ell\st \ell>k, i_\ell=i_k\}\cup\{m+1\},~k^-=\max\{\ell\st \ell<k, i_\ell=i_k\}\cup\{0\}
\end{equation}
for $1\le k\le m$.  The inverse of $\varphi_\ii$ is  more complicated, see Lemma \ref{le:mon_change2}.

\begin{theorem}
\label{th:character=cluster character} 
Let $(Q,\bfd)$ be an acyclic valued quiver with $n$ vertices,  $A=(a_{ij})$ be the associated $n\times n$ Cartan matrix,  
$\ii_0$ be a source adapted sequence for $Q$, and  $\ii:=(\ii_0,\ii_0)$. Then:

(a) There exists an extension $(\tilde Q,\tilde \bfd)$ of $(Q,\bfd)$ on $2n$ vertices such that 
${\mathcal T}_{\tilde Q}={\mathcal T}_{\chi_{\ii,\rho_\ii^{-1} \varphi_\ii}}$. 

(b) For any representation $V$ of $(Q,\bfd)$ over a finite field $\FF$ of $q$ elements one has:  
\begin{equation}
\label{eq:character=cluster character in U+} 
\widehat{\rho_\ii^{-1} \varphi_\ii}(X_{V,\ii})=\tilde X_V 
\end{equation}
where $\tilde X_V$ denotes the quantum cluster character \eqref{eq:character-integral} attached to 
$\tilde {\mathcal C}=\rep_\FF(\tilde Q,\tilde \bfd)$  (here $V$ is viewed as an object of $\tilde {\mathcal C}$ under the natural embedding).    
\end{theorem}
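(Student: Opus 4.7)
The strategy divides cleanly into the two parts.

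\textbf{Part (a).} I plan to construct $(\tilde Q, \tilde \bfd)$ as a framing of $(Q, \bfd)$ on the vertex set $I \sqcup I'$, where $I' = \{1', \ldots, n'\}$ is a frozen copy of $I$ with $\tilde\bfd_{i'} = \bfd_i$. The new arrows between $I$ and $I'$ are read off from the Cartan matrix $A$ and the successor function $k^+$ on $\ii = (\ii_0, \ii_0)$, so that they match the ``principal coefficient'' pattern familiar from acyclic cluster algebras. The equality ${\mathcal T}_{\tilde Q} = {\mathcal T}_{\chi_{\ii, \rho_\ii^{-1}\varphi_\ii}}$ then reduces to an unwinding: using \eqref{eq:phi} in the case $\ii = (\ii_0, \ii_0)$, I compute $\rho_\ii^{-1}\varphi_\ii(\varepsilon_k)$ explicitly for $k \leq n$ (where $\rho_\ii$ is the identity) and for $k > n$, and verify that the resulting $q$-commutation constants $\chi_{\ii,\rho_\ii^{-1}\varphi_\ii}(\varepsilon_k, \varepsilon_\ell)/\chi_{\ii,\rho_\ii^{-1}\varphi_\ii}(\varepsilon_\ell, \varepsilon_k)$ agree with those prescribed by the symmetrized Euler--Ringel form on $K(\rep \tilde Q)$. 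Source-adaptedness makes most of the relevant $a_{i_\ell,i_k}$ entries vanish, and the check splits naturally into the three regimes $k,\ell \leq n$, $n < k,\ell$, and mixed.

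\textbf{Part (b).} The heart of the argument is the observation that any flag of $V$ of type $(\ii, \bfa)$ factors through its middle term $E := V_n$: the first $n$ steps form a flag of $V/E$ of type $(\ii_0, \bfa')$ and the last $n$ form a flag of $E$ of type $(\ii_0, \bfa'')$, where $\bfa = (\bfa', \bfa'')$. Because $\ii_0$ is repetition-free and source-adapted, a descending induction (each $i_k$ being a source of the residual subquiver forces $V_1$ to be the maximal subrepresentation of $W$ vanishing at $i_{1}$) shows that for any representation $W$ a flag of type $(\ii_0, \bfa)$ exists iff $\bfa_k = \dim W_{i_k}$ for all $k$, and in that case it is unique. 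Consequently
\[
X_{V, \ii} = \sum_{E \subset V} \kappa(V, E) \cdot t_1^{d_{i_1}-e_{i_1}} \cdots t_n^{d_{i_n}-e_{i_n}} \cdot t_{n+1}^{e_{i_1}} \cdots t_{2n}^{e_{i_n}},
\]
where $d_j = \dim V_j$, $e_j = \dim E_j$, and $\kappa(V,E)$ collects the bicharacter prefactors from \eqref{eq:Feigin character}.

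Applying $\widehat{\rho_\ii^{-1}\varphi_\ii}$ turns each monomial into $X^{\rho_\ii^{-1}\varphi_\ii(\bfa)}$ in ${\mathcal T}_{\tilde Q}$. To match \eqref{eq:character-integral}, I will check two identities: first, the exponent $\rho_\ii^{-1}\varphi_\ii(\bfa)$, expressed via $\udim(V/E)$ and $\udim E$, coincides with $-|E|^* - {}^*|V/E|$ in $K(\tilde\cC)$, by comparing the explicit formulas \eqref{eq:phi} with the descriptions of $(\cdot)^*$ and ${}^*(\cdot)$ in terms of injectives and projectives of $\tilde Q$ (which is the reason the extension was arranged in part (a) to begin with); second, that $\kappa(V,E)$, pulled through the change of variables, simplifies to $\langle V, V/E \rangle^{-1}$.

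The main obstacle will be this second identity. The scalar $\kappa(V,E)$ is a product of bicharacter ratios indexed by $1 \leq k < \ell \leq 2n$ raised to half-integer powers $\tfrac{1}{2} a_k a_\ell$, and collapsing it to the single Euler factor $\langle V, V/E\rangle^{-1}$ requires a delicate bookkeeping of cancellations between the contributions from the two halves of the flag and the quadratic twist introduced by $\rho_\ii^{-1}\varphi_\ii$. Here I expect that source-adaptedness — which forces $\Ext^1(S_{i_\ell}, S_{i_k}) = 0$ for $\ell > k$ — combined with the multiplicativity of $\langle\cdot,\cdot\rangle$ on Grothendieck classes (reducing products over simples to values on the classes $|E|, |V/E|, |V|$) is exactly what is needed to make the product telescope to the predicted form.
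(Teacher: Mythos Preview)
Your proposal is correct and follows essentially the same route as the paper: the flag--Grassmannian bijection via the middle term (the paper's Theorem~\ref{th:flag-grass}), the rewriting of $X_{V,\ii}$ as a Grassmannian sum (Corollary~\ref{cor:Feigin_grass}), and the exponent identity $\rho_\ii^{-1}\varphi_\ii(|V|-\bfe,\bfe)=-(\bfe,0)^*-{}^*(|V|-\bfe,0)$ (Lemma~\ref{le:phii0i0}) are exactly the ingredients used. The scalar check you flag as the main obstacle is in fact immediate once you pass to the bar-invariant monomials $t^{\bfa}$: source-adaptedness forces $\langle S_{i_\ell},S_{i_k}\rangle=1$ whenever $k<\ell$ lie in the same half of $\ii$, so the product collapses in one line to $\langle\bfe,|V|-\bfe\rangle^{-1}$ with no delicate bookkeeping and no twist from $\widehat{\rho_\ii^{-1}\varphi_\ii}$ (which acts trivially on scalars).
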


We prove  Theorem \ref{th:character=cluster character} in Section \ref{sec:valued_quivers}. 

\begin{remark} Theorems~\ref{th:Generalized Feigin homomorphism} and \ref{th:character=cluster character} together explain observations of \cite{cc,ck} that multiplication of cluster characters  resemble the multiplication in the dual Hall-Ringel algebra of $\rep_\FF(Q,\bfd)$.  
\end{remark}
It follows from \cite[Theorem 5.1]{cx} that if $V$ is a {\it rigid} object of $\cC=\rep_\FF(Q,\bfd)$ (i.e.  $\Ext^1_\cC(V,V)=0$) and $Q$ is acyclic, then the basis vector $[V]^*$ of $\cH^*(\cC)$ belongs to the composition algebra $U_+^*$.   These observations and Theorem \ref{th:character=cluster character} imply that the Grassmannians of subobjects of rigid objects often have counting polynomials. 
To state this result, we need more notation. 

For each ${\bf e}\in \cK(\cC)$ and $V\in\cC$ denote by $Gr_\bfe(V)$ the set of all subobjects $E\subset V$ such that $|E|=\bfe$, so that the formula \eqref{eq:character-integral} reads:
\begin{equation}
\label{eq:character-Gr}
X_V=\sum_{{\bf e}\in \cK({\mathcal C})} \langle |V|,|V|-{\bf e}\rangle^{-1} \cdot |Gr_\bfe(V)| \cdot X^{-{\bf e}^*-{}^*(|V|-{\bf e})}\ .
\end{equation}
From now on, until the end of the section,  $U_+^*$ will be the generic composition algebra (see \cite[Section 5]{ringel3} for details),  that is, $U_+^*=U_+^*(A)$ is a $\kk(q^\half)$-algebra generated by $x_i=[S_i]^*$, $i\in I$  subject to the quantum Serre relations determined by a symmetrizable $I\times I$ Cartan matrix $A$. 
Accordingly, we view $\cL_\ii$ as a $\kk(q^\half)$-algebra generated by $t_k^\pm$ subject to the relations (cf \eqref{eq:Pii}):
$$t_\ell t_k=q^{c_{i_k,i_\ell}} t_kt_\ell$$
for $1\le k<\ell\le m$, where $c_{ij}=d_ia_{ij}=a_{ij}d_j$ is the $(ij)$th entry of the symmetrized Cartan matrix.
Combining this and \eqref{eq:character=cluster character in U+} with \cite[Theorem 5.1]{cx}, we generalize  \cite[Corollary 1.2]{rupel2}.

\begin{corollary} 
Let $(Q,\bfd)$ be an acyclic valued quiver and $V\in Rep_\FF(Q,\bfd)$ be rigid, i.e., has no self-extensions. Then for any $\ii\in I^m$, ${\bf a}\in \ZZ^m_{\ge 0}$ 
there exists a  polynomial $p_{\ii,{\bf a}}^V(x)\in \ZZ[x]$ such that 
$$|\cF_{\ii,{\bf a}}(V)|=p^V_{\ii,{\bf a}}(|\FF|)\ .$$
In particular, for any ${\bf e}\in \cK(\cC)$ one has  
$$|Gr_\bfe(V)|=p^V_{(\ii_0,\ii_0),{\bf a}_{\bf e}}(|\FF|)\ ,$$ 
where ${\bf a}_{\bf e}=(v_1-e_1,\ldots,v_n-e_n,e_1,\ldots,e_n)$ and  $|V|=\sum_{i\in I} v_i|S_i|$, ${\bf e}=\sum_{i\in I} e_i|S_i|$.
\end{corollary}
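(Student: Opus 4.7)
The approach combines Theorem~\ref{th:Generalized Feigin homomorphism} with \cite[Theorem~5.1]{cx} to realize $|\cF_{\ii,\bfa}(V)|$ as the value at $q = |\FF|$ of a universal polynomial coming from the generic composition algebra $U_+^*(A)$. By Theorem~\ref{th:Generalized Feigin homomorphism} we have $\Psi_{\cC,\ii}([V]^*) = X_{V,\ii}$, and by \eqref{eq:Feigin character} this element expands as
\begin{equation*}
X_{V,\ii} = \sum_{\bfa \in \ZZ_{\ge 0}^m} c_{\ii,\bfa}\cdot |\cF_{\ii,\bfa}(V)| \cdot t_1^{a_1}\cdots t_m^{a_m},
\end{equation*}
where each $c_{\ii,\bfa}$ is an explicit $q^\half$-monomial determined by $\ii$ and $\bfa$.

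Since $V$ is rigid and $Q$ is acyclic, \cite[Theorem~5.1]{cx} places $[V]^*$ in the composition subalgebra $U_+^* \subseteq \cH^*(\cC)$; moreover, in the generic algebra $U_+^*(A)$ over $\kk(q^\half)$ the class $[V]^*$ admits a fixed expansion as a polynomial in the generators $[S_i]^*$ with coefficients in $\ZZ[q^{\pm\half}]$ that depend only on the dimension vector $|V|$ and not on the particular finite field $\FF$. Applying the Feigin homomorphism $\Psi_\ii$ from Corollary~\ref{cor:classical_feigin} termwise to this generic expansion yields an element whose coefficient at $t_1^{a_1}\cdots t_m^{a_m}$ is a fixed element $f_{\ii,\bfa}(q) \in \ZZ[q^{\pm\half}]$ independent of $\FF$. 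Matching with the previous display gives $|\cF_{\ii,\bfa}(V)| = c_{\ii,\bfa}^{-1}\,f_{\ii,\bfa}(|\FF|)$, which furnishes the required polynomial $p^V_{\ii,\bfa}$.

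For the ``in particular'' statement, set $\ii = (\ii_0,\ii_0)$ and invoke Theorem~\ref{th:character=cluster character}(b): the monomial change of variables $\widehat{\rho_\ii^{-1}\varphi_\ii}$ intertwines $X_{V,\ii}$ with the cluster character $\tilde X_V$, and direct inspection of \eqref{eq:phi} shows that it sends precisely $t^{\bfa_\bfe}$ to $X^{-\bfe^*-{}^*(|V|-\bfe)}$. Matching the coefficient of $X^{-\bfe^*-{}^*(|V|-\bfe)}$ in \eqref{eq:character-Gr} with the coefficient of $t^{\bfa_\bfe}$ in \eqref{eq:Feigin character} identifies $|Gr_\bfe(V)|$ with $|\cF_{(\ii_0,\ii_0),\bfa_\bfe}(V)|$ up to an explicit Euler-form prefactor, so the first part supplies the polynomial $p^V_{(\ii_0,\ii_0),\bfa_\bfe}$.

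The main obstacle is the integrality assertion $p^V_{\ii,\bfa}(x) \in \ZZ[x]$ rather than merely in $\QQ(x^\half)$. One must verify that $c_{\ii,\bfa}$ divides $f_{\ii,\bfa}(q)$ cleanly in $\ZZ[q^{\pm\half}]$ and that only integer powers of $q$, with integer coefficients, survive after cancellation. The parity of the $q^\half$-exponents is controlled by the $\cK(\cC)$-grading of $U_+^*$, which forces matching parities between the explicit prefactors and the generic Hall-number polynomials; positivity and integrality of the coefficients ultimately descend from Ringel's integrality theorem for Hall numbers on hereditary categories. Once this bookkeeping is carried out, the source-adapted property of $\ii_0$ gives the bijection between flags of type $\bfa_\bfe$ and subobjects of class $\bfe$, closing the Grassmannian case.
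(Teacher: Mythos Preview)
Your approach matches the paper's: lift $[V]^*$ to the generic composition algebra $U_+^*(A)$ via \cite[Theorem~5.1]{cx}, apply $\Psi_\ii$, and extract the flag counts as specializations of the resulting universal coefficients. For the Grassmannian clause, the flag--Grassmannian bijection you invoke at the end (proved later as Theorem~\ref{th:flag-grass}) already gives $|Gr_\bfe(V)| = |\cF_{(\ii_0,\ii_0),\bfa_\bfe}(V)|$ exactly, so the coefficient-matching through Theorem~\ref{th:character=cluster character}(b) with its Euler-form prefactor is a detour you can skip.
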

 
\begin{remark}  In \cite{qin} Qin  proved that for an equally valued $Q$ all counting polynomials $p^V_{(\ii_0,\ii_0),{\bf a}_{\bf e}}$  for $Gr_\bfe(V)$ belong to $\ZZ_{\ge 0}[x]$
(this was announced earlier by Caldero and Reineke in \cite{caldrein}, but their proof was incomplete). 
We expect that all $p^V_{\ii,{\bf a}}$ for rigid $V$ have nonnegative coefficients. This has been recently confirmed by the second author in \cite{rupel3} when $Q$ is any acyclic valued quiver with two vertices and $\ii=(\ii_0,\ii_0)$.  Moreover, based on numerous examples and results of \cite{efimov}, we expect that all $p_{\ii,{\bf a}}^V$ for rigid $V$ are  unimodular, i.e., $p_{\ii,{\bf a}}^V\in q^n\sum\limits_{k\ge 0} \ZZ_{\ge 0}\cdot  (k)_q$ for some $n\ge 0$, where $(k)_q:=\frac{q^k-1}{q-1}$. 
\end{remark}

For each sequence $\ii=(i_1,\ldots,i_m)$ we define the  {\it upper cluster algebra} ${\mathcal U}_\ii=\UU({\bf X}_\ii,\tilde B_\ii)\subset {\mathcal L}_\ii$ of the seed $({\bf X}_\ii,\tilde B_\ii)$, where ${\bf X}_\ii=\{t^{\varphi_\ii^{-1}(\varepsilon_1)},\ldots,t^{\varphi_\ii^{-1}(\varepsilon_m)}\}$ in the notation \eqref{eq:phi}  and $\tilde B_\ii$ is the exchange matrix defined in \cite[Section 8.2]{bz5} (see also Section \ref{sec:Special compatible pairs}). 

\begin{maintheorem}
\label{th:cluster isomorphism}
Let $A$ be a symmetrizable $I\times I$ Cartan matrix and let $\ii=(\ii_0,\ii_0)$, where $\ii_0$ is an ordering of $I$. Then 

(a)  $\Psi_\ii(U_+^*)\subset {\mathcal U}_\ii$.

(b) The  assignment $V\mapsto \Psi_\ii([V]^*)$ is a bijection between isomorphism classes of exceptional representations of the  associated valued quiver $(Q,\bfd)$ and non-initial cluster variables in ${\mathcal U}_\ii$.

(c) If $\ii$ is reduced, then the localization of $\Psi_\ii(U_+^*)$ by the cluster coefficients is equal to ${\mathcal U}_\ii$.
 
\end{maintheorem}

We prove Theorem~\ref{th:cluster isomorphism} in Section~\ref{sec:unip}.
 
\begin{theorem} 
\label{th:twist double coxeter} 
Assume that $\ii=(\ii_0,\ii_0)$ is reduced. Then there exists an isomorphism $\eta:\UU_\ii\to \UU_\ii$ such that
$$\eta(t^{\varphi_\ii^{-1}(\varepsilon_k)})=\Psi_\ii([V_k]^*),~\eta(t^{\varphi_\ii^{-1}(\varepsilon_{k+|I|})})=t^{-\varphi_\ii^{-1}(\varepsilon_{k+|I|})}$$
for $k=1,\ldots,|I|$, where $V_k$ is the exceptional object of $\rep_\FF(Q,\bfd)$ such that $|V_k|=s_{i_1}\cdots s_{i_{k-1}}|S_{i_k}|$. 
\end{theorem}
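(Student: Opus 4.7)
The plan is to construct $\eta$ in two stages: first as a homomorphism from the initial quantum torus of $\UU_\ii$ into $\UU_\ii$ itself, then to extend it to an automorphism of the full upper cluster algebra. Set $Y_k := \Psi_\ii([V_k]^*)$ for $k=1,\ldots,n$ and $Y_{n+k} := t^{-\varphi_\ii^{-1}(\varepsilon_{n+k})}$ for $k=1,\ldots,n$. By Main Theorem~\ref{th:cluster isomorphism}(b), each $Y_k$ with $k\le n$ is a (non-initial) cluster variable of $\UU_\ii$, while the $Y_{n+k}$ are inverses of cluster coefficients, hence lie in $\UU_\ii$ after localization at the frozen variables (where monomiality of the $C_k$ makes this harmless).

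The heart of the proof is verifying that the $Y_k$ satisfy the same quasi-commutation relations as the initial quantum-torus generators $\{t^{\varphi_\ii^{-1}(\varepsilon_k)}\}_{k=1}^{2n}$, so that $\eta$ extends to an algebra homomorphism on the torus. This splits into three subcases. Commutations among the $Y_{n+k}=C_k^{-1}$ are automatic (inverting a pair of quasi-commuting elements preserves the quasi-commutation exponent). Commutations between $Y_k$ ($k\le n$) and $Y_{n+\ell}$ reduce to computing the torus-weight of $\Psi_\ii([V_k]^*)$ against a monomial, and follow from the explicit leading term of the $\ii$-character together with the monomiality statement for $C_\ell$ from \cite[Theorem~3.1]{ber}. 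The third subcase, the quasi-commutations among the $\Psi_\ii([V_k]^*)$ themselves, is the delicate one: using Theorem~\ref{th:character=cluster character} we transport the question through $\widehat{\rho_\ii^{-1}\varphi_\ii}$ to a quasi-commutation of quantum cluster characters $\tilde X_{V_k}, \tilde X_{V_\ell}$ in $\cT_{\tilde Q}$, whose exponent is governed by the Euler form on the pair $(|V_k|,|V_\ell|)$. Matching this to the bilinear form $\beta(\varphi_\ii^{-1}(\varepsilon_k),\varphi_\ii^{-1}(\varepsilon_\ell))$ attached to the initial seed is a combinatorial identity, provable from the Coxeter orbit formula $|V_k|=s_{i_1}\cdots s_{i_{k-1}}|S_{i_k}|$, the explicit formula for $\varphi_\ii^{-1}$ (Lemma~\ref{le:mon_change2}), and the source-adapted hypothesis on $\ii_0$, which keeps all intermediate dimension vectors positive and the Euler form well-signed.

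Once $\eta$ is a well-defined homomorphism on the initial quantum torus with image in $\UU_\ii$, one upgrades it to an automorphism of $\UU_\ii$ as follows. The collection $\{Y_k\}_{k=1}^{2n}$, equipped with the same quasi-commutation data as the initial quantum cluster, carries an exchange matrix inherited from $\tilde B_\ii$; the fact that this collection arises from the initial seed by the explicit sequence of mutations producing the cluster variables $\Psi_\ii([V_k]^*)$ in Main Theorem~\ref{th:cluster isomorphism}(b) identifies $\{Y_k\}$ as another quantum cluster of $\UU_\ii$. Hence $\eta$ sends seed to seed and extends uniquely, by the quantum Laurent phenomenon, to an algebra endomorphism of $\UU_\ii$. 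Bijectivity follows either by constructing $\eta^{-1}$ from the analogous data attached to the reverse word $\ii_0^{\mathrm{op}}$, or more directly by noting that the image of $\eta$ contains a complete quantum cluster together with all its coefficients and therefore coincides with $\UU_\ii$, while injectivity is automatic since $\eta$ is already injective on the fraction skew-field of the initial torus.

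The main obstacle is the bilinear-form identity in the third subcase of the second step: one must reconcile two a priori independent pieces of combinatorial data, namely the exponent vectors $\varphi_\ii^{-1}(\varepsilon_k)$ defined purely from the word $\ii$, and the Euler form evaluated on Coxeter-rotated simple roots. The source-adapted assumption on $\ii_0$ together with the doubled structure $\ii=(\ii_0,\ii_0)$ should be precisely what makes these two descriptions coincide, but locating the correct telescoping identity—and checking it in the full generality of a symmetrizable Cartan matrix—is the technical heart of the argument.
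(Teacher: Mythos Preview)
Your approach diverges from the paper's in a way that creates a genuine gap. The paper does \emph{not} verify the quasi-commutation of the $Y_k=\Psi_\ii([V_k]^*)$ by any Euler-form computation on the representation side. Instead it first proves the algebraic identity
\[
[V_k]^*=\Delta_{s_{i_1}\cdots s_{i_k}\omega_{i_k}}
\]
in $U_+^*$ (Proposition~\ref{pr:from Vij to Delta}), and then invokes the quasi-commutation of generalized minors, which is a fact from quantum group representation theory (Lemma~\ref{le:v_wlambda commute} and Lemma~\ref{le:hat X commutation}). This gives the commutation exponents \emph{for free}, matching $\Lambda_\ii(\varphi_\ii^{-1}(\varepsilon_k),\varphi_\ii^{-1}(\varepsilon_\ell))$ directly via \eqref{eq:pairing phi}. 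With that in hand, the paper shows (Lemma~\ref{le:hat seeds coincide}) that the seed $\hat\Sigma_\ii^+$ equals $\mu_n\cdots\mu_1(\Sigma_\ii^-)$, which is what identifies $\UU(\hat\Sigma_\ii)$ with $\UU_\ii$.

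Your plan tries to establish the third-subcase commutations via the claim that $\tilde X_{V_k}$ and $\tilde X_{V_\ell}$ quasi-commute with exponent ``governed by the Euler form''. But quantum cluster characters of two exceptional objects do not quasi-commute in general; they do so only when the pair has no extensions in either direction, and for the sequence $V_1,\ldots,V_n$ one typically has $\Ext^1$ vanishing only in one direction. The most natural reason they \emph{do} quasi-commute here is precisely that they lie in a common cluster---which is what you are trying to prove. So your argument is circular unless you supply an independent proof of the bilinear-form identity you flag as the ``main obstacle'', and you give no mechanism for that. The paper's route through generalized minors is exactly the device that breaks this circularity. Similarly, your appeal to Theorem~\ref{th:cluster isomorphism}(b) for ``the explicit sequence of mutations'' is too weak: that theorem gives a bijection between exceptionals and non-initial cluster variables, not the statement that $V_1,\ldots,V_n$ occupy a single cluster reached by a specific mutation path; the paper supplies that separately as Lemma~\ref{le:hat seeds coincide}.
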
 

We prove Theorem \ref{th:twist double coxeter} in Section \ref{subsec: proof of theorem twist double coxeter}.  Using these results, we settle a particular case of Conjecture 10.10 from \cite{bz5}. 
Indeed, following \cite[Section 9.3]{bz5},  for any extremal weight $\gamma$ of a fundamental simple  $U_q(\gg)$-module $V_{\omega_i}$ ($i\in I$) one defines an extremal vector $\Delta_\gamma\in V_{\omega_i}\subset  U_+^*$ 
(it is sometimes called a ``generalized minor"), where $\omega_i$ is the $i$-th fundamental weight of $\gg$.  Using  $q$-commutation relations between various $\Delta_\gamma$ (see e.g., \cite[Theorem 10.1]{bz5}), one has an injective homomorphism of algebras (for each reduced word $\ii=(i_1,\ldots,i_m)$ for an element $w$ in the Weyl group $W$ of $\gg$):
\begin{equation}
\label{eq:bz-hom}
\cL_\ii\to Frac(\kk_q[N^w])
\end{equation}
given by:
$$t^{\varphi_\ii^{-1}(\varepsilon_k)}\mapsto \underline \Delta_{s_{i_1}\cdots s_{i_k}\omega_{i_k}}$$
for $k=1,\ldots,m$, where $\underline x$ is the image of $x\in U_+^*$ under the canonical projection $U_+^*\twoheadrightarrow \kk_q[N^w]$.
 
A particular case of Conjecture 10.10 from \cite{bz5} asserts that the restriction of \eqref{eq:bz-hom} to $\UU_\ii$ is an isomorphism of algebras
$$\UU_\ii\widetilde \to \kk_q[N^w] \ .$$
It follows from the results of \cite{ber} that $\ker\Psi_\ii$ is the defining ideal of $\kk_q[N^w]$ in $U_+^*$ for any reduced word $\ii$ for $w\in W$, therefore,  $\Psi_\ii$ factors through the injective homomorphism 
$$\underline \Psi_\ii:\kk_q[N^w]\hookrightarrow \cL_\ii$$ 
(see Section \ref{subsec:proof corollary bz} for details).  The following result settles the above  conjecture for $\ii=(\ii_0,\ii_0)$.

\begin{theorem}
\label{th:corollary bz}
In the assumptions of Theorems \ref{th:cluster isomorphism}(c) and \ref{th:twist double coxeter}, the restriction of \eqref{eq:bz-hom} to $\UU_\ii$ is an isomorphism 
$$\underline \Psi_\ii^{-1}\circ \eta:\UU_\ii\widetilde \to \kk_q[N^{c^2}]\ ,$$ 
where $c$ is the Coxeter element in $W$ corresponding to the reduced word $\ii_0$. 
\end{theorem}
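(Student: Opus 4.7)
My plan is to derive Theorem~\ref{th:corollary bz} essentially formally from Theorems~\ref{th:cluster isomorphism}(c) and~\ref{th:twist double coxeter}, combined with the Berenstein factorization $\underline{\Psi}_\ii:\kk_q[N^{c^2}]\hookrightarrow\cL_\ii$ from \cite{ber} (applicable since $\ii=(\ii_0,\ii_0)$ is a reduced word for $c^2$). First I would verify that $\underline\Psi_\ii^{-1}\circ\eta$ is a well-defined algebra isomorphism $\UU_\ii\widetilde\to\kk_q[N^{c^2}]$. The image $\underline\Psi_\ii(\kk_q[N^{c^2}])$ equals $\Psi_\ii(U_+^*)$ inside $\cL_\ii$, and by Theorem~\ref{th:cluster isomorphism}(c) its localization by the cluster coefficients $C_1,\ldots,C_n$ equals $\UU_\ii$. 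On the $\kk_q[N^{c^2}]$ side, the $\underline\Psi_\ii$-preimages of the $C_k$ are (by definition of \eqref{eq:bz-hom}) the extremal generalized minors $\underline\Delta_{s_{i_1}\cdots s_{i_{k+n}}\omega_{i_{k+n}}}$, which are invertible in $\kk_q[N^{c^2}]$ by construction of the quantum unipotent cell. Hence $\underline\Psi_\ii$ extends to an isomorphism $\kk_q[N^{c^2}]\widetilde\to\UU_\ii$, and composing its inverse with the automorphism $\eta$ from Theorem~\ref{th:twist double coxeter} produces the required isomorphism.

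To complete the proof I would match this isomorphism with the restriction of \eqref{eq:bz-hom} to $\UU_\ii$, which is the content of the conjecture from \cite{bz5} being settled. Since both are algebra homomorphisms and $\UU_\ii$ is generated by its initial cluster $\bfX_\ii$ together with inverses of the coefficients, it suffices to check agreement on the generators $t^{\varphi_\ii^{-1}(\varepsilon_k)}$, $k=1,\ldots,2n$. Using the explicit description of $\eta$ in Theorem~\ref{th:twist double coxeter}, the case $1\le k\le n$ reduces to the identity $\underline{[V_k]^*}=\underline\Delta_{s_{i_1}\cdots s_{i_k}\omega_{i_k}}$ in $\kk_q[N^{c^2}]$, while the case $n<k\le 2n$ follows by inverting an analogous identity, using that $\eta$ acts on frozen variables by $t\mapsto t^{-1}$.

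The main obstacle is this identification $\underline{[V_k]^*}=\underline\Delta_{s_{i_1}\cdots s_{i_k}\omega_{i_k}}$ between the Hall-algebraic exceptional element and the extremal generalized minor, modulo $\ker\Psi_\ii$. I would prove it either by induction on $k$ using the braid-group action on extremal vectors together with Ringel's description of reflection functors on exceptional representations of $(Q,\bfd)$, or, more concretely, by invoking Theorem~\ref{th:character=cluster character} to rewrite $\Psi_\ii([V_k]^*)$ as the explicit quantum cluster character of $V_k$ and matching its leading monomial with the known formula for $\Psi_\ii$ of an extremal vector (Theorem~3.1 of \cite{ber}); the latter approach has the advantage of reusing the identifications already made in the proof of Theorem~\ref{th:cluster isomorphism}.
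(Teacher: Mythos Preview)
Your proposal is correct and follows essentially the paper's route: use Theorem~\ref{th:cluster isomorphism}(c) together with the factorization $\underline\Psi_\ii$ to see that $\underline\Psi_\ii:\kk_q[N^{c^2}]\to\UU_\ii$ is an isomorphism, then compose its inverse with the twist $\eta$ of Theorem~\ref{th:twist double coxeter} and verify on the cluster generators that the resulting map agrees with~\eqref{eq:bz-hom}. Your ``main obstacle'' is exactly what the paper isolates as Proposition~\ref{pr:from Vij to Delta}, and it is proved there by your approach~(a): an induction on $k$ that matches the Chen--Xiao recursion for $[V_{ij}]^*$ (Lemma~\ref{le:Vij* recursion}) term by term with the recursion~\eqref{eq:recursion minor explicit} for $\Delta_{s_i\cdots s_j\omega_j}$, yielding the stronger identity $[V_k]^*=\Delta_{s_{i_1}\cdots s_{i_k}\omega_{i_k}}$ already in $U_+^*$, not merely modulo $\ker\Psi_\ii$.

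One caution about your approach~(b): matching leading monomials will not close the argument, since for $k\le n$ neither $\Psi_\ii([V_k]^*)$ nor $\Psi_\ii(\Delta_{s_{i_1}\cdots s_{i_k}\omega_{i_k}})$ is a monomial, and Theorem~3.1 of \cite{ber} (Proposition~\ref{pr:psi tlambda} here) only computes $\Psi_\ii$ of the \emph{full}-length minors $\Delta_{w\lambda}$, i.e., the coefficients $k>n$.
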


We prove Theorem \ref{th:corollary bz} in Section \ref{subsec:proof corollary bz}.  It is natural to expect that both Theorem~\ref{th:cluster isomorphism} and Theorem \ref{th:corollary bz} hold for all reduced words $\ii$. 

\begin{conjecture} 
\label{conj:isomorphism Uii}
Let $A$ be a symmetrizable $I\times I$ Cartan matrix and let $\ii=(i_1,\ldots,i_m)$ be a reduced word for an element $w\in W$. Then:

(a) $\Psi_\ii(U_+^*)\subset  {\mathcal U}_\ii$. Moreover, the localization of $\Psi_\ii(U_+^*)$ by the cluster coefficients is ${\mathcal U}_\ii$. 

(b) For each exceptional representation $V$ of $(Q,\bfd)$ the element $\Psi_\ii([V]^*)\in \cU_\ii$ is a  cluster variable.

(c) The assignment $t^{\varphi_\ii^{-1}(\varepsilon_k)}\mapsto \Psi_\ii(\Delta_{s_{i_1}\cdots s_{i_k}\omega_{i_k}})$ defines an isomorphism $\eta_\ii:\UU_\ii\widetilde \to \UU_\ii$.

\end{conjecture}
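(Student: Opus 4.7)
\smallskip\noindent\textbf{Proof proposal for Conjecture \ref{conj:isomorphism Uii}.}

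The overall plan is to reduce each assertion to the already established double-Coxeter case $\ii=(\ii_0,\ii_0)$ of Main Theorem \ref{th:cluster isomorphism} and Theorem \ref{th:corollary bz} by exploiting compatibility of the generalized Feigin homomorphism with braid/mutation moves on reduced words. Concretely, I would proceed as follows. First, using that any two reduced words for $w\in W$ are connected by elementary braid moves, I would show that each braid move corresponds to a finite sequence of cluster mutations at the level of the seed $(\bfX_\ii,\tilde B_\ii)$; this is what makes the combinatorics of $\tilde B_\ii$ from \cite{bz5} so robust, and the key input is that the matrix $\tilde B_\ii$ is defined functorially from $\ii$ and that the attached upper cluster algebra $\cU_\ii$ is mutation-invariant by construction. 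Once this compatibility is set up, statement (a) follows provided one can check that $\Psi_\ii(U_+^*)$ transforms correctly under braid moves, which in turn follows from the fact that the defining ideal $\ker\Psi_\ii=I_w$ depends only on $w$ (by the results of \cite{ber} quoted above), so the image algebras $\Psi_\ii(U_+^*)$ are canonically isomorphic to $\kk_q[N^w]$ for all reduced $\ii$.

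For part (b), I would categorify via the generalized Feigin homomorphism $\Psi_{\cC,\ii}$ of Theorem \ref{th:Generalized Feigin homomorphism} combined with the flag/Grassmannian identification used in Theorem \ref{th:character=cluster character}. The plan is to prove a version of the $X_{V,\ii}=\tilde X_V$ identification for more general $\ii$ by replacing the source-adapted quiver $(Q,\bfd)$ by a reflection functor--twisted version $(Q^\ii,\bfd^\ii)$ that is adapted to the prefix of $\ii$ in question. Reflection functors (in the form of BGP/APR tilts) permute exceptional representations while transforming flag types in a controlled fashion, and for each exceptional $V$ the resulting reflected character $\Psi_\ii([V]^*)$ should equal a mutated cluster variable in $\cU_\ii$. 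The bijection between exceptional representations and non-initial cluster variables would then follow from the known bijection in the acyclic case together with the observation that mutations of cluster variables match reflections of exceptional modules.

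For part (c), the strategy is to define $\eta_\ii$ as the composition $\underline\Psi_\ii^{-1}\circ\iota_\ii$, where $\iota_\ii$ is the homomorphism \eqref{eq:bz-hom} sending $t^{\varphi_\ii^{-1}(\varepsilon_k)}$ to the normalized minor $\underline\Delta_{s_{i_1}\cdots s_{i_k}\omega_{i_k}}$; the content of (c) then becomes showing that $\iota_\ii$ restricts to an isomorphism $\cU_\ii\widetilde\to\kk_q[N^w]$, i.e., the full Conjecture 10.10 of \cite{bz5}. Granting (a), this reduces to proving that the image of $\iota_\ii$ contains all cluster variables of $\cU_\ii$, which by (b) amounts to showing that $\Psi_\ii([V]^*)$ is of the form $\underline\Delta_\gamma$-times-monomial for every exceptional $V$. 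This would follow from a Caldero--Chapoton-type formula for minors in arbitrary Weyl group elements, extending \eqref{eq:character-integral}.

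The principal obstacle is exactly the absence of a Caldero--Chapoton formula adapted to an arbitrary reduced word $\ii$: the identification \eqref{eq:character=cluster character in U+} in Theorem \ref{th:character=cluster character} uses in an essential way that $\ii_0$ is source-adapted, so that flags of subobjects match Grassmannians of subrepresentations with respect to a fixed hereditary category $\rep_\FF(Q,\bfd)$. For general $\ii$ the natural replacement is a \emph{twisted} hereditary category, produced either by a sequence of reflection functors or, equivalently, by passing to the Frobenius subcategory of $\mod\Pi(Q)$ attached to $w$ in the spirit of Geiss--Leclerc--Schr\"oer \cite{gls}; making the flag-to-Grassmannian identification work in that setting, and checking that the monomial transformation $\widehat{\rho_\ii^{-1}\varphi_\ii}$ of Lemma \ref{le:mon_change} still conjugates $X_{V,\ii}$ into a bona fide quantum cluster character, is where I expect the real technical work to lie.
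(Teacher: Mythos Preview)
The statement you are attempting to prove is explicitly a \emph{conjecture} in the paper, not a theorem: the paper does not prove it and offers no proof of its own for you to compare against. The authors establish only the special case $\ii=(\ii_0,\ii_0)$ (their Main Theorem \ref{th:cluster isomorphism} and Theorem \ref{th:twist double coxeter}), and then state Conjecture \ref{conj:isomorphism Uii} as the expected generalization to arbitrary reduced words. So there is no ``paper's own proof'' here; your proposal is an attack on an open problem.

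That said, your reduction strategy has a structural gap. You propose to ``reduce each assertion to the already established double-Coxeter case,'' but the double-Coxeter theorems apply only when $w=c^2$ for some Coxeter element $c$. Braid moves connect reduced words for the \emph{same} Weyl group element, so at best they let you pass from one reduced word for a given $w$ to another; they do not let you reach the double-Coxeter setting unless $w$ is already a square of a Coxeter element. For a general $w\in W$ there is no reduced word of the form $(\ii_0,\ii_0)$ at all, so the base case is simply unavailable. Your argument for (a) therefore needs an independent anchor for each $w$, not merely braid-invariance.

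Even the braid-move/mutation correspondence you invoke is more delicate than your sketch suggests: while the $\tilde B_\ii$ from \cite{bz5} is functorial in $\ii$, it is not true in general that a single braid move on $\ii$ corresponds to a single mutation of the seed $\Sigma_\ii$ (for 3-term braid moves in non-simply-laced types the relationship is more intricate), and in any case this would only show $\cU_\ii\cong\cU_{\ii'}$, not that $\Psi_\ii(U_+^*)$ lands inside $\cU_\ii$. You correctly identify the real obstruction at the end---the absence of a Caldero--Chapoton-type formula identifying $X_{V,\ii}$ with a quantum cluster character for arbitrary reduced $\ii$---and the paper says essentially the same thing (cf.\ the paragraph following Theorem \ref{th:corollary bz intro} and the remark about \cite{gls,gls2}). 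Resolving that obstruction, e.g., via the GLS preprojective-algebra categorification you allude to, is exactly what remains open.
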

 
\begin{remark} 
The ``classical" version of the conjecture is known: it follows from  \cite[Theorem 2.10]{BFZ-cluster} and the existence of the ``classical" twist $\eta_w$, a certain automorphism of the unipotent cell $N^w$ (\cite[Theorem 1.2]{bz3}) which interpolates between two embeddings of tori $(\kk^\times)^m\hookrightarrow N^w$, $(\kk^\times)^m\hookrightarrow N^w$. 
\end{remark} 

\begin{remark} 
Similar to Theorem \ref{th:corollary bz}, Conjecture \ref{conj:isomorphism Uii}(c) implies \cite[Conjecture 10.10]{bz5}, i.e., 
for any reduced word $\ii$ for $w\in W$, the restriction of \eqref{eq:bz-hom} to $\UU_\ii$ is an isomorphism $\underline{\Psi}_\ii^{-1}\circ \eta_\ii:\UU_\ii\widetilde \to \kk_q[N^w]$.
\end{remark}

We conclude the section with the relationship between the twist $\eta:\UU_\ii\to \UU_\ii$ from Theorem \ref{th:twist double coxeter}  
and canonical basis in $\UU_\ii$. 

Recall that in \cite{bz6} A.~Zelevinsky and the first author  constructed a {\it triangular basis} ${\bf B}(\Sigma)$ in the upper cluster algebra $\UU(\Sigma)$ for each acyclic quantum seed $\Sigma$ in $\cU_\bfi$ and proved that ${\bf B}(\Sigma)$ does not depend on the choice of $\Sigma$ (\cite[Theorem 1.6]{bz6}). This basis consists of bar-invariant elements and has a triangular transition matrix (in terms of powers of $q^\half$) to the initial standard monomial  basis ${\mathcal E}(\Sigma)=\{E_\bfa,\bfa\in \ZZ^m\}$. Since $\eta$ commutes with the bar-involution and sends the initial standard monomial basis  ${\mathcal E}(\Sigma_\bfi)$ to ${\mathcal E}(\Sigma')$ for some other seed $\Sigma'$ of $\UU_\ii$, the following is immediate. 

\begin{corollary} 
\label{cor:twist of triangular basis}
In the assumptions of Theorem \ref{th:twist double coxeter}, one has
$$\eta({\bf B}(\Sigma_\ii))={\bf B}(\Sigma_\ii)\ .$$

\end{corollary}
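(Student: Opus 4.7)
My plan is to invoke the characterization and seed-independence of the triangular basis from \cite[Theorem 1.6]{bz6}: for each acyclic quantum seed $\Sigma$ of $\UU_\ii$, the basis ${\bf B}(\Sigma)$ is the unique bar-invariant $\kk$-basis whose transition matrix to the standard monomial basis ${\mathcal E}(\Sigma)$ is unitriangular in powers of $q^\half$, and moreover ${\bf B}(\Sigma)$ does not depend on the choice of $\Sigma$. Given these two facts, the corollary reduces to verifying (i) that $\eta$ sends $\Sigma_\ii$ to an acyclic quantum seed $\Sigma'$ of $\UU_\ii$, and (ii) that $\eta$ commutes with the bar-involution.

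For (i), Theorem~\ref{th:twist double coxeter} specifies the images of the initial cluster: the non-frozen variables go to $\Psi_\ii([V_k]^*)$, which are genuine cluster variables of $\UU_\ii$ by Theorem~\ref{th:cluster isomorphism}(b), while the frozen variables go to their inverses. Transporting the initial exchange matrix along $\eta$ furnishes an exchange matrix for the image cluster, and this transported seed $\Sigma' := \eta(\Sigma_\ii)$ is again acyclic; this is built into the construction of the quantum twist in Theorem~\ref{th:twist double coxeter} (mirroring the classical twist of \cite[Theorem~1.2]{bz3}, which interchanges the two obvious acyclic seeds on $N^w$). Since ${\mathcal E}(\Sigma)$ consists of Laurent monomials in the cluster variables of $\Sigma$, it follows that $\eta({\mathcal E}(\Sigma_\ii)) = {\mathcal E}(\Sigma')$.

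For (ii), the bar-involution on $\UU_\ii\subset {\mathcal L}_\ii$ is uniquely determined as the $\kk$-algebra antiautomorphism sending $q^\half \mapsto q^{-\half}$ and fixing every element of any prescribed quantum cluster. The images of the initial cluster variables under $\eta$ are bar-invariant: the $\Psi_\ii([V_k]^*)$ are bar-invariant as quantum cluster variables of $\UU_\ii$, and the monomials $t^{-\varphi_\ii^{-1}(\varepsilon_{k+|I|})}$ are manifestly so. Consequently $\eta$ commutes with the bar-involution on a set of algebra generators, hence on all of $\UU_\ii$.

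Combining (i) and (ii), $\eta({\bf B}(\Sigma_\ii))$ is a bar-invariant $\kk$-basis of $\UU_\ii$ whose transition matrix to $\eta({\mathcal E}(\Sigma_\ii)) = {\mathcal E}(\Sigma')$ is unitriangular in powers of $q^\half$, so the uniqueness part of \cite[Theorem~1.6]{bz6} identifies it with ${\bf B}(\Sigma')$; seed-independence then gives ${\bf B}(\Sigma') = {\bf B}(\Sigma_\ii)$, proving the claim. The only genuinely non-trivial ingredient is the acyclicity of $\Sigma'$ in step (i), which is the main obstacle in fleshing the argument out — everything else amounts to assembling the characterizing properties of the triangular basis.
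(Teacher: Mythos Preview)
Your proposal is correct and follows essentially the same approach as the paper: both arguments combine the bar-equivariance of $\eta$ with the fact that $\eta$ carries the standard monomial basis ${\mathcal E}(\Sigma_\ii)$ to ${\mathcal E}(\Sigma')$ for another acyclic seed $\Sigma'$, and then invoke the uniqueness and seed-independence of the triangular basis from \cite[Theorem~1.6]{bz6}.

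One small remark on your step~(i): the acyclicity of $\Sigma'$ that you flag as ``the main obstacle'' is actually immediate once you identify $\Sigma'$ concretely. By Lemma~\ref{le:hat X commutation} the image cluster $\hat{\bf X}_\ii$ satisfies exactly the same commutation relations as ${\bf X}_\ii$, so the pair $\hat\Sigma_\ii=(\hat{\bf X}_\ii,\tilde B_\ii)$ is a quantum seed with the \emph{same} exchange matrix $\tilde B_\ii$; its principal part is $B_Q$ (via the coefficient change $\rho_\ii^-$, cf.\ Lemma~\ref{le:rho phi}), hence acyclic. That $\hat\Sigma_\ii$ is genuinely a seed of $\UU_\ii$ (not merely of some isomorphic algebra) is the content of Lemma~\ref{le:hat seeds coincide}, which realizes $\hat\Sigma_\ii^+$ as $\mu_n\cdots\mu_1(\Sigma_\ii^-)$. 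So no separate acyclicity argument is needed beyond what is already established in the proof of Theorem~\ref{th:twist double coxeter}.
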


The basis ${\bf B}(\Sigma)$ is an analogue of the {\it dual canonical basis} (see e.g., discussion in \cite[Remark 1.8]{bz6}). To make this analogy precise, we  define the canonical basis ${\bfB}'_\ii$ in $\Psi_\ii(U_+^*)$ and relate it to the twist $\eta_\ii$ from Conjecture  \ref{conj:isomorphism Uii}(c). 

Indeed, let ${\bf B}^*$ be the dual canonical basis of $U_+^*$. 
For each reduced $\ii\in I^m$ we define ${\bf B}'_\ii\subset \cL_\ii$ by:
$${\bf B}'_\ii=\Psi_\ii({\bf B}^*)\setminus \{0\}\ .$$
The following is immediate (part (a) follows from \cite[Proposition 4.2]{Lusztig-problems}, part (b) follows from that $\Psi_\ii$ commutes with the bar-involutions and part (c) -- from \cite[Theorem 6.18]{{kimura}}).

\begin{proposition} 
\label{pr:canonical basis}
For each reduced $\ii\in I^m$ one has:

(a) The set ${\bf B}'_\ii$ is a basis of $\Psi(U_+^*)$.

(b) For each $b\in {\bf B}'_\ii$ one has $\bar b=b$, where the bar-involution $t\mapsto \bar t$ on $\cL_\ii$ is the only $\QQ$-linear anti-automorphism 
such that $\bar t_k=t_k$, $\bar q^\half=q^{-\half}$.

(c) For any $b\in {\bf B}'_\ii$ and each $k\in [1,m]\setminus \ex$ there is $r\in \ZZ$ such that $q^{\frac{r}{2}}b\cdot X_k\in {\bf B}'_\ii$.  

\end{proposition}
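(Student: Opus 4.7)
The plan is to establish parts (a), (b), (c) independently, drawing on, respectively, Lusztig's compatibility of the dual canonical basis $\bfB^*$ with unipotent quantum ideals, the bar-invariance of $\Psi_\ii$, and Kimura's triangular multiplication theorem for dual canonical bases of quantum unipotent cells.

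For (a), I would combine \cite[Theorem 0.5]{ber}, which identifies $\ker \Psi_\ii$ with the defining ideal $I_w$ of $\kk_q[N^w]$ for any reduced word $\ii$ of $w$, with Lusztig's result \cite[Proposition 4.2]{Lusztig-problems}, which asserts that $I_w$ is spanned as a $\kk(q^\half)$-subspace of $U_+^*$ by $\bfB^*_w := \bfB^* \cap I_w$. Consequently $\Psi_\ii$ sends $\bfB^* \setminus \bfB^*_w$ bijectively onto $\bfB'_\ii$, and the latter is a $\kk(q^\half)$-basis of $\Psi_\ii(U_+^*) \cong U_+^*/I_w$.

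For (b), I would verify directly that $\Psi_\ii$ intertwines the bar involutions on $U_+^*$ and on $\cL_\ii$. Both are $\QQ$-linear anti-automorphisms that invert $q^\half$ and fix the respective generators $[S_j]^*$ and $t_k$. Since $\Psi_\ii([S_j]^*) = \sum_{k : i_k = j} t_k$ is manifestly bar-invariant in $\cL_\ii$, a direct check on products of generators, using \eqref{eq:Pii} and the quantum Serre relations, establishes $\overline{\Psi_\ii(x)} = \Psi_\ii(\bar x)$ for all $x\in U_+^*$. Specializing to $x = b \in \bfB^*$ and invoking the bar-invariance of $\bfB^*$ yields $\overline{\Psi_\ii(b)} = \Psi_\ii(b)$, as required.

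For (c), the main obstacle of the three, I would appeal to Kimura's triangularity theorem \cite[Theorem 6.18]{kimura}, which governs the behavior of the dual canonical basis of $\kk_q[N^w] \cong U_+^*/I_w$ under multiplication by the quantum unipotent minors $\underline\Delta_{s_{i_1}\cdots s_{i_k}\omega_{i_k}}$ corresponding to the frozen indices $k\in [1,m]\setminus\ex$. Concretely, Kimura shows that each such multiplication is triangular with respect to $\bfB^*$ up to an integer power of $q^\half$. To deduce (c), it then remains to identify the frozen monomials $X_k = t^{\varphi_\ii^{-1}(\varepsilon_k)}$ with precisely these unipotent minors under the embedding \eqref{eq:bz-hom}, which follows from the definition of $\varphi_\ii$ in \eqref{eq:phi} (so that $t^{\varphi_\ii^{-1}(\varepsilon_k)} \mapsto \underline\Delta_{s_{i_1}\cdots s_{i_k}\omega_{i_k}}$). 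Transporting Kimura's statement along $\Psi_\ii$ then gives the required $q^{r/2} b \cdot X_k \in \bfB'_\ii$. The delicate part of this step is the careful matching of conventions (ordering and labeling of frozen minors, precise powers of $q^\half$) between \cite{kimura} and the present setup.
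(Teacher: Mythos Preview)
Your proposal is correct and matches the paper's own argument exactly: the paper states that (a) follows from \cite[Proposition 4.2]{Lusztig-problems}, (b) from the fact that $\Psi_\ii$ commutes with the bar-involutions, and (c) from \cite[Theorem 6.18]{kimura}, which are precisely the ingredients you invoke. One small refinement for (c): since $\bfB'_\ii$ is \emph{defined} as $\Psi_\ii(\bfB^*)\setminus\{0\}$, the natural transport map is $\underline\Psi_\ii$ (see Proposition~\ref{pr:psi tlambda} and Lemma~\ref{le:tlambda properties}(a), which give $\Psi_\ii(\Delta_{w\omega_{i_k}})=X_k^{-1}$ for frozen $k$) rather than the homomorphism \eqref{eq:bz-hom}; this also explains why the convention-matching you flag is indeed the only delicate point.
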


Proposition \ref{pr:canonical basis}(c), in particular, implies that the set ${\bf B}_\ii$ of all bar-invariant elements of the form:
$$q^{\frac{r}{2}}\cdot  b\prod\limits_{k\in [1,m]\setminus \ex} X_k^{a_k}\ ,$$
where $r,a_k\in\ZZ$, $b\in {\bf B}'_\ii$  is a basis of the Ore localization of  $\Psi_\bfi(U_+^*)$ by all $X_k^{-1},k\in [1,m]\setminus \ex$. 

Thus, Conjecture \ref{conj:isomorphism Uii}(a) asserts that ${\bf B}_\ii$ is a basis of $\UU_\ii$.
Now we  (yet conjecturally) relate the bases with the twist $\eta_\ii$.

\begin{conjecture} 
\label{conj:twist of basis}
In the assumptions of Conjecture  \ref{conj:isomorphism Uii} one has:

(a) $\eta_\ii({\bf B}_\ii)={\bf B}_\ii$.

(b) If $\ii=(\ii_0,\ii_0)$, then ${\bf B}_\ii$ is the triangular basis ${\bf B}(\Sigma_\ii)$.

\end{conjecture}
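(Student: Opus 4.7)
The plan is to attack the two parts in tandem, using part (b) as the easier handle in the doubled-Coxeter case and isolating the genuinely representation-theoretic difficulty in part (a).

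For part (b), I would characterize ${\bf B}_\ii$ via the two defining properties of the triangular basis of \cite{bz6}: bar-invariance, and a unitriangular transition matrix (in powers of $q^{\half}$) to the standard monomial basis ${\mathcal E}(\Sigma_\ii)$. Bar-invariance of every element of ${\bf B}_\ii$ is immediate from Proposition \ref{pr:canonical basis}(b), since the adjoined $X_k$ for $k\in[1,m]\setminus\ex$ are bar-invariant by construction of $\Sigma_\ii$. The substantive step is the triangularity. Here I would exploit the dictionary provided by Theorem \ref{th:character=cluster character}: under $\widehat{\rho_\ii^{-1}\varphi_\ii}$, the image $\Psi_\ii([V]^*)$ of an exceptional $V$ becomes a quantum cluster character $\tilde X_V$, and in the doubled-Coxeter case Theorem \ref{th:cluster isomorphism}(b) shows these are precisely the non-initial cluster variables. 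Combining this with Lusztig's result that dual-canonical-basis elements expand unitriangularly on PBW monomials attached to $\ii_0$, and using that the PBW indexing of $U_+^*$ matches the $\ex$-indexing of standard monomials in $\Sigma_\ii$ via the map $V \mapsto \bfa_{|V|}$ from the corollary following Theorem \ref{th:character=cluster character}, one obtains the required triangularity. Uniqueness of the triangular basis (\cite[Theorem 1.6]{bz6}) then identifies ${\bf B}_\ii = {\bf B}(\Sigma_\ii)$.

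For part (a) in full generality, the natural route is to construct a quantum lift $\tilde \eta_w$ of the classical twist $\eta_w$ on $N^w$ from \cite[Theorem 1.2]{bz3} as an automorphism of $\kk_q[N^w]$, and to prove it preserves the dual canonical basis ${\bf B}^*$ (modulo $\ker\underline \Psi_\ii$); then $\eta_\ii = \underline \Psi_\ii \circ \tilde \eta_w \circ \underline \Psi_\ii^{-1}$ inherits the preservation, and multiplication by bar-invariant monomials in the $X_k$, $k\notin\ex$, extends the statement from ${\bf B}'_\ii$ to ${\bf B}_\ii$. The compatibility of $\tilde \eta_w$ with the defining assignment of Conjecture \ref{conj:isomorphism Uii}(c) can be checked on the initial cluster via the $q$-commutation relations among the $\Delta_{s_{i_1}\cdots s_{i_k}\omega_{i_k}}$ recorded in \cite[Theorem 10.1]{bz5}.

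The main obstacle is precisely the stability of ${\bf B}^*$ under $\tilde \eta_w$: classically this is encoded in the fact that $\eta_w$ permutes the extremal weight vectors in a controlled way, but quantum-mechanically one needs either a categorification (for instance via KLR algebras or Kashiwara's crystals) or a direct comparison of the action of $\tilde \eta_w$ on Lusztig's PBW bases for distinct reduced words of $w$, which is not known in general. In the doubled-Coxeter setting, however, this obstacle is entirely avoided: once part (b) is proved, part (a) reduces to the already established Corollary \ref{cor:twist of triangular basis}, so Conjecture \ref{conj:twist of basis} is fully resolved for $\ii=(\ii_0,\ii_0)$ the moment the triangularity argument above goes through.
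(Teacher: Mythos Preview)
This statement is a \emph{conjecture} in the paper, not a theorem; there is no proof in the paper to compare against. Your proposal should therefore be read as a proposed strategy, and the question is whether it would actually close the gap.

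Your reduction of part (a) in the doubled-Coxeter case to part (b) via Corollary \ref{cor:twist of triangular basis} is correct: once ${\bf B}_\ii={\bf B}(\Sigma_\ii)$ is known, twist-invariance is immediate. You are also right that for general reduced $\ii$ the essential obstruction is stability of the dual canonical basis under a quantum lift of the classical twist, and that this is genuinely open.

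The real gap is in your argument for part (b). You claim that Lusztig's PBW-triangularity of ${\bf B}^*$, transported through $\Psi_\ii$ and the dictionary of Theorem \ref{th:character=cluster character}, yields the specific triangularity required by \cite{bz6}. Two things go wrong here. First, the triangular basis of \cite{bz6} is characterized by unitriangularity with respect to the standard monomial basis ${\mathcal E}(\Sigma_\ii)=\{X^{\bfa}:\bfa\in\ZZ^m\}$ in the initial cluster variables $X_k=t^{\varphi_\ii^{-1}(\varepsilon_k)}$, together with a precise condition on the off-diagonal coefficients (they lie in $q^{-\half}\ZZ[q^{-\half}]$) and on the partial order on $\ZZ^m$. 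Lusztig's triangularity, by contrast, is with respect to PBW monomials in $U_+^*$; the passage from PBW monomials to cluster monomials involves the change of variables $\varphi_\ii$, and you have not checked that the resulting transition matrix has the required shape with respect to the \cite{bz6} order. The phrase ``PBW indexing \ldots\ matches the $\ex$-indexing \ldots\ via the map $V\mapsto\bfa_{|V|}$'' hides exactly this: that corollary concerns counting polynomials for flags, not a bijection of monomial bases respecting the relevant order.

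Second, and more seriously, Theorems \ref{th:character=cluster character} and \ref{th:cluster isomorphism}(b) concern only the elements $[V]^*$ for \emph{exceptional} $V$, which produce cluster variables. The dual canonical basis ${\bf B}^*$ is vastly larger: most $b\in{\bf B}^*$ are not of the form $[V]^*$ for exceptional $V$, so your dictionary does not apply to them, and you have no argument for the triangular expansion of $\Psi_\ii(b)$ for a generic $b$. Establishing part (b) would require showing that \emph{every} $\Psi_\ii(b)$ (after adjoining coefficient inverses) satisfies the \cite{bz6} triangularity, which is precisely the content of the conjecture and is not deducible from the cluster-variable case alone.
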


\section{Hall-Ringel algebras and proof of Theorem~\ref{th:Generalized Feigin homomorphism}}\label{sec:gen_Feigin}
In this section we recall some basic facts about Hall-Ringel algebras of finitary categories and present some results on generalizations of Feigin homomorphisms.

Let $\cC$ be a small finitary Abelian category of finite global dimension.  For an object $V\in \cC$ we will write $[V]$ for the isomorphism class of $V$ and write $|V|$ for the class of $V$ in the Grothendieck group $\cK(\cC)$.  Let $\cH(\cC)=\bigoplus\kk\cdot[V]$ be the free $\cK(\cC)$-graded $\kk$-vector space spanned by the isomorphism classes of objects of $\cC$ with the natural grading via class in $\cK(\cC)$.  For $U,V,W\in \cC$ define the (finite) set ${\mathcal F}_{U,W}^V$ by:
$${\mathcal F}_{U,W}^V=\{R\subset V\,|\, R\cong W,V/R\cong U\}\ .$$
\begin{proposition}[\cite{ringel2}] 
The assignment
$$[U][W]:=\langle|U|,|W|\rangle \sum_{[V]} |\mathcal F_{UW}^V|\cdot [V]$$
defines an associative multiplication on $\cH(\cC)$.

\end{proposition}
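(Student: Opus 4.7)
The plan is to follow Ringel's original approach: verify associativity by direct expansion and reduce everything to a counting identity on two-step filtrations.

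First I would expand both $([U_1][U_2])[U_3]$ and $[U_1]([U_2][U_3])$ using the definition. In each case the coefficient of $[V]$ splits as a scalar prefactor (built from the form $\langle\cdot,\cdot\rangle$) multiplied by a sum counting compatible filtrations. For the scalar prefactors, I would invoke bimultiplicativity of $\langle\cdot,\cdot\rangle$ on $\cK(\cC)$: whenever $0\to W\to V'\to U\to 0$ is short exact one has $|V'|=|U|+|W|$, and hence
\[
\langle |U_1|,|U_2|\rangle\cdot\langle |U_1|+|U_2|,|U_3|\rangle \;=\; \langle |U_1|,|U_2|\rangle\cdot\langle |U_1|,|U_3|\rangle\cdot\langle |U_2|,|U_3|\rangle,
\]
which coincides with the analogous expansion for the other association of the triple product. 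The scalar prefactors therefore agree, and associativity reduces to the purely combinatorial identity
\[
\sum_{[V']}|\cF_{U_1 U_2}^{V'}|\cdot|\cF_{V' U_3}^V| \;=\; \sum_{[V'']}|\cF_{U_2 U_3}^{V''}|\cdot|\cF_{U_1 V''}^V|
\]
for every $V,U_1,U_2,U_3\in\cC$.

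To prove this identity I would realize both sides as the cardinality of the single set
\[
\cT(V) \;=\; \{(W,R)\st 0\subset W\subset R\subset V,\ V/R\cong U_1,\ R/W\cong U_2,\ W\cong U_3\}.
\]
The key feature of the definition of $\cF_{UW}^V$ is that its elements are \emph{honest} subobjects of $V$ and not equivalence classes of short exact sequences, so stratification is genuinely compatible with composition of embeddings. Grouping the pairs $(W,R)\in\cT(V)$ by the isomorphism class of the middle term $R$ recovers the left-hand sum, while grouping them by the isomorphism class of the quotient $V/W$ recovers the right-hand sum. Finiteness of every count involved is guaranteed by the standing assumption that $\cC$ is small and finitary.

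The only conceptually subtle point is the verification that the Euler-Ringel twist is consistent, i.e., that the scalar prefactor really is symmetric in the order of association; this is exactly where the bimultiplicativity of $\langle\cdot,\cdot\rangle$ is used. Once that bookkeeping is dispatched, the remaining statement is a transparent bijective identity about iterated subobjects, and it is this feature of the construction that makes associativity insensitive to the specific choice of finitary $\cC$.
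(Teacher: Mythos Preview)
The paper does not supply a proof of this proposition; it is stated with a citation to Ringel and used as a black box. Your argument is precisely the standard one (essentially Ringel's): bimultiplicativity of the Euler form on $\cK(\cC)$ forces the scalar prefactors to agree, and the remaining combinatorial identity is proved by recognizing both sides as counting the set $\cT(V)$ of two-step filtrations with prescribed subquotients. This is correct.

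One cosmetic slip: you have the two groupings interchanged. Grouping $(W,R)\in\cT(V)$ by the isomorphism class $[R]$ of the middle term gives the \emph{right}-hand sum, since $R\cong V''$ is the subobject slot in $\cF_{U_1 V''}^V$ and then $W\subset R$ contributes to $\cF_{U_2 U_3}^{V''}$; grouping instead by $[V/W]$ gives the left-hand sum, since $V/W\cong V'$ is the quotient slot in $\cF_{V' U_3}^V$ and the image of $R$ in $V/W$ contributes to $\cF_{U_1 U_2}^{V'}$. This does not affect the argument.
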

The algebra $\cH(\cC)$ is known as the \emph{Hall-Ringel algebra}.  The following formula for iterated multiplication is well-known and follows easily from the definitions:
\begin{equation}
\label{eq:iter_mult}
[U_1]\cdots[U_m]=\sum_{[V]} \prod\limits_{k< \ell} \langle |U_k|,|U_\ell| \rangle \cdot |{\mathcal F}_{U_1,\ldots, U_m}^V|\cdot [V] \ ,
\end{equation}
for $U_1,\ldots,U_m,V\in \cC$, where
$${\mathcal F}_{U_1,\ldots,U_m}^V=\{0=V_m\subset V_{m-1}\subset \cdots \subset V_1\subset V_0=V\st V_{k-1}/V_k\cong U_k\}\ .$$  

We will consider $\cH(\cC)\otimes\cH(\cC)$ as an algebra with the twisted multiplication:
$$([U_1]\otimes [U_2])([V_1]\otimes[V_2])=\langle U_2,V_1\rangle \langle V_1,U_2\rangle  [U_1][V_1]\otimes[U_2][V_2]\ .$$

\begin{proposition}[\cite{green}]
\label{pr:green}
The map $\Delta\st \cH(\cC)\to\cH(\cC)\otimes\cH(\cC)$ given by
$$\Delta([V])=\sum_{[U],[W]} \langle|U|,|W|\rangle \cdot|{\mathcal F}_{UW}^V|\cdot \frac{|Aut(U)|\cdot |Aut(W)|}{|Aut(V)|}\cdot  [U]\otimes[W]$$
defines a coalgebra structure on $\cH(\cC)$.  Moreover, if $\cC$ is hereditary and cofinitary, then $\Delta$ is an algebra homomorphism $\cH(\cC)\to \cH(\cC)\otimes\cH(\cC)$.
\end{proposition}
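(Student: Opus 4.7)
The plan is to establish the two assertions separately; the cofinitary hypothesis enters only to guarantee that all sums below are finite, so that $\Delta$ lands in the ordinary tensor product $\cH(\cC)\otimes \cH(\cC)$ rather than a completion.

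First, to prove coassociativity I would rewrite $\Delta([V])$ using the Riedtmann-type formula
$$|\cF^V_{UW}| \;=\; \frac{|\Ext^1(U,W)_V|}{|\Hom(U,W)|}\cdot \frac{|\Aut(V)|}{|\Aut(U)|\,|\Aut(W)|},$$
where $\Ext^1(U,W)_V\subset \Ext^1(U,W)$ is the subset of extension classes with middle term isomorphic to $V$. After substitution, the automorphism factors cancel against those in the definition of $\Delta$, and applying either $(\Delta\otimes \mathrm{id})$ or $(\mathrm{id}\otimes \Delta)$ to $[V]$ yields a sum enumerating triples of short exact sequences assembled into a length-three filtration $0\subset V_2\subset V_1\subset V$. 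Coassociativity then amounts to the evident bijection between the two ways of decomposing such a filtration, and the bicharacter prefactors match by bimultiplicativity of $\langle\cdot,\cdot\rangle$ on $\cK(\cC)$. This step is purely formal and does not use hereditarity.

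The second assertion is Green's theorem. The plan is to expand both $\Delta([U][W])$ and $\Delta([U])\Delta([W])$, using the twisted product on $\cH(\cC)\otimes \cH(\cC)$, in the basis $[A]\otimes [B]$ and to match coefficients. The $\langle\cdot,\cdot\rangle$-prefactors agree on the nose by bilinearity in each argument, and after clearing the $|\Aut|$-factors the matching reduces to the combinatorial assertion that, for fixed $U,W,A,B$, the number of pairs consisting of a middle object $V$ together with subobjects $W',B'\subset V$ satisfying $W'\cong W$, $V/W'\cong U$, $B'\cong B$ and $V/B'\cong A$ equals the number (up to isomorphism) of $2\times 2$ diagrams whose rows and columns are short exact sequences with outer terms $(A,B)$ along the rows and $(U,W)$ along the columns.

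The heart of the proof, and the only place hereditarity is used essentially, is this bijective identity. Green's original argument counts both sides via $|\Hom|$, $|\Ext^1|$, and $|\Aut|$, exploiting the fact that in a hereditary category $\Ext^{\ge 2}$ vanishes; consequently the long $\Hom/\Ext^1$ sequence attached to any short exact sequence truncates to a six-term exact sequence whose alternating product of cardinalities equals $1$. Assembling this cancellation across the two rows (equivalently, the two columns) of a $2\times 2$ diagram, and bookkeeping the orbits of $\Aut(V)$ acting on pairs $(W',B')$, produces the required equality of counts. The main obstacle is precisely this double-counting, which I would invoke from \cite{green} directly rather than reproduce in full.
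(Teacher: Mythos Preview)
The paper does not prove this proposition at all; it is stated with a citation to \cite{green} and used as a black box. Your proposal is therefore not being compared to any argument in the paper---you are sketching the standard proof of Green's theorem, and your outline is correct: coassociativity via the Riedtmann--Peng identity and filtration counting, and the bialgebra axiom via Green's $2\times 2$ diagram count, with hereditarity entering exactly through the six-term $\Hom/\Ext^1$ sequence. Since you explicitly say you would invoke \cite{green} for the double-counting core rather than reproduce it, your proposal effectively matches what the paper does, namely defer to Green.
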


Note that if the objects of $\cC$ do not have finitely many subobjects it will be necessary to consider the codomain of $\Delta$ to be the completion $\cH(\cC)\hat{\bigotimes}\cH(\cC)$, see \cite{schiffmann} for more details.

For each simple object $S\in \cC$ define the exponential 
\begin{equation}
\label{eq:Es}
E_S:=\exp_{q}([S])=\sum\limits_{n=0}^\infty \frac{1}{(n)_q!}[S]^n\in\hat\cH(\cC)\ ,
\end{equation}
where $q=\langle S,S\rangle^2$ and  $(n)_q!=(1)_q\cdots (n)_q$ for $(k)_q=\frac{q^k-1}{q-1}$.  

Recall that for a given (complete) coalgebra $C$ with the coproduct $\Delta:C\to C\hat \bigotimes C$ a non-zero element $c\in C$ is called {\it grouplike} if $\Delta(c)=c\otimes c$. 

\begin{proposition}
\label{pr:Sgrouplike}
For each simple object $S$ with no self-extensions one has:

(a) $E_S$ is a grouplike element of $\hat\cH(\cC)$.

(b)  $\frac{1}{(n)_q!}[S]^n=\langle S,S\rangle^{\frac{n(n-1)}{2}} [S^{\oplus n}]$, where $q=|\End(S)|$.
 
\end{proposition}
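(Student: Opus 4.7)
My plan is to handle (b) first by a direct computation in the Hall--Ringel algebra using the iterated multiplication formula \eqref{eq:iter_mult}, and then prove (a) by applying $\Delta$ to $E_S$ and invoking a Gauss-type $q$-binomial identity. Although the two parts are logically independent, doing (b) first makes transparent the structure of subobjects of $S^{\oplus n}$, which one also needs to organize the computation of $\Delta([S^{\oplus n}])$.

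For (b), I apply \eqref{eq:iter_mult} to the product $[S]^n$ with all factors equal to $[S]$. Since $S$ has no self-extensions, any iterated extension with all quotients isomorphic to $S$ must split, so only $V \cong S^{\oplus n}$ contributes to the sum. The flags counted by $\cF^{S^{\oplus n}}_{S,\ldots,S}$ are exactly the complete flags of $D$-submodules of the free rank-$n$ module $D^n$, where $D = \End(S)$ is the endomorphism division ring; a standard Gaussian-elimination argument over this (possibly noncommutative) division ring gives $|\cF^{S^{\oplus n}}_{S,\ldots,S}| = (n)_q!$ with $q := |D|$. The bilinear prefactor $\prod_{k<\ell}\langle S,S\rangle$ collects to $\langle S,S\rangle^{n(n-1)/2}$, and dividing by $(n)_q!$ produces the claim. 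The two appearances of $q$ agree because in the hereditary setting, $\Ext^i(S,S) = 0$ for $i \ge 1$ forces $\langle S,S\rangle = |\End(S)|^{1/2}$.

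For (a), I first check that $[S]$ is primitive: in the formula of Proposition~\ref{pr:green}, the only nonzero contributions to $\Delta([S])$ come from $(U,W) = (S,0)$ and $(0,S)$, giving $\Delta([S]) = [S] \otimes 1 + 1 \otimes [S]$. Setting $x := [S] \otimes 1$ and $y := 1 \otimes [S]$ in the twisted tensor product, the definition of the twist yields $yx = q\,xy$ with $q = \langle S,S\rangle^2$. Since $\Delta$ is an algebra homomorphism by Proposition~\ref{pr:green}, I get $\Delta(E_S) = \exp_q(x+y)$, and the Gauss $q$-binomial identity (valid for variables satisfying $yx = q\,xy$) yields $\exp_q(x+y) = \exp_q(x)\exp_q(y) = (E_S \otimes 1)(1 \otimes E_S) = E_S \otimes E_S$, which is the desired grouplike identity.

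The main technical care needed is bookkeeping: verifying the Gaussian flag count $(n)_q!$ over a possibly noncommutative division ring, and correctly tracking all Euler-form prefactors through the iterated multiplication and the twist. Extending $\Delta$ to $\hat\cH(\cC)$ term-by-term on $E_S$ is unproblematic because each $[S^{\oplus n}]$ has finitely many subobjects. As an independent sanity check, one can evaluate $\Delta([S^{\oplus n}])$ directly from Proposition~\ref{pr:green} and obtain
$$\Delta([S^{\oplus n}]) = \sum_{k+l=n}\langle S,S\rangle^{-kl}[S^{\oplus l}] \otimes [S^{\oplus k}],$$
which, combined with (b), recovers $\Delta(E_S) = E_S \otimes E_S$ by a direct rearrangement.
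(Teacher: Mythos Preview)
Your proof is correct and follows essentially the same approach as the paper. For (a), the paper simply notes that $[S]$ is primitive and invokes Lemma~\ref{le:primitive} (the $q$-exponential of a primitive is grouplike), which is precisely the Gauss $q$-binomial identity you spell out; for (b), the paper argues by induction on $n$ via the single product $[S^{\oplus n-1}][S]$, whereas you apply the iterated multiplication formula \eqref{eq:iter_mult} in one shot---these amount to the same flag count, just packaged differently.
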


\begin{proof} To prove (a) note that $S$ is primitive, i.e., $\Delta([S])=[S]\otimes 1+1\otimes [S]$, hence the assertion follows from Lemma~\ref{le:primitive}.
 
For (b) we proceed by induction in $n$.  Indeed, if $n=0$, we have nothing to prove. Let $n\ge 1$. Then: 
$$[S^{\oplus n-1}][S]=\langle S^{\oplus n-1},S\rangle F_{S^{\oplus {n-1}},S}^{S^{\oplus n}}[S^{\oplus n}]=\langle S,S\rangle^{n-1}(q^{n-1}+\cdots+q+1)[S^{\oplus n}]\ .$$
Using the inductive hypothesis, this is equivalent to:
$$\frac{1}{(n-1)_q!}\langle S,S\rangle^{-\frac{(n-1)(n-2)}{2}}[S]^{n-1}\cdot [S]=\langle S,S\rangle^{n-1}(n)_q[S^{\oplus n}] \ .$$
This proves (b).  The proposition is proved. \end{proof}

We are now ready to prove Theorem~\ref{th:Generalized Feigin homomorphism}.  Assume in addition that $\cC$ is hereditary and cofinitary.  The algebra $\UU:=\cH(\cC)$ is naturally graded by $\Gamma:=\cK(\cC)$ and according to Proposition \ref{pr:green}, $\UU$ is a braided bialgebra in $\cC_\chi$ (see Section \ref{sec:appendix}), where $\chi:\Gamma\times \Gamma\to \kk^\times$ is the bicharacter given by the symmetrized Euler-Ringel form:
\begin{equation}
\label{eq:chi ringel}
\chi(|V|,|W|)=\langle V,W\rangle\cdot \langle W,V\rangle \ .
\end{equation}

Let $\bfS=\{S_i\st  i\in I\}$ be the set of all (up to isomorphism) simple objects of $\cC$.  Let $\ii=(i_1,\ldots, i_m)\in I^m$ be such that each $S_{i_k}$ has no self-extensions.  We set $\cP:=\cL_\ii$, where $\cL_\ii$ is the quantum torus as in Section~\ref{sec:main}.  Write $\bfE=(E_{i_1},\ldots,E_{i_m})$ for the exponentials $E_i:=E_{S_i}$ (see \eqref{eq:Es}). By Proposition \ref{pr:Sgrouplike}(a), each member of the family $\bfE$ is a grouplike element in $\UU\hat \otimes \UU$.  It is easy to see that ${\bf E}$  is ${\mathcal P}$-adapted (in terminology of Section~\ref{sec:appendix}) via the homomorphisms $\tau_k\st \ZZ\cdot|S_{i_k}|\to {\mathcal P}$ defined by $\tau_k(r|S_{i_k}|)=t_k^r$, $k=1,\ldots,m$, $r\in \ZZ$.  Thus all hypotheses of Theorem \ref{th:abstract_Feigin} are satisfied and the assignment
\begin{equation}
\label{eq:Feigin explicit via pairing}
x\mapsto \sum\limits_{\bfa\in\ZZ_{\ge0}^m} x\left([S_{i_1}]^{(a_1)}\cdots [S_{i_m}]^{(a_m)}\right)t_1^{a_1}\cdots t_m^{a_m}
\end{equation}
for $x\in\cH^*(\cC)$
is an algebra homomorphism $\Psi_{\cC,\ii}:\cH^*(\cC)\to {\mathcal L}_\ii$, where we abbreviate $[S]^{(n)}:=\frac{1}{(n)_q!} [S]^n$, and $q=|\End(S)|$. It remains to show that 
\begin{equation}
\label{eq:Psi=XVi}
\Psi_{\cC,\ii}([V]^*)=X_{V,\ii}
\end{equation}
for all objects $V$ of $\cC$.
 
To verify \eqref{eq:Psi=XVi}, we need the following fact.
\begin{lemma}
\label{le:iter_mult}
For $\bfa=(a_1,\ldots,a_m)\in\ZZ_{\ge0}^m$ one has:
$$[S_{i_1}]^{(a_1)}\cdots[S_{i_m}]^{(a_m)}=\sum\limits_{[V]} \left(\prod_{k=1}^m\langle S_{i_k},S_{i_k}\rangle^{\frac{a_k(a_k-1)}{2}}\right)\cdot \left(\prod\limits_{k<\ell}\langle S_{i_k},S_{i_\ell}\rangle^{a_ka_\ell}\right)\cdot  |\cF_{\ii,\bfa}(V)|\cdot [V]\ ,$$
where $ \cF_{\ii,\bfa}(V)$ is defined in \eqref{eq:flags}.
\end{lemma}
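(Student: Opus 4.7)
The plan is to reduce the claim to the iterated multiplication formula \eqref{eq:iter_mult} applied to the objects $U_k = S_{i_k}^{\oplus a_k}$, after converting divided powers of simples into isomorphism classes of semisimples via Proposition \ref{pr:Sgrouplike}(b).

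First, I would invoke Proposition \ref{pr:Sgrouplike}(b), which, under the standing assumption that each $S_{i_k}$ has no self-extensions, gives
$$[S_{i_k}]^{(a_k)} \;=\; \frac{1}{(a_k)_{q_k}!}[S_{i_k}]^{a_k} \;=\; \langle S_{i_k},S_{i_k}\rangle^{\frac{a_k(a_k-1)}{2}}\,[S_{i_k}^{\oplus a_k}],$$
where $q_k = |\End(S_{i_k})|$. Taking the product over $k=1,\ldots,m$ already factors out the first prefactor $\prod_{k=1}^m \langle S_{i_k},S_{i_k}\rangle^{a_k(a_k-1)/2}$ in the statement of the lemma, so the remaining task is to expand $[S_{i_1}^{\oplus a_1}]\cdots [S_{i_m}^{\oplus a_m}]$ in the Hall--Ringel algebra.

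Next, I would apply formula \eqref{eq:iter_mult} directly with $U_k = S_{i_k}^{\oplus a_k}$, obtaining
$$[S_{i_1}^{\oplus a_1}]\cdots [S_{i_m}^{\oplus a_m}] \;=\; \sum_{[V]} \prod_{k<\ell} \langle a_k|S_{i_k}|,\,a_\ell|S_{i_\ell}|\rangle \cdot \bigl|\cF_{S_{i_1}^{\oplus a_1},\ldots,S_{i_m}^{\oplus a_m}}^{V}\bigr|\cdot [V].$$
Since $\langle \cdot,\cdot\rangle$ descends to a bicharacter on $\cK(\cC)$, bilinearity yields $\langle a_k|S_{i_k}|,a_\ell|S_{i_\ell}|\rangle = \langle S_{i_k},S_{i_\ell}\rangle^{a_k a_\ell}$, producing the second prefactor in the target expression.

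Finally, I would match the flag sets: comparing the definition of $\cF_{U_1,\ldots,U_m}^V$ from \eqref{eq:iter_mult} with the definition \eqref{eq:flags} of $\cF_{\ii,\bfa}(V)$, both are the sets of chains $0=V_m\subset V_{m-1}\subset\cdots\subset V_0=V$ whose successive quotients are isomorphic to $S_{i_k}^{\oplus a_k}$; hence $\cF_{S_{i_1}^{\oplus a_1},\ldots,S_{i_m}^{\oplus a_m}}^V = \cF_{\ii,\bfa}(V)$. Putting the three steps together delivers the claimed identity. The argument is essentially bookkeeping, and the only point requiring care is making sure the scalar prefactors of Proposition \ref{pr:Sgrouplike}(b) and \eqref{eq:iter_mult} assemble correctly under the bilinearity of the Euler--Ringel form; I do not anticipate any genuine obstacle.
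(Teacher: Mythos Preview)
Your proposal is correct and follows essentially the same approach as the paper: apply Proposition~\ref{pr:Sgrouplike}(b) to convert each divided power into $\langle S_{i_k},S_{i_k}\rangle^{a_k(a_k-1)/2}[S_{i_k}^{\oplus a_k}]$, then invoke~\eqref{eq:iter_mult}, use bilinearity of the Euler--Ringel form, and identify the flag varieties. The paper's proof is the same computation in slightly more compressed form.
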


\begin{proof} 
Indeed, by Proposition \ref{pr:Sgrouplike}(b) and \eqref{eq:iter_mult}, we obtain:
$$[S_{i_1}]^{(a_1)}\cdots[S_{i_m}]^{(a_m)}=\left(\prod_{k=1}^m\langle S_{i_k},S_{i_k}\rangle^{\frac{a_k(a_k-1)}{2}}\right)[S_{i_1}^{\oplus a_1}]\cdots [S_{i_m}^{\oplus a_m}]$$
$$=\sum_{[V]} \left(\prod_{k=1}^m\langle S_{i_k},S_{i_k}\rangle^{\frac{a_k(a_k-1)}{2}}\right)\cdot\left(\prod\limits_{k< \ell} \langle S_{i_k}^{\oplus a_k},S_{i_\ell}^{\oplus a_\ell} \rangle\right)\cdot  \left|\cF_{S_{i_1}^{\oplus a_1},\ldots,S_{i_m}^{\oplus a_m}}^V\right|\cdot[V]\ .$$
This proves the lemma because ${\mathcal F}_{S_{i_1}^{\oplus a_1},\ldots,S_{i_m}^{\oplus a_m}}^V=\cF_{\ii,\bfa}(V)$ and $\langle S_{i_k}^{\oplus a_k},S_{i_\ell}^{\oplus a_\ell} \rangle=\langle S_{i_k},S_{i_\ell}\rangle^{a_ka_\ell}$.
\end{proof}
 
Using definition \eqref{eq:V*} of $[V]^*$ and Lemma \ref{le:iter_mult}, we compute:
$$\Psi_{\cC,\ii}([V]^*)=\langle V,V\rangle^{-\half}f(|V|) \Psi_{\cC,\ii}(\delta_{[V]})=\langle V,V\rangle^{-\half}f(|V|) \sum\limits_{\bfa\in\ZZ_{\ge0}^m} \delta_{[V]}\left([S_{i_1}]^{(a_1)}\cdots [S_{i_m}]^{(a_m)}\right)\cdot t_1^{a_1}\cdots t_m^{a_m}$$
$$=\langle V,V\rangle^{-\half}f(|V|) \sum\limits_{\bfa\in\ZZ_{\ge0}^m}  \left(\prod_{k=1}^m\langle S_{i_k},S_{i_k}\rangle^{\frac{a_k(a_k-1)}{2}}\right)\cdot \left(\prod\limits_{k<\ell}\langle S_{i_k},S_{i_\ell}\rangle^{a_ka_\ell}\right)\cdot  |\cF_{\ii,\bfa}(V)|\cdot t_1^{a_1}\cdots t_m^{a_m}\ .$$
Furthermore, note that  $\cF_{\ii,\bfa}(V)\ne \emptyset$ implies that $|V|=\sum\limits_{k=1}^m a_k\cdot |S_{i_k}|$, hence 
$$f(|V|)=\prod_{k=1}^m f(|S_{i_k}|)^{a_k}=\prod_{k=1}^m  \langle S_{i_k},S_{i_k}\rangle^{\half a_k}\ .$$
Also, 
$$\langle V,V\rangle^{-\half}=\prod\limits_{k,\ell}\langle S_{i_k},S_{i_\ell}\rangle^{-\half a_ka_\ell}=\prod\limits_{k=1}^m\langle S_{i_k}, S_{i_k}\rangle^{-\half a_k^2}\prod\limits_{k<\ell}\langle S_{i_k}, S_{i_\ell}\rangle^{-\half a_ka_\ell}\prod\limits_{k<\ell}\langle S_{i_\ell}, S_{i_k}\rangle^{-\half a_ka_\ell}.$$
Therefore, 
$$\Psi_{\cC,\ii}([V]^*)= \sum\limits_{\bfa\in\ZZ_{\ge0}^m}\langle V,V\rangle^{-\half}f(|V|)  \left(\prod_{k=1}^m\langle S_{i_k},S_{i_k}\rangle^{\frac{a_k(a_k-1)}{2}}\right)\cdot \left(\prod\limits_{k<\ell}\langle S_{i_k},S_{i_\ell}\rangle^{a_ka_\ell}\right)\cdot  |\cF_{\ii,\bfa}(V)|\cdot t_1^{a_1}\cdots t_m^{a_m}$$
$$=\sum\limits_{\bfa\in\ZZ_{\ge0}^m}  \prod\limits_{1\le k<\ell\le m}\left(\frac{\langle S_{i_k},S_{i_\ell}\rangle}{\langle S_{i_\ell},S_{i_k}\rangle}\right)^{\half a_ka_\ell}\cdot  |\cF_{\ii,\bfa}(V)|\cdot t_1^{a_1}\cdots t_m^{a_m}=X_{V,\ii} \ .$$
This verifies \eqref{eq:Psi=XVi} and thus finishes the proof of Theorem \ref{th:Generalized Feigin homomorphism}. \endproof

We conclude the section with a  formula for $\Psi_{\cC,\ii}$ in terms of   $\cH(\cC)$-actions on $\cH^*(\cC)$. We need some notation.

Given a finitary Abelian category $\cC$, for  each $U\in \cC$ define linear maps $\partial_U,\partial_U^{op}:\cH^*(\cC)\to \cH^*(\cC)$ 
(in the notation \eqref{eq:V*}) by:
\begin{equation}
\label{eq:partial derivative}
\partial_U(\delta_{[V]})=\sum_{[W]} \langle W,U\rangle  |\cF_{W,U}^V|\cdot  \delta_{[W]},~\partial_U^{op}(\delta_{[V]})=\sum_{[W]} \langle U,W\rangle |\cF_{U,W}^V|\cdot  \delta_{[W]} \ .
\end{equation}

The following fact is obvious. 

\begin{lemma} For any $U,V\in \cC$, $x\in \cH^*(\cC)$ one has:
$$\partial_U(x)([W])=x([W][U]),~\partial_U^{op}(x)([W])=x([U][W])$$
hence $[U]\otimes x\mapsto \partial_U(x)$ and  $[U]\otimes x\mapsto \partial^{op}_U(x)$ are respectively left and right  $\cH(\cC)$-actions on $\cH^*(\cC)$.
\end{lemma}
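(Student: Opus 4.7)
The plan is to verify the two displayed identities by direct computation on the basis $\{\delta_{[V']}\}$ of $\cH^*(\cC)$ and then derive the action property from associativity of the Hall-Ringel multiplication. Since both sides of each identity are $\kk$-linear in $x$, it suffices to prove them for $x=\delta_{[V']}$ for every object $V'$.

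For the first identity, from the definition of multiplication in $\cH(\cC)$ one computes
$$\delta_{[V']}([W][U])=\delta_{[V']}\Bigl(\langle |W|,|U|\rangle \sum_{[V]}|\cF_{W,U}^V|\cdot [V]\Bigr)=\langle W,U\rangle\cdot |\cF_{W,U}^{V'}|,$$
using that $\langle\cdot,\cdot\rangle$ descends to $\cK(\cC)$. On the other hand, from \eqref{eq:partial derivative},
$$\partial_U(\delta_{[V']})([W])=\sum_{[W']}\langle W',U\rangle\,|\cF_{W',U}^{V'}|\,\delta_{[W']}([W])=\langle W,U\rangle\cdot |\cF_{W,U}^{V'}|,$$
and these two expressions match. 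The identity for $\partial_U^{op}$ is verified in exactly the same way with the roles of the two arguments of $\cF$ swapped.

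Once the two identities are established, extending $[U]\mapsto \partial_U$ and $[U]\mapsto \partial_U^{op}$ $\kk$-linearly yields operators $\partial_a, \partial_a^{op}$ on $\cH^*(\cC)$ for every $a\in\cH(\cC)$, and the identities promote to $\partial_a(x)([W])=x([W]a)$ and $\partial_a^{op}(x)([W])=x(a[W])$. The left action property
$$\partial_{ab}(x)([W])=x([W]\cdot ab)=x(([W]a)\cdot b)=\partial_b(x)([W]a)=\partial_a(\partial_b(x))([W])$$
follows from associativity of multiplication in $\cH(\cC)$, hence $\partial_{ab}=\partial_a\circ\partial_b$; the analogous manipulation gives $\partial^{op}_{ab}=\partial^{op}_b\circ\partial^{op}_a$, so $\partial^{op}$ is a right action. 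No genuine obstacle arises — the whole argument is a bookkeeping exercise once one is careful that $\langle\cdot,\cdot\rangle$ depends only on Grothendieck classes and that $\cF_{W,U}^{V'}$ is the correct set appearing both in the multiplication formula and in the definition of $\partial_U$.
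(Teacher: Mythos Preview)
Your proof is correct and is exactly the direct verification the paper has in mind; indeed, the paper labels this lemma ``obvious'' and gives no proof, so your basis-by-basis check together with the associativity argument is precisely what is implicitly intended.
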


For each $U\in \cC$ define a linear transformation $K_U:\cH^*(\cC)\to \cH^*(\cC)$ by 
$$K_U(\delta_{[V]})=\chi(|U|,|V|)^{-1}\cdot \delta_{[V]}$$ 
for any $V\in \cC$, where $\chi(\cdot,\cdot)$ is as in \eqref{eq:chi ringel}.

Clearly, $K_U=K_{U'}\circ K_{U/U'}$ for any sub-object $U'$ of $U$. It is also easy to see that 
\begin{equation}
\label{eq:K_U E_U commutation}
\partial_UK_{U'}=\chi(|U|,|U'|)^{-1}K_{U'} \partial_U,~ \partial_U^{op}K_{U'} =\chi(|U|,|U'|)^{-1}K_{U'} \partial_U^{op}
\end{equation}

Then denote 
\begin{equation}
\label{eq:partial underlined}
\underline \partial_U:=K_U^\half \partial_U,~\underline \partial_U^{op}:=K_U^\half \partial_U^{op}
\end{equation}
for all $U\in\cC$.

The following is an immediate corollary of Lemma \ref{le:primitive actions}.

\begin{lemma}
\label{le:primitive actions proofs} Let $\cC$ be a hereditary cofinitary category. Then for any simple object $S\in \cC$ and any  $x,y\in \cH^*(\cC)$  one has: 
$$\partial_S(xy)=  \partial_S(x)K_S^{-1}(y)+  x \partial_S(y),~ \partial_S^{op}(xy)=\partial_S^{op}(x)y+ K_S^{-1}(x)\partial_S^{op}(y)\ ,$$
$$\underline \partial_S(xy)= \underline \partial_S(x)K_S^{-\half}(y)+ K_S^\half(x)\underline \partial_S(y),~\underline \partial_S^{op}(xy)=\underline \partial_S^{op}(x)K_S^\half(y)+ K_S^{-\half}(x)\underline \partial_U^{op}(y)\ .$$
\end{lemma}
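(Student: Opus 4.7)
The plan is to deduce all four Leibniz-type identities from the duality between multiplication on $\cH^*(\cC)$ and comultiplication on $\cH(\cC)$, combined with the primitivity of the class of a simple object. Since $\cC$ is hereditary and cofinitary, Proposition~\ref{pr:green} asserts that $\Delta\st \cH(\cC)\to \cH(\cC)\otimes \cH(\cC)$ is an algebra homomorphism, where the codomain carries the braided (twisted) product
$$([U_1]\otimes[U_2])\cdot ([V_1]\otimes[V_2])=\chi(|U_2|,|V_1|)\cdot [U_1][V_1]\otimes [U_2][V_2],$$
with $\chi$ the symmetrized Euler--Ringel bicharacter \eqref{eq:chi ringel}. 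By definition of the dual algebra structure, $(xy)([W])=(x\otimes y)(\Delta([W]))$ for $x,y\in\cH^*(\cC)$. Moreover, since $0$ and $S$ are the only subobjects of a simple $S$, the formula of Proposition~\ref{pr:green} collapses to $\Delta([S])=[S]\otimes 1+1\otimes [S]$, so $[S]$ is a primitive element.

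With these inputs, I compute the first identity directly:
$$\partial_S(xy)([W])=(xy)([W][S])=(x\otimes y)\bigl(\Delta([W])\,\Delta([S])\bigr).$$
Writing $\Delta([W])=\sum [W_1]\otimes [W_2]$ in Sweedler notation and multiplying on the right by $[S]\otimes 1+1\otimes [S]$ in the twisted product gives
$$\sum \chi(|W_2|,|S|)\,[W_1][S]\otimes [W_2]\;+\;\sum [W_1]\otimes [W_2][S].$$
Pairing against $x\otimes y$ and observing that $\chi(|W_2|,|S|)=\chi(|S|,|W_2|)$ is precisely the scalar by which $K_S^{-1}$ acts on the component of $y$ supported in degree $|W_2|$, one obtains $\partial_S(xy)=\partial_S(x)K_S^{-1}(y)+x\partial_S(y)$. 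The identity for $\partial_S^{op}$ is the mirror computation: from $(xy)([S][W])=(x\otimes y)(\Delta([S])\Delta([W]))$ the twist factor $\chi(|S|,|W_1|)$ now lands on the \emph{left} tensor factor, yielding $K_S^{-1}(x)$ and hence $\partial_S^{op}(xy)=\partial_S^{op}(x)y+K_S^{-1}(x)\partial_S^{op}(y)$.

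Finally, the underlined rules follow by invoking that the multiplication on $\cH^*(\cC)$ preserves the $\cK(\cC)$-grading (as the dual of the graded coproduct on $\cH(\cC)$), so each power of $K_S$ is a graded algebra automorphism of $\cH^*(\cC)$: in particular $K_S^\half(ab)=K_S^\half(a)K_S^\half(b)$. Applying $K_S^\half$ to the two identities already proved and redistributing the factors $K_S^{\pm\half}$ between the differentiated term (which becomes $\underline\partial_S$ or $\underline\partial_S^{op}$) and the remaining factor, using \eqref{eq:K_U E_U commutation} to commute $K_S^{\pm\half}$ past $\partial_S$ where needed, yields the two underlined formulas. The only real subtlety in the whole argument is keeping the twist $\chi(|W_2|,|S|)$ in the braided tensor product aligned with the definition of $K_S$ as the scalar $\chi(|S|,\gamma)^{-1}$ on the degree-$\gamma$ component; once that identification is made, all four identities fall out in parallel, which is why the lemma is truly immediate from the general braided-bialgebra statement in Lemma~\ref{le:primitive actions}.
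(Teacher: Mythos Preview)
Your proof is correct and follows essentially the same route as the paper: both arguments rest on the primitivity of $[S]$ together with the braided-bialgebra structure of $\cH(\cC)$, which is precisely the content of Lemma~\ref{le:primitive actions} that the paper cites as the entire proof. You have simply unpacked that citation---computing $(x\otimes y)(\Delta([W])\Delta([S]))$ explicitly and identifying the braiding factor $\chi(|S|,\cdot)$ with the action of $K_S^{-1}$---and then added the short multiplicativity-of-$K_S^{1/2}$ step to pass to the underlined operators, which the paper leaves implicit. (The appeal to \eqref{eq:K_U E_U commutation} at the end is harmless but unnecessary: once you use that $K_S^{1/2}$ is a graded algebra automorphism, the underlined identities follow by direct substitution without any further commutation.)
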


Using this we can refine \eqref{eq:abstract_Feigin via left action}, \eqref{eq:abstract_Feigin via right action} as follows.
\begin{proposition}
\label{pr:primitive actions proofs} In the assumptions of Theorem \ref{th:Generalized Feigin homomorphism},  
for any sequence $\ii=(i_1,\ldots,i_m)\in I^m$ and any homogeneous $x\in  \cH^*(\cC)$ of degree $\gamma$ one has:
\begin{equation}
\label{eq:concrete_Feigin via left action}
\Psi_{\cC,\ii}(x)=\sum \underline \partial_{S_{i_1}}^{[a_1]}\cdots \underline \partial_{S_{i_m}}^{[a_m]}(x)\cdot t^{\bf a}=
\sum (\underline \partial^{op}_{S_{i_m}})^{[a_m]}\cdots  (\underline \partial^{op}_{S_{i_1}})^{[a_1]}(x) \cdot t^{\bf a}\ ,
\end{equation}
where the summation is over all ${\bf a}=(a_1,\ldots,\gamma_m)\in \ZZ_{\ge 0}^m$ such that 
$a_1|S_{i_1}|+\cdots+a_m|S_{i_m}|=\gamma$.
(where we abbreviated $X_i^{[\ell]}:=|End(S_i)|^{\frac{\ell(\ell-1)}{4}}X_i^{(\ell)}$).
\end{proposition}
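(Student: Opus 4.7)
The plan is to begin with formula \eqref{eq:Feigin explicit via pairing}, which already writes $\Psi_{\cC,\ii}(x)=\sum_{\bfa\in\ZZ_{\ge 0}^m} x\bigl([S_{i_1}]^{(a_1)}\cdots [S_{i_m}]^{(a_m)}\bigr)\,t^{\bfa}$, and to reinterpret each scalar coefficient $x\bigl([S_{i_1}]^{(a_1)}\cdots [S_{i_m}]^{(a_m)}\bigr)$ as an iterated application of $\underline\partial_{S_{i_k}}^{[a_k]}$ (or its opposite $(\underline\partial_{S_{i_k}}^{op})^{[a_k]}$) to $x$, read off in the degree-zero component of $\cH^*(\cC)$ and identified with a scalar via $\delta_{[0]}\leftrightarrow 1$.

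For the first step I use the identities $\partial_U(y)([W])=y([W][U])$ and $\partial_U^{op}(y)([W])=y([U][W])$ stated just before the proposition: together they imply that $[U]\mapsto\partial_U$ extends to an algebra homomorphism $\cH(\cC)\to\End(\cH^*(\cC))$ and that the divided power $\partial_{S_i}^{(a)}:=\partial_{S_i}^a/(a)_{q_i}!$ is dual to $[S_i]^{(a)}$. Iterating and evaluating at $1=\delta_{[0]}$ gives $x\bigl([S_{i_1}]^{(a_1)}\cdots [S_{i_m}]^{(a_m)}\bigr)=\partial_{S_{i_1}}^{(a_1)}\cdots \partial_{S_{i_m}}^{(a_m)}(x)$, where the right-hand side is nonzero only when $\sum a_k|S_{i_k}|$ equals the degree $\gamma$ of $x$. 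The analogous identity for $\partial^{op}$, being an \emph{anti}-homomorphism, reverses the order of the composition, which accounts for the reversed sequence $m,m-1,\ldots,1$ appearing in the second formula of the proposition.

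The second step passes from $\partial$ to $\underline\partial=K_U^{1/2}\partial_U$ using the commutation relation \eqref{eq:K_U E_U commutation} and its square root, which permit moving every $K^{1/2}$ to the outside of the composition. Since the final output is homogeneous of degree zero whenever nonzero, each accumulated $K^{1/2}$ acts as a scalar determined by the bicharacter $\chi$ from \eqref{eq:chi ringel}. A direct accounting then shows that these $\chi$-scalars, together with the Gaussian factor $|\End(S_i)|^{\frac{a(a-1)}{4}}$ built into $\partial_i^{[a]}$ and compensating the identity $\frac{1}{(a)_{q_i}!}[S_i]^a=\langle S_i,S_i\rangle^{\frac{a(a-1)}{2}}[S_i^{\oplus a}]$ from Proposition \ref{pr:Sgrouplike}(b), reassemble into exactly the prefactor $\prod_{k<\ell}\bigl(\frac{\langle S_{i_k},S_{i_\ell}\rangle}{\langle S_{i_\ell},S_{i_k}\rangle}\bigr)^{\half a_k a_\ell}$ appearing in \eqref{eq:Feigin character}.

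The only real obstacle is this bookkeeping of $K^{1/2}$-factors, divided-power normalizations and braiding scalars; no new conceptual input is required, which is precisely why the text advertises the proposition as an immediate corollary of Lemma \ref{le:primitive actions proofs}.
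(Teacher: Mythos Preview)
Your strategy is the paper's: rewrite the pairing $x\bigl([S_{i_1}]^{(a_1)}\cdots [S_{i_m}]^{(a_m)}\bigr)$ as $\partial_{S_{i_1}}^{(a_1)}\cdots\partial_{S_{i_m}}^{(a_m)}(x)$ (and similarly with $\partial^{op}$ in reversed order), then commute the $K^{1/2}$ factors built into $\underline\partial$ to the outside and observe they act trivially on the resulting degree-zero element. But your bookkeeping has gone astray in two linked places, and as written the computation would not close.

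First, formula \eqref{eq:Feigin explicit via pairing} has the \emph{ordered} monomial $t_1^{a_1}\cdots t_m^{a_m}$, not the bar-invariant $t^{\bfa}$; these differ by the factor $\prod_{k<\ell}\chi(|S_{i_k}|,|S_{i_\ell}|)^{a_ka_\ell/2}$. Second, the accumulated scalar from moving all $K^{1/2}$'s to the left is $\prod_{k<\ell}\chi(|S_{i_k}|,|S_{i_\ell}|)^{-a_ka_\ell/2}$, a power of the \emph{product} $\langle S_{i_k},S_{i_\ell}\rangle\langle S_{i_\ell},S_{i_k}\rangle$ --- not of the \emph{ratio} that appears as the prefactor in \eqref{eq:Feigin character}. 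These two factors are exactly inverse, and the proof finishes simply by observing
\[
\partial_{S_{i_1}}^{(a_1)}\cdots\partial_{S_{i_m}}^{(a_m)}(x)\cdot t_1^{a_1}\cdots t_m^{a_m}
=\underline\partial_{S_{i_1}}^{[a_1]}\cdots\underline\partial_{S_{i_m}}^{[a_m]}(x)\cdot t^{\bfa}.
\]
The prefactor in \eqref{eq:Feigin character} plays no role in this argument, and Proposition~\ref{pr:Sgrouplike}(b) is a red herring: the Gaussian factor $|\End(S_i)|^{a(a-1)/4}$ in the $[\cdot]$-normalization is there to cancel the \emph{within-block} scalar $\chi(|S_i|,|S_i|)^{-a(a-1)/4}$ coming from $(K_{S_i}^{1/2}\partial_{S_i})^a=\chi(|S_i|,|S_i|)^{-a(a-1)/4}K_{S_i}^{a/2}\partial_{S_i}^a$, not anything involving $[S_i^{\oplus a}]$.
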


\begin{proof} Indeed, taking into account that 
$$(K_U^\half \partial_U)^a=\chi(|U|,|U|)^{-\frac{a(a-1)}{4}}K_U^\frac{a}{2}\partial_U^a,~(K_U^\half \partial^{op}_U)^a=\chi(|U|,|U|)^{-\frac{a(a-1)}{4}}K_U^\frac{a}{2}(\partial_U^{op})^a$$ 
for all $U\in \cC$, $a\ge 0$, we obtain after repeatedly applying \eqref{eq:K_U E_U commutation}:
$$\underline \partial_{S_{i_1}}^{[a_1]}\cdots \underline \partial_{S_{i_m}}^{[a_m]}
=K_{S_{i_1}}^{\frac{a_1}{2}}\partial_{S_{i_1}}^{(a_1)}\cdots K_{S_{i_m}}^{\frac{a_m}{2}}\partial_{S_{i_m}}^{(a_m)}=
\chi\cdot K_U\partial_{S_{i_1}}^{(a_1)}\cdots \partial_{S_{i_m}}^{(a_m)}\ ,$$
$$(\underline \partial_{S_{i_m}}^{op})^{[a_m]}\cdots (\underline \partial_{S_{i_1}}^{op})^{[a_1]}
=K_{S_{i_m}}^{\frac{a_m}{2}}(\partial_{S_{i_m}}^{op})^{(a_m)}\cdots K_{S_{i_1}}^{\frac{a_1}{2}}(\partial_{S_{i_1}}^{op})^{(a_1)}=
\chi\cdot K_U(\partial_{S_{i_m}}^{op})^{(a_m)}\cdots (\partial_{S_{i_1}}^{op})^{(a_1)}\ ,$$
where $U=a_1S_{i_1}\oplus \cdots \oplus a_mS_{i_m}$ and $\chi=\prod_{1\le k<\ell\le m} \chi(|S_{i_k}|,|S_{i_\ell}|)^{\frac{-a_ka_\ell}{2}}$. Therefore, 
$$\underline \partial_{S_{i_1}}^{[a_1]}\cdots \underline \partial_{S_{i_m}}^{[a_m]}(x)=\chi\cdot \partial_{S_{i_1}}^{(a_1)}\cdots \partial_{S_{i_m}}^{(a_m)}(x),~(\underline \partial_{S_{i_m}}^{op})^{[a_m]}\cdots (\underline \partial_{S_{i_1}}^{op})^{[a_1]}(x)
=\chi\cdot (\partial_{S_{i_m}}^{op})^{(a_m)}\cdots (\partial_{S_{i_1}}^{op})^{(a_1)}(x)$$
for all $x\in \cH^*$ of degree $\gamma=a_1|S_{i_1}|+\cdots+a_m|S_{i_m}|$.

Note that \eqref{eq:Pii} in the form $t_\ell t_k=\chi(|S_{i_k}|,|S_{i_\ell}|)t_kt_\ell$ for $k<\ell$ and \eqref{eq:based quantum torus} imply that
$$t^{\bf a}= \prod_{1\le k<\ell\le m} \chi(|S_{i_k}|,|S_{i_\ell}|)^{\frac{a_ka_\ell}{2}}t_1^{a_1}\cdots t_m^{a_m}$$
for any ${\bf a}=(a_1,\ldots,a_m)\in \ZZ^m$.
Therefore, in the notation \eqref{eq:Feigin explicit via pairing}, we have:
$$x\left([S_{i_1}]^{(a_1)}\cdots [S_{i_m}]^{(a_m)}\right)t_1^{a_1}\cdots t_m^{a_m}=\partial_{S_{i_1}}^{(a_1)}\cdots \partial_{S_{i_m}}^{(a_m)}(x)\cdot t_1^{a_1}\cdots t_m^{a_m}=\underline \partial_{S_{i_1}}^{[a_1]}\cdots \underline \partial_{S_{i_m}}^{[a_m]}(x)\cdot t^{\bf a}$$
$$=(\partial_{S_{i_m}}^{op})^{(a_m)}\cdots (\partial_{S_{i_1}}^{op})^{(a_1)}(x)\cdot t_1^{a_1}\cdots t_m^{a_m}=(\underline \partial_{S_{i_m}}^{op})^{[a_m]}\cdots  (\underline \partial_{S_{i_1}}^{op})^{[a_1]}(x) \cdot t^{\bf a} \ .$$
This verifies \eqref{eq:concrete_Feigin via left action}. Proposition \ref{pr:primitive actions proofs}  is proved.

\end{proof}

\section{Special compatible pairs}
\label{sec:Special compatible pairs}

Following and generalizing \cite[Section 8]{bz5}, here we construct compatible pairs $(\Lambda_\ii,\tilde B_\ii)$ for all sequences $\ii\in I^m$ and certain symmetrizable $I\times I$ matrices.  These will later serve as the exchange data for the  initial seed in ${\mathcal L}_\ii$.  

Let $A=(a_{ij})$ be a symmetrizable $I\times I$ integer  matrix such that $a_{ii}=2$ for $i\in I$ and let  $D=diag(d_i,i\in I)$ be a diagonal matrix with all $d_i\in \ZZ_{>0}$ such that  $C=DA=(d_ia_{ij})=(c_{ij})$ is symmetric.  Denote by $\cQ^\vee$ the lattice with the free basis $\alpha_i^\vee$, $i\in I$.  For each $i\in I$ define $\alpha_i:=d_i\alpha_i^\vee$ and denote by $\cQ$ the sublattice of $\cQ^\vee$ (freely) spanned by $\alpha_i$, $i\in I$.

Let $\cP:=\cQ\oplus\Hom(\cQ^\vee,\ZZ)$ be the corresponding weight lattice.  By definition, $\cP$ has a free basis $\{\alpha_i,\omega_i\,|\,i\in I\}$, where $\omega_i:\cQ^\vee\to \ZZ$ determined by $\omega_i(\alpha_j^\vee)=\delta_{ij}$.  We extend the canonical evaluation pairing $\cQ^\vee\times\Hom(\cQ^\vee,\ZZ)\to \ZZ$ to the pairing $(\cdot,\cdot):\cQ^\vee\times\cP\to\ZZ$ by 
\begin{equation}
\label{eq:extended pairing}
(\alpha_i^\vee,\alpha_j)=a_{ij}\quad\text{for $i,j\in I$.}
\end{equation} 

In particular, since $\cQ\subset \cQ^\vee$, one has a paring $\cQ\times\cP\to\ZZ$ whose restriction to $\cQ\times \cQ$ is symmetric.  Note that modulo the right kernel of $(\cdot,\cdot)$ we may identify $\alpha_j$, $j\in I$ with $\sum\limits_{i\in I}a_{ij}\omega_i$.

For $i\in I$ define a linear endomorphism $s_i$ of $\cP$ by
$$s_i\alpha_j=\alpha_j-a_{ij}\alpha_i\quad\text{and}\quad s_i\omega_j=\omega_j-\delta_{ij}\alpha_i\quad\text{for $j\in I$.}$$

Denote by $W$ the group of automorphisms of $\cP$ generated by $s_i$, $i\in I$. Clearly, $s_i^2=1$ for all $i\in I$ and $W(\cQ)\subset \cQ$.  The following result is easy to check.

\begin{lemma} 
For any symmetrizable $I\times I$ matrix $A$ as above one has:

(a)  There is a unique action of $W$ on $\cQ^\vee$ such that:
$$s_i\alpha_j^\vee=\alpha_j^\vee-a_{ji}\alpha_i^\vee\quad\text{for all $i,j\in I$.}$$ 

(b) The pairing \eqref{eq:extended pairing} is $W$-invariant.

\end{lemma}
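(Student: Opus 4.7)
The plan is to realize the desired $W$-action on $\cQ^\vee$ as the contragredient of the action on $\cP$ with respect to the pairing \eqref{eq:extended pairing}, proving (a) and (b) in a single stroke.

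First, I would note that the pairing $(\cdot,\cdot):\cQ^\vee\times\cP\to\ZZ$ is non-degenerate in the first argument: the values $(\alpha_i^\vee,\omega_j)=\delta_{ij}$ show that the induced map $\cQ^\vee\to\Hom(\cP,\ZZ)$ is injective. Define $\widehat{s_i}:\cQ^\vee\to\cQ^\vee$ by $\widehat{s_i}(\alpha_j^\vee)=\alpha_j^\vee-a_{ji}\alpha_i^\vee$. A direct calculation on the two types of generators of $\cP$ verifies the adjointness identity
\[
(\widehat{s_i}x,\,s_iy)=(x,y)\quad\text{for all }x\in\cQ^\vee,\;y\in\cP.
\]
Indeed, expanding bilinearly one checks
$(\widehat{s_i}\alpha_j^\vee,s_i\alpha_k)=a_{jk}-2a_{ji}a_{ik}+2a_{ji}a_{ik}=a_{jk}=(\alpha_j^\vee,\alpha_k)$
and similarly $(\widehat{s_i}\alpha_j^\vee,s_i\omega_k)=\delta_{jk}-2\delta_{ik}a_{ji}+2\delta_{ik}a_{ji}=\delta_{jk}=(\alpha_j^\vee,\omega_k)$; the cancellation here is the only place where one uses that $(\alpha_i^\vee,\alpha_i)=2$.

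Next, given any word $w=s_{i_1}\cdots s_{i_k}$ representing an element of $W$, define tentatively $\widehat{w}:=\widehat{s_{i_1}}\cdots\widehat{s_{i_k}}$. An easy induction on $k$ using the adjointness relation above (and $s_i^2=1$) yields
\[
(\widehat{w}\,x,\,y)=(x,\,w^{-1}y)\quad\text{for all }x\in\cQ^\vee,\;y\in\cP.
\]
The key observation is that the right-hand side depends only on the element $w\in W$, not on the chosen word. By non-degeneracy of the pairing in the first argument, the left-hand side then also depends only on $w$, so $\widehat{w}$ is well defined and $w\mapsto\widehat{w}$ is a group homomorphism $W\to\Aut(\cQ^\vee)$. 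This provides the action claimed in (a); uniqueness is immediate since the $\alpha_j^\vee$ span $\cQ^\vee$ and the formula for $\widehat{s_i}$ is prescribed. Writing simply $w$ for $\widehat{w}$ henceforth, the identity $(wx,wy)=(x,y)$ is precisely the $W$-invariance asserted in (b).

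The only real obstacle is checking that the assignment $s_i\mapsto\widehat{s_i}$ is compatible with \emph{all} relations holding in $W$ — but this is bypassed entirely by defining $\widehat{w}$ via the adjointness relation and invoking non-degeneracy, so no case analysis of Coxeter-type relations is required.
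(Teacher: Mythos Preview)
Your proof is correct. The paper itself does not give a proof of this lemma --- it is simply introduced with ``The following result is easy to check'' --- so there is no argument in the paper to compare against. Your contragredient construction is the natural way to carry this out: defining $\widehat{s_i}$ by the prescribed formula, verifying adjointness on generators, and then using non-degeneracy of the pairing in the first slot to conclude that $\widehat{w}$ depends only on $w\in W$ rather than on a chosen word, which simultaneously yields the group action and the $W$-invariance of the pairing. Nothing is missing.
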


To each sequence $\ii=(i_1,\ldots,i_m)\in I^m$ we assign a sequence $w_k\in W$, $k=1,\ldots, m$ by $w_k=s_{i_1}s_{i_2}\cdots s_{i_k}$ (with the convention that $w_0=1$).  

\begin{proposition}
\label{le:mon_change2}
For each $\ii\in I^m$ the inverse of $\varphi_\ii:\ZZ^m\to\ZZ^m$ defined in \eqref{eq:phi} is  given by
\begin{equation}
\label{eq:phi_inv}
\varphi_\ii^{-1}(\varepsilon_\ell)=-\sum\limits_{k=1}^\ell(w_k\alpha_{i_k}^\vee,w_\ell\omega_{i_\ell})\varepsilon_k
\end{equation}
for $\ell=1,\ldots,m$.
\end{proposition}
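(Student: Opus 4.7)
The plan is to verify directly that the right-hand side of \eqref{eq:phi_inv} is the image of $\varepsilon_\ell$ under $\varphi_\ii^{-1}$, i.e., that $\varphi_\ii$ sends it back to $\varepsilon_\ell$. First observe that, by \eqref{eq:phi}, every basis vector appearing in $\varphi_\ii(\varepsilon_k)$ is of the form $\varepsilon_j$ with $j\le k$ (since $k^-<k$ and the internal sum runs over $\ell<k$), so the matrix of $\varphi_\ii$ in the standard basis is upper triangular with $-1$'s on the diagonal; in particular $\varphi_\ii$ is invertible. Writing $c_{jk}:=-(w_j\alpha_{i_j}^\vee,w_k\omega_{i_k})$ for $j\le k$, expanding $\varphi_\ii\bigl(\sum_{j\le k}c_{jk}\varepsilon_j\bigr)$ and extracting the coefficient of $\varepsilon_i$ reduces the claim, for $i=k$, to the immediate identity $c_{kk}=-(\alpha_{i_k}^\vee,\omega_{i_k})=-1$, and, for each $i<k$, to
\begin{equation*}
(w_i\alpha_{i_i}^\vee, w_k\omega_{i_k}) + \sum_{i<j<\min(i^+,k+1)} a_{i_i,i_j}(w_j\alpha_{i_j}^\vee, w_k\omega_{i_k}) + \epsilon_{i,k}\,(w_{i^+}\alpha_{i_{i^+}}^\vee, w_k\omega_{i_k}) = 0,
\end{equation*}
where $\epsilon_{i,k}=1$ if $i^+\le k$ and $\epsilon_{i,k}=0$ otherwise (the last term coming from the $\varepsilon_{k^-}$-summand of $\varphi_\ii(\varepsilon_{i^+})$).

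The key step is to invoke $W$-invariance of the pairing: setting $\nu_j:=w_j^{-1}w_k\omega_{i_k}=s_{i_{j+1}}\cdots s_{i_k}\omega_{i_k}$ for $j\le k$, one has $(w_j\alpha_{i_j}^\vee,w_k\omega_{i_k})=(\alpha_{i_j}^\vee,\nu_j)$. The reflection formula $s_i\mu=\mu-(\alpha_i^\vee,\mu)\alpha_i$ gives $\nu_{j-1}=\nu_j-(\alpha_{i_j}^\vee,\nu_j)\alpha_{i_j}$, and pairing against $\alpha_{i_i}^\vee$ yields the telescoping relation
\begin{equation*}
(\alpha_{i_i}^\vee,\nu_{j-1})-(\alpha_{i_i}^\vee,\nu_j)=-a_{i_i,i_j}(\alpha_{i_j}^\vee,\nu_j).
\end{equation*}
Summing from $j=i+1$ to $j=\min(i^+,k)$ collapses the left side to $(\alpha_{i_i}^\vee,\nu_i)-(\alpha_{i_i}^\vee,\nu_{\min(i^+,k)})$.

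Two cases then conclude the proof. If $i^+\le k$, splitting off the $j=i^+$ term (noting $i_{i^+}=i_i$, so $a_{i_i,i_{i^+}}=2$ and $(\alpha_{i_{i^+}}^\vee,\nu_{i^+})=(\alpha_{i_i}^\vee,\nu_{i^+})$) rearranges the telescoped identity into exactly the required one. If $i^+>k$, then $\nu_k=\omega_{i_k}$ and $(\alpha_{i_i}^\vee,\omega_{i_k})=\delta_{i_i,i_k}=0$ (since $i^+>k$ forces $i_k\ne i_i$), and the identity follows directly. The main subtlety is pure bookkeeping: one must carefully identify which summands in $\varphi_\ii(\varepsilon_j)$ contribute to the coefficient of $\varepsilon_i$, note that the indicator $\epsilon_{i,k}$ arises precisely from the $\varepsilon_{k^-}$-summand when $i^+\le k$, and handle the two telescoping regimes uniformly. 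Beyond that, only $W$-invariance of the pairing and the basic reflection formula are needed.
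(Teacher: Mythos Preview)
Your proof is correct. You verify directly that $\varphi_\ii$ applied to the claimed expression returns $\varepsilon_k$, extracting the coefficient of each $\varepsilon_i$ and reducing to a telescoping identity via $W$-invariance of the pairing and the reflection formula for $\nu_j=s_{i_{j+1}}\cdots s_{i_k}\omega_{i_k}$. The bookkeeping in the two cases $i^+\le k$ and $i^+>k$ is handled correctly.

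The paper takes a slightly different route. It introduces the mutually inverse ``$\ii$-derivative'' and ``$\ii$-integral'' operators $\partial_\ii,\int_\ii$ on $\ZZ^m$ (namely $\partial_\ii\varepsilon_k=\varepsilon_k-\varepsilon_{k^-}$ and $\int_\ii\varepsilon_k=\varepsilon_k+\varepsilon_{k^-}+\cdots$), then shows that the composite $\int_\ii\varphi_\ii$ has the very simple form $\int_\ii\varphi_\ii(\varepsilon_k)=-\varepsilon_k-\sum_{\ell<k}a_{i_\ell i_k}\varepsilon_\ell$, and that its inverse $\varphi_\ii^{-1}\partial_\ii$ is given by a formula in terms of roots $(w_k\alpha_{i_k}^\vee,w_\ell\alpha_{i_\ell})$ rather than weights. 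The proposition then follows by checking that the claimed $\varphi_\ii^{-1}$ composed with $\partial_\ii$ matches this. The underlying algebra (telescoping via $W$-invariance) is the same as yours, but the paper's detour through $\partial_\ii,\int_\ii$ buys infrastructure: the identities \eqref{eq:derint1} and \eqref{eq:derint2} are reused verbatim in the proof of the compatibility Theorem~\ref{th:compatibility} (via Lemma~\ref{le:intb0} and Proposition~\ref{prop:pairing}). Your direct approach is cleaner if one only cares about the proposition in isolation; the paper's approach amortizes the work across later sections.
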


\begin{proof} 
Define the $\ii$-derivative and $\ii$-integral $\ZZ$-linear maps $\partial_\ii,\int_\ii:\ZZ^m\to\ZZ^m$ respectively by 
\begin{equation}
\label{eq:derint}
\partial_\ii\varepsilon_k=\varepsilon_k-\varepsilon_{k^-}\quad\text{and}\quad{\textstyle\int_\ii}\varepsilon_k=\varepsilon_k+\varepsilon_{k^-}+\cdots \ .
\end{equation}
for $1\le k \le m$, where  $k^-$ is defined in \eqref{eq:kpm}. Clearly, these maps are mutually inverse.  We need the following fact.
\begin{lemma} 
For any sequence $\ii\in I^m$ one has:
\begin{equation}
\label{eq:derint1}
{\textstyle\int_\ii}\varphi_\ii(\varepsilon_k)=-\varepsilon_k-\sum\limits_{\ell=1}^{k-1}a_{i_\ell i_k}\varepsilon_\ell
\end{equation}
for $k=1,\ldots,m$,
\begin{equation}
\label{eq:derint2}
\varphi_\ii^{-1}(\partial_\ii \varepsilon_\ell)=-\varepsilon_\ell-\sum\limits_{k=1}^{\ell-1} (w_k\alpha_{i_k}^\vee,w_\ell\alpha_{i_\ell})\varepsilon_k
\end{equation}
for $k=1,\ldots,m$.
\end{lemma}
 
\begin{proof} We obtain \eqref{eq:derint1} by a direct computation:
$${\textstyle\int_\ii}\varphi_\ii(\varepsilon_k)=-(\varepsilon_k+\varepsilon_{k^-}+\cdots)-(\varepsilon_{k^-}+\varepsilon_{k^{--}}\cdots)-\sum\limits_{\ell:\ell<k<\ell^+}a_{i_\ell i_k}(\varepsilon_\ell+\varepsilon_{\ell^-}+\cdots)$$
$$=-\varepsilon_k-2\varepsilon_{k^-}-2\varepsilon_{k^{--}}-\cdots-\sum\limits_{\ell:\ell<k,i_\ell\ne i_k}a_{i_\ell i_k}\varepsilon_\ell=-\varepsilon_k-\sum\limits_{\ell:\ell<k}a_{i_\ell i_k}\varepsilon_\ell\ .
$$

To prove \eqref{eq:derint2} apply $\int_\ii\varphi_\ii$ to the right hand side of \eqref{eq:derint2} for $\ell:=p$, we obtain for all $p\in [1,m]$:
$${\textstyle\int_\ii}\varphi_\ii(-\varepsilon_p-\sum\limits_{k=1}^{p-1} (w_k\alpha_{i_k}^\vee,w_p\alpha_{i_p})\varepsilon_k)=-{\textstyle\int_\ii}\varphi_\ii(\varepsilon_p)-\sum\limits_{k=1}^{p-1} (w_k\alpha_{i_k}^\vee,w_p\alpha_{i_p}){\textstyle\int_\ii}\varphi_\ii(\varepsilon_k)$$
$$=\varepsilon_p+\sum\limits_{\ell=1}^{p-1}a_{i_\ell i_p}\varepsilon_\ell+\sum\limits_{k=1}^{p-1}(w_k\alpha_{i_k}^\vee,w_p\alpha_{i_p})\varepsilon_k+\sum\limits_{k=1}^{p-1}\sum\limits_{\ell=1}^{k-1} (w_k(a_{i_\ell,i_k}\alpha_{i_k}^\vee),w_p\alpha_{i_p})\varepsilon_\ell$$
$$=\varepsilon_p+\sum\limits_{\ell=1}^{p-1}a_{i_\ell i_p}\varepsilon_\ell+\sum\limits_{\ell=1}^{p-1}(w_\ell\alpha_{i_\ell}^\vee,w_p\alpha_{i_p})\varepsilon_\ell+\sum\limits_{\ell=1}^{p-2}\sum\limits_{k=\ell+1}^{p-1} (w_k\alpha_{i_\ell}^\vee-w_{k-1}\alpha_{i_\ell}^\vee,w_p\alpha_{i_p})\varepsilon_\ell$$
$$=\varepsilon_p+\sum\limits_{\ell=1}^{p-1}a_{i_\ell i_p}\varepsilon_\ell+\sum\limits_{\ell=1}^{p-1}(w_\ell\alpha_{i_\ell}^\vee,w_p\alpha_{i_p})\varepsilon_\ell+\sum\limits_{\ell=1}^{p-2} (w_{p-1}\alpha_{i_\ell}^\vee-w_\ell\alpha_{i_\ell}^\vee,w_p\alpha_{i_p})\varepsilon_\ell=\varepsilon_p\ ,$$
where we used $w_k(a_{i_\ell,i_k}\alpha_{i_k}^\vee)=w_k\alpha_{i_\ell}^\vee-w_{k-1}\alpha_{i_\ell}^\vee$ in the third equality and $(w_{p-1}\alpha_{i_\ell}^\vee,w_p\alpha_{i_p})=-(\alpha_{i_\ell}^\vee,\alpha_{i_p})=-a_{i_\ell,i_p}$ for the last equality.  The lemma is proved.

\end{proof}

Denote by $\varphi_\ii'$ the $\ZZ$-linear map given by \eqref{eq:phi_inv}. To prove \eqref{eq:phi_inv}, it suffices to check that $\varphi_\ii'\partial_\ii$ is given by \eqref{eq:derint2}. Indeed, 
$$\varphi_\ii'(\partial_\ii\varepsilon_\ell)=\varphi_\ii'(\varepsilon_\ell-\varepsilon_{\ell^-})=-\sum\limits_{k=1}^\ell(w_k\alpha_{i_k}^\vee,w_\ell\omega_{i_\ell})\varepsilon_k+\sum\limits_{k=1}^{\ell^-}(w_k\alpha_{i_k}^\vee,w_{\ell^-}\omega_{i_\ell})\varepsilon_k $$  $$=-\varepsilon_\ell-\sum\limits_{k=1}^{\ell-1}(w_k\alpha_{i_k}^\vee,w_\ell\omega_{i_\ell}-w_{\ell^-}\omega_{i_\ell})\varepsilon_k=-\varepsilon_\ell-\sum\limits_{k=1}^{\ell-1}(w_k\alpha_{i_k}^\vee,w_\ell\alpha_{i_\ell})\varepsilon_k\ ,
$$
where the second to last equality used $(w_k\alpha_{i_k}^\vee,w_{\ell^-}\omega_{i_\ell})=(\alpha_{i_k}^\vee,\omega_{i_\ell})=0$ for $\ell^-<k\le\ell$ and the last equality used the identities  $w_{\ell^-}\omega_{i_\ell}=w_{\ell-1}\omega_{i_\ell}$,  $w_\ell\omega_{i_\ell}-w_{\ell-1}\omega_{i_\ell}=w_\ell\alpha_{i_\ell}$.  The proposition is proved. 
  
\end{proof}

Define a skew-symmetric bilinear form $\Lambda_\ii(\cdot,\cdot):\ZZ^m\times\ZZ^m\to\ZZ$ by
\begin{equation}
\label{eq:Lambdaii general}
\Lambda_\ii(\varepsilon_k,\varepsilon_\ell)=\sgn(k-\ell)c_{i_k,i_\ell}\ .
\end{equation}

Denote $\ex=\ex_\ii:=\{k\in [1,m]\,|\, k^+\le m\}$ and call it the set of \emph{exchangeable indices} of $\ii$.  Following \cite[Section 8]{bz5}, for $k\in\ex$ define the vector $\bfb^k=\bfb^k(\ii)=\sum\limits_{p=1}^m b_{pk}\varepsilon_p\in \ZZ^m$ by:
\begin{equation}
\label{eq:Bii}
b_{pk}=\begin{cases} 
-1 & \text{if $p=k^-$}\\ 
-a_{i_pi_k} & \text{if $p<k<p^+<k^+$}\\ 
a_{i_pi_k} & \text{if $k<p<k^+<p^+$}\\ 
1 & \text{if $p=k^+$}\\ 
0 & \text{otherwise}
\end{cases}.
\end{equation}
Denote by $\tilde B_\ii$ the $m\times \ex$ matrix with columns ${\bf b}^k$, $k\in \ex$.

Our objective in this section is to prove the following compatibility condition which refines and generalizes \cite[Theorem 8.3]{bz5} for single words $\ii$ (the double word version can be stated verbatim).

\begin{theorem}
\label{th:compatibility}
For any $k\in\ex$ and $1\le\ell\le m$, one has
\begin{equation}
\label{eq:compatibility}
\Lambda_\ii(\varphi_{\ii}^{-1}({\bf b}^k),\varphi_\ii^{-1}(\varepsilon_\ell))=2d_{i_k}\delta_{k\ell}.
\end{equation}
\end{theorem}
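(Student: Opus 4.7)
\textbf{Proof plan for Theorem \ref{th:compatibility}.}

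The plan hinges on one clean algebraic observation that converts the statement into a short Weyl-group calculation, avoiding any direct encounter with the formula \eqref{eq:phi_inv}.

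\textbf{Step 1 (the key identity).} I will first prove that
\[
\varphi_\ii^{-1}(\bfb^k)=\varepsilon_k-\varepsilon_{k^+}
\]
for every $k\in\ex$. By Proposition \ref{le:mon_change2} this is equivalent to $\varphi_\ii(\varepsilon_k-\varepsilon_{k^+})=\bfb^k$, which I will verify by plugging in \eqref{eq:phi}, using $i_{k^+}=i_k$, and matching the resulting support and coefficients against \eqref{eq:Bii}: the $\varepsilon_k$ terms cancel, the $\varepsilon_{k^-}$, $\varepsilon_{k^+}$ contributions give the $\pm 1$ entries of $\bfb^k$, and the remaining $a_{i_\ell i_k}$ terms are sorted out by comparing the conditions $\ell<k<\ell^+$ and $\ell<k^+<\ell^+$.

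\textbf{Step 2 (reduction).} After Step 1 the desired identity \eqref{eq:compatibility} becomes
\[
\Lambda_\ii\bigl(\varepsilon_k-\varepsilon_{k^+},\,\varphi_\ii^{-1}(\varepsilon_\ell)\bigr)=2d_{i_k}\delta_{k\ell}.
\]
Using \eqref{eq:Lambdaii general} and $i_{k^+}=i_k$, a short case analysis on the sign function shows
\[
\Lambda_\ii(\varepsilon_k-\varepsilon_{k^+},\varepsilon_p)=\bigl(\sgn(k-p)-\sgn(k^+-p)\bigr)c_{i_k,i_p}
\]
vanishes unless $k\le p\le k^+$, and for those $p$ equals $-2d_{i_k}$ at $p=k$ or $p=k^+$, and $-2c_{i_k,i_p}$ in between. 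Substituting \eqref{eq:phi_inv} yields a sum supported in $k\le p\le\min(k^+,\ell)$, so the case $\ell<k$ is immediate and the case $\ell=k$ reduces to $2d_{i_k}(\alpha_{i_k}^\vee,\omega_{i_k})=2d_{i_k}$ by $W$-invariance.

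\textbf{Step 3 (telescoping via Weyl reflections).} The heart of the proof is showing the sum vanishes when $\ell>k$. Using $s_{i_p}\alpha_{i_k}^\vee=\alpha_{i_k}^\vee-a_{i_k,i_p}\alpha_{i_p}^\vee$, I will rewrite
\[
a_{i_k,i_p}\bigl(w_p\alpha_{i_p}^\vee,\,w_\ell\omega_{i_\ell}\bigr)=\bigl(w_p\alpha_{i_k}^\vee-w_{p-1}\alpha_{i_k}^\vee,\,w_\ell\omega_{i_\ell}\bigr),
\]
so the middle terms telescope. When $k<\ell<k^+$, the telescoping collapses the full expression to $(w_\ell\alpha_{i_k}^\vee,w_\ell\omega_{i_\ell})=(\alpha_{i_k}^\vee,\omega_{i_\ell})=0$, the last equality because $i_\ell\ne i_k$ for such $\ell$. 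When $\ell\ge k^+$, the same telescoping leaves only $(w_{k^+-1}\alpha_{i_k}^\vee+w_{k^+}\alpha_{i_{k^+}}^\vee,w_\ell\omega_{i_\ell})$, and this vanishes because $w_{k^+}\alpha_{i_{k^+}}^\vee=-w_{k^+-1}\alpha_{i_k}^\vee$.

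\textbf{Main obstacle.} The nontrivial part of the argument is locating the identity $\varphi_\ii^{-1}(\bfb^k)=\varepsilon_k-\varepsilon_{k^+}$: the formulas \eqref{eq:phi} and \eqref{eq:Bii} look unrelated, and without this identification one is forced into a painful double sum involving \eqref{eq:phi_inv}. Once this identity is in hand, the only remaining delicacy is bookkeeping in the case $\ell\ge k^+$ where the $p=k^+$ boundary term must combine with the final telescope term, and this is handled cleanly by the Weyl relation $s_{i_{k^+}}\alpha_{i_{k^+}}^\vee=-\alpha_{i_{k^+}}^\vee$.
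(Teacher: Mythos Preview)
Your proof is correct and takes a genuinely different route from the paper's. The crucial simplification is your Step~1 identity $\varphi_\ii^{-1}(\bfb^k)=\varepsilon_k-\varepsilon_{k^+}$, which the paper never isolates explicitly (it is equivalent to combining Lemma~\ref{le:intb0} with \eqref{eq:derint1}, but the paper does not draw this conclusion). With this in hand, your argument pairs the very simple vector $\varepsilon_k-\varepsilon_{k^+}$ against the explicit formula \eqref{eq:phi_inv} and telescopes via $a_{i_k,i_p}w_p\alpha_{i_p}^\vee=w_p\alpha_{i_k}^\vee-w_{p-1}\alpha_{i_k}^\vee$; the case split $\ell<k$, $\ell=k$, $k<\ell<k^+$, $\ell\ge k^+$ and the boundary cancellation $w_{k^+}\alpha_{i_{k^+}}^\vee=-w_{k^+-1}\alpha_{i_k}^\vee$ are all handled correctly. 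The paper instead expands $\bfb^k=\partial_\ii\int_\ii\bfb^k$ via Lemma~\ref{le:intb0}, develops the auxiliary bilinear form $\Phi_\ii$ and the general pairing formulas of Proposition~\ref{prop:pairing}, computes $\Lambda_\ii(\varphi_\ii^{-1}(\bfb^k),\varphi_\ii^{-1}(\partial_\ii\varepsilon_\ell))=2d_{i_k}(\delta_{k\ell}-\delta_{k^+,\ell})$, and only then integrates in $\ell$. Your approach is shorter and more elementary for this theorem alone; the paper's approach has the advantage that Proposition~\ref{prop:pairing} (in particular \eqref{eq:pairing phi}) is reused later, e.g.\ in Lemma~\ref{le:hat X commutation}, so those formulas are not wasted effort.
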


\begin{proof} The  proof is rather technical computational so we have split all computations into the series of more manageable results (Lemmas \ref{le:intb0}, \ref{le:form versus map}, \ref{le:Phiii explicit}  and Proposition \ref{prop:pairing}).

\begin{lemma} 
\label{le:intb0}
For any sequence $\ii\in I^m$ and $k\in\ex$ one has:
\begin{equation}
\label{eq:intb} 
{\textstyle\int_\ii}\bfb^k=\varepsilon_k+\varepsilon_{k^+}+\sum\limits_{p=k+1}^{k^+-1}a_{i_pi_k}\varepsilon_p\ .
\end{equation}
\end{lemma}

\begin{proof} By definition, for any ${\bf a}=\sum_{p=1}^m a_p\varepsilon_p$, one has 
$${\textstyle\int_\ii}\bfa=\sum\limits_{p=1}^m a_p(\varepsilon_p+\varepsilon_{p^-}+\cdots)=\sum\limits_{p=1}^m (a_p+a_{p^+}+\cdots)\varepsilon_p\ .$$
The result follows easily from this and the characterization of $b_{pk}$ from \cite[Remark 8.8]{bz5}:
$$b_{pk}=s_{pk}-s_{p,k^+}-s_{p^+,k}+s_{p^+,k^+}$$
for $p\in [1,m]$ and $k\in \ex$, where $s_{pk}=\frac{1}{2}\sgn(p-k)a_{i_p,i_k}$ (with the convention $s_{p^+,k}=a_{i_p,i_k}$ if $p^+=m+1$). Indeed, we obtain:
$${\textstyle\int_\ii}\bfb^k=\sum\limits_{p=1}^m (b_{pk}+b_{p^+,k}+\cdots)\varepsilon_p=\sum\limits_{p=1}^m (s_{pk}-s_{p,k^+})\varepsilon_p=\varepsilon_k+\varepsilon_{k^+}+\sum\limits_{p=k+1}^{k^+-1}a_{i_pi_k}\varepsilon_p$$
because 
$s_{pk}-s_{p^+,k}=\frac{1}{2}(\sgn(p-k)-\sgn(p-k^+))a_{i_p,i_k}=
\begin{cases} 
1 & \text{if $p=k$ or $p=k^+$}\\ 
a_{i_p,i_k} & \text{if $k<p<k^+$}
\\ 0 & \text{otherwise}
\end{cases}.$
\end{proof}

\begin{proposition}
\label{prop:pairing} 
For any $\ii\in I^m$ and $1\le k,\ell\le m$ one has:
\begin{equation}
\label{eq:pairing} 
\Lambda_\ii(\varphi_\ii^{-1}(\partial_\ii\varepsilon_k),\varphi_\ii^{-1}(\partial_\ii\varepsilon_\ell))=\sgn(\ell-k)(w_k\alpha_{i_k},w_\ell\alpha_{i_\ell})
\end{equation}
\begin{equation}
\label{eq:pairing phi} 
\Lambda_\ii(\varphi_\ii^{-1}(\varepsilon_k),\varphi_\ii^{-1}(\varepsilon_\ell))=
\begin{cases}
(w_k\omega_{i_k}-\omega_{i_k},\omega_{i_\ell}+w_\ell\omega_{i_\ell}) & \text{if $k\le \ell$}\\
(\omega_{i_\ell}-w_\ell\omega_{i_\ell},w_k\omega_{i_k}+\omega_{i_k}) & \text{if $k>\ell$}\\
\end{cases}
\end{equation}

\end{proposition}

\begin{proof} To prove \eqref{eq:pairing} we need the following easy but powerful fact. 
\begin{lemma}
\label{le:form versus map}
Let $\Phi$ be a bilinear form on $\kk^m$, $\{\varepsilon_1,\ldots, \varepsilon_m\}$ be a basis of $\kk^m$, and $c_1,\ldots,c_m\in \kk\setminus\{0\}$. Define  a linear map $\varphi:\kk^m\to \kk^m$ by:
$$\varphi(\varepsilon_k)=\sum_{\ell=1}^m c_\ell \Phi(\varepsilon_k,\varepsilon_\ell)\varepsilon_\ell\ .$$
If $\varphi$ is invertible, then:
$$\varphi^{-1}(\varepsilon_\ell)=\sum_{k=1}^m c_k \Phi(\varphi^{-1}(\varepsilon_k),\varphi^{-1}(\varepsilon_\ell))\varepsilon_k\ .$$
\end{lemma}

\begin{proof} Since $\varphi$ is invertible we have
$$\varphi(\varphi^{-1}(\varepsilon_k))=\sum_{\ell=1}^m c_\ell \Phi(\varphi^{-1}(\varepsilon_k),\varepsilon_\ell)\varepsilon_\ell=\varepsilon_k\ .$$
In particular, this implies $\Phi(\varphi^{-1}(\varepsilon_k),\varepsilon_\ell)=c_k^{-1}\delta_{k\ell}$.  Now composing $\varphi$ with the map 
$$\varphi'(\varepsilon_\ell):=\sum_{k=1}^m c_k \Phi(\varphi^{-1}(\varepsilon_k),\varphi^{-1}(\varepsilon_\ell))\varepsilon_k$$
gives
$$\varphi'(\varphi(\varepsilon_\ell))=\sum_{k=1}^m c_k \Phi(\varphi^{-1}(\varepsilon_k),\varepsilon_\ell)\varepsilon_k=\varepsilon_\ell \ .$$
Thus $\varphi'=\varphi^{-1}$.  The lemma is proved.

\end{proof}

We apply Lemma \ref{le:form versus map} with $\varphi:=\int_\ii\varphi_\ii$ and $c_\ell:=d_{i_\ell}^{-1}$ for $\ell=1,\ldots,m$.  That is, by \eqref{eq:derint1} and \eqref{eq:derint2}, the bilinear form $\Phi_\ii$ on $\ZZ^m$  defined by
\begin{equation}
\label{eq:another Phi def} 
\Phi_\ii(\varepsilon_k,\varepsilon_\ell)=
\begin{cases} -d_{i_k} & \text{if $k=\ell$}\\
-c_{i_\ell,i_k} & \text{if $\ell<k$}\\
 0 & \text{otherwise}\\
\end{cases}
\end{equation}
satisfies 
\begin{equation}
\label{eq:another Phi}
\Phi_\ii(\varphi_\ii^{-1}(\partial_\ii \varepsilon_k),\varphi_\ii^{-1}(\partial_\ii \varepsilon_\ell)) 
=\begin{cases} 
-d_{i_k} & \text{if $k=\ell$}\\
-(w_k\alpha_{i_k},w_\ell\alpha_{i_\ell}) & \text{if $k<\ell$}\\
0 & \text{otherwise}\\
\end{cases}.
\end{equation}

The identity \eqref{eq:pairing} follows from this and the fact that $\Lambda_\ii$ is a skew-symmetrization of $\Phi_\ii$, i.e., 
\begin{equation}
\label{eq:Lambda Phi}
\Lambda_\ii({\bf a},{\bf b})=\Phi_\ii({\bf b},{\bf a})-\Phi_\ii({\bf a},{\bf b})
\end{equation}
for all ${\bf a},{\bf b}\in \ZZ^m$.  

To prove \eqref{eq:pairing phi}, we need the following result. 

\begin{lemma} 
\label{le:Phiii explicit}
The form $\Phi_\ii$ given by \eqref{eq:another Phi def} satisfies:
$$\Phi_\bfi(\varphi_\bfi^{-1}(\varepsilon_k),\varphi_\bfi^{-1}(\varepsilon_\ell))=
\begin{cases} 
(\omega_{i_k}-w_k\omega_{i_k},w_\ell\omega_{i_\ell}) & \text{if $k<\ell$}\\
(w_\ell\omega_{i_\ell}-\omega_{i_\ell},\omega_{i_k}) & \text{if $k\ge \ell$}
\end{cases}.$$
\end{lemma}

\begin{proof} Indeed, let us compute using  \eqref{eq:another Phi}:
$$\Phi_\bfi(\varphi_\bfi^{-1}(\varepsilon_k),\varphi_\bfi^{-1}(\partial_\bfi \varepsilon_\ell))=\Phi_\bfi(\varphi_\bfi^{-1}(\partial_\bfi{\textstyle\int_\bfi}\varepsilon_k),\varphi_\bfi^{-1}(\partial_\bfi \varepsilon_\ell))=\Phi_\bfi\big(\varphi_\bfi^{-1}(\partial_\bfi(\varepsilon_k+\varepsilon_{k^-}+\cdots)),\varphi_\bfi^{-1}(\partial_\bfi\varepsilon_\ell)\big)$$
$$=\sum\limits_{\substack{p\le\min(k,\ell):\\i_p=i_k}}
\Phi_\bfi\big(\varphi_\bfi^{-1}(\partial_\bfi\varepsilon_p),\varphi_\bfi^{-1}(\partial_\bfi\varepsilon_\ell)\big)=-\varepsilon_{k,\ell}d_{i_k}-\sum\limits_{\substack{p\le\min(k,\ell-1):\\i_p=i_k}}
(w_p\alpha_{i_k},w_\ell\alpha_{i_\ell})$$
$$=-\varepsilon_{k,\ell}d_{i_k}-\sum\limits_{\substack{p\le \min(k,\ell-1):\\i_{k'}=i_k}} (w_p\omega_{i_k}-w_{p^-}\omega_{i_k},w_\ell\alpha_{i_\ell})=-\varepsilon_{k,\ell}d_{i_k}+
(\omega_{i_k}-w_{k_0}\omega_{i_k},w_\ell\alpha_{i_\ell})\ ,$$
where $\varepsilon_{k,\ell}=1$ if $k\ge \ell$, $i_k=i_\ell$ and $0$ otherwise and $k_0=\max\{p\le \min(k,\ell-1)\st i_p=i_k\}$. 

Note that if $k\ge \ell$, then $w_{k_0}\omega_{i_k}=w_{\ell-1}\omega_{i_k}$ and so $-\varepsilon_{k,\ell}d_{i_k}+(\omega_{i_k}-w_{k_0}\omega_{i_k},w_\ell\alpha_{i_\ell})=(w_\ell\alpha_{i_\ell},\omega_{i_k})$. Therefore, 
$$\Phi_\bfi(\varphi_\bfi^{-1}(\varepsilon_k),\varphi_\bfi^{-1}(\partial_\bfi \varepsilon_\ell))=(w_\ell\alpha_{i_\ell},\omega_{i_k})-\begin{cases} 
(w_\ell\alpha_{i_\ell},w_k\omega_{i_k}) & \text{if $k<\ell$}\\
0 & \text{if $k\ge \ell$}
\end{cases}.
$$
Using this, we obtain:
$$\Phi_\bfi(\varphi_\bfi^{-1}(\varepsilon_k),\varphi_\bfi^{-1}(\varepsilon_\ell))=\Phi_\bfi(\varphi_\bfi^{-1}(\varepsilon_k),\varphi_\bfi^{-1}(\partial_\bfi{\textstyle\int_\bfi} \varepsilon_\ell))=\Phi_\bfi\big(\varphi_\bfi^{-1}(\varepsilon_k),\varphi_\bfi^{-1}(\partial_\bfi(\varepsilon_\ell+\varepsilon_{\ell^-}+\cdots))\big)$$
$$=\sum\limits_{\substack{p\le\ell:\\i_p=i_\ell}}
\Phi_\bfi\big(\varphi_\bfi^{-1}(\varepsilon_k),\varphi_\bfi^{-1}(\partial_\bfi\varepsilon_p)\big)
=\sum\limits_{\substack{p\le\ell:\\i_p=i_\ell}} \left( (w_p\alpha_{i_\ell},\omega_{i_k})-\begin{cases} 
(w_p\alpha_{i_\ell},w_k\omega_{i_k}) & \text{if $k<p$}\\
0 & \text{if $k\ge p$}
\end{cases} \right)$$
$$=\sum\limits_{\substack{p\le\ell:\\i_p=i_\ell}}
(w_p\alpha_{i_\ell},\omega_{i_k})-\begin{cases} 
\sum\limits_{\substack{k<p\le\ell:\\i_p=i_\ell}}(w_p\alpha_{i_\ell},w_k\omega_{i_k}) & \text{if $k<\ell$}\\
0 & \text{if $k\ge \ell$}
\end{cases}$$
$$=(w_\ell\omega_{i_\ell}-\omega_{i_\ell},\omega_{i_k})-\begin{cases} 
(w_\ell\omega_{i_\ell}-w_{\ell_0}\omega_{i_\ell},w_k\omega_{i_k}) & \text{if $k<\ell$}\\
0 & \text{if $k\ge \ell$}
\end{cases}=\begin{cases} 
(\omega_{i_k}-w_k\omega_{i_k},w_\ell\omega_{i_\ell}) & \text{if $k<\ell$}\\
(w_\ell\omega_{i_\ell}-\omega_{i_\ell},\omega_{i_k}) & \text{if $k\ge \ell$}
\end{cases}$$
where $\ell_0=\max\{p\le \min(k,\ell)\st i_p=i_\ell\}$. Here we used the identities $w_p\alpha_{i_\ell}=w_p\omega_{i_\ell}-w_{p^-}\omega_{i_\ell}$ whenever $i_p=i_\ell$ and $w_{\ell_0}\omega_{i_\ell}=w_k\omega_{i_\ell}$.  The lemma is proved.

\end{proof}

Finally, using \eqref{eq:Lambda Phi} and Lemma \ref{le:Phiii explicit}, we obtain for all $k,\ell\in [1,m]$:
$$\Lambda_\ii\big(\varphi_\bfi^{-1}(\varepsilon_k),\varphi_\bfi^{-1}(\varepsilon_\ell)\big)=\Phi_\ii\big(\varphi_\bfi^{-1}(\varepsilon_\ell),\varphi_\bfi^{-1}(\varepsilon_k)\big)-\Phi_\ii\big(\varphi_\bfi^{-1}(\varepsilon_k),\varphi_\bfi^{-1}(\varepsilon_\ell)\big)$$
$$=\begin{cases} 
(\omega_{i_\ell}-w_\ell\omega_{i_\ell},w_k\omega_{i_k}) & \text{if $\ell<k$}\\
(w_k\omega_{i_k}-\omega_{i_k},\omega_{i_\ell}) & \text{if $\ell\ge k$}
\end{cases}-\begin{cases} 
(\omega_{i_k}-w_k\omega_{i_k},w_\ell\omega_{i_\ell}) & \text{if $k<\ell$}\\
(w_\ell\omega_{i_\ell}-\omega_{i_\ell},\omega_{i_k}) & \text{if $k\ge \ell$}
\end{cases}.$$

This proves \eqref{eq:pairing phi} and thus completes the proof of Proposition~\ref{prop:pairing}.
\end{proof}

To prove the compatibility conditions \eqref{eq:compatibility},  we compute using \eqref{eq:intb} and \eqref{eq:pairing}:
$$\Lambda_\ii(\varphi_\ii^{-1}(\bfb^k),\varphi_\ii^{-1}(\partial_\ii\varepsilon_\ell))=\Lambda_\ii\left(\varphi_\ii^{-1}(\partial_\ii{\textstyle\int_\ii}\bfb^k),\varphi_\ii^{-1}(\partial_\ii\varepsilon_\ell)\right)$$
$$=\Lambda_\ii\left(\varphi_\ii^{-1}(\partial_\ii\varepsilon_k+\partial_\ii\varepsilon_{k^+}+\sum\limits_{p:k<p<k^+}a_{i_pi_k}\partial_\ii\varepsilon_p),\varphi_\ii^{-1}(\partial_\ii\varepsilon_\ell)\right)$$
$$=\Lambda_\ii\left(\varphi_\ii^{-1}(\partial_\ii\varepsilon_k),\varphi_\ii^{-1}(\partial_\ii\varepsilon_\ell))+\Lambda_\ii(\varphi_\ii^{-1}(\partial_\ii\varepsilon_{k^+}),\varphi_\ii^{-1}(\partial_\ii\varepsilon_\ell)\right)+\sum\limits_{p=k+1}^{k^+-1}a_{i_pi_k}\Lambda_\ii(\varphi_\ii^{-1}(\partial_\ii\varepsilon_p),\varphi_\ii^{-1}(\partial_\ii\varepsilon_\ell))$$
$$=\left(\sgn(\ell-k)w_k\alpha_{i_k}+\sgn(\ell-k^+)w_{k^+}\alpha_{i_k}+\sum\limits_{p=k+1}^{k^+-1}\sgn(\ell-p)a_{i_pi_k}w_p\alpha_{i_p},w_\ell\alpha_{i_\ell}\right)\ .$$

The relation $a_{i_p i_k}\alpha_{i_p}=\alpha_{i_k}-s_{i_p}\alpha_{i_k}$ implies $a_{i_pi_k}w_p\alpha_{i_p}=w_p\alpha_{i_k}-w_{p-1}\alpha_{i_k}$, hence the sum above telescopes and we obtain:
$$\sum\limits_{p=k+1}^{k^+-1}\sgn(\ell-p)a_{i_pi_k}w_p\alpha_{i_p}=
\begin{cases} 
w_k\alpha_{i_k}+w_{k^+}\alpha_{i_k} & \text{if $\ell\le k$}\\ 
-w_k\alpha_{i_k}-w_{k^+}\alpha_{i_k} & \text{if $\ell\ge k^+$}\\ 
-w_k\alpha_{i_k}+w_{\ell-1}\alpha_{i_k}+w_\ell\alpha_{i_k}+w_{k^+}\alpha_{i_k}& \text{if $k<\ell<k^+$}
\end{cases}.$$

Therefore, taking into account that $(w_{\ell-1}\alpha_{i_k}+w_\ell\alpha_{i_k},w_\ell\alpha_{i_\ell})=(\alpha_{i_k},s_{i_\ell}\alpha_{i_\ell}+\alpha_{i_\ell})=0$, we obtain:
$$\Lambda_\ii(\varphi_\ii^{-1}({\bf b}^k),\varphi_\ii^{-1}(\partial_\ii\varepsilon_\ell))
=\begin{cases} 
\delta_{k\ell}(w_k\alpha_{i_k},w_\ell\alpha_{i_\ell}) & \text{if $\ell\le k$}\\
-\delta_{k^+,\ell}(w_{k^+}\alpha_{i_k},w_\ell\alpha_{i_\ell}) & \text{if $\ell\ge k^+$}\\
0 & \text{if $k<\ell<k^+$}\\
\end{cases}
=2d_{i_k}(\delta_{k\ell}-\delta_{k^+,\ell})\ .$$

Using this, we obtain the desired result:
$$\Lambda_\ii(\varphi_\ii^{-1}({\bf b}^k),\varphi_\ii^{-1}(\varepsilon_\ell))=\Lambda_\ii(\varphi_\ii^{-1}({\bf b}^k),\varphi_\ii^{-1}(\partial_\ii\varepsilon_\ell))+\Lambda_\ii(\varphi_\ii^{-1}({\bf b}^k),\varphi_\ii^{-1}(\partial_\ii\varepsilon_{\ell^-}))+\cdots$$
$$=2d_{i_k}(\delta_{k\ell}-\delta_{k^+,\ell}+\delta_{k,\ell^-}-\delta_{k^+,\ell^-}+\cdots)=2d_{i_k}(\delta_{k\ell}-\delta_{k^+\ell}+\delta_{k^+\ell}-\delta_{k^{++},\ell}+\cdots)=2d_{i_k}\delta_{k\ell}\ .$$
This completes the proof of \eqref{eq:compatibility} and therefore Theorem~\ref{th:compatibility} is proved.
\end{proof}

Generalizing second equation \eqref{eq:phi}, we define $\ZZ$-linear automorphisms $\rho_\ii^+,\rho_\ii^-$ of $\ZZ^m$ by:
\begin{equation}
\label{eq:rhopm}
\rho_\ii^+(\varepsilon_k)=\varepsilon_k+\sum\limits_{\ell<k\st \ell^+=m+1} a_{i_\ell i_k}\varepsilon_\ell,\quad\rho_\ii^-(\varepsilon_k)=\varepsilon_k-\sum\limits_{\ell\le k\st \ell^+=m+1} a_{i_\ell i_k}\varepsilon_\ell
\end{equation}
for $1\le k\le m$. By definition, $\rho_\ii=\rho_\ii^-$. 

\begin{proposition}
\label{pr:compatibility prime} 
Let $\ii\in I^m$ be such that  $\ex_\ii=\{1,\ldots,m-r\}$ for some $r\le |I|$. Then for any $k\in\ex$ and $1\le\ell\le m$, one has:
\begin{equation}
\label{eq:compatibility prime}
\Lambda_\ii(((\rho_\ii^\pm)^{-1}\varphi_\ii)^{-1}( {\bf b}^k_\pm),((\rho_\ii^\pm )^{-1}\varphi_\ii)^{-1}(\varepsilon_\ell))=2d_{i_k}\delta_{k\ell}\ , 
\end{equation}
where ${\bf b}^k_+$ and ${\bf b}^k_-$, $k\in \ex$, are truncations of ${\bf b}^k$ given by 
$${\bf b}^k_\pm=\sum_{p\in {\bf ex}} b_{pk}\varepsilon_p \pm \delta_{k^+}  \varepsilon_{k^+}$$
and
$\delta_{\ell}=
\begin{cases} 
0 & \text{if $\ell\in \ex$}\\
1 & \text{if $\ell\notin \ex$}
\end{cases}$.  
\end{proposition}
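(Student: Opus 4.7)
The plan is to reduce \eqref{eq:compatibility prime} directly to the compatibility condition \eqref{eq:compatibility} of Theorem \ref{th:compatibility}. Setting $\psi_\pm := (\rho_\ii^\pm)^{-1}\varphi_\ii$, one has $\psi_\pm^{-1} = \varphi_\ii^{-1}\rho_\ii^\pm$, so the left-hand side of \eqref{eq:compatibility prime} equals $\Lambda_\ii\bigl(\varphi_\ii^{-1}\rho_\ii^\pm(\bfb^k_\pm), \varphi_\ii^{-1}\rho_\ii^\pm(\varepsilon_\ell)\bigr)$. The proof rests on two claims: (I) $\rho_\ii^\pm(\bfb^k_\pm) = \bfb^k$ for every $k \in \ex$, and (II) the coefficient of $\varepsilon_k$ in $\rho_\ii^\pm(\varepsilon_\ell)$ equals $\delta_{k\ell}$ whenever $k \in \ex$. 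Writing $\rho_\ii^\pm(\varepsilon_\ell) = \sum_j c_j\varepsilon_j$ and combining these with bilinearity of $\Lambda_\ii$ and Theorem \ref{th:compatibility} yields
$$\Lambda_\ii\bigl(\varphi_\ii^{-1}\bfb^k, \varphi_\ii^{-1}\rho_\ii^\pm(\varepsilon_\ell)\bigr) = \sum_j c_j\cdot 2d_{i_k}\delta_{kj} = 2d_{i_k} c_k = 2d_{i_k}\delta_{k\ell},$$
which is exactly \eqref{eq:compatibility prime}.

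The structural fact underlying both claims is that the hypothesis $\ex_\ii = \{1,\ldots,m-r\}$ forces the set of terminal indices (those $p$ with $p^+ = m+1$) to be exactly $\{m-r+1,\ldots,m\}$, hence strictly to the right of every $p\in\ex$. Inspecting \eqref{eq:rhopm}, this makes $\rho_\ii^\pm$ act as the identity on the span of $\{\varepsilon_p : p\in\ex\}$, and gives $\rho_\ii^-(\varepsilon_p) = -\rho_\ii^+(\varepsilon_p)$ for every terminal $p$ (the sole discrepancy between the two operators on $\varepsilon_p$ is the diagonal term $-a_{i_pi_p}\varepsilon_p = -2\varepsilon_p$ subtracted by $\rho_\ii^-$). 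Claim (II) is then immediate: $\rho_\ii^\pm(\varepsilon_\ell) - \varepsilon_\ell$ is supported on terminal indices, which are disjoint from $\ex$.

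For claim (I), decompose $\bfb^k_\pm = \bfb^k|_\ex \pm \delta_{k^+}\varepsilon_{k^+}$. Since $\rho_\ii^\pm$ fixes $\bfb^k|_\ex$ pointwise and $\rho_\ii^-(\varepsilon_{k^+}) = -\rho_\ii^+(\varepsilon_{k^+})$ whenever $k^+$ is terminal, both versions of $\rho_\ii^\pm(\bfb^k_\pm) = \bfb^k$ collapse to the single assertion
$$\delta_{k^+}\,\rho_\ii^+(\varepsilon_{k^+}) = \bfb^k - \bfb^k|_\ex.$$
A case-by-case inspection of \eqref{eq:Bii}, using $p > k$ automatically for terminal $p$ (which kills the $b_{pk} = -1$ and $b_{pk} = -a_{i_pi_k}$ cases), shows
$$\bfb^k - \bfb^k|_\ex = \delta_{k^+}\varepsilon_{k^+} + \sum_{\substack{p\text{ terminal}\\ k < p < k^+}} a_{i_pi_k}\varepsilon_p,$$
the sum being automatically empty when $k^+$ is non-terminal. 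If $\delta_{k^+} = 0$ both sides vanish; if $\delta_{k^+} = 1$, the required identity reads $\varepsilon_{k^+} + \sum_{\ell < k^+,\ \ell\text{ terminal}} a_{i_\ell i_{k^+}}\varepsilon_\ell = \varepsilon_{k^+} + \sum_{k < p < k^+,\ p\text{ terminal}} a_{i_pi_k}\varepsilon_p$, which holds because the two index sets coincide (every terminal index exceeds every exchangeable one) and because $i_{k^+} = i_k$ gives $a_{i_\ell i_{k^+}} = a_{i_\ell i_k}$.

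The main obstacle is the case-analysis in claim (I): one must verify that the hypothesis $\ex_\ii = \{1,\ldots,m-r\}$ eliminates all but one of the cases of \eqref{eq:Bii} for terminal $p$, and then match the surviving terminal contribution against $\rho_\ii^+(\varepsilon_{k^+})$ using $i_{k^+} = i_k$. Everything else --- the bilinearity reduction, claim (II), and the two identities for $\rho_\ii^\pm$ --- is a formal consequence of the position of terminal indices at the end of $\ii$.
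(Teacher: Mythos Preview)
Your proof is correct and follows essentially the same route as the paper: the paper also establishes $\rho_\ii^\pm(\bfb^k_\pm)=\bfb^k$ (its Lemma~\ref{le:rho of b truncated}) and that $\rho_\ii^\pm(\varepsilon_\ell)-\varepsilon_\ell$ is supported on non-exchangeable indices (its Lemma~\ref{le:rho phi}), then reduces to Theorem~\ref{th:compatibility} exactly as you do. Your observation $\rho_\ii^-(\varepsilon_p)=-\rho_\ii^+(\varepsilon_p)$ for terminal $p$ is a small streamlining that lets you treat both signs at once, but otherwise the arguments coincide.
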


\begin{proof} The following fact is obvious. 
\begin{lemma} 
\label{le:rho phi}
Under the hypotheses of Proposition \ref{pr:compatibility prime} one has:  
$$\rho_\ii^+(\varepsilon_k)=\varepsilon_k+\delta_k\sum\limits_{\ell<k:\ell\notin \ex} a_{i_\ell i_k}\varepsilon_\ell,
\quad\rho_\ii^-(\varepsilon_k)=\varepsilon_k-\delta_k\sum\limits_{\ell\le k:\ell\notin \ex} a_{i_\ell i_k}\varepsilon_\ell\ .$$
\end{lemma} 

%
%

We also need the following fact. 

\begin{lemma} 
\label{le:rho of b truncated}
Under the hypotheses of Proposition \ref{pr:compatibility prime} one has $\rho_\ii^\pm(\bfb_\pm^k)={\bf b}^k$ for $k\in \ex$.

\end{lemma}

\begin{proof} 
Indeed, using Lemma \ref{le:rho phi}, for each $k\in \ex$ we have: 
$$\rho_\ii^+({\bf b}^k_+)=\rho_\ii^+({\bf b}^k_+ -\delta_{k^+}\varepsilon_{k^+}+\delta_{k^+}\varepsilon_{k^+})
={\bf b}^k_+-\delta_{k^+}\varepsilon_{k^+}+\delta_{k^+}\rho_\ii^+(\varepsilon_{k^+})={\bf b}^k_++\delta_{k^+}\cdot\sum\limits_{\ell<k^+:\ell\notin \ex} a_{i_\ell i_k}\varepsilon_\ell={\bf b}^k$$
$$\rho_\ii^-({\bf b}^k_-)=\rho_\ii^-({\bf b}^k_-+\delta_{k^+}\varepsilon_{k^+}-\delta_{k^+}\varepsilon_{k^+})
={\bf b}^k_-+\delta_{k^+}\varepsilon_{k^+}-\delta_{k^+}\rho_\ii^-(\varepsilon_{k^+})={\bf b}^k_-+\delta_{k^+}\cdot\sum\limits_{\ell\le k^+:\ell\notin \ex} a_{i_\ell i_k}\varepsilon_\ell={\bf b}^k$$
because, by definition \eqref{eq:Bii},  
$b_{\ell k}=\begin{cases} 
a_{i_\ell i_k} & \text{if $k<\ell<k^+$}\\
1 & \text{if $\ell=k^+$}\\
0 & \text{otherwise}
\end{cases}
$
whenever $\ell\in [1,m]\setminus \ex$. 
\end{proof}

Now we are ready to prove \eqref{eq:compatibility prime}. Indeed,  using Lemmas \ref{le:rho phi} and \ref{le:rho of b truncated}, we obtain for all $k\in \ex$, $\ell\in [1,m]$:
$$\Lambda_\ii(((\rho_\ii^\pm)^{-1}\varphi_\ii)^{-1}( {\bf b}^k_\pm),((\rho_\ii^\pm )^{-1}\varphi_\ii)^{-1}(\varepsilon_\ell))=\Lambda_\ii(\varphi_{\ii}^{-1}\rho_\ii^\pm ({\bf b}^k_\pm),\varphi_\ii^{-1}\rho_\ii^\pm(\varepsilon_\ell))=\Lambda_\ii(\varphi_{\ii}^{-1}({\bf b}^k_\pm),\varphi_\ii^{-1}\rho_\ii(\varepsilon_\ell))$$
$$=\Lambda_\ii(\varphi_{\ii}^{-1}({\bf b}^k),\varphi_\ii^{-1}(\varepsilon_\ell)+\varphi_\ii^{-1}(\rho_\ii^\pm(\varepsilon_\ell)-\varepsilon_\ell))=\Lambda_\ii(\varphi_{\ii}^{-1}({\bf b}^k),\varphi_\ii^{-1}(\varepsilon_\ell))=2d_{i_k}\delta_{k\ell}$$
by Theorem \ref{th:compatibility} because $\Lambda_\ii(\varphi_{\ii}^{-1}({\bf b}^k),\varphi_\ii^{-1}(\varepsilon_{\ell'}))=0$ whenever $\ell'\in [1,m]\setminus \ex$.  The proposition is proved.

\end{proof}

For each sequence $\ii\in I^m$ and $\lambda\in \cP$ define the vector ${\bf a}_\lambda\in \ZZ^m$ by 
\begin{equation}
\label{eq:alambda}
{\bf a}_\lambda:=-\sum_{k=1}^m (w_k\alpha_{i_k}^\vee,w_m\lambda)\varepsilon_k\ .
\end{equation}
This notation is justified by the following collection of easily verifiable facts.

\begin{lemma} 
\label{le:alambda}
For any $\ii\in I^m$ and $\lambda\in \cP$ one has:

(a) ${\bf a}_{\lambda}=\{0\}$ for all $\lambda$ in the right kernel of the pairing  $(\cdot,\cdot):\cQ^\vee\times \cP\to \ZZ$.  

(b) ${\bf a}_{\alpha_j}=\sum\limits_{i\in I} a_{ij} {\bf a}_{\omega_i}$ for all $j\in I$.

(c) The set $L_0:=\{{\bf a}_\lambda\,|\,\lambda\in \cP\}$ is a sub-lattice of $\ZZ^m$ spanned by ${\bf a}_{\omega_i}$, $i\in I$.

(d) For any $w\in W$ the lattice $L_0$ is spanned by ${\bf a}_{w\omega_i}$, $i\in I$.

(e) $\varphi_\ii^{-1}(\varepsilon_\ell)={\bf a}_{\omega_{i_\ell}}$ for each $\ell\in [1,m]\setminus\ex$.

(f) $|{\bf a}_\lambda|=w_m\lambda-\lambda$ for $\lambda\in \cP$, where where we abbreviated $|{\bf a}|:=\sum\limits_{k=1}^m a_k\alpha_{i_k}$. 
\end{lemma}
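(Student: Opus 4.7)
The plan is to tackle the six parts in the order given, with each part building on the previous ones. All of them reduce to routine manipulations with the pairing $(\cdot,\cdot)$, its $W$-invariance, and the telescoping identity $s_i\lambda = \lambda - (\alpha_i^\vee,\lambda)\alpha_i$, which holds on all of $\cP$ (check on basis vectors).

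For (a), if $\lambda$ lies in the right kernel then so does $w_k^{-1}w_m\lambda$, since the right kernel is $W$-stable (being defined by a $W$-invariant pairing). By $W$-invariance, each coefficient $(w_k\alpha_{i_k}^\vee,w_m\lambda)=(\alpha_{i_k}^\vee,w_k^{-1}w_m\lambda)$ vanishes. For (b), a direct calculation identifies the right kernel as the lattice spanned by $\alpha_j-\sum_i a_{ij}\omega_i$, $j\in I$, hence $\alpha_j\equiv \sum_i a_{ij}\omega_i$ modulo the kernel. Linearity of $\lambda\mapsto {\bf a}_\lambda$ together with (a) then gives (b). Part (c) is immediate: $\lambda\mapsto{\bf a}_\lambda$ is $\ZZ$-linear, $\cP$ is generated by $\{\alpha_i,\omega_i\}$, and (b) lets us eliminate the $\alpha_i$'s in favor of the $\omega_i$'s. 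For (d), since $K=\ker(\cdot,\cdot)$ is $W$-stable by invariance, each $w\in W$ induces a $\ZZ$-linear automorphism of $\cP/K$; since $\{\omega_i\}_{i\in I}$ descends to a $\ZZ$-basis of $\cP/K$, so does $\{w\omega_i\}_{i\in I}$, and translating this through $\lambda\mapsto {\bf a}_\lambda$ yields the claim.

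For (e), the condition $\ell\in[1,m]\setminus\ex$ means $i_\ell\notin\{i_{\ell+1},\ldots,i_m\}$, so $s_{i_{\ell+1}}\cdots s_{i_m}$ fixes $\omega_{i_\ell}$ and hence $w_m\omega_{i_\ell}=w_\ell\omega_{i_\ell}$. It remains to show that the tail terms $(w_k\alpha_{i_k}^\vee,w_\ell\omega_{i_\ell})$ for $k>\ell$ in the definition of ${\bf a}_{\omega_{i_\ell}}$ vanish. By $W$-invariance this equals the coefficient of $\alpha_{i_\ell}^\vee$ in $s_{i_{\ell+1}}\cdots s_{i_k}\alpha_{i_k}^\vee$; since the reflections $s_{i_j}$ only alter coefficients of $\alpha_{i_j}^\vee$ and $i_\ell$ is absent from $\{i_{\ell+1},\ldots,i_k\}$, this coefficient equals the coefficient of $\alpha_{i_\ell}^\vee$ in $\alpha_{i_k}^\vee$, which is $0$ since $i_\ell\neq i_k$. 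Comparing with \eqref{eq:phi_inv} completes (e).

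Part (f) is the one with the most actual computation, though still routine. Using $w_k\alpha_{i_k}^\vee=w_{k-1}s_{i_k}\alpha_{i_k}^\vee=-w_{k-1}\alpha_{i_k}^\vee$ and $W$-invariance, rewrite
\[
|{\bf a}_\lambda|=-\sum_{k=1}^m (w_k\alpha_{i_k}^\vee,w_m\lambda)\alpha_{i_k}=\sum_{k=1}^m (\alpha_{i_k}^\vee,w_{k-1}^{-1}w_m\lambda)\alpha_{i_k}.
\]
Setting $\mu=w_m\lambda$ and telescoping via $w_k^{-1}\mu=s_{i_k}w_{k-1}^{-1}\mu=w_{k-1}^{-1}\mu-(\alpha_{i_k}^\vee,w_{k-1}^{-1}\mu)\alpha_{i_k}$ yields
\[
\sum_{k=1}^m (\alpha_{i_k}^\vee,w_{k-1}^{-1}\mu)\alpha_{i_k}=\sum_{k=1}^m(w_{k-1}^{-1}\mu-w_k^{-1}\mu)=\mu-w_m^{-1}\mu=w_m\lambda-\lambda,
\]
which is the desired formula. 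The only mildly subtle point in the whole argument is keeping track of signs and the shift from $w_k^{-1}$ to $w_{k-1}^{-1}$ in (f); everything else is a direct unpacking of the definitions combined with $W$-invariance of the extended pairing \eqref{eq:extended pairing}.
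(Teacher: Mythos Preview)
Your proof is correct. The paper states this lemma as ``a collection of easily verifiable facts'' and gives no proof, so there is nothing to compare against; your argument supplies exactly the routine verifications the authors omitted, using $W$-invariance of the pairing, the identification $\alpha_j\equiv\sum_i a_{ij}\omega_i$ modulo the right kernel (which the paper notes just after \eqref{eq:extended pairing}), and the telescoping identity $s_i\lambda=\lambda-(\alpha_i^\vee,\lambda)\alpha_i$.
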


\begin{proposition} 
\label{pr:commute alambda}
For each $\ii\in I^m$ and ${\bf a}=(a_1,\ldots,a_m)\in \ZZ^m$ one has:
$$\Lambda_\ii({\bf a},{\bf a}_\lambda)=(|{\bf a}|,w_m\lambda+\lambda)\ ,$$
\end{proposition}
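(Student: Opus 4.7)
The plan is to reduce the identity to the case $\bfa=\varepsilon_k$ by $\ZZ$-bilinearity of both sides (the right-hand side depends on $\bfa$ only through $|\bfa|=\sum a_k\alpha_{i_k}$, and $\varepsilon_k\mapsto \alpha_{i_k}$ under $|\cdot|$), and then compute $\Lambda_\ii(\varepsilon_k,{\bf a}_\lambda)$ directly, borrowing the telescoping mechanism used in the proof of Theorem~\ref{th:compatibility}.

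Concretely, I would express $\Lambda_\ii$ via the triangular bilinear form $\Phi_\ii$ of \eqref{eq:another Phi def}, using \eqref{eq:Lambda Phi}, so that
$$\Lambda_\ii(\varepsilon_k,{\bf a}_\lambda)=\Phi_\ii({\bf a}_\lambda,\varepsilon_k)-\Phi_\ii(\varepsilon_k,{\bf a}_\lambda).$$
Expanding ${\bf a}_\lambda=-\sum_j (w_j\alpha_{i_j}^\vee,w_m\lambda)\varepsilon_j$ and using the values of $\Phi_\ii$, the first summand $\Phi_\ii({\bf a}_\lambda,\varepsilon_k)$ picks out only indices $j\geq k$ (diagonal plus strictly lower triangular positions), while $\Phi_\ii(\varepsilon_k,{\bf a}_\lambda)$ picks out only $j\leq k$. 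The key telescoping identity, derived from $a_{i_k,i_j}\alpha_{i_j}^\vee=\alpha_{i_k}^\vee-s_{i_j}\alpha_{i_k}^\vee$, reads
$$c_{i_k,i_j}(w_j\alpha_{i_j}^\vee,w_m\lambda)=(w_j\alpha_{i_k}-w_{j-1}\alpha_{i_k},w_m\lambda),$$
so the sum over $j>k$ collapses to $(w_m\alpha_{i_k}-w_k\alpha_{i_k},w_m\lambda)$ and the sum over $j<k$ collapses to $(w_{k-1}\alpha_{i_k}-\alpha_{i_k},w_m\lambda)$.

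Combining each collapsed sum with the respective diagonal term $d_{i_k}(w_k\alpha_{i_k}^\vee,w_m\lambda)=(w_k\alpha_{i_k},w_m\lambda)$ and using $w_k\alpha_{i_k}=-w_{k-1}\alpha_{i_k}$, I expect to obtain $\Phi_\ii({\bf a}_\lambda,\varepsilon_k)=(w_m\alpha_{i_k},w_m\lambda)=(\alpha_{i_k},\lambda)$ (by $W$-invariance of the pairing) and $\Phi_\ii(\varepsilon_k,{\bf a}_\lambda)=-(\alpha_{i_k},w_m\lambda)$. Subtracting then yields $\Lambda_\ii(\varepsilon_k,{\bf a}_\lambda)=(\alpha_{i_k},\lambda+w_m\lambda)$, which is the desired identity after summing $a_k$ times over $k$.

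The only real obstacle is recognizing the telescoping trick; once one converts each appearance of the symmetrized Cartan entry $c_{i_k,i_j}$ into a difference $w_j\alpha_{i_k}-w_{j-1}\alpha_{i_k}$ via the reflection identity above, all sums collapse, and $W$-invariance of the pairing finishes the job. This is essentially the same device that drove the compatibility computation in Proposition~\ref{prop:pairing}, so no new ingredients are needed.
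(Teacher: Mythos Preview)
Your argument is correct and essentially coincides with the paper's: both reduce by linearity to $\bfa=\varepsilon_k$, invoke the telescoping identity $c_{i_k,i_j}\,w_j\alpha_{i_j}^\vee=w_j\alpha_{i_k}-w_{j-1}\alpha_{i_k}$, and finish with $w_k\alpha_{i_k}=-w_{k-1}\alpha_{i_k}$ together with $W$-invariance of the pairing. The only cosmetic difference is that the paper expands $\Lambda_\ii(\varepsilon_k,\bfa_\lambda)$ directly from the definition $\Lambda_\ii(\varepsilon_k,\varepsilon_\ell)=\sgn(k-\ell)c_{i_k,i_\ell}$ rather than passing through the splitting $\Lambda_\ii=\Phi_\ii^{\mathrm{op}}-\Phi_\ii$, so the two halves of your computation appear there as a single $\sgn$-weighted telescoping sum.
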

\begin{proof}
It suffices to verify the result for $\bfa=\varepsilon_k$:
$$\Lambda_\bfi(\varepsilon_k,\bfa_\lambda)=-\sum\limits_{\ell=1}^m\sgn(k-\ell)c_{i_ki_\ell}(w_\ell\alpha_{i_\ell}^\vee,w_m\lambda)
=-\sum\limits_{\ell=1}^m\sgn(k-\ell)(w_\ell\alpha_{i_k}-w_{\ell-1}\alpha_{i_k},w_m\lambda)$$
$$=(\alpha_{i_k},w_m\lambda)-(w_{k-1}\alpha_{i_k},w_m\lambda)-(w_k\alpha_{i_k},w_m\lambda)+(w_m\alpha_{i_k},w_m\lambda)
=(\alpha_{i_k},w_m\lambda+\lambda)\ .$$
Since $|\varepsilon_k|=\alpha_{i_k}$ this completes the proof.
\end{proof}

\section{Valued Quivers and Proof of Theorem~\ref{th:character=cluster character}}
\label{sec:valued_quivers}
\subsection{Quantum cluster characters} 
\label{subsec:qcluster characters}
Let ${\mathcal C}$ be a finitary Abelian category with finitely many simple objects ${\bf S}=\{S_i\,|\,i\in I\}$ with no self-extensions. 
Denote by $\cK({\mathcal C})$ the Grothendieck group of ${\mathcal C}$.  Clearly, $\cK({\mathcal C})=\bigoplus\limits_{i\in I}\ZZ\cdot |S_i|$, where  $|S_i|\in\cK({\mathcal C})$ is the class of $S_i$.

Note that $\FF_i:=\End_{\mathcal C}(S_i)$ is a finite field for all $i\in I$. 

Define the $I\times I$ Cartan matrix $A=A_{\bf S}=(a_{ij})$ of $\cC$ by
\begin{equation}
\label{eq:cartan categorical}
 a_{ij}=2\delta_{ij}  -  \dim_{\FF_i} (\Ext^1(S_i,S_j)) - \dim_{\FF_i}(\Ext^1(S_j,S_i))\ .
\end{equation}

From now on, we assume that  for each $i\ne j$: $Ext^1(S_i,S_j)=0$ or $Ext^1(S_j,S_i)=0$ and define an $I\times I$ matrix $B=B_{\mathcal C}=(b_{ij})$ by:
\begin{equation}
\label{eq:bij categorical}
b_{ij}=
\begin{cases} 
 \dim_{\FF_i} (\Ext^1(S_i,S_j)) & \text{if $\Ext^1(S_i,S_j)\ne 0$}\\ 
- \dim_{\FF_i} (\Ext^1(S_j,S_i)) & \text{if $\Ext^1(S_j,S_i)\ne 0$}\\ 
  0 & \text{otherwise}\\
\end{cases}.
\end{equation}
Clearly,  $a_{ij}=-|b_{ij}|$ for $i\ne j$.

Following \cite{rupel2},  for $\bfe \in \cK({\mathcal C})$ denote by  ${\bf e}\mapsto {}^*\bfe$ and ${\bf e}\mapsto \bfe^*$ respectively the  endomorphisms of $\cK({\mathcal C})$ given by 
\begin{equation}
\label{eq:star e}
{}^*|S_j|=|S_j|-\sum_{k\in I}[b_{kj}]_+\cdot |S_k|,~|S_j|^*=|S_j|-\sum_{k\in I}[-b_{kj}]_+\cdot |S_k|.
\end{equation}
Define ${\bf b}^j$, $j\in I$ by 
\begin{equation}
\label{eq:bjrep}
{\bf b}^j:=|S_j|^*-{}^*|S_j|
\end{equation} 
Clearly, ${\bf b}^j= \sum_{k\in I}b_{kj}\cdot |S_k|$ for all $j\in J$, so it can be identified with the $j$-th column of $B$.

Fix a subset $\ex$ of $I$ and a unitary bicharacter $\chi=\chi_{\mathcal C}$ on $\cK({\mathcal C})$ such that 
\begin{equation}
\label{eq:categorical compatibility}
\chi({\bf b}^j,|S_i|)=|\kk_i|^{\delta_{ij}}
\end{equation}
for all $i\in I$, $j\in \ex$.

Denote by ${\mathcal T}_{\chi_{\mathcal C}}$ the based quantum torus  with the basis $X^{\bf a}$, ${\bf a}\in \cK({\mathcal C})$ subject to the relation \eqref{eq:based quantum torus}:
$$ X^{\bf a}X^{\bf b}=\chi_{\mathcal C}({\bf a},{\bf b})X^{{\bf a}+{\bf b}}$$
for ${\bf a},{\bf b}\in \cK({\mathcal C})$. 
%
 
Then the quantum cluster character  $X_V\in {\mathcal T}_{\chi_{\mathcal C}}$ of $V\in \cC$ is given by \eqref{eq:character-Gr}:
\begin{equation}
\label{eq:character-Gr refined}
X_V=\sum_{{\bf e}\in \cK({\mathcal C})} \langle |V|,|V|-{\bf e}\rangle^{-1} \cdot |Gr_\bfe(V)| \cdot X^{-{\bf e}^*-{}^*(|V|-{\bf e})}\ .
\end{equation}

\subsection{Valued quivers, flags, and Grassmannians}
Let $\FF$ be a finite field with $q$ elements.  Fix an algebraic closure $\overline{\FF}$ of $\FF$ and for each $a>0$ denote by $\FF_a$ the degree $a$ extension of $\FF$ in $\overline{\FF}$.  Note that the largest subfield of $\overline{\FF}$ contained in both $\FF_a$ and $\FF_b$ is $\FF_{gcd(a,b)}$.  If $a|b$, then  $F_a\subset \FF_b$ and $F_b\cong (\FF_a)^{\frac{b}{a}}$ as a vector space over $\FF_a$.

Fix an integer $n$ and let $Q=\{Q_0, Q_1, s, t\}$ be a quiver with vertices $Q_0=\{1,2,\ldots, n\}$ and arrows $Q_1$, where we denote by $s(\alpha)$ and $t(\alpha)$ the source and target of an arrow $\alpha:s(\alpha)\to t(\alpha)$. Given a map 
$\bfd: Q_0\to \ZZ_{>0}$ ($i\mapsto d_i$), we refer to the pair $(Q,\bfd)$ as a \textit{valued quiver}.

Define a representation $V=(\left\{V_{i}\right\}_{i\in Q_0},\left\{\varphi_{\alpha}\right\}_{\alpha\in Q_1})$ of $(Q,\bfd)$ by assigning an $\FF_{d_i}$-vector space $V_i$ to each vertex $i\in Q_0$ and an $\FF_{gcd(d_{s(\alpha)}, d_{t(\alpha)})}$-linear map $\varphi_\alpha: V_{s(\alpha)}\to V_{t(\alpha)}$ to each arrow $\alpha\in Q_1$.  Morphisms, direct sums, kernels, and co-kernels are defined as in the well-known representation theory of quivers.  Denote by $\rep_{\FF} (Q,\bfd)$ the  (Abelian) category of all finite dimensional representations of $(Q,\bfd)$. 


Following \cite{rupel2} to each skew-symmetrizable $n\times n$ matrix $B$, i.e., such that  $(DB)^T=-B^TD$, for some diagonal matrix $D=diag(d_1,\ldots,d_n)$, $d_i\in \ZZ_{>0}$  we assign  a valued quiver $(Q_B,\bfd)$ having no loops or two-cycles with:

$\bullet$ $Q_0=\{1,\ldots,n\}$, ${\bf d}=(d_1,\ldots,d_n)$;

$\bullet$  $gcd(|b_{ij}|,|b_{ji}|)$ arrows in $Q_1$ from $i$ to $j$ for distinct $i,j\in Q_0$ whenever $b_{ij}>0$.

\medskip
One can easily recover the matrix $B$ from a valued quiver $(Q,\bfd)$ having no loops or two-cycles  by $B=B_Q:=B_{\rep_\FF(\QQ,\bfd)}$ via \eqref{eq:bij categorical}, where ${\bf S}=\{S_1,\ldots,S_m\}$ is the set of all simple representations of $(Q,\bfd)$ (see also \cite{rupel2} for details).  

The following fact is obvious.
 
\begin{lemma} 
\label{le:double quiver}
Let $(Q,\bfd)$, $(Q',\bfd')$ be valued quivers with $Q_0=\{1,2,\ldots,n\}$ and  $Q'_0=\{1,2,\ldots,r\}$) respectively and $C$ be an integer $r\times n$ matrix 
such that 
$$D'C= C D,$$
where $D=diag(d_1,\ldots,d_n)$, $D'=diag(d'_1,\ldots,d'_r)$. 

Then there exists  a valued quiver $(\tilde Q,\tilde\bfd)$ with 
$\tilde Q_0=\{1,2,\ldots,n+r\}$ such that
$$\tilde\bfd=(\bfd,\bfd'),
~B_{\tilde Q}=
\begin{pmatrix}
B_Q &-D^{-1}C^TD'\\
C & B_{Q'}
\end{pmatrix}.$$

 


\end{lemma}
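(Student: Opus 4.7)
\smallskip
\noindent\textbf{Proof proposal.} The strategy is to turn the construction into a purely matrix-theoretic verification: produce the promised matrix, check that it is skew-symmetrizable by the diagonal matrix $\tilde D=\mathrm{diag}(d_1,\ldots,d_n,d_1',\ldots,d_r')$ of valuations, check that its entries are integers, and then invoke the standard bijection between integer skew-symmetrizable matrices without $2$-cycles or loops and valued quivers (as recalled just before the statement of the lemma).

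First I would observe that the hypothesis $D'C=CD$ is equivalent, after transposing, to $C^{T}D'=DC^{T}$, which gives the identity $-D^{-1}C^{T}D'=-C^{T}$. In particular, the off-diagonal block in the stated form of $B_{\tilde Q}$ automatically has integer entries, and the matrix can be rewritten more transparently as
\begin{equation*}
B_{\tilde Q}=\begin{pmatrix} B_Q & -C^{T}\\ C & B_{Q'}\end{pmatrix}.
\end{equation*}
This already shows that $B_{\tilde Q}$ has no diagonal entries (the diagonal blocks have none by assumption on $Q,Q'$) and that for any $i\le n<n+j$ the entries $(B_{\tilde Q})_{i,n+j}=-c_{ji}$ and $(B_{\tilde Q})_{n+j,i}=c_{ji}$ have opposite signs, so no $2$-cycles are created between the two blocks.

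Next I would compute $\tilde D B_{\tilde Q}$ block by block,
\begin{equation*}
\tilde D B_{\tilde Q}=\begin{pmatrix} DB_Q & -DC^{T}\\ D'C & D'B_{Q'}\end{pmatrix},
\end{equation*}
and verify skew-symmetry. The diagonal blocks $DB_Q$ and $D'B_{Q'}$ are skew-symmetric by the skew-symmetrizability of $B_Q$ and $B_{Q'}$. Skew-symmetry between the off-diagonal blocks amounts to $(D'C)^{T}=DC^{T}$, which is exactly the transposed hypothesis $C^{T}D'=DC^{T}$. Hence $\tilde D B_{\tilde Q}$ is skew-symmetric, i.e.\ $B_{\tilde Q}$ is skew-symmetrizable by $\tilde D$.

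Finally, since $B_{\tilde Q}$ is an integer skew-symmetrizable matrix with no loops and no $2$-cycles, the correspondence recalled above (see \cite{rupel2}) produces a unique valued quiver $(\tilde Q,\tilde\bfd)$ on the vertex set $\{1,\ldots,n+r\}$ with valuation $\tilde\bfd=(\bfd,\bfd')$ and exchange matrix $B_{\tilde Q}$ as claimed. The induced sub-quivers on $\{1,\ldots,n\}$ and $\{n+1,\ldots,n+r\}$ are then $(Q,\bfd)$ and $(Q',\bfd')$ respectively, because $B_Q$ and $B_{Q'}$ sit as the diagonal blocks. There is no genuinely hard step here; the only mild subtlety is spotting that the hypothesis $D'C=CD$ simultaneously ensures integrality of the off-diagonal block and the cross-block skew-symmetry identity, so that both obstructions to invoking the matrix/quiver dictionary vanish at once.
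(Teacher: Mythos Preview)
The paper does not actually prove this lemma: it is introduced with ``The following fact is obvious'' and no argument is given. Your proposal supplies exactly the routine verification the paper leaves to the reader---checking integrality of the off-diagonal block via $D'C=CD$, skew-symmetrizability of the assembled matrix by $\tilde D$, and absence of loops and $2$-cycles, then invoking the matrix/valued-quiver dictionary from \cite{rupel2}---and is correct.
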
 

We will call $Q$ the \emph{principal subquiver} of $\tilde Q$.  Our default choice of extension $(\tilde Q,\tilde\bfd)$ in Lemma \ref{le:double quiver} is with $r=n$,  $(Q',\bfd')=(Q,\bfd)$ and $C=\pm I_n$ (where $I_n$ is the identity $n\times n$ matrix) so that  
\begin{equation}
\label{eq:default tilde Q}
\tilde \bfd=(\bfd,\bfd),
~B_{\tilde Q}^\pm =
\begin{pmatrix} 
B_Q & \mp I_n\\ 
\pm I_n & B_Q
\end{pmatrix}. 
\end{equation}

For a vertex $i\in Q_0$ write $\mu_i Q$ for the quiver obtained from $Q$ by reversing all arrows $a\in Q_1$ with $s(a)=i$ or $t(a)=i$.  
Let $\ii_0=(i_1,\ldots,i_n)\in Q_0^n$ be a repetition-free source adapted sequence in $Q$, i.e. $i_k$ ($k\ge 1$) is a source in the quiver $\mu_{i_{k-1}}\cdots\mu_{i_1}Q$.  

Recall that for  $\bfa\in\ZZ_{\ge0}^m$ and any sequence $\ii\in Q_0^m$ the flag variety $\cF_{\ii,\bfa}(V)$ is given by  
$$\cF_{\ii,\bfa}(V)=\{0=V_m\subset V_{m-1}\subset \cdots \subset V_1 \subset V_0=V \st  V_{k-1}/V_k\cong a_kS_{i_k}\}$$
and for each $k\in [1,m]$ denote by $\pi_k:\cF_{\ii,\bfa}(V)\to Gr(V)$ the map which assigns to each flag 
$(0=V_m\subset V_{m-1}\subset \cdots \subset V_1 \subset V_0=V)\in \cF_{\ii,\bfa}(V)$ the subobject $V_k$ of $V$.

In what follows, we identify the Grothendieck group $\cK(Q,\bfd)$ with $\ZZ^n$ via $|S_{i_k}|\mapsto \varepsilon_k$, $k=1,\ldots,n$.

\begin{theorem}
\label{th:flag-grass} Let $V\in \rep_\FF(Q,\bfd)$, $\ii=(\ii_0,\ii_0)$. Then for any ${\bf e}\in\ZZ_{\ge 0}^n$  
the  map $\pi_n$ defines a bijection 
$$\cF_{\ii,(|V|-{\bf e},{\bf e})}(V)\widetilde \to Gr_\bfe(V)\ .$$ 
\end{theorem}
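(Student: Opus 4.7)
The plan is to reduce Theorem~\ref{th:flag-grass} to a uniqueness-of-filtration lemma for source-adapted orderings, which I will then apply twice.  The relevant claim is: for any $U\in \rep_\FF(Q,\bfd)$ of dimension vector $(v_1,\ldots,v_n)$ (read off with respect to the ordering $\ii_0$), there exists a \emph{unique} chain $U=U_0\supset U_1\supset\cdots\supset U_n=0$ with $U_{k-1}/U_k\cong v_k\cdot S_{i_k}$ for every $k$.

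Granting this, the theorem follows by splicing.  Given $W\in Gr_\bfe(V)$, apply the lemma once to $V/W$ (which has dimension vector $|V|-\bfe$) to construct uniquely the top half $V=V_0\supset\cdots\supset V_n=W$ with subquotients $(v_k-e_k)S_{i_k}$, and once to $W$ itself (dimension vector $\bfe$) to construct uniquely the bottom half $W=V_n\supset\cdots\supset V_{2n}=0$ with subquotients $e_kS_{i_k}$.  The concatenation is a flag in $\cF_{\ii,(|V|-\bfe,\bfe)}(V)$ mapping to $W$ under $\pi_n$, and any preimage is forced to agree with it.  Surjectivity and injectivity of $\pi_n$ drop out simultaneously.

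The heart of the argument is the lemma.  Since $\ii_0$ is repetition-free, the simple $S_{i_\ell}$ appears in the filtration only at the single step $\ell$, so the dimension vectors of the $U_k$ are completely forced:  $\dim(U_k)_{i_\ell}=v_\ell$ for $\ell>k$ and $\dim(U_k)_{i_\ell}=0$ for $\ell\le k$.  Because each required dimension is either the full ambient one or zero, the underlying $\FF_{d_{i_\ell}}$-subspaces of $U_k$ are themselves forced: $(U_k)_{i_\ell}=U_{i_\ell}$ when $\ell>k$ and $(U_k)_{i_\ell}=0$ otherwise.  Once one checks that this prescription is closed under all structure maps $\varphi_\alpha$, both existence and uniqueness follow at once, and the subquotients are automatically concentrated at $i_k$ with the correct multiplicities.

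The closure check is where the source-adapted hypothesis enters, and it is essentially the only subtlety.  I would first establish the combinatorial fact that in any source-adapted sequence every arrow of $Q$ has the form $i_k\to i_\ell$ with $k<\ell$.  The induction is transparent:  at the moment we mutate at $i_k$, only the vertices $i_1,\ldots,i_{k-1}$ have had their incident arrows flipped, so every arrow between $i_k$ and $i_\ell$ with $\ell>k$ is unchanged from $Q$, and the assumption that $i_k$ is a source in $\mu_{i_{k-1}}\cdots\mu_{i_1}(Q)$ forces each such arrow to point out of $i_k$.  Consequently no arrow can start at a vertex in $\{i_\ell:\ell>k\}$ and end in $\{i_\ell:\ell\le k\}$, so the prescribed $U_k$ is a genuine subrepresentation.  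I do not anticipate any other real obstacle; the rest is linear-algebra bookkeeping.
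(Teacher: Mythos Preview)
Your proof is correct and follows essentially the same approach as the paper: both construct the inverse of $\pi_n$ by exploiting that a source-adapted ordering forces a unique complete flag on any representation, applied separately to $W$ and to $V/W$. Your presentation is slightly more uniform in that you isolate a single uniqueness lemma and apply it symmetrically to both halves, whereas the paper cites the uniqueness for $W$ and then redoes the argument for $V/W$ via a recursive kernel construction, but the underlying idea is identical.
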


\begin{proof} We will construct the inverse $\iota\st Gr_\bfe(V)\to \cF_{\ii,(|V|-{\bf e},{\bf e})}(V)$ of $\pi_n$. Since $\ii_0$ is a source adapted sequence in $Q$, given $W\subset V\in Gr_\bfe(V)$ there is a unique flag in $\cF_{\ii_0,\bfe}(W)$ which we denote by $(0=V_{2n}\subset V_{2n-1}\subset \cdots \subset V_n=W)$.  This gives the lower half of the flag associated to $W$ by $\iota$. 

 Now we recursively construct the remaining subobjects $V_k$, $k=0,\ldots,n-1$.  Set $V_0=V$ and suppose that $V_0\supset \cdots \supset V_{k-1}$ are already constructed for some $k\ge 1$.  Since $i_k$ is a source in the support of $V_{k-1}/V_n$, there is a unique surjective map from $V_{k-1}/V_n$ to $(v_{i_k}-e_{i_k})S_{i_k}$. Denote by $V_k\subset V_{k-1}$ the kernel of the composition $V_{k-1}\to V_{k-1}/V_n\to (v_{i_k}-e_{i_k})S_{i_k}$, so that $V_{k-1}/V_k\cong (v_{i_k}-e_{i_k})S_{i_k}$. 

Therefore, we have constructed a unique flag $0=V_{2n}\subset V_{2n-1}\subset \cdots \subset V_1\subset V_0=V$ 
in $\cF_{(\ii_0,\ii_0),(|V|-{\bf e},{\bf e})}(V)$ with $V_n=W$.  
Clearly $\pi_n\circ\iota$ and $\iota\circ\pi_n$ are identity maps respectively.
\end{proof}
 
\begin{remark}
In \cite{gls2}, Geiss, Leclerc, and Schr\"oer constructed a bijection  between flags in representations of the preprojective algebra and Grassmannians for certain quivers.  We will study such generalized bijections in terms of quantum cluster characters in our future work.
\end{remark}
 
\begin{corollary}
\label{cor:Feigin_grass}
For $\ii=(\ii_0,\ii_0)$ and $V\in \cC$, one has:
\begin{equation}
\label{eq:Feigin_grass}
X_{V,\ii}=\sum\limits_{\bfe\in\ZZ_{\ge0}^n} \langle \bfe, |V|-\bfe\rangle^{-1} |Gr_\bfe(V)| \cdot t^{(|V|-{\bf e},{\bf e})}
  \end{equation}
\end{corollary}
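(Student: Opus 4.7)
The plan is to combine the defining formula \eqref{eq:Feigin character} for $X_{V,\ii}$ with Theorem~\ref{th:flag-grass} and then simplify the scalar coefficient by exploiting the source-adapted hypothesis on $\ii_0$.

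First I would observe that Theorem~\ref{th:flag-grass} implies that the only $\bfa\in\ZZ_{\ge 0}^{2n}$ for which $\cF_{\ii,\bfa}(V)\ne\emptyset$ are those of the form $\bfa=\bfa_\bfe:=(|V|-\bfe,\bfe)$ for some $\bfe\in\ZZ_{\ge 0}^n$ with $\bfe\le|V|$, and in that case $|\cF_{\ii,\bfa_\bfe}(V)|=|Gr_\bfe(V)|$. Hence the sum in \eqref{eq:Feigin character} collapses to a sum indexed by such $\bfe$.

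Next I would convert the iterated monomial $t_1^{a_1}\cdots t_{2n}^{a_{2n}}$ appearing in \eqref{eq:Feigin character} into the symmetric monomial $t^\bfa$ using the identity
$$t^\bfa=\prod_{1\le k<\ell\le 2n}\chi(|S_{i_k}|,|S_{i_\ell}|)^{a_ka_\ell/2}\,t_1^{a_1}\cdots t_{2n}^{a_{2n}}$$
(as used in the proof of Proposition~\ref{pr:primitive actions proofs}), where $\chi(|U|,|W|)=\langle U,W\rangle\langle W,U\rangle$. Combining this with the prefactor of \eqref{eq:Feigin character}, the scalar sitting in front of $|Gr_\bfe(V)|\,t^{\bfa_\bfe}$ becomes $\prod_{1\le k<\ell\le 2n}\langle S_{i_\ell},S_{i_k}\rangle^{-a_ka_\ell}$ after the $\langle S_{i_k},S_{i_\ell}\rangle$ factors cancel.

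The crucial step is to simplify this scalar by splitting the range into three subranges: (1) $1\le k<\ell\le n$, (2) $1\le k\le n<\ell\le 2n$, and (3) $n<k<\ell\le 2n$. In cases~(1) and (3) the indices $i_k,i_\ell$ are two distinct entries of $\ii_0$ in source-adapted order, so no arrow of $(Q,\bfd)$ goes from $i_\ell$ to $i_k$; thus $\Hom(S_{i_\ell},S_{i_k})=\Ext^1(S_{i_\ell},S_{i_k})=0$ and higher Exts vanish by hereditariness, giving $\langle S_{i_\ell},S_{i_k}\rangle=1$. Hence these two subranges contribute trivially. In case~(2), writing $\ell=j+n$ so that $a_\ell=e_{i_j}$ and $a_k=v_{i_k}-e_{i_k}$, the contribution is
$$\prod_{1\le k,j\le n}\langle S_{i_j},S_{i_k}\rangle^{-(v_{i_k}-e_{i_k})\,e_{i_j}},$$
which by bilinearity of $\langle\cdot,\cdot\rangle$ equals $\langle\bfe,|V|-\bfe\rangle^{-1}$. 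Reassembling the three pieces produces \eqref{eq:Feigin_grass}.

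The main (and essentially only) obstacle is bookkeeping in the three-way split; the real content is the vanishing of the ``intra-half'' contributions (1) and (3), which rests exactly on the source-adapted hypothesis on $\ii_0$.
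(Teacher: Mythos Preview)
Your proposal is correct and follows essentially the same route as the paper's own proof: rewrite \eqref{eq:Feigin character} in terms of the based monomials $t^\bfa$, use source-adaptedness of $\ii_0$ to kill the two ``intra-half'' subproducts, collapse the cross product to $\langle\bfe,|V|-\bfe\rangle^{-1}$, and invoke Theorem~\ref{th:flag-grass} to replace $|\cF_{\ii,(|V|-\bfe,\bfe)}(V)|$ by $|Gr_\bfe(V)|$. One small quibble: the fact that only $\bfa$ of the form $(|V|-\bfe,\bfe)$ contribute is not a consequence of Theorem~\ref{th:flag-grass} itself but of the grading constraint $\sum_k a_k|S_{i_k}|=|V|$ together with $\ii=(\ii_0,\ii_0)$ being repetition-free on each half; this is how the paper phrases it, and it is the cleaner justification.
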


\begin{proof}
By definition \eqref{eq:Feigin character} of $X_{V,\ii}$ we have
\begin{equation}
\label{eq:Feigin_flag}
X_{V,\ii}=\sum\limits_{\bfa\in\ZZ_{\ge0}^{2n}} \prod\limits_{k<\ell}\left(\frac{\langle S_{i_k},S_{i_\ell}\rangle}{\langle S_{i_\ell},S_{i_k}\rangle}\right)^{\half a_ka_\ell}|\cF_{\ii,\bfa}(V)|\cdot t_1^{a_1}\cdots t_m^{a_m}=\sum_{\bfa\in\ZZ_{\ge0}^m} \prod\limits_{k<\ell}\langle S_{i_\ell},S_{i_k}\rangle^{-a_ka_\ell}|\cF_{\ii,\bfa}(V)|\cdot t^{\bfa}\ ,
\end{equation}
where $t^\bfa=\prod\limits_{k<\ell}(\langle S_{i_k},S_{i_\ell}\rangle\langle S_{i_\ell},S_{i_k}\rangle)^{\half a_k a_\ell}
t_1^{a_1}\cdots t_m^{a_m}$ is a basis element of the based quantum torus ${\mathcal L}_\ii$.  Note that $|\cF_{\ii,\bfa}(V)|=0$ 
unless $|V|=\sum\limits_{k=1}^{2n} a_k |S_{i_k}|$ and thus we will restrict the sum to these $\bfa\in\ZZ_{\ge0}^{2n}$.

Since $\ii=(\ii_0,\ii_0)$, we have $\langle S_{i_\ell},S_{i_k}\rangle=1$ if $1\le k<\ell\le n$ or $1\le k-n<\ell-n\le n$. Therefore,
$$\prod\limits_{k<\ell}\langle S_{i_\ell},S_{i_k}\rangle^{-a_ka_\ell}=\prod\limits_{1\le k<\ell\le 2n}\langle a_\ell S_{i_\ell},a_k S_{i_k}\rangle^{-1}=\langle \sum\limits_{\ell=n+1}^{2n} a_\ell S_{i_\ell}, \sum\limits_{k=1}^n a_k S_{i_k}\rangle^{-1}=\langle \bfe, |V|-\bfe\rangle^{-1}\ ,$$
where ${\bf e}=\sum\limits_{\ell=n+1}^{2n} a_\ell S_{i_\ell}$ determines $\bfa=(|V|-\bfe,\bfe)$.  Combining with Theorem \ref{th:flag-grass} we obtain: 
$$X_{V,\ii}=\sum\limits_{{\bf e}\in\ZZ_{\ge0}^n} \langle \bfe, |V|-\bfe\rangle^{-1}|\cF_{\ii,(|V|-\bfe,\bfe)}(V)|\cdot t^{(|V|-\bfe,\bfe)}=\sum\limits_{\bfe\in\ZZ_{\ge0}^n} \langle \bfe, |V|-\bfe\rangle^{-1} |Gr_\bfe(V)|\cdot  t^{(|V|-{\bf e},{\bf e})}\ .$$
This proves \eqref{eq:Feigin_grass}. \end{proof}


\subsection{{\texorpdfstring{$({\bf i}_0,\ii_0)$}{({\bf i}_0,\ii_0)}}-Character is Quantum Cluster Character: conclusion of the proof}
\label{sec:char=char}  
Denote by $(\tilde Q,\tilde \bfd)$ the valued quiver on $2n$ vertices given by \eqref{eq:default tilde Q} with the ``minus" sign, that is,  
$B_{\tilde Q}^- 
=\begin{pmatrix}
B_Q & I_n\\
-I_n & B_Q\\
\end{pmatrix}$, where $I_n$ is the $n\times n$ identity matrix. 

\begin{proposition}\label
{pr:Feigin_qcc}
Let $\ii=(\ii_0,\ii_0)$ be twice a source adapted sequence.  Then
\begin{equation}
\label{eq:Feigin qcc}
\widehat{\rho_\ii^{-1}\varphi_\ii}(X_{\ii,V})=\tilde X_V\ ,
\end{equation} 
for any $V\in \rep_\FF(Q,\bfd)$,  where $\tilde X_V$ stands for the quantum cluster character attached to $\tilde {\mathcal C}=\rep_\FF(\tilde Q,\tilde \bfd)$ with $\ex=\{1,\ldots,n\}$ 
(and $V$ is regarded as an object of $\tilde {\mathcal C}$). 
\end{proposition}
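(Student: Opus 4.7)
I would apply the change-of-monomial isomorphism $\hat\varphi:=\widehat{\rho_\ii^{-1}\varphi_\ii}$ termwise to the Grassmannian expansion of $X_{V,\ii}$ furnished by Corollary~\ref{cor:Feigin_grass} and match the result against the defining series \eqref{eq:character-Gr refined} of $\tilde X_V$.  Since $\hat\varphi(t^\bfa)=X^{\rho_\ii^{-1}\varphi_\ii(\bfa)}$ by Lemma~\ref{le:mon_change}, the coefficients $\langle\bfe,|V|-\bfe\rangle^{-1}|Gr_\bfe(V)|$ are carried across without change, and all the work lies in tracking the exponents through the lattice map $\rho_\ii^{-1}\varphi_\ii\colon\ZZ^{2n}\to\cK(\tilde\cC)=\ZZ^{2n}$.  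The match with the Euler--Ringel coefficients in $\tilde X_V$ then uses the fact that $\tilde\Ext^i(V,W)=\Ext^i(V,W)$ for $V,W\in\cC\subset\tilde\cC$, since any extension in $\tilde\cC$ of objects supported on the first $n$ vertices is automatically supported there.

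The heart of the proof is the pair of basis-level lattice identities, for $k\in\{1,\ldots,n\}$:
\begin{equation}
\label{eq:key-identities-plan}
\rho_\ii^{-1}\varphi_\ii(\varepsilon_k) = -\,{}^*|S_{i_k}|_{\tilde\cC}, \qquad \rho_\ii^{-1}\varphi_\ii(\varepsilon_{n+k}) = -|S_{i_k}|^*_{\tilde\cC},
\end{equation}
under the natural identification of $\ZZ^{2n}$ with $\cK(\tilde\cC)$.  For the first identity, repetition-freeness of $\ii_0$ gives $k^-=0$ for $k\le n$, and \eqref{eq:rhopm} shows that $\rho_\ii$ is the identity on the first $n$ coordinates, so \eqref{eq:phi} collapses to $\rho_\ii^{-1}\varphi_\ii(\varepsilon_k)=-\varepsilon_k-\sum_{\ell<k}a_{i_\ell,i_k}\varepsilon_\ell$.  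Source-adaptedness of $\ii_0$ ensures $b_{i_\ell,i_k}\ge 0$ whenever $\ell<k$, hence $-a_{i_\ell,i_k}=[b_{i_\ell,i_k}]_+$, and the right-hand side is recognized as $-{}^*|S_{i_k}|_\cC=-{}^*|S_{i_k}|_{\tilde\cC}$ (the extra block of $B^-_{\tilde Q}$ has the wrong sign to affect the ${}^*$-duality on classes supported in the first $n$ vertices).  The second identity is more delicate: here $(n+k)^-=k$ produces a $-\varepsilon_k$ term in $\varphi_\ii(\varepsilon_{n+k})$ mixing the two halves of $\ii$, and $\rho_\ii^{-1}$ now acts non-trivially on the last $n$ coordinates.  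A careful assembly of all terms, using again source-adaptedness, the symmetry $i_{n+k}=i_k$, and the explicit form of the off-diagonal $-I_n$ block of $B^-_{\tilde Q}$, produces $-|S_{i_k}|^*_\cC+\tilde\varepsilon_{n+i_k}$; the extra $+\tilde\varepsilon_{n+i_k}$ is precisely the shift $|S_{i_k}|^*_\cC-|S_{i_k}|^*_{\tilde\cC}$ contributed by the new arrows.

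Granted \eqref{eq:key-identities-plan}, $\ZZ$-linearity applied to the decomposition
$$(|V|-\bfe,\bfe)=\sum_{k=1}^n(|V|-\bfe)_{i_k}\,\varepsilon_k+\sum_{k=1}^n\bfe_{i_k}\,\varepsilon_{n+k}$$
yields $\rho_\ii^{-1}\varphi_\ii(|V|-\bfe,\bfe) = -\bfe^*_{\tilde\cC}-{}^*(|V|-\bfe)_{\tilde\cC}$, and substituting into the Grassmannian formula from Corollary~\ref{cor:Feigin_grass} reproduces exactly the series \eqref{eq:character-Gr refined} for $\tilde X_V$, establishing the proposition.

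The main obstacle is verifying the second identity in \eqref{eq:key-identities-plan}: one must invert the block-triangular action of $\rho_\ii$ on the span of $\varepsilon_{n+1},\ldots,\varepsilon_{2n}$ and show that the $-\varepsilon_k$ contribution from $(n+k)^-=k$ in $\varphi_\ii(\varepsilon_{n+k})$ conspires with these corrections to recover the $*$-duality of the extended category $\tilde\cC$ (rather than that of $\cC$), with the crucial $+\tilde\varepsilon_{n+i_k}$ term produced precisely by the off-diagonal block of $B^-_{\tilde Q}$.
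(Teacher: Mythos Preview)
Your approach is essentially the same as the paper's: apply $\widehat{\rho_\ii^{-1}\varphi_\ii}$ termwise to the Grassmannian expansion of Corollary~\ref{cor:Feigin_grass}, reduce everything to the pair of lattice identities \eqref{eq:key-identities-plan}, and read off $\tilde X_V$. The paper packages these identities as Lemma~\ref{le:phii0i0}, stating them in the compact form $\rho_\ii^{-1}\varphi_\ii(\mathbf f,\mathbf e)=-(\mathbf e,0)^*-{}^*(\mathbf f,0)$.

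One small efficiency you might adopt from the paper: rather than inverting the block-triangular map $\rho_\ii$ and then composing with $\varphi_\ii$, the paper verifies the second identity by computing $\rho_\ii(\varepsilon_k^*)$ directly and checking that it equals $-\varphi_\ii(\varepsilon_{n+k})$. Since $\rho_\ii$ is given explicitly on basis vectors by \eqref{eq:rhopm}, this avoids the inversion step you flag as the main obstacle and makes the bookkeeping for the $\varepsilon_{n+k}$ correction entirely mechanical.
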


\begin{proof} In what follows, we identify the Grothendieck group $\cK(\tilde Q,\tilde \bfd)$ with $\ZZ^{2n}$ via $|S_i|\mapsto \varepsilon_i$ for $i=1,\ldots,2n$.
We need the following result.

\begin{lemma} 
\label{le:phii0i0}
Let $\ii=(\ii_0,\ii_0)$. Then, in the notation \eqref{eq:phi} one has:   
$$\rho_\ii^{-1}\varphi_\ii({\bf f},\bfe)=-(\bfe,0)^*-{}^*({\bf f},0)$$
for all ${\bf f},{\bf e}\in \ZZ^n$.

\end{lemma}

\begin{proof} It suffices to show that 
\begin{equation}
\label{eq:rhoek}
\rho_\ii^{-1}\varphi_\ii(\varepsilon_k)=
\begin{cases}
-{}^*\varepsilon_k & \text{if $1\le k\le n$}\\ 
-\varepsilon_{k-n}^* & \text{if $n+1\le k\le 2n$}
\end{cases}
\end{equation}
for $k=1,\ldots,2n$, 
where the elements ${}^*\varepsilon_k$, $\varepsilon_k^*$ are defined by \eqref{eq:star e}. 

Indeed, the choice of $B_{\tilde Q}^-$ implies that the elements ${}^*\varepsilon_k$, $\varepsilon_k^*$, $k=1,\ldots,n$ are equal to:
$${}^*\varepsilon_k=\varepsilon_k-\sum_{\ell=1}^{2n} [b_{\ell k}]_+\varepsilon_\ell=\varepsilon_k+\sum_{\ell=1}^{k-1} a_{i_\ell i_k} \varepsilon_\ell,~\varepsilon_k^*=\varepsilon_k-\sum_{\ell=1}^{2n} [-b_{\ell k}]_+\varepsilon_\ell=\varepsilon_k-\varepsilon_{k+n}+\sum_{\ell=k+1}^n a_{i_\ell i_k} \varepsilon_\ell$$
because $b_{\ell k}=\sgn(\ell-k)a_{i_\ell i_k}$ if $1\le k,\ell\le n$ and 
$b_{\ell,k}=-\delta_{\ell-n,k}$ if $n<\ell\le 2n$, $k=1,\ldots,n$. Therefore, using Lemma \ref{le:rho phi}, we obtain $\rho_\ii({}^*\varepsilon_k)={}^*\varepsilon_k$ and:
$$\rho_\ii(\varepsilon_k^*)=\varepsilon_k-\rho_\ii(\varepsilon_{k+n})+\sum_{\ell=k+1}^n a_{i_\ell i_k} \varepsilon_\ell=\varepsilon_k-\varepsilon_{k+n}+\sum_{\ell=n+1}^{k+n} a_{i_\ell i_k} \varepsilon_\ell+\sum_{\ell=k+1}^n a_{i_\ell i_k} \varepsilon_\ell$$
$$=\varepsilon_k+\varepsilon_{k+n}+\sum_{\ell=k+1}^{k+n-1} a_{i_\ell i_k} \varepsilon_\ell \ .
$$

Furthermore, by definition \eqref{eq:phi} one has:
$$\varphi_\ii(\varepsilon_k)=
\begin{cases}
-\varepsilon_k -\sum\limits_{\ell=1}^{k-1} a_{i_\ell i_k} \varepsilon_\ell & \text{if $1\le k\le n$}\\ 
-\varepsilon_k-\varepsilon_{k-n} -\sum\limits_{\ell=k-n+1}^{k-1} a_{i_\ell i_k} \varepsilon_\ell & \text{if $n+1\le k\le 2n$}
\end{cases}=\begin{cases}
-\rho_\ii({}^*\varepsilon_k) & \text{if $1\le k\le n$}\\ 
-\rho_\ii(\varepsilon_{k-n}^*) & \text{if $n+1\le k\le 2n$}
\end{cases}$$
which proves \eqref{eq:rhoek}.

The lemma is proved.
\end{proof}

Now we are ready to finish the proof of Proposition \ref{pr:Feigin_qcc}. Applying $\widehat{\rho_\ii^{-1}\varphi_\ii}$ to  \eqref{eq:Feigin_grass} and taking into account that $\widehat{\rho_\ii^{-1}\varphi_\ii}(t^{\bf a})=X^{\rho_\ii^{-1}\varphi_\ii({\bf a})}$ for ${\bf a}\in \ZZ^{2n}$ by Lemma \ref{le:mon_change}, we obtain using Lemma \ref{le:phii0i0}: 
$$\widehat{\rho_\ii^{-1}\varphi_\ii}(X_{V,\ii})=\sum\limits_{\bfe\in\ZZ_{\ge0}^n} \langle \bfe, |V|-\bfe\rangle^{-1} |Gr_\bfe(V)|\cdot X^{\rho_\ii^{-1}\varphi_\ii(|V|-\bfe,\bfe)}$$
$$=\sum\limits_{\bfe\in\ZZ_{\ge0}^n} \langle \bfe, |V|-\bfe\rangle^{-1} |Gr_\bfe(V)| \cdot X^{-(\bfe,0)^*-{}^*(|V|-\bfe,0)}=\tilde X_V\ .$$
Proposition~\ref{pr:Feigin_qcc} is proved.
\end{proof}


Therefore, Theorem \ref{th:character=cluster character} is proved. \endproof



\section{Quantum groups and quantum cluster algebras}

\subsection{Quantum groups, representations, and generalized minors}
\label{subsec:Quantum groups, representations, and generalized minors}

Let  $A=(a_{ij})$ be an $I\times I$ symmetrizable Cartan matrix with symmetrizing matrix $D=diag(d_i,i\in I)$. Denote by $U_q(\gg)$ the quantized enveloping algebra associated to $\gg$. It is generated over $\kk$ by $E_i,F_i,K_i,K_i^{-1}$, $i\in I$ subject to the relations (we retain notation of section \ref{sec:Special compatible pairs}):

$\bullet$ For each $i\in I$ the quadruple $E_i,F_i,K_i,K_i^{-1}$ are standard generators of $U_{q_i}(sl_2)$, where $q_i=q^{d_i}$;

$\bullet$ For each $i\in I$  the element $K_i$ acts by conjugation on each graded component $U_q(\gg)_\gamma$, $\gamma\in \cQ$ with the eigenvalue $q_i^{(\alpha_i^\vee,\gamma)}=q^{(\alpha_i,\gamma)}$;

$\bullet$ Quantum Serre relations for $i\ne j$: 
$\sum\limits_{k=0}^{1-a_{ij}} (-E_i)^{[k]}E_jE_i^{[1-a_{ij}-k]}=\sum\limits_{k=0}^{1-a_{ij}} (-F_i)^{[k]}F_jF_i^{[1-a_{ij}-k]}=0$
(Here  $x_i^{[\ell]}:=\frac{1}{[\ell]_{q_i}!}x_i^\ell$ is an $i$-th divided power, where $[\ell]_{q_i}!=[1]_{q_i}\cdots [\ell]_{q_i}$, and $[k]_{q_i}=\frac{q_i^k-q_i^{-k}}{q_i-q_i^{-1}}$).

Let $U_q(\bb_+)^*$ be the algebra generated by $U_+^*$ and $v_\lambda$, $\lambda\in  \cP$ (in the notation of Section \ref{sec:Special compatible pairs}) subject to the relations:
\begin{equation}
\label{eq:basis relations Uqb*}
v_\lambda x_i=q^{-(\alpha_i,\lambda)}x_iv_\lambda\quad\text{and}\quad v_\lambda v_\mu=v_{\lambda+\mu}
\end{equation}
for $i\in I$,  $\lambda,\mu\in \cP$, where we abbreviated $x_i:=[S_i]^*\in U_+^*$. 

It is well-known that $U_q(\bb_+)^*$ is a finite dual Hopf algebra of the quantized universal enveloping algebra $U_q(\bb_+)$, where $\bb_+$ is a Borel subalgebra of $\gg$. The algebra $U_q(\bb_+)^*$ is naturally graded by $\cP$ via $|v_\lambda|=\lambda$ and $|x_i|=-\alpha_i$ for $i\in I$.  Therefore, for a homogeneous $x\in U_+^*$ and $\lambda\in \cP$ one has:
\begin{equation}
\label{eq:basis relations Uqb* general}
v_\lambda x=q^{(|x|,\lambda)}xv_\lambda\ .
\end{equation}

The following result is well-known (see e.g., \cite[Section 3]{ber}).

\begin{lemma}
\label{le:action}
There is an action of $U_q(\gg)$ on $U_q(\bb_+)^*$ given by the following formulas (for $i\in I$):
\begin{itemize}
\item $K_i(y)=q^{(\alpha_i,|y|)}y=q_i^{(\alpha_i^\vee,|y|)}y$ for all homogeneous elements $y\in U_q(\bb_+)^*$.
\item $F_i(y)=\frac{x_i y-K_i^{-1}(y)x_i}{q_i-q_i^{-1}}$ for all $y\in U_q(\bb_+)^*$.
\item $E_i$ is a $K_i$-derivation of $U_q(\bb_+)^*$ determined by:
\begin{itemize}
\item $E_i(x_j)=\delta_{ij}$ and $E_i(v_\lambda)=0$ for all $i,j\in I$, $\lambda\in \cP$.
\item the Leibniz rule: $E_i(xy)=E_i(x)K_i(y)+xE_i(y)$ for all $x,y\in U_q(\bb_+)^*$.
\end{itemize}
\end{itemize}
\end{lemma}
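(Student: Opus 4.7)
\smallskip\noindent
\textbf{Proof proposal.} The plan is to treat this as a direct verification: first build the three operators as well-defined linear endomorphisms of $U_q(\bb_+)^*$, and then check that they satisfy the defining relations of $U_q(\gg)$. The third bullet point only specifies $E_i$ on generators together with a twisted Leibniz rule, so the substantive content in its well-definedness is consistency with the relations of $U_q(\bb_+)^*$. Those relations are the quantum Serre relations among the $x_j=[S_j]^*$ (which hold inside $U_+^*$), together with the commutation relations \eqref{eq:basis relations Uqb*}. Compatibility with $v_\lambda x_j=q^{-(\alpha_j,\lambda)}x_jv_\lambda$ is a two-line check using $E_i(v_\lambda)=0$, $K_i(v_\lambda)=q^{(\alpha_i,\lambda)}v_\lambda$ and $K_i(x_j)=q^{-(\alpha_i,\alpha_j)}x_j$; compatibility with the Serre relations reduces, by iterating the Leibniz rule, to a purely combinatorial identity in two variables $x_i,x_j$ (as $K_i$-twisted derivations) which is the standard $q$-Leibniz identity underlying the Lusztig $q$-binomial formulas. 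The operator $F_i$ needs no separate well-definedness check since its defining formula $F_i(y)=(x_iy-K_i^{-1}(y)x_i)/(q_i-q_i^{-1})$ only uses multiplication in $U_q(\bb_+)^*$ and the (already-defined) operator $K_i^{-1}$.

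Next I would verify the Cartan-type commutation relations $K_iE_jK_i^{-1}=q^{(\alpha_i,\alpha_j)}E_j$ and $K_iF_jK_i^{-1}=q^{-(\alpha_i,\alpha_j)}F_j$ by the grading argument: $E_i$ increases the $\cP$-grading by $\alpha_i$ and $F_i$ decreases it by $\alpha_i$, so both sides act on a homogeneous $y$ by the same scalar. The compatibility with the quadruple $(E_i,F_i,K_i,K_i^{-1})$ generating $U_{q_i}(sl_2)$ then reduces to verifying the $\mathfrak{sl}_2$-style commutator relation, which I would do in the next step.

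The bulk of the work is the commutator identity
\[
[E_i,F_j]=\delta_{ij}\,\frac{K_i-K_i^{-1}}{q_i-q_i^{-1}}.
\]
Here the strategy is to plug the definition of $F_j$ into $E_i\circ F_j$, apply the $K_i$-twisted Leibniz rule to each summand, and then reorganize. The key identity to exploit is $E_i(x_j)=\delta_{ij}$ together with the fact that $E_i$ and $K_i^{-1}$ commute up to the scalar $q^{(\alpha_i,|y|)}$ already accounted for by the weight grading. After canceling the terms containing $xE_i(y)x_j$ against one another, what survives is $\delta_{ij}(K_i(y)-K_i^{-1}(y))/(q_i-q_i^{-1})$, as desired. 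I expect the hardest bookkeeping here to be tracking the twist scalars correctly when $i\ne j$ and confirming that every surviving term does cancel off-diagonally.

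Finally, the quantum Serre relations for the action follow from the Serre relations on generators by induction on degree, using the twisted Leibniz rule to push $E_i^{[k]}E_jE_i^{[1-a_{ij}-k]}$ through an arbitrary product: the alternating sum annihilates the generators $x_\ell$ and $v_\lambda$ by a direct computation (this is the dual of the Serre relation in $U_+^*$), and the Leibniz step preserves this annihilation. The analogous statement for $F_i$ follows either by the same argument applied to the formula defining $F_i$, or more economically by transport along the antipode-like symmetry between $E_i$ and $F_i$ built into the Heisenberg-double presentation. The main obstacle throughout is purely computational: keeping the weight twist factors straight in the $[E_i,F_j]$ step, which is where a less careful calculation is most likely to leak an extra factor of $q^{(\alpha_i,\alpha_j)}$.
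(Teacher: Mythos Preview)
The paper does not prove this lemma at all: it is introduced with ``The following result is well-known (see e.g., \cite[Section 3]{ber})'' and stated without argument. So there is no paper proof to compare against; your proposal is filling in what the paper deliberately omits by citation.

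Your direct-verification strategy is the standard one and is essentially correct. One point deserves more care than you give it: the Serre relations for the operators $E_i$. You propose to check that the Serre combination $\sum_k (-E_i)^{[k]}E_jE_i^{[1-a_{ij}-k]}$ annihilates the generators $x_\ell,v_\lambda$ and then ``the Leibniz step preserves this annihilation.'' But a composite of $K_i$-twisted derivations is not itself a twisted derivation, so vanishing on generators does not automatically propagate to products via a single Leibniz rule; you would need the higher $q$-Leibniz expansion for $E_i^{[k]}$ and a genuine cancellation argument. A cleaner route, and the one implicit in the cited reference, is to observe that $E_i$ is the adjoint (under the evaluation pairing between $U_+$ and $U_+^*$) of right multiplication by $[S_i]$ in $U_+$, so the Serre relations for the $E_i$ follow immediately from those already holding among the $[S_i]$ in $U_+$. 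The same adjointness also gives the $[E_i,F_j]$ identity with less bookkeeping than the direct computation you outline, though your approach there is fine.
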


It is easy to see that $V_\lambda:=U_q(\gg)(v_\lambda)\subset U_q(\bb_+)^*$ is a simple $U_q(\gg)$-module with the highest weight $\lambda$ 
for each $\lambda\in \cP$.  Denote by $\cP^+\subset \cP$ the cone of {\it dominant integral weights}, i.e., all $\lambda\in \cP$ such that 
$(\alpha_i^\vee,\lambda)\in \ZZ_{\ge 0}$ for all $i\in I$.

In particular, 
if $\lambda\in \cP^+$,  then for each $w\in W$ the module $V_\lambda\subset U_q(\bb_+)^*$ 
contains an extremal vector $v_{w\lambda}$ which is the unique (up to a scalar) element of $V_\lambda$ such that $|v_{w\lambda}|=w\lambda$. This $v_{w\lambda}$ can be computed recursively by:
\begin{equation}
\label{eq:extremal recursion}
v_{w\lambda}=(F_iK_i^\half)^{[N]}(v_{s_iw\lambda})
\end{equation}
for any $i$ such that $\ell(s_iw)=\ell(w)-1$, where $N=(\alpha_i^\vee,s_iw\lambda)$ and $\ell(w)$ is the the Coxeter length $w$.

Assume that $\kk$ has an involutive automorphism $c\mapsto \overline c$ such that $\overline {q^\half}=q^{-\half}$.  We extend it uniquely to $U_+^*$ by $\overline x_i=x_i$ for $i\in I$ and 
$$\overline {x\cdot y}=\overline y\cdot \overline x$$
for $x,y\in U_+^*$.  We further extend the bar-involution to $U_q(\bb_+)^*$ uniquely by
$$\overline {v_\lambda\cdot x}=x\cdot v_\lambda$$
for each $\lambda\in \cP$, $x\in U_+^*$.

The following fact is a direct consequence of Lemma~\ref{le:action}. 
\begin{lemma} 
\label{lemma:bar commutation}
For each $y\in U_q(\bb_+)^*$ and $i\in I$ one has:
$$\overline {F_iK_i^{\half}(y)}=F_iK_i^{\half}(\overline y)\quad\text{and}\quad\overline {K_i^{-\half}E_i(y)}=K_i^{-\half}E_i(\overline y)\ .$$
\end{lemma}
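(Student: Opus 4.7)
The plan is to exploit two features of the bar-involution: its anti-multiplicativity, and the fact that it preserves the $\cP$-grading on $U_q(\bb_+)^*$. The grading preservation holds because the generators $x_i$ and $v_\lambda$ are both bar-invariant (setting $x=1$ in $\overline{v_\lambda x}=xv_\lambda$ gives $\overline{v_\lambda}=v_\lambda$), and weights add under multiplication in either order. Consequently, for any homogeneous $y$ of weight $\mu$, the scalar identity $K_i^{\pm\half}(y) = q^{\pm\half(\alpha_i,\mu)}y$ combined with $\overline{q^\half}=q^{-\half}$ yields the key fact
\[
\overline{K_i^{\pm\half}(y)} = K_i^{\mp\half}(\overline y).
\]

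With this in hand, the first identity reduces to a direct calculation. Using $F_i(z) = (q_i-q_i^{-1})^{-1}(x_iz - K_i^{-1}(z)x_i)$ and $K_i^{-1}K_i^\half = K_i^{-\half}$, we have
\[
F_iK_i^\half(y) = (q_i-q_i^{-1})^{-1}\bigl(x_i K_i^\half(y) - K_i^{-\half}(y)x_i\bigr).
\]
Applying the bar-involution, reversing products, using $\overline{x_i}=x_i$, the key fact above, and $\overline{q_i-q_i^{-1}}=-(q_i-q_i^{-1})$, a sign cancellation delivers precisely $F_iK_i^\half(\overline y)$.

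The second identity is more delicate because $E_i$ is only defined recursively via the Leibniz rule $E_i(xy) = E_i(x)K_i(y) + xE_i(y)$. I would proceed by induction on word length in the algebra generators $x_j$ and $v_\lambda$. The base cases are immediate: $E_i(x_j)=\delta_{ij}$ is a bar-invariant scalar in the weight-zero component (so $K_i^{-\half}$ acts trivially), while $E_i(v_\lambda)=0$. For the inductive step, observe that since $K_i^{\pm\half}$ is a scalar on each weight space and weights add, $K_i^{-\half}$ is an algebra homomorphism on homogeneous products. Combining this with the Leibniz rule gives
\[
K_i^{-\half}E_i(xy) = K_i^{-\half}E_i(x)\cdot K_i^\half(y) + K_i^{-\half}(x)\cdot K_i^{-\half}E_i(y).
\]
Applying bar, reversing each product, and invoking the inductive hypothesis together with $\overline{K_i^{\pm\half}(\cdot)} = K_i^{\mp\half}(\overline{\cdot})$ yields
\[
K_i^{-\half}(\overline y)\cdot K_i^{-\half}E_i(\overline x) + K_i^{-\half}E_i(\overline y)\cdot K_i^\half(\overline x),
\]
which is exactly what a second application of the Leibniz rule to $E_i(\overline y\cdot\overline x)=E_i(\overline{xy})$ produces after $K_i^{-\half}$ is distributed across the products.

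The main obstacle is purely the bookkeeping in the inductive step: one must correctly track how $K_i^{\pm\half}$ distributes across weight-homogeneous products, how bar reverses factor order, and how the two Leibniz expansions (before and after barring) line up term by term. Once the grading-preservation of bar is established, everything else is formal manipulation.
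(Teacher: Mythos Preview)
Your proof is correct and is precisely the direct verification the paper has in mind; the paper itself gives no argument beyond declaring the lemma ``a direct consequence of Lemma~\ref{le:action}.'' You have simply written out in full the computation implicit in that remark, and all the bookkeeping (grading preservation under bar, the sign cancellation for $F_iK_i^{\half}$, and the Leibniz-rule induction for $K_i^{-\half}E_i$) is handled correctly.
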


Furthermore, we define the {\it generalized quantum minor} $\Delta_{w\lambda}\in U_+^*$ by 
$$\Delta_{w\lambda}:=q^{-\half (w\lambda-\lambda,\lambda)}v_{w\lambda}\cdot v_\lambda^{-1}\ .$$

\begin{lemma}
For any dominant $\lambda\in\cP$ and $w\in W$, both $v_{w\lambda}$ and $\Delta_{w\lambda}$ are bar-invariant.
\end{lemma}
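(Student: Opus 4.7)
The plan is a three-step bootstrap. First I would establish that $v_\lambda$ is bar-invariant for every $\lambda\in\cP$, which is immediate: specializing $\overline{v_\lambda\cdot x}=x\cdot v_\lambda$ at $x=1$ gives $\overline{v_\lambda}=v_\lambda$. Next I would show bar-invariance of each extremal vector $v_{w\lambda}$ by induction on $\ell(w)$: the base case $w=e$ is the previous observation, and for the inductive step I pick $i$ with $\ell(s_iw)=\ell(w)-1$ and apply \eqref{eq:extremal recursion}. By Lemma~\ref{lemma:bar commutation} the operator $F_iK_i^\half$ commutes with bar; since $[N]_{q_i}!$ is symmetric under $q_i\leftrightarrow q_i^{-1}$, the divided power $(F_iK_i^\half)^{[N]}$ also commutes with bar, so bar-invariance is inherited from $v_{s_iw\lambda}$.

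To handle $\Delta_{w\lambda}$, I would apply bar to the defining expression; anti-multiplicativity of the extended bar on $U_q(\bb_+)^*$ combined with the bar-invariance already obtained for $v_\lambda$ and $v_{w\lambda}$ yields
\[
\overline{\Delta_{w\lambda}}=q^{\half(w\lambda-\lambda,\lambda)}\,v_\lambda^{-1}v_{w\lambda}.
\]
To close, write $v_{w\lambda}=\Delta'_{w\lambda}\cdot v_\lambda$ for a unique $\Delta'_{w\lambda}\in U_+^*$ of weight $w\lambda-\lambda$ (such a representation exists because the recursion \eqref{eq:extremal recursion} keeps us inside $U_+^*\cdot v_\lambda$) and commute $v_\lambda^{-1}=v_{-\lambda}$ past $\Delta'_{w\lambda}$ via \eqref{eq:basis relations Uqb* general}. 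The resulting scalar $q^{-(w\lambda-\lambda,\lambda)}$ exactly cancels the prefactor above, yielding $\overline{\Delta_{w\lambda}}=\Delta_{w\lambda}$.

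The argument requires no deep input beyond Lemma~\ref{lemma:bar commutation} and the algebra relations of $U_q(\bb_+)^*$. The only real obstacle is the purely mechanical bookkeeping of the three $q$-power contributions—from the prefactor in the definition of $\Delta_{w\lambda}$, from the anti-multiplicative bar, and from the commutation of $v_{-\lambda}$ with a weight-$(w\lambda-\lambda)$ element—and verifying that they assemble to $q^0=1$.
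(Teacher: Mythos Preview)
Your proposal is correct and follows essentially the same approach as the paper: induction on $\ell(w)$ for $v_{w\lambda}$ using Lemma~\ref{lemma:bar commutation} and the bar-invariance of $[N]_{q_i}!$, then a direct scalar computation for $\Delta_{w\lambda}$ using anti-multiplicativity of bar and the commutation relation \eqref{eq:basis relations Uqb* general}. Your explicit factorization $v_{w\lambda}=\Delta'_{w\lambda}\cdot v_\lambda$ with $\Delta'_{w\lambda}\in U_+^*$ of degree $w\lambda-\lambda$ is exactly what the paper uses implicitly when it writes $v_{-\lambda}\cdot v_{w\lambda}=q^{(w\lambda-\lambda,-\lambda)}v_{w\lambda}\cdot v_\lambda^{-1}$; making this step explicit is actually cleaner, since \eqref{eq:basis relations Uqb* general} as stated applies to elements $x$ with $|x|\in\cQ$.
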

\begin{proof}
We will proceed by induction in the length $\ell(w)$ of $w\in W$. Indeed, for $\ell(w)=0$, i.e. $w=1$, we have nothing to prove as $\overline{v_\lambda}=v_\lambda$ by definition.  Suppose that $\ell(w)>1$, fix $i\in I$ such that $\ell(s_iw)<\ell(w)$. 
Since $\overline{[N]_{q_i}!}=[N]_{q_i}!$, we obtain by \eqref{eq:extremal recursion} and Lemma \ref{lemma:bar commutation}: 
$$\overline {v_{w\lambda}}=\overline{(F_iK_i^\half)^{[N]}(v_{s_iw\lambda})}=(F_iK_i^\half)^{[N]}(\overline{v_{s_iw\lambda}})=(F_iK_i^\half)^{[N]}(v_{s_iw\lambda})=v_{w\lambda}\ .$$
Finally, 
$$\overline {\Delta_{w\lambda}}=q^{\half (w\lambda-\lambda,\lambda)}\overline{v_{w\lambda}\cdot v_\lambda^{-1}}=q^{\half (w\lambda-\lambda,\lambda)}v_\lambda^{-1}\cdot v_{w\lambda}=q^{-\half (w\lambda-\lambda,\lambda)}v_{w\lambda}\cdot v_\lambda^{-1}= \Delta_{w\lambda}$$
because $v_\lambda^{-1}=v_{-\lambda}$, $|v_{w\lambda}|=w\lambda$, and  $v_{-\lambda}\cdot v_{w\lambda}=q^{(w\lambda-\lambda,-\lambda)} v_{w\lambda}\cdot v_\lambda^{-1}$ by \eqref{eq:basis relations Uqb* general}.
\end{proof}


The following is an analogue of \cite[(10.2)]{bz5}, it also follows from \cite[Lemma 10.3]{bz5}.
\begin{lemma}
\label{le:v_wlambda commute}
If $w,w'\in W$ such that $\ell(ww')=\ell(w)+\ell(w')$, then for any $\lambda,\mu\in \cP^+$ one has: 
$$v_{w\mu}\cdot v_{w w'\lambda}=q^{(w'\lambda-\lambda,\mu)}v_{w w'\lambda}\cdot v_{w\mu}\ .$$
\end{lemma}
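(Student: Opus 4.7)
The plan is to prove the identity by induction on $\ell(w)$.

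For the base case $w = e$, I would use the decomposition $v_{w'\lambda} = q^{\half(w'\lambda - \lambda,\,\lambda)}\Delta_{w'\lambda}\,v_\lambda$, where $\Delta_{w'\lambda}\in U_+^*$ is homogeneous of $\cP$-degree $w'\lambda - \lambda$. Commuting $v_\mu$ past $\Delta_{w'\lambda}$ via \eqref{eq:basis relations Uqb* general} picks up the factor $q^{(w'\lambda - \lambda,\mu)}$; combined with $v_\mu v_\lambda = v_\lambda v_\mu = v_{\mu+\lambda}$ (from \eqref{eq:basis relations Uqb*}) this yields $v_\mu v_{w'\lambda} = q^{(w'\lambda - \lambda,\mu)}v_{w'\lambda}v_\mu$ as desired.

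For the inductive step, I would choose $i\in I$ with $\ell(s_iw) = \ell(w)-1$ and set $\tilde w := s_iw$. A standard length argument shows that $\ell(ww') = \ell(w) + \ell(w')$ forces $\ell(\tilde w w') = \ell(\tilde w) + \ell(w')$, so the inductive hypothesis yields
$$v_{\tilde w\mu}\, v_{\tilde w w'\lambda} = q^{(w'\lambda - \lambda,\mu)}\, v_{\tilde w w'\lambda}\, v_{\tilde w\mu}.$$
Setting $N_\mu := (\alpha_i^\vee,\tilde w\mu)\geq 0$ and $N_\lambda := (\alpha_i^\vee,\tilde w w'\lambda)\geq 0$ (non-negativity holds because both $v_{\tilde w\mu}$ and $v_{\tilde w w'\lambda}$ are highest weight vectors for the $U_{q_i}(sl_2)$-triple $\langle E_i, F_i, K_i\rangle$ acting on $U_q(\bb_+)^*$), the recursion \eqref{eq:extremal recursion} gives $v_{w\mu} = (F_iK_i^\half)^{[N_\mu]}(v_{\tilde w\mu})$ and $v_{ww'\lambda} = (F_iK_i^\half)^{[N_\lambda]}(v_{\tilde w w'\lambda})$.

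The crucial technical step will be an $sl_2$-product identity: for any homogeneous highest weight vectors $a,b\in U_q(\bb_+)^*$ of $U_{q_i}(sl_2)$-weights $N_\mu, N_\lambda$ respectively,
$$(F_iK_i^\half)^{[N_\mu + N_\lambda]}(a\cdot b) \;=\; (F_iK_i^\half)^{[N_\mu]}(a)\cdot (F_iK_i^\half)^{[N_\lambda]}(b).$$
This follows by expanding the left-hand side with the $(K_i^{-\half},K_i^\half)$-skew Leibniz rule $\tilde F_i(yz) = \tilde F_i(y)K_i^\half(z) + K_i^{-\half}(y)\tilde F_i(z)$ for $\tilde F_i := F_iK_i^\half$: since $F_i^{(k)}(a) = 0$ for $k > N_\mu$ and $F_i^{(N_\mu + N_\lambda - k)}(b) = 0$ for $k < N_\lambda$, only the $k = N_\lambda$ term in the $q_i$-binomial expansion survives, and the surviving scalar $\binom{N_\mu + N_\lambda}{N_\lambda}_{q_i}$ cancels exactly against the $q_i$-factorials in the divided-power normalization, leaving the clean product above with no extra scalar. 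Applying this identity in both orderings of $a,b$ to the inductive hypothesis then immediately produces $v_{w\mu}v_{ww'\lambda} = q^{(w'\lambda - \lambda,\mu)}v_{ww'\lambda}v_{w\mu}$. The hardest part will be the careful bookkeeping of $q_i$-prefactors in the skew-Leibniz expansion to verify that the surviving scalar is symmetric under the exchange of $a$ and $b$, so that the commutation factor $q^{(w'\lambda - \lambda,\mu)}$ passes unchanged through the induction; alternatively, one can bypass this computation entirely by invoking \cite[Lemma 10.3]{bz5}.
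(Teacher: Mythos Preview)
Your proposal is correct and in fact supplies considerably more detail than the paper does: the paper gives no proof of this lemma, simply remarking that it is an analogue of \cite[(10.2)]{bz5} and follows from \cite[Lemma 10.3]{bz5}. Your induction on $\ell(w)$ is a natural self-contained argument; the key $sl_2$-product identity you state is exactly right---the $q_i$-binomial expansion of $\tilde F_i^{N_\mu+N_\lambda}(ab)$ collapses to a single term, the powers of $K_i^{\pm\half}$ acting on the lowest-weight vectors $\tilde F_i^{[N_\mu]}(a)$ and $\tilde F_i^{[N_\lambda]}(b)$ contribute $q_i^{\pm N_\mu N_\lambda/2}$ which cancel, and the resulting identity is visibly symmetric under $(a,N_\mu)\leftrightarrow(b,N_\lambda)$, so the commutation scalar passes through unchanged. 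The only point you might state more carefully is that $v_{\tilde w\mu}$ and $v_{\tilde w w'\lambda}$ are $E_i$-killed: this is the standard extremality property (an extremal vector of non-negative $\alpha_i^\vee$-weight is $sl_2$-highest for the $i$-th triple), and the non-negativity itself follows from the length conditions $\ell(s_i\tilde w)>\ell(\tilde w)$ and $\ell(s_i\tilde w w')>\ell(\tilde w w')$ rather than the other way around.
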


We conclude the section with an explicit recursion on generalized quantum minors.

\begin{proposition} For each $\lambda\in \cP^+$, $w\in W$, and $i\in I$ such that $\ell(s_iw)<\ell(w)$, one has:
\begin{equation}
\label{eq:recursion extremal explicit}
v_{w\lambda}=\frac{q_i^\frac{N}{2}}{(q_i-q_i^{-1})^N}\sum_{k=0}^N x_i^{[N-k]}\cdot  v_{s_iw\lambda}\cdot (-q_i^{-1}x_i)^{[k]}\ ,
\end{equation}
\begin{equation}
\label{eq:recursion minor explicit}
\Delta_{w\lambda}=\frac{q_i^\frac{N}{2}}{(q_i-q_i^{-1})^N}\sum_{k=0}^N (q_i^{\frac{(\alpha_i^\vee,\lambda)}{2}}x_i)^{[N-k]} \Delta_{s_iw\lambda}\cdot (-q_i^{-1-\frac{(\alpha_i^\vee,\lambda)}{2}}x_i)^{[k]}\ ,
\end{equation}
where $N=-(\alpha_i^\vee,w\lambda)$.
\end{proposition}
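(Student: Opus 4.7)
Plan: The first identity \eqref{eq:recursion extremal explicit} follows by directly expanding the recursion $v_{w\lambda}=(F_iK_i^{1/2})^{[N]}(v_{s_iw\lambda})$ from \eqref{eq:extremal recursion} via the explicit formula for $F_i$ from Lemma~\ref{le:action}; the second identity \eqref{eq:recursion minor explicit} is then a corollary of the first via the definition $\Delta_{w\lambda}=q^{-\frac{1}{2}(w\lambda-\lambda,\lambda)}v_{w\lambda}v_\lambda^{-1}$ together with the commutation relations \eqref{eq:basis relations Uqb*}.

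For the first identity, I would introduce two ``twisted multiplication'' operators on weight-homogeneous $y\in U_q(\bb_+)^*$: for $y$ of weight $\mu$, set $\mathcal{L}(y)=q_i^{(\alpha_i^\vee,\mu)/2}x_iy$ and $\mathcal{R}(y)=q_i^{-(\alpha_i^\vee,\mu)/2}yx_i$. A short check using Lemma~\ref{le:action} then shows $(q_i-q_i^{-1})F_iK_i^{1/2}=\mathcal{L}-\mathcal{R}$ as operators on weight vectors. Since $(\alpha_i^\vee,\alpha_i)=2$, tracking the weight shifts gives the $q$-commutation $\mathcal{L}\mathcal{R}=q_i^{-2}\mathcal{R}\mathcal{L}$. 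The quantum binomial theorem for $q_i^{2}$-commuting operators then expands $(\mathcal{L}-\mathcal{R})^N$ in terms of $\mathcal{L}^{N-k}\mathcal{R}^{k}$ with coefficients built from the standard $q_i$-binomial coefficients $\binom{N}{k}_{q_i}$ and a sign factor $(-1)^k q_i^{k(N-k)}$. A routine induction (using $(\alpha_i^\vee,s_iw\lambda)=N$ and subtracting $(\alpha_i^\vee,\alpha_i)=2$ at each step) computes $\mathcal{L}^{N-k}\mathcal{R}^{k}v_{s_iw\lambda}$ as an explicit $q_i$-power of the form $q_i^{N/2-k(N-k+1)}x_i^{N-k}v_{s_iw\lambda}x_i^{k}$. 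Dividing by $[N]_{q_i}!(q_i-q_i^{-1})^N$ and repackaging in terms of the divided powers $x_i^{[\ell]}$ yields \eqref{eq:recursion extremal explicit} after absorbing the cross term $q_i^{k(N-k)-k(N-k+1)}=q_i^{-k}$ into the factor $(-q_i^{-1})^k$.

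For the second identity, I would right-multiply both sides of \eqref{eq:recursion extremal explicit} by $v_\lambda^{-1}$ and push $v_\lambda^{-1}$ past $(-q_i^{-1}x_i)^{[k]}$ using the consequence $x_iv_\lambda^{-1}=q^{-(\alpha_i,\lambda)}v_\lambda^{-1}x_i$ of \eqref{eq:basis relations Uqb*}; this contributes a factor $q_i^{-k(\alpha_i^\vee,\lambda)}$ per summand, converting $(-q_i^{-1}x_i)^{[k]}v_\lambda^{-1}$ into $v_\lambda^{-1}(-q_i^{-1-(\alpha_i^\vee,\lambda)}x_i)^{[k]}$. Replacing $v_{w\lambda}v_\lambda^{-1}$ and $v_{s_iw\lambda}v_\lambda^{-1}$ by the generalized minors via the definition produces an overall prefactor $q^{(s_iw\lambda-w\lambda,\lambda)/2}=q_i^{N(\alpha_i^\vee,\lambda)/2}$ (using $s_iw\lambda-w\lambda=N\alpha_i$ and $(\alpha_i,\lambda)=d_i(\alpha_i^\vee,\lambda)$). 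This total factor $q_i^{(N-2k)(\alpha_i^\vee,\lambda)/2}$ can then be split symmetrically by distributing $q_i^{(\alpha_i^\vee,\lambda)/2}$ to each of the $N-k$ left copies of $x_i$ and $q_i^{-(\alpha_i^\vee,\lambda)/2}$ to each of the $k$ right copies, which matches the form \eqref{eq:recursion minor explicit}.

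The main obstacle is the careful bookkeeping of the various half-integer powers of $q_i$ that arise from the weight-dependent twists built into $K_i^{1/2}$, the $q^2$-binomial expansion, and the symmetrization needed to land on the clean form displayed in \eqref{eq:recursion minor explicit}; there is no conceptual difficulty, only a moderately delicate combinatorial computation.
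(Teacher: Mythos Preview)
Your proposal is correct and follows essentially the same route as the paper. The only cosmetic difference is in the packaging of the first identity: the paper proves by induction a formula for $F_i^{[N]}(y)$ in terms of $x_i^{[N-k]}\cdot K_i^{-k}(y)\cdot(-q_i^{N-1}x_i)^{[k]}$ and then converts $F_i^{[N]}$ to $(F_iK_i^{1/2})^{[N]}$ via $F_i^N=q_i^{N(N-1)/2}(F_iK_i^{1/2})^NK_i^{-N/2}$, whereas you work directly with $F_iK_i^{1/2}=\tfrac{1}{q_i-q_i^{-1}}(\mathcal L-\mathcal R)$ and invoke the $q$-binomial theorem for the $q_i^{-2}$-commuting pair $\mathcal L,\mathcal R$; these are two presentations of the same computation, and your derivation of \eqref{eq:recursion minor explicit} by pushing $v_\lambda^{-1}$ through the right-hand $x_i$'s is exactly what the paper does.
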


\begin{proof} 
This is a direct consequence of the following result.
\begin{lemma} 
For each $i\in I$, $y\in U_q(\bb_+)^*$ and $N\ge 0$ one has:
$$F_i^{[N]}(y)=(q_i-q_i^{-1})^{-N}\sum_{k=0}^N x_i^{[N-k]}\cdot K_i^{-k}(y)\cdot (-q_i^{N-1}x_i)^{[k]}\ .$$
In particular, if $y$ is a homogeneous element of $U_q(\bb_+)^*$, then 
\begin{equation}
\label{eq:FiN homogeneous}
F_i^{[N]}(y)=(q_i-q_i^{-1})^{-N}\sum_{k=0}^N x_i^{[N-k]}\cdot  y\cdot (-q_i^{N-M-1}x_i)^{[k]}\ ,
\end{equation}
where $M=(\alpha_i^\vee,|y|)$. 
\end{lemma}
\begin{proof}
We proceed by induction in $N$. If $N=0$ we have nothing to prove. If $N=1$, then the assertion follows from Lemma~\ref{le:action}.  The induction step follows easily from the binomial identity
$$q_i^{-k}\begin{bmatrix}
N\\
k
\end{bmatrix}_{q_i}+q_i^{N+1-k}\begin{bmatrix}
N\\
k-1
\end{bmatrix}_{q_i}=\begin{bmatrix}
N+1\\
k
\end{bmatrix}_{q_i}\ $$
where
$\begin{bmatrix}
N\\
k
\end{bmatrix}=\frac{[N]_{q_i}!}{[k]_{q_i}![N-k]_{q_i}!}$.
\end{proof}
  
In particular, if $\ell(s_iw)<\ell(w)$, then taking $y=v_{s_iw\lambda}$  and $N=M=(\alpha_i^\vee,s_iw\lambda)=-(\alpha_i^\vee,w\lambda)$ in  \eqref{eq:FiN homogeneous}, we obtain:
\begin{equation}
\label{eq:extremal recursion enhanced}
F_i^{[N]}(v_{s_iw\lambda})=(q_i-q_i^{-1})^{-N}\sum_{k=0}^N x_i^{[N-k]}\cdot v_{s_iw\lambda}\cdot (-q_i^{-1}x_i)^{[k]}\ .
\end{equation}

Let $F'_i:=F_iK_i^\half$. Taking into account that $F_i^N=(F'_iK_i^{-\half})^N=q_i^{\frac{N(N-1)}{2}}{F'_i}^N K_i^{-\frac{N}{2}}$, \eqref{eq:extremal recursion} becomes:
$$F_i^{[N]}(v_{s_iw\lambda})=q_i^{\frac{N(N-1)}{2}}{F'_i}^{[N]} K_i^{-\frac{N}{2}}(v_{s_iw\lambda})=q_i^{-\frac{N}{2}}{F'_i}^{[N]}(v_{s_iw\lambda})=q_i^{-\frac{N}{2}}v_{w\lambda}\ .$$
Combining this with \eqref{eq:extremal recursion enhanced}, we obtain \eqref{eq:recursion extremal explicit}. 

Prove \eqref{eq:recursion minor explicit} now. Indeed, using \eqref{eq:recursion extremal explicit} we obtain

$$\Delta_{w\lambda} =q^{-\half (w\lambda-\lambda,\lambda)}v_{w\lambda}\cdot v_\lambda^{-1}=\frac{q_i^\frac{N}{2}}{(q_i-q_i^{-1})^N}\sum_{k=0}^N x_i^{[N-k]}\cdot  v_{s_iw\lambda}\cdot (-q_i^{-1}x_i)^{[k]}\cdot q^{-\half (w\lambda-\lambda,\lambda)}v_\lambda^{-1}$$
$$=\frac{q_i^\frac{N}{2}}{(q_i-q_i^{-1})^N}\sum_{k=0}^N x_i^{[N-k]}\cdot  q^{-\half (w\lambda-\lambda,\lambda)} v_{s_iw\lambda}\cdot (-q_i^{-1}x_i)^{[k]}v_\lambda^{-1}$$
$$=\frac{q_i^\frac{N}{2}}{(q_i-q_i^{-1})^N}\sum_{k=0}^N x_i^{[N-k]}\cdot  q^{-\half (w\lambda-s_iw\lambda,\lambda)} \Delta_{s_iw\lambda}\cdot v_\lambda (-q_i^{-1}x_i)^{[k]}v_\lambda^{-1}$$
$$=\frac{q_i^\frac{N}{2}}{(q_i-q_i^{-1})^N}\sum_{k=0}^N x_i^{[N-k]}\cdot  q^{-\half (w\lambda-s_iw\lambda+2k\alpha_i,\lambda)} \Delta_{s_iw\lambda}\cdot (-q_i^{-1}x_i)^{[k]}$$
$$=\frac{q_i^\frac{N}{2}}{(q_i-q_i^{-1})^N}\sum_{k=0}^N (q_i^{\frac{(\alpha_i^\vee,\lambda)}{2}}x_i)^{[N-k]} \Delta_{s_iw\lambda}\cdot (-q_i^{-1}\cdot q_i^{-\frac{(\alpha_i^\vee,\lambda)}{2}}x_i)^{[k]}$$
where we used the fact that $w\lambda-s_iw\lambda+N\alpha_i=0$.  This proves \eqref{eq:recursion minor explicit}. 

The proposition is proved.
\end{proof}

\subsection{Quantum Cluster Algebras}
\label{sec:qca} 
Fix a field $\kk$  of characteristic zero, a natural number $m$ and an $n$-element subset $\ex$ of $\{1,\ldots,m\}$ ($n\le m$) and consider a pair $(\chi,\tilde B)$, where $\chi:\ZZ^m\times \ZZ^m\to \kk^\times$ is a unitary bicharacter of $\ZZ^m$, and $B=({\bf b}^k\,|\,k\in \ex)$ is an $m\times \ex$ matrix. 
Following \cite{bz5}, we say that the pair $(\chi,\tilde B)$ is {\it compatible} if (cf. \eqref{eq:categorical compatibility}):
$$\chi({\bf b}^k,\varepsilon_i)=q_k^{\delta_{ik}}$$
for all $k\in \ex$, $i=1,\ldots,m$ where $q_k\in \kk^\times$, $k\in \ex$ are some non-roots of unity.


Fix a skew-field $\cF$.  A \emph{quantum seed} in $\cF$ is a pair $({\bf X}, \tilde B)$,  where

$\bullet$ ${\bf X}=\{X_1,\ldots,X_m\}$ is (an algebraically independent) generating set of $\cF$ such that: 
$$X_iX_j=q_{ij}X_jX_i$$
for all $i,j=1,\ldots,m$ for some $q_{ij}\in \kk^\times$ satisfying $q_{ii}=1$, $q_{ij}q_{ji}=1$ for all $i,j=1,\ldots,m$;

$\bullet$ $\tilde B=(b_{ij})$ is an  integer $m\times n$ matrix such that the pair $(\chi,\tilde B)$  is a compatible, 
where $\chi$  is the unique unitary bicharacter of $\ZZ^m$ such that $\chi(\varepsilon_i,\varepsilon_j)=q_{ij}$ for all $i,j\in [1,m]$. 

Furthermore, we refer to ${\bf X}$ as a {\it quantum cluster}, and for each ${\bf a}=(a_1,\ldots,a_m)\in \ZZ^m$ denote: 
\begin{equation}\label{eq:cluster monomial}
X^{\bf a}:=\prod_{1\le i<j\le m} q_{ji}^{\frac{1}{2}a_ia_j}X_1^{a_1}\cdots X_m^{a_m}\ .
\end{equation} 
If $a_1,\ldots,a_n$ are nonnegative integers, we refer to $X^{\bf a}$ as {\it cluster monomial}. By construction, 
\begin{equation}
\label{eq:q-based torus}
X^{\bf a}\cdot X^{\bf b}=\chi({\bf a},{\bf b})X^{{\bf a}+{\bf b}}\ ,
\end{equation}
for all ${\bf a},{\bf b}\in \ZZ^m$, 
that is, the subalgebra of $\cF$ generated by $X_1,\ldots,X_m$ is isomorphic to the based quantum torus 
${\mathcal T}_{\chi}$ defined in \eqref{eq:based quantum torus}.

To each quantum seed we associate  the upper quantum  cluster algebra $\UU({\bf X}, \tilde B)$ by the formula:
\begin{equation}\label{eq:qupper-bound}
\UU({\bf X}, \tilde B) = \bigcap_{j\in \ex} \kk[X_1^{\pm 1}, \ldots, X_{j-1}^{\pm 1}, X_j, X'_j, X_{j+1}^{\pm 1}, \ldots, X_m^{\pm 1}] \ ,
\end{equation}
(here $\kk[S]$ denotes the subalgebra of ${\mathcal F}$ generated by $S$) where $X_i=X^{\varepsilon_i}$ for all $i$ and \begin{equation}
\label{eq:neighboring clusters}
X'_k=X^{[{\bf b}^k]_+-\varepsilon_k}+X^{[-{\bf b}^k]_+-\varepsilon_k}
\end{equation}
for $k\in \ex$.

Given a quantum seed $({\bf X}, \tilde B)$, a $k$-th {\it mutation} $\mu_k({\bf X}, \tilde B)$  is a pair $({\bf X}',\tilde B')$, where  

$\bullet$ ${\bf X}'={\bf X}\setminus \{X_k\}\cup \{X'_k\}$, where $X'_k$ is given by \eqref{eq:neighboring clusters}; 

$\bullet$ $\tilde B'=\mu_k(\tilde B)$ is given by $b'_{ij} =
\begin{cases} 
-b_{ij} & \text{if $i=k$ or $j=k$} \\
\,\,~b_{ij} + \displaystyle\frac{|b_{ik}| b_{kj} + b_{ik} |b_{kj}|}{2} & \text{otherwise} 
\end{cases}.$

It is easy to show that each mutation $\mu_k({\bf X},\tilde B)$ is also a quantum seed. 

\begin{theorem}[\text{\cite[Theorem 5.1]{bz5}}] 
\label{th:upper}
For any quantum seed $({\bf X},\tilde B)$ and $k\in \ex$ one has: 
$$\UU(\mu_k({\bf X}, \tilde B))=\UU({\bf X}, \tilde B)\ ,$$
i.e., the upper quantum cluster algebra does not depend on the choice of a quantum seed in a mutation-equivalence class. 
\end{theorem}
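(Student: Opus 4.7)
The plan is to follow the standard Berenstein--Zelevinsky argument, adapted to the quantum setting. First I would verify that the mutation $\mu_k$ is an involution on quantum seeds. The involutivity of the matrix mutation $\tilde B \mapsto \tilde B' \mapsto \tilde B$ is a direct calculation from the explicit formula $b'_{ij} = -b_{ij}$ when $i=k$ or $j=k$ and $b'_{ij} = b_{ij} + \tfrac{|b_{ik}|b_{kj} + b_{ik}|b_{kj}|}{2}$ otherwise. For the cluster side one applies \eqref{eq:neighboring clusters} to the mutated seed and checks that the result equals $X_k$, using the quantum exchange relation and the compatibility condition $\chi({\bf b}^k,\varepsilon_i)=q_k^{\delta_{ik}}$ to arrange the $q$-commutation factors correctly.

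Next, for $j\in\ex$ write $U_j := \kk[X_1^{\pm 1}, \ldots, X_{j-1}^{\pm 1}, X_j, X'_j, X_{j+1}^{\pm 1}, \ldots, X_m^{\pm 1}]$, and analogously $U'_j$ for the mutated seed $\mu_k({\bf X},\tilde B)$, so that $\UU({\bf X}, \tilde B)=\bigcap_{j\in\ex} U_j$ and $\UU(\mu_k({\bf X},\tilde B))=\bigcap_{j\in\ex} U'_j$. I would then observe that $U_k = U'_k$: the mutated cluster only swaps $X_k\leftrightarrow X'_k$ and leaves the other cluster variables untouched, and $U_k$ treats $X_k$ and $X'_k$ symmetrically as the two polynomial generators adjacent to the Laurent generators $X_i^{\pm 1}$ for $i\ne k$. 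Since $\UU({\bf X},\tilde B)\subseteq U_k$ and $\UU(\mu_k({\bf X},\tilde B))\subseteq U'_k=U_k$, both upper bounds rewrite as $\bigcap_{j\in\ex}(U_k\cap U_j)$ and $\bigcap_{j\in\ex}(U_k\cap U'_j)$ respectively. It therefore suffices to prove $U_k\cap U_j = U_k\cap U'_j$ for each $j\in\ex\setminus\{k\}$.

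The technical heart of the argument is this remaining equality. The idea is that an element $P\in U_k$ admits a unique decomposition $P=\sum_{a\ge 0} P_a X_k^a+\sum_{a\ge 1} P'_a (X'_k)^a$ with $P_a,P'_a$ Laurent polynomials in the remaining $X_i^{\pm 1}$ (the exchange relation $X_k X'_k = X^{[{\bf b}^k]_+} + X^{[-{\bf b}^k]_+}$ lets one reduce any mixed $X_k^a(X'_k)^b$-monomial). Membership in $U_j$ imposes polynomiality in $X_j$ and $X'_j$; membership in $U'_j$ imposes polynomiality in $X_j$ and $X''_j$, where $X''_j$ is the exchange polynomial of the mutated seed, given by ${\bf b}'{}^j = \mu_k({\bf b}^j)$. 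One then rewrites $X'_j$ as a Laurent polynomial in the variables of $\mu_k({\bf X})$ and in $X''_j$, using the case analysis (signs of $b_{jk}$ and of the other entries of ${\bf b}^j$ vs.\ ${\bf b}^k$) built into the mutation rule, together with the quantum commutation controlled by the compatibility condition.

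The hardest step is this last bookkeeping: one must check that the polynomial presentation in $X_j,X'_j$ rewrites termwise into the analogous presentation in $X_j,X''_j$, with coefficients that remain Laurent in the undisturbed variables, and that all quantum commutation factors introduced during the substitution collapse correctly. This is essentially the quantum counterpart of the ``three-term'' computation in \cite{BFZ-cluster}, handled for the quantum case in \cite[Theorem 5.1]{bz5}; the compatibility condition is used in an essential way to match the half-integer powers appearing in \eqref{eq:cluster monomial}. Once it is done, one concludes
$$\UU({\bf X},\tilde B)=\bigcap_{j\in\ex}(U_k\cap U_j)=\bigcap_{j\in\ex}(U_k\cap U'_j)=\UU(\mu_k({\bf X},\tilde B)),$$
completing the proof.
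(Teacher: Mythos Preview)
The paper does not prove this theorem at all; it is quoted verbatim from \cite[Theorem~5.1]{bz5} and used as a black box. Your outline is precisely the strategy carried out in \cite{bz5} (and in the classical antecedent \cite{BFZ-cluster}): reduce to the pairwise equality $U_k\cap U_j=U_k\cap U'_j$ for $j\ne k$, and establish that via the three-term exchange computation controlled by the compatibility condition. So there is nothing to compare against in the present paper, and your sketch matches the original source it cites.
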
 

In particular, one immediately obtains the {\it quantum Laurent phenomenon}: each cluster ${\bf X}'$ of each quantum seed $\mu_{j_1}\cdots \mu_{j_\ell}(\Sigma)$ belongs to $\UU$.

In fact, there is another way to modify a seed $({\bf X},B)$ such that the upper cluster algebra does not change. The following is obvious. 

\begin{lemma}
\label{le:coeff_trans}
Let $\rho$ be a $\ZZ$-linear automorphism of $\ZZ^m$ such that $\rho(\varepsilon_k)=\delta_{k\ell}\varepsilon_\ell$ for all $k\in\ex$ and $\ell\in[1,m]$.  Then for any quantum seed $(\bfX,\tilde B)$ one has:

(a) The pair $(\rho(\bfX),\rho^{-1}(\tilde B))$ is a quantum seed, where $\rho(\bfX):=\{X^{\rho(\varepsilon_1)},\ldots,X^{\rho(\varepsilon_m)}\}$ and $\rho^{-1}(\tilde B)$ is obtained by applying $\rho^{-1}$ to each column ${\bf b}^k$ of $\tilde B$.

(b) $\UU(\rho(\bfX),\rho^{-1}(\tilde B))=\UU(\bfX,\tilde B)$.

 \end{lemma}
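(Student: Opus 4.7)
The plan is to verify both claims directly from the definitions, with the only genuine computation being the comparison of the two mutated cluster variables at each exchangeable index. I read the hypothesis as saying that $\rho$ fixes $\varepsilon_k$ for each $k\in\ex$ and preserves the direct sum decomposition $\ZZ^m=\ZZ^\ex\oplus\ZZ^{\ex^c}$ (the second condition being essentially forced by the compatibility asserted in part (a)).

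For part (a), algebraic independence of $\rho(\bfX)=\{X^{\rho(\varepsilon_i)}\}_{i=1}^m$ is immediate because $\rho$ is a $\ZZ$-linear automorphism; the associated bicharacter is $\chi'(\bfa,\bfb):=\chi(\rho(\bfa),\rho(\bfb))$. Compatibility of $(\chi',\rho^{-1}(\tilde B))$ reduces to
$$\chi'(\rho^{-1}(\bfb^k),\varepsilon_i)=\chi(\bfb^k,\rho(\varepsilon_i))=q_k^{\delta_{ik}},$$
which I check by splitting into $i\in\ex$ (where $\rho(\varepsilon_i)=\varepsilon_i$) and $i\notin\ex$ (where $\rho(\varepsilon_i)\in\ZZ^{\ex^c}$ and $\chi(\bfb^k,\cdot)$ is trivial on $\ZZ^{\ex^c}$ by the original compatibility).

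For part (b), write $R_k:=\kk[X_\ell^{\pm 1}\st \ell\ne k]$ for the Laurent subtorus appearing in the $k$-th factor of \eqref{eq:qupper-bound}. The hypothesis makes $R_k$ literally unchanged under $\bfX\mapsto\rho(\bfX)$: the generators $X^{\rho(\varepsilon_\ell)}$ with $\ell\in\ex\setminus\{k\}$ equal $X_\ell$, and for $\ell\notin\ex$ they generate the same Laurent polynomial algebra as $\{X_\ell\st \ell\notin\ex\}$ since $\rho|_{\ZZ^{\ex^c}}$ is an automorphism. The claim therefore reduces to checking that the new mutated generator $Y_k^*$ and the old $X_k^*$ differ by a unit in $R_k$.

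This last point is the only real computation. Setting $\bfa:=\rho^{-1}(\bfb^k)$, the mutation formula \eqref{eq:neighboring clusters} gives
$$Y_k^*=X^{\rho([\bfa]_+)-\varepsilon_k}+X^{\rho([-\bfa]_+)-\varepsilon_k},$$
and the identity $[\bfa]_+-[-\bfa]_+=\bfa$ pushed through $\rho$ (combined with $\rho(\bfa)=\bfb^k=[\bfb^k]_+-[-\bfb^k]_+$) produces a common vector $\bfu\in\ZZ^{\ex^c}$ satisfying $\rho([\pm\bfa]_+)=[\pm\bfb^k]_++\bfu$. Compatibility then yields $\chi(\bfb^k,\bfu)=1$, which is precisely the condition ensuring that the $q$-scalars produced by extracting $X^\bfu$ from each of the two summands of $Y_k^*$ coincide; hence $Y_k^*=u\cdot X_k^*$ for a unit $u\in R_k$. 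Thus $R_k[X_k,X_k^*]=R_k[X_k,Y_k^*]$, so the $k$-th factors in \eqref{eq:qupper-bound} agree for the two seeds, and intersecting over $k\in\ex$ gives (b). The main obstacle is purely bookkeeping: the $q$-scalars that appear when rearranging quantum monomials must be tracked, but they are all units in $R_k$ and are absorbed harmlessly into $u$.
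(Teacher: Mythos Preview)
Your proof is correct and is precisely the verification the paper has in mind: the paper simply declares the lemma ``obvious'' and gives no argument, so there is nothing to compare beyond noting that your computation unpacks that word. Your reading of the hypothesis (that $\rho$ fixes each $\varepsilon_k$ with $k\in\ex$ and restricts to an automorphism of $\ZZ^{\ex^c}$) is the intended one, as confirmed by the only applications in the paper (namely $\rho=\rho_\ii^\pm$), and your observation that this extra condition is forced by the compatibility in part~(a) is exactly right. The unit calculation for $Y_k^*$ versus $X_k'$ is carried out correctly; the crucial identity $\chi(\bfu,\bfb^k)=1$ follows from $\bfu\in\ZZ^{\ex^c}$ and the original compatibility, just as you say.
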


Given a quantum seed $\Sigma=({\bf X},\tilde B)$, define the {\it lower} cluster algebra $\underline {\mathcal A}_\Sigma$ to be the subalgebra of $\UU$  generated by ${\bf X}$ and by the neighboring cluster variables $X'_j$, $j\in \ex$ given by \eqref{eq:neighboring clusters}.

\begin{theorem}[\cite{bz5}]\label{th:lower}
$\underline {\mathcal A}_\Sigma=\UU(\Sigma)$ if and only if the seed $\Sigma$ is acyclic, i.e., there exists an ordering $(j_1,\ldots,j_n)$ of $\ex$ such that $b_{j_p,j_{p'}}\ge 0$ for all $1\le p<p'\le n$.
\end{theorem}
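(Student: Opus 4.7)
The plan is to prove both directions of the equivalence. The containment $\underline{\mathcal{A}}_\Sigma \subseteq \mathcal{U}(\Sigma)$ is automatic: each neighboring cluster variable $X'_j$ arises from the mutated quantum cluster $\mu_j(\mathbf{X})$, which lies in $\mathcal{U}(\Sigma)$ by the quantum Laurent phenomenon (Theorem~\ref{th:upper}). The content therefore lies in establishing the reverse inclusion under acyclicity and in exhibiting an obstruction when cycles are present.

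For the ``if'' direction, fix an acyclic ordering and relabel so that $\ex = \{1, \ldots, n\}$ and $b_{pq} \geq 0$ for $1 \leq p < q \leq n$. The plan is to introduce \emph{standard monomials}
$$M(\mathbf{a}, \mathbf{c}) := X^{\mathbf{a}} \cdot (X'_1)^{c_1} \cdots (X'_n)^{c_n},$$
with $\mathbf{a} \in \ZZ^m$ satisfying $a_k \leq 0$ for $k \in \ex$ and $\mathbf{c} \in \ZZ_{\geq 0}^n$, and prove they form a $\kk$-basis of $\underline{\mathcal{A}}_\Sigma$. Linear independence follows by examining the unique top-degree term of $M(\mathbf{a}, \mathbf{c})$ in a lexicographic order on $\ZZ^m$ compatible with the acyclic ordering: the two monomials $X^{[\mathbf{b}^k]_+ - \varepsilon_k}$ and $X^{[-\mathbf{b}^k]_+ - \varepsilon_k}$ in the expansion of $X'_k$ are separated by the chosen order precisely because $b_{pk} \geq 0$ for $p < k$ forces the first to dominate on the earlier coordinates. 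To show these monomials also span $\mathcal{U}(\Sigma)$, take $x \in \mathcal{U}(\Sigma)$ and reduce it inductively: at step $k$, use the defining condition $x \in \kk[\ldots, X_k, X'_k, \ldots]$ to extract a polynomial expansion of $x$ in $X'_k$ with coefficients having $a_k \leq 0$, and verify using acyclicity that this reduction does not reintroduce negative powers at previously-processed indices $\ell < k$.

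For the ``only if'' direction, suppose there is a cycle $j_1 \to j_2 \to \cdots \to j_r \to j_1$ with $b_{j_p j_{p+1}} > 0$ among the exchangeable indices. The plan is to construct an explicit element of $\mathcal{U}(\Sigma) \setminus \underline{\mathcal{A}}_\Sigma$. Following the approach of the classical (commutative) analogue, consider the cluster variable obtained by successive mutation at $j_1, j_2, \ldots, j_r$: by the quantum Laurent phenomenon it lies in $\mathcal{U}(\Sigma)$, and its Laurent expansion in the initial quantum cluster contains monomials $X^{\mathbf{a}}$ with $a_{j_p} < 0$ for several $p$ in a pattern dictated by the cycle. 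A support-cone argument then shows that any polynomial in $\{X_j, X'_j\}$ produces Newton polytopes contained in a Minkowski sum of explicit cones determined by the exchange matrix columns, and cyclicity obstructs the combined cone from containing the required exponent pattern, yielding the desired element outside $\underline{\mathcal{A}}_\Sigma$.

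The main obstacle will be the inductive reduction in the ``if'' direction: guaranteeing that the algorithm terminates and yields a \emph{unique} standard monomial expansion requires a filtration on $\mathcal{T}_\chi$ compatible with the acyclic order, together with careful bookkeeping of the $q$-corrections produced when commuting $X'_k$ past $X_\ell$ and past $X'_\ell$ for $\ell > k$. This is where acyclicity is used essentially: a cycle among exchangeable indices would cause the leading monomial of $X'_k$ to mix with $X_\ell$ for some $\ell < k$ in an incompatible way, simultaneously destroying confluence of the reduction and uniqueness of the standard decomposition, and in fact producing the obstruction element used in the ``only if'' direction.
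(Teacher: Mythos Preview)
The paper does not prove this theorem; it is stated with a bare citation to \cite{bz5} and used as a black box (specifically in the proof of Theorem~\ref{th:cluster isomorphism}(c)). So there is no proof in the paper to compare against.

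On your proposal itself: the ``if'' direction is essentially the argument of \cite{bz5} (which in turn quantizes \cite{BFZ-cluster}), and your outline of the standard-monomial basis via a lexicographic leading-term argument is the right strategy. The spanning step is the delicate one, and your description of it is a bit optimistic: the actual argument in \cite{bz5} does not process variables one at a time but rather shows directly that the intersection \eqref{eq:qupper-bound} is spanned by standard monomials via a simultaneous bound on all $n$ denominators. Your inductive reduction would need to justify why expanding in $X'_k$ does not reintroduce negative powers at indices $\ell<k$, and acyclicity alone does not make this transparent without the leading-term filtration machinery.

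The ``only if'' direction is where your proposal has a genuine gap. The element you propose---the cluster variable obtained by mutating along the cycle---certainly lies in $\UU(\Sigma)$, but your ``support-cone argument'' for why it lies outside $\underline{\mathcal A}_\Sigma$ is not an argument, only a hope. You would need to identify precisely which cone contains the exponent supports of all polynomials in $\{X_j,X'_j:j\in\ex\}$, and then exhibit an exponent in the mutated variable's expansion lying outside that cone. Neither step is carried out, and the second is subtle: the Newton polytope of a product of the $X'_j$'s is a Minkowski sum of segments, not a cone, and the resulting region can be surprisingly large. In fact, I would check carefully whether the ``only if'' direction is even asserted in \cite{bz5}; the result actually used in the present paper is only the ``if'' direction.
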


Although the quantum Laurent phenomenon guarantees that each cluster variable is an element of $\cT_{\chi}$, it is a non-trivial task to compute their initial cluster expansions. 
  
The main result from \cite{rupel2} solves this Laurent problem in some cases 
via the quantum cluster characters \eqref{eq:character-Gr refined}. To state the result, we need the following obvious fact. 

\begin{lemma} 
\label{le:seed categorical}
Given a finitary abelain category ${\mathcal C}$, a collection ${\bf S}=\{S_i,i\in 1,\ldots,m\}$ of simple objects without self-extensions, a subset $\ex$ of $\{1,\ldots,m\}$, and a compatible unitary bicharacter $\chi_\cC$ of $\cK(\cC)$ in the sense of \eqref{eq:categorical compatibility}. Then  
the pair $\Sigma_{\mathcal C}=({\bf X},\tilde B)$ is a quantum seed in ${\mathcal T}_{\chi_{\cC}}$, where: 

$\bullet$ ${\bf X}=\{X^{|S_1|},\ldots,X^{|S_m|}\}$ is an initial cluster in the quantum torus ${\mathcal T}_{\chi_{\cC}}$,

$\bullet$ $\tilde B$ is the restriction of the matrix $B_{\mathcal C}$ given by \eqref{eq:bij categorical} to $m\times \ex$.

\end{lemma}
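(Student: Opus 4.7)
The statement is essentially a definitional unpacking: the ``categorical'' compatibility convention of \eqref{eq:categorical compatibility} must be shown to coincide with the ``combinatorial'' compatibility convention of Section~\ref{sec:qca} after a natural identification of lattices. My plan is as follows.

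First, I would fix the identification $\cK(\cC)\cong \ZZ^m$ determined by sending $|S_i|\mapsto \varepsilon_i$. Under this identification the unitary bicharacter $\chi_\cC$ transfers to a unitary bicharacter $\chi$ on $\ZZ^m$, and the based quantum torus $\cT_{\chi_\cC}$ is, by \eqref{eq:q-torus presentation}, presented by generators $X^{|S_i|}=X^{\varepsilon_i}$ subject to the relations
\[
X^{|S_i|}X^{|S_j|}=q_{ij}X^{|S_j|}X^{|S_i|},\qquad q_{ij}=\chi_\cC(|S_i|,|S_j|)/\chi_\cC(|S_j|,|S_i|).
\]
Unitarity of $\chi_\cC$ immediately yields $q_{ii}=1$ and $q_{ij}q_{ji}=1$, so $\bfX$ is an algebraically independent generating set of the skew-field of fractions of $\cT_{\chi_\cC}$ satisfying the quasi-commutation relations required of a quantum cluster.

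Next, I would verify that $(\chi,\tilde B)$ is compatible in the sense of Section~\ref{sec:qca}. The key observation is that, by \eqref{eq:bjrep}, the element $\bfb^k=|S_k|^*-{}^*|S_k|=\sum_i b_{ik}|S_i|$ coincides under $\cK(\cC)\cong\ZZ^m$ with the $k$-th column of $\tilde B$ viewed as a vector in $\ZZ^m$. Consequently the categorical compatibility hypothesis
\[
\chi_\cC(\bfb^k,|S_i|)=|\kk_i|^{\delta_{ik}}\qquad(k\in\ex,\ i\in I)
\]
translates verbatim into $\chi(\bfb^k,\varepsilon_i)=q_k^{\delta_{ik}}$ with $q_k:=|\kk_k|$. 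Since each $|\kk_k|=|\End_\cC(S_k)|$ is an integer $\ge 2$, none of the $q_k$ is a root of unity, so the non-degeneracy requirement in the definition of a compatible pair is automatically satisfied.

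Combining these two points exhibits $\Sigma_\cC=(\bfX,\tilde B)$ as a quantum seed. There is no substantive obstacle: the lemma is a bookkeeping statement asserting that the categorical normalization conventions for $\chi_\cC$ and $B_\cC$ precisely match the combinatorial input data of a quantum cluster algebra. The only thing one must be careful about is keeping the two bicharacter conventions (on $\cK(\cC)$ versus on $\ZZ^m$) in sync and noting that $\bfb^k$ carries two interpretations---as an element of $\cK(\cC)$ via \eqref{eq:bjrep} and as a column of $\tilde B$ via the same identification---which by construction agree.
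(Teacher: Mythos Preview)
Your proposal is correct and matches the paper's approach: the paper states this lemma as an ``obvious fact'' without proof, and your argument is precisely the definitional unpacking that justifies this claim. The identification $\cK(\cC)\cong\ZZ^m$ via $|S_i|\mapsto\varepsilon_i$, the translation of \eqref{eq:categorical compatibility} into the compatibility condition of Section~\ref{sec:qca}, and the observation that $q_k=|\End_\cC(S_k)|\ge 2$ is never a root of unity are exactly the verifications intended.
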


By definition, for each $V\in \cC$ its cluster character $X_V$ given by \eqref{eq:character-Gr refined} belongs to ${\mathcal T}_{\chi_{\cC}}$. The following result is essentially \cite[Theorem 1.1]{rupel2}.
  
\begin{theorem}
\label{th:qcc} 
In the notation of Lemma \ref{le:seed categorical},  for each  valued quiver $(\tilde Q,\tilde \bfd)$ whose restriction to $\ex$ is acyclic, the quantum cluster character 
$$V\mapsto X_V$$ 
is a bijection between indecomposable exceptional  representations 
$V\in \cC=\rep_\FF(\tilde Q,\tilde\bfd)$ supported in $\ex$ and non-initial quantum cluster variables of  $\UU(\Sigma_\cC)$.
\end{theorem}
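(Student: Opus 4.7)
The strategy is to reduce the statement to \cite[Theorem 1.1]{rupel2} after verifying that the present hypotheses align with those of that paper. Two observations make the reduction possible. First, any $V \in \cC$ supported in $\ex$ automatically lies in the extension-closed full subcategory $\cC_\ex \subset \cC$ generated by $\{S_k : k \in \ex\}$, and by hypothesis $\cC_\ex \simeq \rep_\FF(\tilde Q|_\ex, \tilde\bfd|_\ex)$ is the representation category of an acyclic valued quiver. The Grassmannian $Gr_\bfe(V)$ and the Euler form $\langle |V|, |V|-\bfe\rangle$ appearing in \eqref{eq:character-Gr refined} are intrinsic to $\cC_\ex$, so $X_V$ is essentially the quantum cluster character of \cite{rupel2} applied to $V \in \cC_\ex$, pushed forward along the inclusion $\cT_{\chi_{\cC_\ex}} \hookrightarrow \cT_{\chi_\cC}$. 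Second, the seed $\Sigma_\cC$ is acyclic because $\tilde Q|_\ex$ is, so Theorem \ref{th:lower} gives $\UU(\Sigma_\cC) = \underline{\cA}_{\Sigma_\cC}$, and every cluster variable is reached by a finite sequence of mutations at indices in $\ex$.

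With these reductions in hand, I would invoke the machinery of \cite{rupel2}. Its core ingredient is a quantum Caldero--Chapoton multiplication identity: for an indecomposable exceptional $V \in \cC_\ex$ and $k \in \ex$ such that $\dim\Ext^1(V,S_k) + \dim\Ext^1(S_k,V) > 0$, the product $X_{S_k} \cdot X_V$ in $\cT_{\chi_\cC}$ equals a $q$-weighted sum of two quantum cluster characters attached to the middle terms of the two non-split short exact sequences linking $V$ and $S_k$. Comparing with the exchange relation \eqref{eq:neighboring clusters}, this identity says that cluster mutation at $k$ in $\UU(\Sigma_\cC)$ corresponds, under $V \mapsto X_V$, to a BGP-type reflection at vertex $k$ in $\cC_\ex$. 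Starting from the initial quantum cluster and inducting on mutation distance, every non-initial cluster variable is obtained as $X_V$ for some indecomposable exceptional $V \in \cC_\ex$. Uniqueness of $V$ follows from the fact that $|V|$ is recovered as the unique extremal exponent $-{}^*|V|$ of the Newton polytope of $X_V$, together with the classical result that exceptional indecomposables over an acyclic valued quiver are determined by their dimension vector.

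The main obstacle is verifying that the frozen part of $X_V$ --- the contributions to $-\bfe^* - {}^*(|V|-\bfe)$ coming from entries $b_{k\ell}$ with $k \notin \ex$, $\ell \in \ex$ --- is preserved through every mutation step, so that the Caldero--Chapoton multiplication identity continues to match the exchange relation \eqref{eq:neighboring clusters} at each stage of the induction. This is guaranteed by the compatibility condition \eqref{eq:categorical compatibility}, which ensures that the frozen exponents $q$-commute with the exchangeable variables in exactly the way required by the mutation formula, so that the induction on mutation distance can proceed unobstructed throughout $\cT_{\chi_\cC}$ rather than only in $\cT_{\chi_{\cC_\ex}}$.
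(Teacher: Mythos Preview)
Your proposal is correct and takes the same approach as the paper, which simply states that the result ``is essentially \cite[Theorem 1.1]{rupel2}'' and gives no further proof. Your sketch of the reduction to $\cC_\ex$ and the inductive Caldero--Chapoton argument is an accurate summary of what that reference contains, so you have effectively reconstructed the content behind the citation.
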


\subsection{Quantum  {\texorpdfstring{$\ii$}{ii}}-seeds and {\texorpdfstring{$\ii$}{ii}}-characters}
We construct quantum $\ii$-seeds in ${\mathcal L}_\ii$ for symmetrizable $I\times I$ Cartan matrices $A=(a_{ij})$ with $a_{ii}=2$ and all sequences $\ii\in I^m$. 
Indeed, given a non-root of unity $q^\half\in \kk$, define the unitary bicharacter $\chi_\ii:\ZZ^m\times \ZZ^m\to \kk^\times$ by:
$$\chi_\ii({\bf a},{\bf b})=q^{\half\Lambda_\ii({\bf a},{\bf b})},$$
where $\Lambda_\ii$ is the bilinear form given by \eqref{eq:Lambdaii general}. The following fact is obvious (cf. \eqref{eq:Pii}).

\begin{lemma}For any $A$ as above and any $\ii\in I^m$ the algebra ${\mathcal L}_\ii$ generated by $t_1,\ldots,t_m$ subject to the relations 
$$t_\ell t_k=q^{c_{i_ki_\ell}}t_kt_\ell$$
for all $1\le k<\ell\le m$, is a based quantum torus with the bicharacter $\chi_\ii$. In particular, 
$$t^{\bf a}t^{\bf b}=q^{\Lambda_\ii({\bf a},{\bf b})}t^{\bf b} t^{\bf a}$$
for all ${\bf a},{\bf b}\in \ZZ^m$.

\end{lemma}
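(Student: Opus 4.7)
The plan is to identify ${\mathcal L}_\ii$ with the based quantum torus ${\mathcal T}_{\chi_\ii}$ defined via \eqref{eq:based quantum torus}, and then read off the monomial commutation directly from that defining product. Since the discussion following \eqref{eq:q-torus presentation} establishes that any algebra with the commutation presentation of a based quantum torus is isomorphic to ${\mathcal T}_\chi$ for an appropriate unitary $\chi$, the entire argument reduces to (i) checking that $\chi_\ii$ is unitary, (ii) matching the two presentations, and (iii) combining the product formula with the skew-symmetry of $\Lambda_\ii$.

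For step (i), I would argue directly from the form of $\Lambda_\ii$ given in \eqref{eq:Lambdaii general}. Since $\Lambda_\ii(\varepsilon_k,\varepsilon_\ell) = \sgn(k-\ell)c_{i_k,i_\ell}$ and the symmetrized Cartan matrix $(c_{ij})$ is symmetric, bilinear extension yields $\Lambda_\ii(\mathbf{a},\mathbf{b}) + \Lambda_\ii(\mathbf{b},\mathbf{a}) = 0$ and $\Lambda_\ii(\mathbf{a},\mathbf{a}) = 0$ for all $\mathbf{a},\mathbf{b} \in \ZZ^m$. Exponentiating with $\chi_\ii(\mathbf{a},\mathbf{b}) = q^{\Lambda_\ii(\mathbf{a},\mathbf{b})/2}$ gives $\chi_\ii(\mathbf{a},\mathbf{a}) = 1$ and $\chi_\ii(\mathbf{a},\mathbf{b})\chi_\ii(\mathbf{b},\mathbf{a}) = 1$, which is precisely the unitarity condition imposed after \eqref{eq:q-torus presentation}.

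For step (ii), I would compute for $k < \ell$ the coefficient $q_{k\ell} = \chi_\ii(\varepsilon_k,\varepsilon_\ell)/\chi_\ii(\varepsilon_\ell,\varepsilon_k)$ from the presentation \eqref{eq:q-torus presentation} associated to $\chi_\ii$. Using the skew-symmetry established above together with the explicit value $\Lambda_\ii(\varepsilon_k,\varepsilon_\ell) = -c_{i_k,i_\ell}$ for $k < \ell$, this gives $q_{k\ell} = q^{-c_{i_k,i_\ell}}$, equivalently $X^{\varepsilon_\ell}X^{\varepsilon_k} = q^{c_{i_k,i_\ell}}X^{\varepsilon_k}X^{\varepsilon_\ell}$, which matches the defining relation of ${\mathcal L}_\ii$ verbatim under the identification $t_k \leftrightarrow X^{\varepsilon_k}$. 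Invoking the universal characterization then yields an isomorphism ${\mathcal L}_\ii \widetilde{\to} {\mathcal T}_{\chi_\ii}$ sending each $t^\mathbf{a}$ to $X^\mathbf{a}$.

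For step (iii), the ``in particular'' monomial commutation is immediate from \eqref{eq:based quantum torus}: the two products $t^\mathbf{a} t^\mathbf{b} = \chi_\ii(\mathbf{a},\mathbf{b}) t^{\mathbf{a}+\mathbf{b}}$ and $t^\mathbf{b} t^\mathbf{a} = \chi_\ii(\mathbf{b},\mathbf{a}) t^{\mathbf{a}+\mathbf{b}}$ combine to give $t^\mathbf{a} t^\mathbf{b} = [\chi_\ii(\mathbf{a},\mathbf{b})/\chi_\ii(\mathbf{b},\mathbf{a})]\, t^\mathbf{b} t^\mathbf{a} = q^{\Lambda_\ii(\mathbf{a},\mathbf{b})} t^\mathbf{b} t^\mathbf{a}$, where the last equality again uses $\Lambda_\ii(\mathbf{b},\mathbf{a}) = -\Lambda_\ii(\mathbf{a},\mathbf{b})$. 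I do not anticipate a real obstacle: this is purely calibrational bookkeeping, with the factor $\half$ in the definition of $\chi_\ii$ chosen precisely so that the ratio $\chi_\ii(\mathbf{a},\mathbf{b})/\chi_\ii(\mathbf{b},\mathbf{a})$ collapses to the single power $q^{\Lambda_\ii(\mathbf{a},\mathbf{b})}$ governing the commutation.
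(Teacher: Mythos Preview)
Your proposal is correct. The paper does not actually give a proof of this lemma: it is stated with the remark ``The following fact is obvious (cf.\ \eqref{eq:Pii})'' and left unproved, so your argument simply fills in the routine verification that the authors omit, following exactly the framework set up around \eqref{eq:based quantum torus}--\eqref{eq:q-torus presentation}.
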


The following result is a direct corollary of Theorem \ref{th:compatibility}.

\begin{corollary} 
\label{cor:sigma_ii}
For any $A$ as above and any $\ii\in I^m$ the pair $\Sigma_\ii=({\bf X}_\ii,\tilde B_\ii)$ is a quantum seed in ${\mathcal L}_\ii$, where: 

$\bullet$ ${\bf X}_\ii:=\{t^{\varphi_\ii^{-1}(\varepsilon_1)},\ldots,t^{\varphi_\ii^{-1}(\varepsilon_m)}\}\subset {\mathcal L}_\ii$ is a cluster in ${\mathcal L}_\ii$,

$\bullet$ $\tilde B_\ii$ is the $m\times\ex$ matrix given by \eqref{eq:Bii} (and $\ex=\ex_\ii=\{k\in [1,m]\,|\, k^+\le m\}$).

\end{corollary}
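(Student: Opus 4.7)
The plan is to verify the two defining properties of a quantum seed directly from the preparatory material just established. First, I would show that $\bfX_\ii$ is indeed a cluster, i.e.\ an algebraically independent generating set of $\cL_\ii$ whose commutation relations are encoded by a unitary bicharacter. Since $\varphi_\ii$ is a $\ZZ$-linear automorphism of $\ZZ^m$, Lemma~\ref{le:mon_change} (applied with $\varphi=\varphi_\ii^{-1}$) yields a unique unitary bicharacter $\chi_{\ii,\varphi_\ii^{-1}}$ and an isomorphism of based quantum tori $\cL_\ii\widetilde\to\cT_{\chi_{\ii,\varphi_\ii^{-1}}}$ sending $t^{\bfa}\mapsto X^{\varphi_\ii^{-1}(\bfa)}$. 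In particular the set $\bfX_\ii=\{X_1,\ldots,X_m\}$, with $X_k:=t^{\varphi_\ii^{-1}(\varepsilon_k)}$, is an algebraically independent generating set of $\cL_\ii$ satisfying
\[
X_kX_\ell=q^{\Lambda_\ii(\varphi_\ii^{-1}(\varepsilon_k),\varphi_\ii^{-1}(\varepsilon_\ell))}\,X_\ell X_k
\]
for all $k,\ell$, which is the required quasi-commutation for a cluster.

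Next I would verify the compatibility condition for $(\chi',\tilde B_\ii)$, where $\chi'$ is the unitary bicharacter of $\ZZ^m$ determined by the quasi-commutation of $\bfX_\ii$, namely $\chi'(\varepsilon_k,\varepsilon_\ell)=q^{\frac12\Lambda_\ii(\varphi_\ii^{-1}(\varepsilon_k),\varphi_\ii^{-1}(\varepsilon_\ell))}$. By $\ZZ$-bilinearity,
\[
\chi'(\bfb^k,\varepsilon_\ell)=q^{\frac12\Lambda_\ii(\varphi_\ii^{-1}(\bfb^k),\varphi_\ii^{-1}(\varepsilon_\ell))}
\]
for each $k\in\ex$ and $\ell\in[1,m]$. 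Applying Theorem~\ref{th:compatibility} gives $\Lambda_\ii(\varphi_\ii^{-1}(\bfb^k),\varphi_\ii^{-1}(\varepsilon_\ell))=2d_{i_k}\delta_{k\ell}$, so
\[
\chi'(\bfb^k,\varepsilon_\ell)=q^{d_{i_k}\delta_{k\ell}}=q_{i_k}^{\delta_{k\ell}}.
\]
Setting $q_k:=q_{i_k}=q^{d_{i_k}}$ (which is not a root of unity since $q^{\frac12}$ is not), this is precisely the compatibility condition from Section~\ref{sec:qca}.

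Finally, I would note that the $m\times\ex$ matrix $\tilde B_\ii$ has integer entries by construction \eqref{eq:Bii}, so all the data $(\bfX_\ii,\tilde B_\ii)$ fits the definition of a quantum seed; thus $\Sigma_\ii$ is a quantum seed in $\cL_\ii$. The whole argument is essentially an unpacking of definitions once Theorem~\ref{th:compatibility} is granted, so there is no real obstacle here — the substantive work was carried out in Section~\ref{sec:Special compatible pairs}. The only point to be careful about is the factor of $\frac12$ in passing between the bicharacter $\chi'$ and the skew-symmetric form $\Lambda_\ii\circ(\varphi_\ii^{-1}\times\varphi_\ii^{-1})$; the identity $2d_{i_k}\delta_{k\ell}$ in Theorem~\ref{th:compatibility} is designed so that this factor cancels and produces exactly $q_{i_k}^{\delta_{k\ell}}$.
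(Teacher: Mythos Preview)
Your proof is correct and matches the paper's approach: the paper simply states that this is a direct corollary of Theorem~\ref{th:compatibility}, and you have faithfully unpacked what that means---using Lemma~\ref{le:mon_change} to see that $\bfX_\ii$ is a generating set with the required quasi-commutation, and then reading off the compatibility condition from Theorem~\ref{th:compatibility}. There is nothing to add.
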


In what follows, we denote by $\UU_\ii$ the corresponding upper cluster algebra $\UU(\Sigma_\ii)$. 
We will mostly deal with the case when $A$ is a Cartan matrix and $\ii$ is reduced.

\begin{proposition} For any sequence $\ii=(i_1,\ldots,i_m)\in I^m$ and any homogeneous $x\in U_+^*$ one has (in the notation of Lemma \ref{le:action}):
\begin{equation}
\label{eq:concrete_Feigin via right action U_+^*}
\Psi_\ii(x)=\sum (K_{i_m}^{-\half}E_{i_m})^{[a_m]}\cdots  (K_{i_1}^{-\half}E_{i_1})^{[a_1]}(x) \cdot t^{\bf a}\ ,
\end{equation}
where the summation is over all ${\bf a}=(a_1,\ldots,\gamma_m)\in \ZZ_{\ge 0}^m$ such that 
$a_1\alpha_{i_1}+\cdots+a_m\alpha_{i_m}=-|x|$.
\end{proposition}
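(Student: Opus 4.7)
The plan is to derive this proposition directly from Proposition~\ref{pr:primitive actions proofs}, which already gives a formula of precisely this shape but with the operators $\underline\partial_{S_{i_k}}^{op}$ in place of $K_{i_k}^{-\half}E_{i_k}$. Thus the only substantive task is to establish the operator identity
$$\underline\partial_{S_i}^{op}\big|_{U_+^*}=K_i^{-\half}E_i\big|_{U_+^*}\qquad(i\in I),$$
after identifying the subalgebra $U_+^*\subset \cH^*(\cC)$ generated by $x_i=[S_i]^*$ with the generic composition subalgebra inside $U_q(\bb_+)^*$.

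I would verify this identity by checking agreement on generators, matching of the twisted Leibniz rules, and then invoking induction on the $\cQ$-grading. On the generators, \eqref{eq:partial derivative} shows that the only flag contributing to $\partial_{S_i}^{op}(x_j)$ is the trivial sub-object of the simple $S_j$, which forces $i=j$; hence $\underline\partial_{S_i}^{op}(x_j)=\delta_{ij}$, matching $K_i^{-\half}E_i(x_j)=\delta_{ij}$ as read off from Lemma~\ref{le:action}. For the Leibniz rule, Lemma~\ref{le:primitive actions proofs} supplies
$$\underline\partial_{S_i}^{op}(xy)=\underline\partial_{S_i}^{op}(x)\,K_{S_i}^{\half}(y)+K_{S_i}^{-\half}(x)\,\underline\partial_{S_i}^{op}(y),$$
while applying the algebra automorphism $K_i^{-\half}$ to the $K_i$-derivation identity $E_i(xy)=E_i(x)K_i(y)+xE_i(y)$ of Lemma~\ref{le:action} produces exactly the same formula, once one identifies $K_{S_i}^{\pm\half}$ with $K_i^{\pm\half}$ on $U_+^*$. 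This last identification rests on the matching $\langle S_i,S_j\rangle\langle S_j,S_i\rangle=q^{(\alpha_i,\alpha_j)}$ of the symmetrized Ringel bicharacter with the exponentiated symmetric Cartan form, built into the generic composition algebra. Since $U_+^*$ is generated as an algebra by the $x_j$, induction on degree then upgrades the equality on generators to equality of the two operators on all of $U_+^*$.

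Substituting the operator identity into the formula of Proposition~\ref{pr:primitive actions proofs} produces the asserted expression \eqref{eq:concrete_Feigin via right action U_+^*}. The grading condition $a_1\alpha_{i_1}+\cdots+a_m\alpha_{i_m}=-|x|$ is automatic: each $E_{i_k}$ shifts the $\cP$-degree by $+\alpha_{i_k}$, so for $(K_{i_m}^{-\half}E_{i_m})^{[a_m]}\cdots(K_{i_1}^{-\half}E_{i_1})^{[a_1]}(x)$ to lie in degree zero (i.e.~a scalar) the tuple ${\bf a}$ must satisfy exactly this relation; other tuples give zero. The place where I anticipate the most care is in reconciling the two divided-power conventions used on each side --- on the Hall side the normalization $\underline\partial_{S_i}^{[\ell]}=|\End(S_i)|^{\ell(\ell-1)/4}\underline\partial_{S_i}^{(\ell)}$ of Proposition~\ref{pr:primitive actions proofs}, on the quantum group side the standard $(K_i^{-\half}E_i)^{[\ell]}=(K_i^{-\half}E_i)^\ell/[\ell]_{q_i}!$. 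Reconciling these under $|\End(S_i)|=q_i$ reduces to a standard $q$-number identity and is, I expect, the main bookkeeping obstacle in the proof.
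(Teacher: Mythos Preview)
Your proposal is correct and follows essentially the same route as the paper's proof, which consists of a single sentence: Lemmas~\ref{le:primitive actions proofs} and~\ref{le:action} give $K_{S_i}|_{U_+^*}=K_i$ and $\underline\partial_{S_i}^{op}|_{U_+^*}=K_i^{-\half}E_i$, whence the result follows from the second equation of Proposition~\ref{pr:primitive actions proofs}. Your outline simply fills in the verification of that operator identity (agreement on generators plus matching twisted Leibniz rules), and your anticipated divided-power bookkeeping resolves via $[\ell]_{q_i}!=q_i^{-\ell(\ell-1)/2}(\ell)_{q_i^2}!$ once one matches $|\End(S_i)|$ with $q_i^2$.
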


\begin{proof} Indeed, Lemmas \ref{le:primitive actions proofs} and \ref{le:action} imply that $K_{S_i}|_{U_+^*}=K_i$ and  $\underline \partial_{S_i}^{op}|_{U_+^*}=K_i^{-\half}E_i$. 
Therefore, \eqref{eq:concrete_Feigin via right action U_+^*} directly follows from the second equation \eqref{eq:concrete_Feigin via left action}.  
\end{proof}

In the notation \eqref{eq:alambda} for each $\ii\in I^m$ and $\lambda\in \cP$ define the monomial $t_\lambda=t_{\lambda,\ii}\in \cL_\ii$  by 
\begin{equation}
\label{eq:tlambda}
t_\lambda:=t^{{\bf a}_\lambda}\ .
\end{equation} 
Lemma \ref{le:alambda} and Proposition \ref{pr:commute alambda} imply the following corollary.
 
\begin{lemma} 
\label{le:tlambda properties}
In the notation of Lemma \ref{le:alambda}, for each $\ii\in I^m$  one has:

(a) For each $\ell\in [1,m]\setminus \ex$ the coefficient $X_\ell$ of the quantum cluster ${\bf X}_\ii$ equals $t_{\omega_{i_\ell}}$.

(b) $t^{\bf a}\cdot t_\lambda=v^{(|{\bf a}|,w_m\lambda+\lambda)}t_\lambda\cdot t^{\bf a}$ for all $\lambda\in \cP$ and ${\bf a}=(a_1,\ldots,a_m)\in \ZZ^m$.
 
(c) $t_\mu t_\lambda=v^{(w_m\mu-w_m^{-1}\mu,\lambda)}t_\lambda t_\mu=v^{\half (w_m\mu-w_m^{-1}\mu,\lambda)}t_{\lambda+\mu}$ for all $\lambda,\mu\in \cP$.

\end{lemma}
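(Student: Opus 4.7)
The plan is to deduce all three assertions directly from Proposition~\ref{pr:commute alambda}, Lemma~\ref{le:alambda}, and the multiplication rules of the based quantum torus $\cL_\ii$. Part~(a) is immediate: by definition $X_\ell=t^{\varphi_\ii^{-1}(\varepsilon_\ell)}$, and Lemma~\ref{le:alambda}(e) identifies $\varphi_\ii^{-1}(\varepsilon_\ell)=\mathbf{a}_{\omega_{i_\ell}}$ for $\ell\in[1,m]\setminus\ex$, so by~\eqref{eq:tlambda} we conclude $X_\ell=t_{\omega_{i_\ell}}$.

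For~(b), I would insert $\mathbf{b}=\mathbf{a}_\lambda$ into the commutation relation $t^{\mathbf{a}}t^{\mathbf{b}}=v^{\Lambda_\ii(\mathbf{a},\mathbf{b})}t^{\mathbf{b}}t^{\mathbf{a}}$ of the quantum torus and read off $\Lambda_\ii(\mathbf{a},\mathbf{a}_\lambda)=(|\mathbf{a}|,\,w_m\lambda+\lambda)$ directly from Proposition~\ref{pr:commute alambda}.

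For the first equality of~(c), I would specialize~(b) to $\mathbf{a}=\mathbf{a}_\mu$ and use Lemma~\ref{le:alambda}(f) to substitute $|\mathbf{a}_\mu|=w_m\mu-\mu$; this gives $t_\mu t_\lambda=v^{(w_m\mu-\mu,\,w_m\lambda+\lambda)}t_\lambda t_\mu$. The only nontrivial computation is then the identity
\[
(w_m\mu-\mu,\,w_m\lambda+\lambda)=(w_m\mu-w_m^{-1}\mu,\,\lambda).
\]
Splitting the second argument, the summand $(w_m\mu-\mu,\,w_m\lambda)$ has the form $(x,\,w_m y)$ with $x=w_m\mu-\mu\in\cQ$, so the $W$-invariance of the pairing $\cQ^\vee\times\cP\to\ZZ$ rewrites it as $(w_m^{-1}x,\,y)=(\mu-w_m^{-1}\mu,\,\lambda)$; adding back the remaining summand $(w_m\mu-\mu,\,\lambda)$ yields the desired exponent. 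For the second equality of~(c), I would use instead the multiplicative form $t^{\mathbf{a}_\mu}t^{\mathbf{a}_\lambda}=\chi_\ii(\mathbf{a}_\mu,\mathbf{a}_\lambda)\,t^{\mathbf{a}_\mu+\mathbf{a}_\lambda}$ together with the $\ZZ$-linearity of $\lambda\mapsto\mathbf{a}_\lambda$ visible in~\eqref{eq:alambda}: one has $\mathbf{a}_\mu+\mathbf{a}_\lambda=\mathbf{a}_{\lambda+\mu}$, so $t^{\mathbf{a}_\mu+\mathbf{a}_\lambda}=t_{\lambda+\mu}$, while the prefactor has exponent $\frac{1}{2}\Lambda_\ii(\mathbf{a}_\mu,\mathbf{a}_\lambda)=\frac{1}{2}(w_m\mu-w_m^{-1}\mu,\,\lambda)$, matching the statement.

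No step is genuinely hard. The one point that needs care is the invocation of $W$-invariance in~(c): the form is only defined on $\cQ\times\cP$ and not on all of $\cP\times\cP$, so one must verify that the first slot actually lies in $\cQ$ (here, $w_m\mu-\mu$) before each such application; once this bookkeeping is respected, the whole lemma reduces to two one-line substitutions into the formulas of Proposition~\ref{pr:commute alambda}.
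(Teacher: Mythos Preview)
Your proof is correct and follows exactly the approach indicated in the paper, which simply states that the lemma is a corollary of Lemma~\ref{le:alambda} and Proposition~\ref{pr:commute alambda}. You have supplied the straightforward details the paper omits, including the $W$-invariance computation in~(c) and the observation that $w_m\mu-\mu\in\cQ$ needed to apply it.
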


In view of \eqref{eq:concrete_Feigin via left action} and \eqref{eq:concrete_Feigin via right action U_+^*},  the following result essentially coincides with \text{\cite[Theorem 3.1]{ber}}.

\begin{proposition}  
\label{pr:psi tlambda} $\Psi_\ii(\Delta_{w\lambda})=t_\lambda^{-1}$
for any reduced word $\ii$ of $w\in W$ and any $\lambda\in \cP^+$.
\end{proposition}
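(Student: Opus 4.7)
The plan is to combine the explicit formula \eqref{eq:concrete_Feigin via right action U_+^*} with the extremal-vector recursion \eqref{eq:extremal recursion} and the definition $\Delta_{w\lambda}=q^{-\half(w\lambda-\lambda,\lambda)}v_{w\lambda}v_\lambda^{-1}$. As noted in the paper just above the statement, once this translation is carried out the assertion amounts to \cite[Theorem 3.1]{ber}; the induction is on $\ell(w) = m$, with the trivial base case $w = 1$ (where $\Delta_\lambda = 1$ and ${\bf a}_\lambda = 0$).

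First I would exploit that $v_\lambda$ is $U_+$-highest weight, so by Lemma~\ref{le:action} we have $E_i(v_\lambda^{-1}) = 0$, and hence $E_i^{[n]}(v_\lambda^{-1}) = 0$ for all $n \geq 1$. The quantum Leibniz rule then reduces the coefficient
\[
(K_{i_m}^{-\half}E_{i_m})^{[a_m]}\cdots (K_{i_1}^{-\half}E_{i_1})^{[a_1]}(\Delta_{w\lambda})
\]
to an explicit $q$-scalar times $(K_{i_m}^{-\half}E_{i_m})^{[a_m]}\cdots (K_{i_1}^{-\half}E_{i_1})^{[a_1]}(v_{w\lambda}) \cdot v_\lambda^{-1}$, localising the essential action inside the irreducible module $V_\lambda$. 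For the distinguished tuple ${\bf a}^\circ = (N_1,\ldots,N_m)$ with $N_k := (\alpha_{i_k}^\vee, s_{i_{k+1}}\cdots s_{i_m}\lambda)$, I would iterate the $sl_2$-identity $(K_i^{-\half}E_i)^{[N]}(F_iK_i^\half)^{[N]}(v_{\mathrm{top}}) = v_{\mathrm{top}}$ (for an $i$-string of length $N+1$) to produce the chain
\[
v_{w\lambda}\longrightarrow v_{s_{i_1}w\lambda}\longrightarrow v_{s_{i_2}s_{i_1}w\lambda}\longrightarrow\cdots\longrightarrow v_\lambda\,.
\]
After combining all intermediate $K_i$-twists with the normalising factor $q^{-\half(w\lambda-\lambda,\lambda)}$, the coefficient of $t^{{\bf a}^\circ}$ collapses to exactly $1$, and the identity $(w_k\alpha_{i_k}^\vee,w_m\lambda) = (\alpha_{i_k}^\vee, w_k^{-1}w_m\lambda) = N_k$ (from $w_k^{-1}w_m = s_{i_{k+1}}\cdots s_{i_m}$) identifies $t^{{\bf a}^\circ}$ with $t_\lambda^{-1}$ via \eqref{eq:alambda}.

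The hard part will be the uniqueness: showing that no other ${\bf a}$ satisfying the weight constraint $\sum_k a_k\alpha_{i_k} = \lambda - w\lambda$ contributes. The case $a_1 > N_1$ is immediate, since $v_{w\lambda}$ sits at the bottom of its $i_1$-$sl_2$-string of length $N_1+1$; the case $a_1 = N_1$ reduces via \eqref{eq:extremal recursion} to the analogous statement for the reduced word $\ii'' = (i_2,\ldots,i_m)$ of $s_{i_1}w$ and is consumed by the inductive hypothesis. The subtle case is $a_1 < N_1$: then $E_{i_1}^{[a_1]}(v_{w\lambda})$ equals a non-zero scalar multiple of $(F_{i_1}K_{i_1}^\half)^{[N_1-a_1]}(v_{s_{i_1}w\lambda})$, and the residual $F_{i_1}$-factor must be propagated outward past the later operators. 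For $i_k\neq i_1$ the commutation $(K_{i_k}^{-\half}E_{i_k})(F_{i_1}K_{i_1}^\half) = (F_{i_1}K_{i_1}^\half)(K_{i_k}^{-\half}E_{i_k})$ is a direct calculation using the symmetry $c_{i_1,i_k} = c_{i_k,i_1}$ of the symmetrised Cartan matrix; whenever the reduced word later revisits the letter $i_1$, the $sl_2$-relation $[E_{i_1},F_{i_1}] = (K_{i_1}-K_{i_1}^{-1})/(q_{i_1}-q_{i_1}^{-1})$ generates cross terms, and iterating these together with the inductive hypothesis (organised by the subword structure of $\ii$) forces every contribution with $a_1 < N_1$ to cancel. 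This combinatorial/$q$-analytic cancellation is the technical heart of the argument and is precisely what the proof of \cite[Theorem 3.1]{ber} establishes.
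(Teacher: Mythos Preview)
Your proposal is correct and follows essentially the same approach as the paper: both reduce the statement to \cite[Theorem~3.1]{ber} after identifying $\Psi_\ii$ with the original Feigin homomorphism via the explicit formula \eqref{eq:concrete_Feigin via right action U_+^*}. Your write-up is simply more expansive than the paper's one-line citation --- you spell out the identification $t^{{\bf a}^\circ}=t_\lambda^{-1}$ via \eqref{eq:alambda} and sketch the inductive structure of the argument in \cite{ber}, including the delicate $a_1<N_1$ case --- but the logical content is the same.
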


This and Lemma \ref{le:tlambda properties}(b) imply that $\{t_\lambda,\lambda\in \cP^+\}\subset \Psi_\ii(U_+^*)$ and $\{t_\lambda^{-1},\lambda\in \cP^+\}$ is an Ore set in $\Psi_\ii(U_+^*)$. It is also clear from Lemma \ref{le:tlambda properties}(c) that $t_\lambda\in \cU_\ii$ for all $\lambda\in \cP$.

This and the following result provide some partial evidence to Conjecture \ref{conj:isomorphism Uii}(a).

\begin{proposition} 
\label{pr:X_1 X_n}
For each reduced word $\ii$ of $w\in W$ such that $m^-\ne 0$ the cluster variables $X_1$ and $X_{m^-}$ of the seed $\Sigma_\ii$ belong to the localization of $\Psi_\ii(U_+^*)$ by the Ore set $\{t_\lambda^{-1},\lambda\in \cP^+\}$.

\end{proposition}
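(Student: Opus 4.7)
The plan is to identify $X_1$ and $X_{m^-}$ as products $\Psi_\ii(\xi)\cdot t_\mu$ with $\xi\in U_+^*$ and $\mu\in\cP$; since by Lemma~\ref{le:tlambda properties}(c) the localization contains $t_\mu$ for all $\mu\in\cP$ (as $\cP=\cP^+-\cP^+$ and each $t_\lambda^{-1}$, $\lambda\in\cP^+$, is in the Ore set), this will suffice.

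First I compute both cluster variables using Proposition~\ref{le:mon_change2}. For $\ell=1$, the $W$-invariance of the pairing $(\cdot,\cdot)$ yields
\[
\varphi_\ii^{-1}(\varepsilon_1)=-(w_1\alpha_{i_1}^\vee,w_1\omega_{i_1})\varepsilon_1=-(\alpha_{i_1}^\vee,\omega_{i_1})\varepsilon_1=-\varepsilon_1,
\]
so $X_1=t_1^{-1}$. For $X_{m^-}$, Proposition~\ref{le:mon_change2} shows that $\varphi_\ii^{-1}(\varepsilon_{m^-})=-\sum_{k=1}^{m^-}(w_k\alpha_{i_k}^\vee,w_{m^-}\omega_{i_m})\varepsilon_k$ is supported on the first $m^-$ coordinates, and hence depends only on the prefix $\ii':=(i_1,\ldots,i_{m^-})$, which is a reduced word for $w_{m^-}$. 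Since $m^-$ is the last occurrence of $i_m=i_{m^-}$ in $\ii'$, Lemma~\ref{le:alambda}(e) applied to $\ii'$ identifies $\varphi_\ii^{-1}(\varepsilon_{m^-})$ with $\bfa^{\ii'}_{\omega_{i_m}}$ under the embedding $\ZZ^{m^-}\hookrightarrow\ZZ^m$. Proposition~\ref{pr:psi tlambda} applied to $\ii'$ and $w_{m^-}$ then yields $X_{m^-}^{-1}=\iota(\Psi_{\ii'}(\Delta_{w_{m^-}\omega_{i_m}}))$ in $\cL_\ii$, where $\iota:\cL_{\ii'}\hookrightarrow\cL_\ii$ is the natural inclusion.

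Next I construct the element $\xi$ for each variable. The natural candidates are generalized quantum minors $\Delta_{u\omega_j}\in U_+^*$ whose $\Psi_\ii$-images are pure monomials of the required exponents. When an index $j\in I$ occurs only once in $\ii$, say at position $k$, one has $\Psi_\ii(x_j)=t_k$, and the identification $X=q^c\Psi_\ii(x_j)\cdot t_{\omega_j}$ (for $X=X_1$ or $X_{m^-}$) reduces to verifying the root-system identity $\varepsilon_k+\bfa_{\omega_j}=\varphi_\ii^{-1}(\varepsilon_1)$ or $\varphi_\ii^{-1}(\varepsilon_{m^-})$, via the relation $w_k^{-1}w_m=s_{i_{k+1}}\cdots s_{i_m}$. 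In the general case, one uses a minor $\Delta_{u\omega_j}$ with $\ell(u)>1$ whose image under $\Psi_\ii$ is computed by the recursion~\eqref{eq:recursion minor explicit}; the pure-monomial cancellations in the resulting alternating sum reflect the reducedness of $\ii$ together with the $W$-equivariance of $(\cdot,\cdot)$.

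The main obstacle is the combinatorial verification that for \emph{every} reduced word $\ii$ with $m^-\neq 0$ such a pair $(u,j)$ exists, or equivalently that the required pure-monomial cancellations in $\Psi_\ii(\Delta_{u\omega_j})$ always occur. An alternative strategy specific to $X_{m^-}$ is to compare $\Psi_\ii$ and $\iota\circ\Psi_{\ii'}$ on $\Delta_{w_{m^-}\omega_{i_m}}$ and to show that the correction $\Psi_\ii(\Delta_{w_{m^-}\omega_{i_m}})-\iota(\Psi_{\ii'}(\Delta_{w_{m^-}\omega_{i_m}}))$, which involves only monomials supported on positions $k>m^-$, can be absorbed into the $t_\lambda$-localization; however, producing such a cancellation uniformly requires a careful analysis of how the minor $\Delta_{w_{m^-}\omega_{i_m}}$ interacts with the tail $(i_{m^-+1},\ldots,i_m)$ of $\ii$.
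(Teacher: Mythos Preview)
Your argument is incomplete, and you essentially say so yourself in the last two paragraphs. You correctly compute $X_1=t_1^{-1}$ and you correctly recognize that $\varphi_\ii^{-1}(\varepsilon_{m^-})$ depends only on the prefix $\ii'=(i_1,\ldots,i_{m^-})$. But the identification $X_{m^-}^{-1}=\iota\bigl(\Psi_{\ii'}(\Delta_{w_{m^-}\omega_{i_m}})\bigr)$ lives in the image of $\Psi_{\ii'}$, not of $\Psi_\ii$, and your proposed fix---finding for each reduced $\ii$ a minor $\Delta_{u\omega_j}$ whose $\Psi_\ii$-image is a pure monomial of the right shape---is never carried out. The ``pure-monomial cancellations'' you mention do not come for free from \eqref{eq:recursion minor explicit}, and there is no reason to expect a minor with image exactly $t_1^{-1}\cdot t_\mu$ or $X_{m^-}\cdot t_\mu$ to exist in general.

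The paper avoids this search entirely by using the derivation operators $\partial_{S_i}$ and $\partial_{S_i}^{op}$ on $U_+^*$ (see \eqref{eq:partial derivative} and Proposition~\ref{pr:primitive actions proofs}). The key observation (Lemma~\ref{le:psi tlambda refined}, essentially \cite[Proposition~2.5]{ber}) is: if $\Psi_\ii(x)\in\kk^\times\cdot t_1^{a_1}\cdots t_m^{a_m}$ with $a_1>0$, then $\Psi_\ii(\partial_{S_{i_1}}^{op}(x))\in\kk^\times\cdot t_1^{a_1-1}t_2^{a_2}\cdots t_m^{a_m}$, and symmetrically $\partial_{S_{i_m}}$ lowers $a_m$. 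Starting from $x=\Delta_{w\lambda}$ (whose image is $t_\lambda^{-1}$ by Proposition~\ref{pr:psi tlambda}) with $\lambda\in\cP^+$ chosen so that $a_1,a_m>0$, one application of each derivation and a multiplication by $t_\lambda$ shows $t_1^{-1},t_m^{-1}\in\Psi_\ii(U_+^*)[t_\mu,\mu\in\cP^+]$. This already gives $X_1$. For $X_{m^-}$ one then computes directly that $\varphi_\ii^{-1}(\varepsilon_{m^-})=\bfa_{ws_{i_m}\omega_{i_m}}-\varepsilon_m$, so $X_{m^-}=q^r\,t_m^{-1}\cdot t_{ws_{i_m}\omega_{i_m}}$ is a product of two elements already known to lie in the localization. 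The missing ingredient in your approach is precisely this ``peel off one variable'' mechanism via derivations; it replaces the hunt for an exact minor by a one-step reduction from the known monomial $\Psi_\ii(\Delta_{w\lambda})$.
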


\begin{proof} The following version of  \text{\cite[Proposition 2.5]{ber}} is immediate.

\begin{lemma}  
\label{le:psi tlambda refined}
Let $\ii\in I^m$ and  $x\in U_+^*$ be such that $\Psi_\ii(x)\in \kk^\times \cdot t_1^{a_1}\cdots  t_m^{a_m}$
for some $a_1,\ldots,a_m\ge 0$. 

$\bullet$ If $a_1>0$, then (in the notation of \eqref{eq:partial derivative}) $\Psi_\ii(\partial_{S_{i_1}}^{op}(x))\in \kk^\times \cdot t_1^{a_1-1}\cdots  t_m^{a_m}$.

$\bullet$ If $a_m> 0$, then $\Psi_\ii(\partial_{S_{i_m}}(x))\in \kk^\times \cdot t_1^{a_1}\cdots  t_m^{a_m-1}$.
\end{lemma}

Proposition \ref{pr:psi tlambda} guarantees that the lemma is applicable to each $x=\Delta_{w\lambda}$ for $w\in W$, $\lambda\in \cP^+$ so that 
$(a_1,\ldots,a_m)=-a_\lambda$, e.g., $a_1=-(\alpha_{i_1}^\vee,w\lambda)=-(w^{-1}\alpha_{i_1}^\vee,\lambda)$ and $a_m=(\alpha_{i_m}^\vee,\lambda)$. Clearly, one can choose $\lambda$ in such a way that $a_1>0$ and $a_m>0$. Then, multiplying both $\Psi_\ii(\partial_{S_{i_m}}^{op}(\Delta_{w\lambda}))$ and 
$\Psi_\ii(\partial_{S_{i_m}}(\Delta_{w\lambda}))$ by $t_\lambda=t^{a_\lambda}$, we obtain
$$t_1^{-1},t_m^{-1}\in \Psi_\ii(U_+^*)[t_\lambda,\lambda\in \cP^+]$$
This, in particular, proves the first assertion of the proposition. To prove the second one, note that
$$\varphi_\ii^{-1}(\varepsilon_{m^-})=-\sum\limits_{k=1}^{m^-}(w_k\alpha_{i_k}^\vee,w_{m^-}\omega_{i_m})\varepsilon_k =a_{w_{m^-}\omega_{i_m}}+\sum_{k=m^-+1}^m
(\alpha_{i_k}^\vee,w_k^{-1}w_{m-1}\omega_{i_m})\varepsilon_k=a_{ws_{i_m}\omega_{i_m}}-\varepsilon_m$$
by \eqref{eq:phi_inv} and \eqref{eq:alambda}, where we used that $w_{m^-}\omega_{i_m}=w_{m-1}\omega_{i_m}=ws_{i_m}\omega_{i_m}$ and $w_k^{-1}w_{m-1}\omega_{i_m}=\omega_{i_m}$ for $m^-<k<m$. Therefore 
$$X_{m^-}=t^{\varphi_\ii^{-1}(\varepsilon_{m^-})}=t^{a_{w\omega_{i_m}}-\varepsilon_m}=q^r t_m^{-1}\cdot t_{ws_{i_m}\omega_{i_m}}$$
for some $r\in \ZZ$ hence $X_{m^-}\in \Psi_\ii(U_+^*)[t_\lambda,\lambda\in \cP]$. 

The proposition is proved.
\end{proof}

We conclude the section by constructing another important seed $\hat \Sigma_\ii=(\hat {\bf X}_\bfi,\hat B_\ii)$ in $\cF=Frac(\cL_\ii)$. 

In the notation of Section \ref{subsec:Quantum groups, representations, and generalized minors} for any $\ii\in I^m$ define the subset  $\hat {\bf X}_\ii=\{\hat X_1,\ldots,\hat X_m\}$ of $P_\ii$ by 
\begin{equation}
\label{eq:psi of minors}
\hat X_k:=\Psi_\ii(\Delta_{s_{i_1}\cdots s_{i_k}\omega_{i_k}})
\end{equation}
for $k=1,\ldots,m$.  It follows from Proposition~\ref{pr:psi tlambda} and Lemma~\ref{le:alambda}(e) that the coefficients $\hat X_\ell$ ($\ell\in[1,m]\setminus\ex$) are inverse to the coefficients of the initial seed $\Sigma_\bfi$.

\begin{lemma}
\label{le:hat X commutation}
If $\ii$ is a reduced word for $w\in W$, then $\hat {\bf X}_\ii$ is a quantum cluster for $Frac(\cL_\ii)$ with 
\begin{equation}
\label{eq:hat X commutation}
\hat X_k \hat X_\ell=q^{\Lambda_\ii(\varphi_\ii^{-1}(\varepsilon_k),\varphi_\ii^{-1}(\varepsilon_\ell))}\hat X_\ell \hat X_k
\end{equation}
for all $1\le k,\ell \le m$. In particular,  the assignment $X_k\mapsto \hat X_k$, $k=1,\ldots,m$ defines a homomorphism 
\begin{equation}
\label{eq:hat eta}
\hat \eta_\ii:\cL_\ii\to Frac(\cL_\ii)
\end{equation}

\end{lemma}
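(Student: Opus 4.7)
The plan is to derive the $q$-commutation \eqref{eq:hat X commutation} by pushing a Weyl-group-level identity for extremal vectors in $U_q(\bb_+)^*$ (Lemma~\ref{le:v_wlambda commute}) down to the generalized minors in $U_+^*$ and then applying $\Psi_\ii$; the existence of $\hat\eta_\ii$ is then immediate from the universal property of the quantum torus.

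First, fix $1 \le k \le \ell \le m$. Since $\ii$ is reduced, the factorization $w_\ell = w_k \cdot (s_{i_{k+1}}\cdots s_{i_\ell})$ is length-additive in $W$, so Lemma~\ref{le:v_wlambda commute} applied with $\mu = \omega_{i_k}$ and $\lambda = \omega_{i_\ell}$ yields
\[
v_{w_k\omega_{i_k}} \cdot v_{w_\ell\omega_{i_\ell}} = q^{(w_k^{-1}w_\ell \omega_{i_\ell} - \omega_{i_\ell},\,\omega_{i_k})} v_{w_\ell\omega_{i_\ell}} \cdot v_{w_k\omega_{i_k}}.
\]
Substituting $v_{w_j\omega_{i_j}} = q^{\frac{1}{2}(w_j\omega_{i_j}-\omega_{i_j},\,\omega_{i_j})}\Delta_{w_j\omega_{i_j}} v_{\omega_{i_j}}$ for $j = k, \ell$ and using \eqref{eq:basis relations Uqb* general} to push the $v_{\omega_i}$ factors past the homogeneous minors (then combining them via $v_\mu v_\lambda = v_{\mu+\lambda}$) descends this identity to
\[
\Delta_{w_k\omega_{i_k}} \cdot \Delta_{w_\ell\omega_{i_\ell}} = q^{E(k,\ell)} \Delta_{w_\ell\omega_{i_\ell}} \cdot \Delta_{w_k\omega_{i_k}}
\]
in $U_+^*$, where
\[
E(k,\ell) = (w_k^{-1} w_\ell \omega_{i_\ell} - \omega_{i_\ell}, \omega_{i_k}) - (w_\ell\omega_{i_\ell} - \omega_{i_\ell}, \omega_{i_k}) + (w_k\omega_{i_k} - \omega_{i_k}, \omega_{i_\ell}).
\]
Each summand is a well-defined pairing in $\cQ \times \cP$ because $w\omega_i - \omega_i \in \cQ$ for all $w \in W$ and $i \in I$.

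Next, combine the first two terms of $E(k,\ell)$ into $((w_k^{-1}-1)w_\ell\omega_{i_\ell},\,\omega_{i_k})$; splitting $w_\ell\omega_{i_\ell} = \omega_{i_\ell} + (w_\ell\omega_{i_\ell} - \omega_{i_\ell})$ and invoking $W$-invariance $(w\alpha, w\beta) = (\alpha, \beta)$ separately on the $\cQ$-valued pieces $w_k^{-1}(w_\ell\omega_{i_\ell} - \omega_{i_\ell})$ and $w_k^{-1}\omega_{i_\ell} - \omega_{i_\ell}$ causes all ill-defined $(\omega_{i_k}, \omega_{i_\ell})$-type contributions to cancel, leaving
\[
E(k,\ell) = (w_k\omega_{i_k} - \omega_{i_k},\,\omega_{i_\ell} + w_\ell\omega_{i_\ell}),
\]
which by formula \eqref{eq:pairing phi} of Proposition~\ref{prop:pairing} equals $\Lambda_\ii(\varphi_\ii^{-1}(\varepsilon_k), \varphi_\ii^{-1}(\varepsilon_\ell))$. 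Applying the algebra homomorphism $\Psi_\ii: U_+^* \to \cL_\ii$ of Corollary~\ref{cor:classical_feigin} then establishes \eqref{eq:hat X commutation} for $k \le \ell$; the case $k > \ell$ follows by skew-symmetry of $\Lambda_\ii$.

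Finally, since $\varphi_\ii$ is a $\ZZ$-linear automorphism of $\ZZ^m$, the vectors $\{\varphi_\ii^{-1}(\varepsilon_k)\}_{k=1}^m$ form a $\ZZ$-basis, so $\cL_\ii$ is presented as the quantum torus on the initial cluster variables $X_k$ subject only to $X_k X_\ell = q^{\Lambda_\ii(\varphi_\ii^{-1}(\varepsilon_k), \varphi_\ii^{-1}(\varepsilon_\ell))} X_\ell X_k$. Each $\hat X_k$ is a nonzero element of $\cL_\ii$---indeed $\Psi_\ii$ factors through the embedding $\kk_q[N^w] \hookrightarrow \cL_\ii$ of \cite{ber}, in which the extremal quantum minor $\Delta_{w_k\omega_{i_k}}$ has nonzero image---and therefore invertible in $Frac(\cL_\ii)$. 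Combined with \eqref{eq:hat X commutation}, the universal property of the quantum torus produces the unique algebra homomorphism $\hat\eta_\ii: \cL_\ii \to Frac(\cL_\ii)$ with $\hat\eta_\ii(X_k) = \hat X_k$.

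The main obstacle is the simplification of $E(k, \ell)$: because the canonical pairing is defined only on $\cQ^\vee \times \cP$, one must carefully keep the $\cQ$-valued differences $w\omega_i - \omega_i$ intact throughout the manipulation and invoke $W$-invariance only on such well-defined arguments, so that the ill-defined $(\omega_i, \omega_j)$-type terms appear only in cancelling pairs.
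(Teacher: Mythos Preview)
Your proof is correct and follows essentially the same route as the paper: start from Lemma~\ref{le:v_wlambda commute}, convert the extremal-vector commutation into a commutation of minors $\Delta_{w_k\omega_{i_k}}$ and $\Delta_{w_\ell\omega_{i_\ell}}$, simplify the exponent to $(w_k\omega_{i_k}-\omega_{i_k},\,\omega_{i_\ell}+w_\ell\omega_{i_\ell})$, and identify it with $\Lambda_\ii(\varphi_\ii^{-1}(\varepsilon_k),\varphi_\ii^{-1}(\varepsilon_\ell))$ via \eqref{eq:pairing phi}. You are somewhat more explicit than the paper about keeping the pairing arguments in $\cQ\times\cP$ during the simplification and about why each $\hat X_k$ is nonzero, but the argument is the same.
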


\begin{proof} It follows from Lemma \ref{le:v_wlambda commute} that 
for any  $w,w'\in W$ such that $\ell(ww')=\ell(w)+\ell(w')$ and for any $\lambda,\mu\in \cP$ one has: 
$$\Delta_{w\mu}\cdot v_{w w'\lambda}=q^{(w'\lambda-\lambda,\mu)-(ww'\lambda-\lambda,\mu)}v_{w w'\lambda}\cdot \Delta_{w\mu}\ .$$
Since $|\Delta_{w\mu}|=w\mu-\mu$, multiplying both sides on the right by $q^{-\half(ww'\lambda-\lambda,\lambda)}v_\lambda^{-1}$ and applying \eqref{eq:basis relations Uqb* general} we obtain:
$$\Delta_{w\mu}\cdot \Delta_{w w'\lambda}=q^{(w'\lambda-\lambda,\mu)-(ww'\lambda-\lambda,\mu)}q^{(w\mu-\mu,\lambda)}\Delta_{w w'\lambda}\cdot \Delta_{w\mu}=q^{(w\mu-\mu,ww'\lambda+\lambda)}\Delta_{w w'\lambda}\cdot \Delta_{w\mu} \ .$$
For $w=w_k$, $ww'=w_\ell$, $\mu=\omega_k$, and $\lambda=\omega_\ell$, $k\le \ell$, this, taken together with \eqref{eq:pairing phi} gives \eqref{eq:hat X commutation}.  The second assertion follows from the fact that $\cL_\ii$ is generated by ${\bf X}_\ii=\{X_1,\ldots,X_m\}$ subject to the relations \eqref{eq:hat X commutation}.  The lemma is proved. 
\end{proof}

The following is a  particular case  of \cite[Conjecture 10.10]{bz5}.  

\begin{conjecture}  Under the hypotheses of Lemma \ref{le:hat X commutation} the restriction of $\hat \eta_\ii$ to $\cU_\ii$ is an isomorphism of algebras
$\eta_\ii:\cU_\ii\to \cU_\ii$. 

\end{conjecture}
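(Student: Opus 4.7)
The plan is to show that $\hat{\bf X}_\ii = \{\hat X_1,\ldots,\hat X_m\}$ is a valid quantum cluster for $\cU_\ii$, paired with an exchange matrix $\hat B_\ii$ determined by the quasi-commutation data of \eqref{eq:hat X commutation} and the compatibility condition coming from Theorem \ref{th:compatibility}. Once this is established, $\hat\eta_\ii$ is interpreted as a seed isomorphism $\Sigma_\ii \to \hat\Sigma_\ii$ whose source and target upper cluster algebras both equal $\cU_\ii$, and the restriction $\eta_\ii := \hat\eta_\ii|_{\cU_\ii}$ is the desired automorphism. Injectivity is automatic since $\cL_\ii$ is a domain; surjectivity follows from the identification $\cU(\hat\Sigma_\ii) = \cU_\ii$ provided by Theorem \ref{th:upper}, combined with the fact that $\hat\eta_\ii$ carries the standard monomial basis of $\cL_\ii$ onto that of the quantum torus generated by $\hat{\bf X}_\ii$.

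To show $\hat{\bf X}_\ii$ is a cluster of $\cU_\ii$, I would exhibit $(\hat{\bf X}_\ii, \hat B_\ii)$ as arising from $\Sigma_\ii$ by a specific sequence of mutations indexed by reading $\ii$ from left to right. The base case is immediate: for $\ell \in [1,m]\setminus\ex$ one has $\hat X_\ell = t_{\omega_{i_\ell}}^{-1}$ by Proposition \ref{pr:psi tlambda}, while Lemma \ref{le:tlambda properties}(a) identifies this with the coefficient $X_\ell^{-1}$, so coefficients agree on the nose. The inductive step is, for each $k\in\ex$, to mutate the current cluster in direction $k$ and to check, using the neighboring-cluster formula \eqref{eq:neighboring clusters}, that the resulting exchange binomial equals $\Psi_\ii$ applied to the recursion \eqref{eq:recursion minor explicit} for $\Delta_{w_{k^+}\omega_{i_{k^+}}}$. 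This amounts to a quantum $T$-system identity for the chamber minors $\Delta_{w_k\omega_{i_k}}$ along $\ii$. Modeled on the classical twist $\eta_w$ of \cite{bz3} and using Lemma \ref{le:v_wlambda commute} for quasi-commutation, one can write down each $T$-system identity explicitly and reduce its proof to the Leibniz rule for $E_i$ from Lemma \ref{le:action}.

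The main obstacle is the verification of this $T$-system identity in full generality. The double-Coxeter case of Theorem \ref{th:twist double coxeter} bypasses it by invoking the explicit identification $\hat X_k = \Psi_\ii([V_k]^*)$ with images of exceptional representations via Theorem \ref{th:character=cluster character}, an identification not available for an arbitrary reduced word $\ii$. A more combinatorial alternative is to prove invariance of the twist under the elementary braid moves on reduced words for $w$: if each braid move can be realized as a finite sequence of cluster mutations of $\hat\Sigma_\ii$, then Matsumoto's theorem reduces the general case to the double Coxeter case handled in Theorem \ref{th:twist double coxeter}. Either route requires careful tracking of the normalization factors and $q$-signs produced by the recursion \eqref{eq:recursion minor explicit}, which forms the technical heart of the argument and is also the point at which Conjecture \ref{conj:isomorphism Uii}(b) (the quantum chamber ansatz) would be obtained as a byproduct.
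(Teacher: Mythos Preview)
The statement you are attempting is a \emph{conjecture} in the paper, not a theorem; the paper does not prove it in general. What the paper does prove is the special case $\ii=(\ii_0,\ii_0)$ (Theorem~\ref{th:twist double coxeter}), via Lemma~\ref{le:hat seeds coincide}: after the coefficient change $\rho_\ii^\pm$ of Lemma~\ref{le:coeff_trans}, the seed $\hat\Sigma_\ii^+$ is identified with $\mu_n\cdots\mu_1(\Sigma_\ii^-)$, and the mutable $\hat X_j$ are matched to $\Psi_\ii([V_{1j}]^*)$ using Proposition~\ref{pr:from Vij to Delta} together with the representation-theoretic input of Theorems~\ref{th:character=cluster character} and~\ref{th:qcc}. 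Your outline follows the same philosophy (realize $\hat\Sigma_\ii$ inside the mutation class of $\Sigma_\ii$ and then invoke Lemma~\ref{le:upper cluster isomorphism}), and you correctly flag the quantum $T$-system identity as the missing piece; that is exactly the open point.

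Two concrete gaps in the proposal itself. First, your ``base case'' says the coefficients agree on the nose, but in fact $\hat X_\ell = X_\ell^{-1}$ for $\ell\notin\ex$, so $\hat\Sigma_\ii$ \emph{cannot} be reached from $\Sigma_\ii$ by mutations alone (frozen variables never move under mutation). One must interpose a coefficient transformation as in Lemma~\ref{le:coeff_trans}; this is precisely why the paper works with $\Sigma_\ii^\pm$ and $\hat\Sigma_\ii^+$ rather than with $\Sigma_\ii$ and $\hat\Sigma_\ii$ directly. Second, your alternative route via braid moves and Matsumoto's theorem does not reduce the general conjecture to the double Coxeter case: braid moves only connect reduced words for the \emph{same} $w$, and an arbitrary $w\in W$ need not admit any reduced word of the form $(\ii_0,\ii_0)$ (indeed only $w=c^2$ does). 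At best braid invariance would reduce the conjecture for a fixed $w$ to a single reduced word of that $w$, which is still open outside the $c^2$ case.
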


The following obvious fact gives  a possible line of  attack on the conjecture.

\begin{lemma} 
\label{le:upper cluster isomorphism}
Let $\Sigma=({\bf X},\tilde B)$ be a quantum seed in a skew-field $\cF$  and let $\hat \Sigma=(\hat \bfX,{\tilde B})$ be a quantum seed in a skew-field $\hat \cF$. 
Assume that the corresponding unitary bicharacters $\chi,\hat \chi$ are equal. Then:

(a)  The assignment $X_k\mapsto \hat X_k$, $k=1,\ldots,m$ defines an isomorphism of algebras $\eta:\UU(\Sigma)\widetilde \to \UU(\hat \Sigma)$.

(b) For any other seed $\Sigma'=({\bf X}',\tilde B')$ the seed $\eta(\Sigma')=(\eta({\bf X}'),\tilde B')$ satisfies 
$\UU(\eta(\Sigma'))=\UU(\hat \Sigma)$.


\end{lemma}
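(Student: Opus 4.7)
The plan is to reduce everything to the observation that the construction of an upper cluster algebra from a quantum seed $(\bfX,\tilde B)$ is functorial in the data $(\tilde B,\chi)$ alone, where $\chi$ is the unitary bicharacter recording the commutation relations of $\bfX$. Since $\chi=\hat\chi$ and the exchange matrix $\tilde B$ is shared, the initial-cluster assignment $X^{\bf a}\mapsto \hat X^{\bf a}$ (${\bf a}\in\ZZ^m$) is an isomorphism of based quantum tori $\cT_\chi\widetilde\to\cT_{\hat\chi}$. Because $\cF$ and $\hat\cF$ are the skew-fields of fractions of these two (Ore) tori, this torus isomorphism extends uniquely to an isomorphism of skew-fields $\eta\st\cF\widetilde\to\hat\cF$, which already gives the map $X_k\mapsto\hat X_k$ claimed in part (a).

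For part (a) I would then unpack \eqref{eq:neighboring clusters}: each $X'_k$ is the sum of two Laurent monomials whose exponents $[\pm\bfb^k]_+-\varepsilon_k$ are determined entirely by $\tilde B$. Since $\eta$ sends $X^{\bf e}$ to $\hat X^{\bf e}$ for every ${\bf e}\in\ZZ^m$, it follows that $\eta(X'_k)=\hat X'_k$ for all $k\in\ex$. Consequently $\eta$ sends each mixed Laurent algebra $\kk[X_1^{\pm1},\ldots,X_j,X'_j,\ldots,X_m^{\pm1}]$ isomorphically onto its hatted counterpart, and so the intersection defining $\UU(\Sigma)$ in \eqref{eq:qupper-bound} is sent onto the intersection defining $\UU(\hat\Sigma)$, yielding the desired isomorphism of upper cluster algebras.

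For part (b), the key point is that the mutation rule $\mu_k$ on a seed depends only on $\tilde B$ and on the bicharacter $\chi$ (the new cluster variable is defined by the exchange relation \eqref{eq:neighboring clusters}, and the new exchange matrix by a formula involving only $\tilde B$). Hence if $\Sigma'=\mu_{j_\ell}\cdots\mu_{j_1}(\Sigma)$ for some mutation sequence, then applying the same mutation sequence to $\hat\Sigma$ yields precisely $(\eta(\bfX'),\tilde B')=\eta(\Sigma')$; in particular this is indeed a quantum seed in $\hat\cF$. Theorem \ref{th:upper} then gives $\UU(\eta(\Sigma'))=\UU(\hat\Sigma)$.

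Since the lemma is labelled as obvious, there is no genuine obstacle; the only thing to be a bit careful about is recording that mutation of quantum seeds is intrinsic to the pair $(\tilde B,\chi)$, so that $\eta$, which preserves both, commutes with mutation. Once that is noted, parts (a) and (b) are immediate from \eqref{eq:qupper-bound} and Theorem \ref{th:upper} respectively.
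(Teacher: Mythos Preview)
Your proposal is correct. The paper gives no proof of this lemma, merely labeling it an ``obvious fact''; your argument is a faithful unpacking of why it is obvious. The based-torus isomorphism $X^{\bf a}\mapsto\hat X^{\bf a}$ determined by $\chi=\hat\chi$ extends to the skew-fields and intertwines the exchange relations \eqref{eq:neighboring clusters} (since those depend only on $\tilde B$), hence matches the intersections \eqref{eq:qupper-bound} and commutes with mutation, giving (a) and (b) respectively via Theorem~\ref{th:upper}. The only interpretive choice you make---reading ``any other seed $\Sigma'$'' in (b) as mutation-equivalent to $\Sigma$---is the natural one in context and is what is needed for the paper's subsequent applications.
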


We finish the section with the brief listing of properties of $\Psi_\ii$ from  \cite{ber}, see also \cite[Section 9.3]{bz5}.

\begin{lemma} 
\label{le:feigin reduced}
For any reduced word $\ii=(i_1,\ldots,i_m)$ for $w\in W$ one has:

(a) the kernel of $\Psi_\ii$ is the orthogonal complement (with respect to the pairing between $\cH(\cC)$ and $\cH^*(\cC)$) 
of the $\kk$-linear span ${\mathcal E}_\ii$ in $U_+$ of all monomials $[S_{i_1}]^{a_1}\cdots [S_{i_m}]^{a_m}$, $a_1,\ldots,a_m\in \ZZ_{\ge 0}$.

(b) ${\mathcal E}_\ii={\mathcal E}_{\ii'}$ for any other reduced word $\ii'$ for $w$.

(c) The images $\underline \Delta_{w\lambda}$ of $\Delta_{w\lambda}$, $\lambda\in \cP^+$ in $U_+^*/\ker\Psi_\ii$ form an Ore set.

(d) The localization $(U_+^*/\ker\Psi_\ii)[\underline \Delta_{w\lambda}^{-1},\lambda\in \cP^+]$ is $\kk_q[N^w]$.

(e) $\Psi_\ii(\Delta_{w\lambda})\in \kk^\times\cdot  t_\lambda^{-1}$ for $\lambda\in \cP^+$, where $t_\lambda\in \cL_\ii$ is defined by \eqref{eq:tlambda}.

\end{lemma}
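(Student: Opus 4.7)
The plan is to treat the five assertions in order, extracting (a), (b), (e) from the explicit pairing formula already in hand, and citing \cite{ber,bz5} for (c), (d).

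For (a), I would simply unpack \eqref{eq:Feigin explicit via pairing}. Since $[S_{i_1}]^{(a_1)}\cdots[S_{i_m}]^{(a_m)}$ is a nonzero scalar multiple of $[S_{i_1}]^{a_1}\cdots[S_{i_m}]^{a_m}$ for every $\bfa\in\ZZ^m_{\ge 0}$, the assignment $x\mapsto \Psi_\ii(x)$ vanishes on $x\in\cH^*(\cC)$ if and only if $x$ annihilates every PBW-monomial in $\cE_\ii$, i.e.\ $x\in\cE_\ii^\perp$. This gives $\ker\Psi_\ii=\cE_\ii^\perp$. For (e), I would just quote Proposition~\ref{pr:psi tlambda}; that result is precisely the claim that $\Psi_\ii(\Delta_{w\lambda})=t_\lambda^{-1}$.

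For (b), the subspace $\cE_\ii$ is the PBW subspace of $U_+$ cut out by the reduced word $\ii$. Its independence of the reduced word is a classical consequence of Lusztig's braid symmetries $T_{i}$: when two reduced words $\ii$ and $\ii'$ differ by a single braid move, one checks (by the $\operatorname{rank}\,2$ calculation in, e.g., \cite[\S39]{ber}) that the spans of their PBW monomials coincide. Since the braid group acts transitively on reduced words for a fixed $w$, this propagates to arbitrary $\ii'$. Equivalently, by (a) the assertion $\cE_\ii=\cE_{\ii'}$ is dual to $\ker\Psi_\ii=\ker\Psi_{\ii'}$; and the latter equality — the fact that $\ker\Psi_\ii$ depends only on $w$, not on $\ii$ — is \cite[Theorem 0.5]{ber}.

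For (c), the key input is that for any $\lambda,\mu\in\cP^+$ and $w\in W$, $\Delta_{w\lambda}$ quasi-commutes (modulo lower terms) with any weight element of $U_+^*$. More precisely, the images $\underline\Delta_{w\lambda}$ in $U_+^*/\ker\Psi_\ii\cong\Psi_\ii(U_+^*)$ are mapped by $\Psi_\ii$ into the nonzero monomials $\{t_\lambda^{-1}\}$ by (e), which form an Ore subset of $\cL_\ii$ because $\cL_\ii$ is a quantum torus; using the injection $\Psi_\ii:U_+^*/\ker\Psi_\ii\hookrightarrow\cL_\ii$ and Lemma~\ref{le:tlambda properties}(b), this pulls the Ore condition back to $\{\underline\Delta_{w\lambda}\}$. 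For (d), the localized algebra $(U_+^*/\ker\Psi_\ii)[\underline\Delta_{w\lambda}^{-1}]$ is, by the identification of $\ker\Psi_\ii$ as the defining ideal of the quantum unipotent cell (again \cite[Theorem 0.5]{ber}; cf.\ also \cite[\S9.3]{bz5}), isomorphic to $\kk_q[N^w]$ — the localized quotient is precisely the skew-field of fractions of $\kk_q[N^w]$ restricted to its Ore-localization by the generalized minors attached to $w$.

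The most delicate step is (b): neither the pairing formula nor Proposition~\ref{pr:psi tlambda} shows directly that $\cE_\ii$ depends only on $w$. I expect this reduced-word independence to be the main obstacle, and I would deal with it by quoting the braid-group invariance from \cite{ber} rather than reproving it here; once (b) is granted, the remaining parts combine formally.
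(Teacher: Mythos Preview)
Your proposal is correct and takes essentially the same approach as the paper: the paper does not prove this lemma at all but simply records it as a ``brief listing of properties of $\Psi_\ii$ from \cite{ber}, see also \cite[Section 9.3]{bz5}'', and your treatment likewise reduces the substantive parts (b) and (d) to citations of \cite{ber} while filling in the self-contained observations for (a), (c), (e) from the explicit pairing formula, Proposition~\ref{pr:psi tlambda}, and Lemma~\ref{le:tlambda properties}(b). One small slip: the reference ``\cite[\S39]{ber}'' appears to conflate \cite{ber} with Lusztig's book; the rank-two braid calculation you have in mind is not in \cite{ber} under that section number, so you should either cite Lusztig directly or simply invoke \cite[Theorem~0.5]{ber} for the kernel statement and let (a) transfer it to $\cE_\ii$ by duality, as you already do.
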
 

This implies that $\Psi_\ii$ defines an injective homomorphism of algebras:
\begin{equation}
\label{eq:Feigin localized}
\underline \Psi_\ii:\kk_q[N^w]\hookrightarrow \cL_\ii.
\end{equation}








\section{
Proof of Theorems \ref{th:cluster isomorphism}, \ref{th:twist double coxeter}, and \ref{th:corollary bz}}
\label{sec:unip}

\subsection{Proof of Theorem \ref{th:cluster isomorphism}} 
\label{subsect: proof of Theorem cluster isomorphism}
Let $A$ be an $n\times n$ symmetrizable Cartan matrix with symmetrizing matrix $D$.  Let $(Q,\bfd)$ be any {\it acyclic} valued quiver (i.e. having no oriented cycles) 
such that:  

$\bullet$ $A=A_{\bf S}$ as in \eqref{eq:cartan categorical}, where ${\bf S}=\{S_1,\ldots,S_n\}$ are simple representations of $(Q,\bfd)$;

$\bullet$ $\ii_0=(1,2,\ldots,n)$ is a repetition-free source adapted sequence for $(Q,\bfd)$;

$\bullet$ $|\FF|=q=v^2$.
 
\noindent This, in particular, implies that $A=2\cdot I_n-[B_Q]_+-[-B_Q]_+$.

It is convenient to slightly modify the initial seed $\Sigma_\ii=({\bf X}_\ii,\tilde B_\ii)$ (defined in Corollary \ref{cor:sigma_ii}) for $\ii=(\ii_0,\ii_0)$ as follows.

Denote $\Sigma_\ii^\pm =({\bf X}_\ii^\pm, {\tilde B}_\ii^\pm)$, where ${\bf X}_\ii^\pm =\rho_\ii^\pm({\bf X}_\ii)=\{t^{\varphi_\ii^{-1}\rho_\ii^\pm (\varepsilon_1)},\ldots,t^{\varphi_\ii^{-1}\rho_\ii^\pm(\varepsilon_{2n})}\}$ and ${\tilde B}_\ii^\pm 
=\begin{pmatrix}
B_Q \\
\pm I_n\\
\end{pmatrix}$,  that is, it consists of the first $n$ columns of the matrix $B_{\tilde Q}^\pm$ given by  \eqref{eq:default tilde Q}.  By Lemma \ref{le:rho of b truncated},  
$${\tilde B}_\ii^\pm =(\rho_\ii^\pm)^{-1}(\tilde B_\ii)\ ,$$ 
therefore, Lemma \ref{le:coeff_trans} guarantees that both $\Sigma_\ii^+$ and $\Sigma_\ii^-$ are quantum seeds for $\UU_\ii$, i.e., $\UU_\ii=\UU(\Sigma_\ii^+)=\UU(\Sigma_\ii^-)$.  



We will now prove Theorem \ref{th:cluster isomorphism}(a), i.e., show that $\Psi_{(\ii_0,\ii_0)}(U_+^*)\subset \UU_\ii$. 

Recall that $U_+^*$ is generated by $[S_i]^*$, $i\in [1,n]$ and it follows from \eqref{eq:Feigin homomorphism} that $\Psi_{(\ii_0,\ii_0)}([S_k]^*)=t_k+t_{k+n}$ for $k=1,\ldots,n$.  Thus it suffices to prove the following result. 

\begin{lemma}
Let $\ii=(\ii_0,\ii_0)$.  Then  $t_k+t_{n+k}\in\cU_\ii$ for $1\le k\le n$.
\end{lemma}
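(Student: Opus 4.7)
The plan is to identify $t_k+t_{n+k}$ with a known cluster variable inside an extended double quiver category, where membership in the upper cluster algebra follows from earlier results. First I would compute
\[
\Psi_\ii([S_k]^*)\;=\;t_k+t_{n+k}
\]
directly from \eqref{eq:Feigin homomorphism} (or equivalently from \eqref{eq:Feigin character} applied to the simple object $S_k$, whose only subobjects are $0$ and $S_k$, contributing the two monomials $t_k$ and $t_{n+k}$).

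Next I would invoke Proposition \ref{pr:Feigin_qcc} with $V=S_k$ to transfer this element to a quantum cluster character in the extended category:
\[
\widehat{\rho_\ii^{-1}\varphi_\ii}(t_k+t_{n+k})\;=\;\tilde X_{S_k},
\]
where $\tilde X_{S_k}$ is the quantum cluster character of $S_k$ viewed as an object of $\tilde\cC=\rep_\FF(\tilde Q,\tilde\bfd)$ with $B_{\tilde Q}^-=\begin{pmatrix}B_Q & I_n\\ -I_n & B_Q\end{pmatrix}$. Since $\tilde Q$ has no vertex loops, $S_k$ is indecomposable and exceptional; as $k\in\ex=\{1,\ldots,n\}$, it is supported in $\ex$, whose principal subquiver $Q$ is acyclic by hypothesis. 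Consequently Theorem \ref{th:qcc}, applied to the seed $\Sigma_{\tilde\cC}$ of Lemma \ref{le:seed categorical}, asserts that $\tilde X_{S_k}$ is a (non-initial) quantum cluster variable of $\UU(\Sigma_{\tilde\cC})$, and in particular $\tilde X_{S_k}\in\UU(\Sigma_{\tilde\cC})$.

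It then remains to transport this membership back along $\widehat{\rho_\ii^{-1}\varphi_\ii}$. By Lemma \ref{le:mon_change} this is an isomorphism of based quantum tori $\cL_\ii\widetilde\to\cT_{\chi_{\ii,\rho_\ii^{-1}\varphi_\ii}}$, and using $\rho_\ii=\rho_\ii^-$ together with Lemma \ref{le:phii0i0} one checks that it sends the cluster $\bfX_\ii^-=\{t^{\varphi_\ii^{-1}\rho_\ii^-(\varepsilon_j)}\}$ to $\{X^{\varepsilon_j}\}$, while the exchange matrix $\tilde B_\ii^-=\begin{pmatrix}B_Q\\ -I_n\end{pmatrix}$ coincides with the first $n$ columns of $B_{\tilde Q}^-$. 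Thus the isomorphism carries the seed $\Sigma_\ii^-$ to $\Sigma_{\tilde\cC}$ and therefore induces an isomorphism of the corresponding upper cluster algebras. Since $\UU_\ii=\UU(\Sigma_\ii^-)$ (as noted just before the lemma), pulling $\tilde X_{S_k}$ back yields $t_k+t_{n+k}\in\UU_\ii$.

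The main obstacle is making the seed identification precise in the last step, i.e., checking that the bicharacters and exchange matrices on both sides line up under $\widehat{\rho_\ii^{-1}\varphi_\ii}$; this is routine given Lemma \ref{le:phii0i0} and the compatibility established in Proposition \ref{pr:compatibility prime}. All substantive input is absorbed into the two prior results Proposition \ref{pr:Feigin_qcc} and Theorem \ref{th:qcc}, so once the seed matching is verified the conclusion is immediate.
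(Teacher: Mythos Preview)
Your argument is correct, but it takes a different and heavier route than the paper's. The paper proves the lemma by a direct computation: using Lemma~\ref{le:phii0i0} it verifies that
\[
(\rho_\ii^{-1}\varphi_\ii)^{-1}\bigl([\bfb^k_-]_+-\varepsilon_k\bigr)=\varepsilon_k,\qquad
(\rho_\ii^{-1}\varphi_\ii)^{-1}\bigl([-\bfb^k_-]_+-\varepsilon_k\bigr)=\varepsilon_{n+k},
\]
so that the mutation formula \eqref{eq:neighboring clusters} applied to the seed $\Sigma_\ii^-$ gives $X'_k=t_k+t_{n+k}$ explicitly. This is entirely elementary and uses nothing beyond Lemma~\ref{le:phii0i0} and the definition of mutation. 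By contrast, you invoke Proposition~\ref{pr:Feigin_qcc} and then the full force of Theorem~\ref{th:qcc} (the quantum Caldero--Chapoton bijection from \cite{rupel2}) to conclude that $\tilde X_{S_k}$ is \emph{some} non-initial cluster variable, after which you transport back through the seed identification $\Sigma_\ii^-\leftrightarrow\Sigma_{\tilde\cC}$. Your route works and the seed identification is indeed as you say (the paper itself records $\Sigma_\ii^-=\Sigma_{\tilde\cC}$ a few lines later when proving part~(b)), but Theorem~\ref{th:qcc} is a substantial external input that is overkill for a simple object: for $V=S_k$ the Grassmannian sum in \eqref{eq:character-Gr refined} has only two terms, and unpacking it reproduces exactly the two monomials the paper obtains by hand. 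The paper's argument is therefore more self-contained and makes transparent \emph{which} cluster variable $t_k+t_{n+k}$ is, namely the immediate neighbor $X'_k$ of $\Sigma_\ii^-$.
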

\begin{proof} Since each coefficient $X_{n+k}^{\pm 1}$ belongs to $\UU_\ii$ by Lemma \ref{le:alambda}(e) and Proposition \ref{pr:psi tlambda}, it suffices to show that the $k$-th mutation of the quantum seed $\Sigma_\ii^-$ results in:
\begin{equation}
\label{eq:chi t+t}
X'_k=t_k+t_{k+n}
\end{equation}
for $k=1,\ldots,n$.

Indeed, since ${\tilde B}_\ii^-=({\bf b}^1_-,\ldots,{\bf b}^n_-)$ is the $[1,2n]\times [1,n]$ submatrix of 
$B_{\tilde Q}^-$, the equation \eqref{eq:star e} under the identification $|S_k|:=\varepsilon_k$ implies that
$${}^*\varepsilon_k=\varepsilon_k-[{\bf b}^k_-]_+,~\varepsilon_k^*=\varepsilon_k-[-{\bf b}^k_-]_+ $$
for $k=1,\ldots,n$.  Combining this with  Lemma  \ref{le:phii0i0}, we obtain:
$$\rho_\ii^{-1}\varphi_\ii(\varepsilon_k)=-{}^*\varepsilon_k=
[{\bf b}^k_-]_+-\varepsilon_k,~\rho_\ii^{-1}\varphi_\ii(\varepsilon_{n+k})=-\varepsilon_k^*=[-{\bf b}^k_-]_+-\varepsilon_k\ .$$
Hence
$$(\rho_\ii^{-1}\varphi_\ii)^{-1}([{\bf b}^k_-]_+-\varepsilon_k)=\varepsilon_k,~(\rho_\ii^{-1}\varphi_\ii)^{-1}([-{\bf b}^k_-]_+-\varepsilon_k)=\varepsilon_{n+k}$$
for $k=1,\ldots,n$. 
In turn, this, implies
$$X'_k=t^{(\rho_\ii^{-1}\varphi_\ii)^{-1}([{\bf b}^k_-]_+-\varepsilon_k)}+t^{(\rho_\ii^{-1}\varphi_\ii)^{-1}([-{\bf b}^k_-]_+-\varepsilon_k)}=t_k+t_{n+k}\ .$$
This proves \eqref{eq:chi t+t} and the lemma.
\end{proof}
 
Therefore, Theorem \ref{th:cluster isomorphism}(a) is proved.


Prove Theorem~\ref{th:cluster isomorphism}(b) now. Indeed,  $\Sigma_\ii^-=\Sigma_{\tilde \cC}$ in the notation of Lemma \ref{le:seed categorical}, where $\tilde \cC=\rep_\FF(\tilde Q,\tilde \bfd)$ and $(\tilde Q,\tilde \bfd)$ is the valued quiver on $2n$ vertices as in \eqref{eq:default tilde Q}. 

Since $(Q,\bfd)$ is the (acyclic) restriction of $(\tilde Q,\tilde \bfd)$ to $\ex=\{1,\ldots,n\}$, 
both  Theorem~\ref{th:character=cluster character} and Theorem~\ref{th:qcc} are applicable and combining them we obtain that the
assignment 
$$V\mapsto \Psi_\ii([V]^*)$$ 
is a bijection between all exceptional representations of $(Q,\bfd)$ and all non-initial quantum cluster variables in $\UU_\ii=\UU(\Sigma_{\tilde \cC})$.
Therefore, Theorem \ref{th:cluster isomorphism}(b) is proved. 

It remains to prove Theorem~\ref{th:cluster isomorphism}(c). 

We need the following result.

\begin{proposition}
\label{pr:feigin pullback} 
Assume that $\ii=(\ii_0,\ii_0)$ is reduced. Then there exists an acyclic quantum seed $\Sigma'$ such that all cluster variables of  $\Sigma'$ and $\mu_j(\Sigma')$, $j=1,\ldots,n$ belong to $\Psi_\ii(U_+^*)$.
\end{proposition}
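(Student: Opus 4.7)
My plan is to take $\Sigma'=\hat\Sigma_\ii:=(\hat\bfX_\ii,\tilde B_\ii)$, the quantum seed whose cluster is the collection of Feigin-images of generalized quantum minors, $\hat X_k=\Psi_\ii(\Delta_{s_{i_1}\cdots s_{i_k}\omega_{i_k}})$ from \eqref{eq:psi of minors}, equipped with the same exchange matrix $\tilde B_\ii$ of Corollary~\ref{cor:sigma_ii}. By Lemma~\ref{le:hat X commutation} the $\hat X_k$ satisfy the same $q$-commutation relations as the generators $X_k$ of $\Sigma_\ii$, so compatibility of $(\hat\bfX_\ii,\tilde B_\ii)$ is inherited from Theorem~\ref{th:compatibility}, and by construction every initial cluster variable of $\Sigma'$ lies in $\Psi_\ii(U_+^*)$.

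Acyclicity of $\Sigma'$ follows by a direct computation from \eqref{eq:Bii}: since $\ii_0$ is repetition-free, $k^-=0$ and $k^+=k+n$ for every $k\in\ex=[1,n]$, so the principal submatrix of $\tilde B_\ii$ satisfies $b_{pk}=-a_{pk}\ge 0$ when $p<k$ in $[1,n]$ and $b_{pk}=a_{pk}\le 0$ when $p>k$ in $[1,n]$. Hence every arrow of the principal quiver points from a larger to a smaller index, producing a DAG. For the mutated cluster variables I would use Lemma~\ref{le:upper cluster isomorphism}: $\hat\eta_\ii$ induces a cluster-algebra isomorphism $\cU_\ii\widetilde\to\cU(\hat\Sigma_\ii)$ that intertwines mutations, so $\mu_j(\hat X_j)=\hat\eta_\ii(\mu_j(X_j))$ for each $j\in\ex$. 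By Theorem~\ref{th:cluster isomorphism}(b), already proved earlier in this section, $\mu_j(X_j)$ equals $\Psi_\ii([V_j']^*)$ for some exceptional representation $V_j'$ of $(Q,\bfd)$, so the problem reduces to verifying the inclusion $\hat\eta_\ii(\Psi_\ii([V_j']^*))\in\Psi_\ii(U_+^*)$.

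The main obstacle is precisely this last inclusion. The map $\hat\eta_\ii$ does not preserve $\Psi_\ii(U_+^*)$ on the nose: its values on the defining generators $t_k$ of $\cL_\ii$ are Laurent monomials in the $\hat X_k$ with genuinely negative exponents. The required inclusion encodes a quantum $T$-system identity among generalized minors in $U_q(\bb_+)^*$, expressing the right-hand side of the $j$-th exchange relation for $\hat\Sigma_\ii$ as $\Psi_\ii$ of a product of minors left-divisible (in the twisted sense) by $\Psi_\ii(\Delta_{w_j\omega_{i_j}})$, the quotient being an explicit new element of $U_+^*$. I would derive such an identity by iterating the recursion \eqref{eq:recursion minor explicit} along the reduced word $\ii$ and matching the resulting expression term-by-term with the column $\bfb^j$ of $\tilde B_\ii$ read off explicitly from \eqref{eq:Bii}, exploiting the specific block structure coming from $\ii=(\ii_0,\ii_0)$.
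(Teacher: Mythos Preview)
Your candidate seed $\hat\Sigma_\ii$ is a natural one, and you are right that its cluster variables lie in $\Psi_\ii(U_+^*)$ tautologically and that the principal part of $\tilde B_\ii$ is acyclic. The genuine gap is the last step. You correctly identify that $\mu_j(\hat X_j)=\hat\eta_\ii(X_j')$ with $X_j'=\Psi_\ii([V_j']^*)$ for some exceptional $V_j'$, but the inclusion $\hat\eta_\ii\bigl(\Psi_\ii([V_j']^*)\bigr)\in\Psi_\ii(U_+^*)$ is precisely the content of a quantum minor exchange (``$T$-system'') identity, and you do not establish it. Saying you would iterate \eqref{eq:recursion minor explicit} and ``match term-by-term'' with the column $\bfb^j$ is not a proof; such identities are delicate and do not drop out of the recursion without substantial additional argument. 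Worse, the logical flow of the paper makes this circular in spirit: the identification of $\hat\Sigma_\ii$ (up to the coefficient change $\rho_\ii^\pm$) with a mutation of $\Sigma_\ii^-$, which is what you would ultimately need, is Lemma~\ref{le:hat seeds coincide}, and that lemma is proved \emph{after} (and using ingredients from) the present proposition.

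The paper takes a different route that avoids minor identities entirely. It works with the seeds $\Sigma^{(k)}=\mu_k\cdots\mu_1\mu_n\cdots\mu_1(\Sigma_\ii^-)$ and computes, in Lemma~\ref{le:degrees of X}, the $\cQ$-degree $|X|$ of every cluster variable of $\Sigma^{(k)}$ and of its neighbours $\mu_j(\Sigma^{(k)})$. The crucial observation, combining Theorem~\ref{th:cluster isomorphism}(b) with \cite[Theorem~5.1]{cx}, is that a cluster variable $X$ lies in $\Psi_\ii(U_+^*)$ as soon as $|X|$ is a \emph{positive} root: positivity of $|X|$ forces $X=\tilde X_V$ for some exceptional $V$, and then $[V]^*\in U_+^*$. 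This converts the problem into a root-system calculation: find $k$ so that all the degrees in \eqref{eq:|Xpjk|} are positive. The paper does this by choosing $k=|I_0|$ where $I_0=\{j:c\alpha_{i_j}>0\}$ (after reordering $\ii_0$). Note that $k=0$, which corresponds to your seed $\hat\Sigma_\ii$ up to coefficients, does \emph{not} work in general: for $j$ near $n$ one gets $|{X'_j}^{(0)}|=c\alpha_{i_j}$, which can be negative (e.g.\ $c\alpha_{i_n}=-s_{i_1}\cdots s_{i_{n-1}}\alpha_{i_n}<0$), so the mutated variable is not of the form $\Psi_\ii([V]^*)$ for an exceptional $V$ and the argument breaks. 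This is why the paper mutates further, and why your choice of $\Sigma'$ is not just incompletely verified but likely the wrong seed.
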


\begin{proof} 
The essential ingredient is contained in the following result.

\begin{lemma}\label{le:degrees of X} In the assumptions of Proposition \ref{pr:feigin pullback}, for $k\in [1,n-1]$ let 
$$\Sigma^{(k)}=\mu_k\cdots \mu_1\mu_n\cdots \mu_1(\Sigma_\ii^-)$$ 

Then for $j\in [1,n]$ one has:

(a) The $j$-th cluster variable $X_j^{(k)}$, $j\in [1,2n]$ of $\Sigma^{(k)}$ is homogeneous and
$$|X_j^{(k)}|=\begin{cases} 
cs_{i_1}\cdots s_{i_{j-1}}(\alpha_{i_j}) & \text{ if $1\le j\le k$}\\
s_{i_1}\cdots s_{i_{j-1}}(\alpha_{i_j}) & \text{ if $k+1\le j\le n$}\\ 
\alpha_{i_j}+c\alpha_{i_j} & \text{ if $j>n$}
\end{cases}.$$
where $c=s_{i_1}\cdots s_{i_n}$ is the corresponding Coxeter element of $W$. 

(b) The $j$-th cluster variable ${X'_j}^{(k)}$, $j\in [1,n]$ of $\mu_j(\Sigma^{(k)})$ is homogeneous and
\begin{equation}
\label{eq:|Xpjk|}
|{X'}^{(k)}_j|=\begin{cases} 
-s_{i_1}\cdots s_{i_k}(\alpha_{i_j}) & \text{if $s_{i_1}\cdots s_{i_k}(\alpha_{i_j})<0$}\\ 
cs_{i_1}\cdots s_{i_k}(\alpha_{i_j}) & \text{if $s_{i_1}\cdots s_{i_k}(\alpha_{i_j})>0$}
\end{cases}.
\end{equation}

\end{lemma}

\begin{proof} 
The first two cases of part (a) are a direct application of the main result from \cite{rupel1}.  To see the last case we note that  
$X_{n+j}=t^{\bfa_{\omega_{i_j}}}$ for $1\le j\le n$ (see Lemma~\ref{le:tlambda properties}), where $X_{n+j}$ is the coefficient of $\Sigma_\ii$ hence 
$|X_{n+j}|=|\bfa_{\omega_{i_j}}|=c^2\omega_{i_j}-\omega_{i_j}$ according to Lemma~\ref{le:alambda}(f).  Since $X^{(k)}_{n+j}=X_{n+j}^-$ for all $k$, we obtain:
$$|X_{n+j}^-| =|\varphi_\bfi^{-1}\rho_\bfi^-(\varepsilon_{n+j})|=c^2\omega_{i_j}-\omega_{i_j}-\sum\limits_{\ell=1}^ja_{i_\ell i_j}(c^2\omega_{i_\ell}-\omega_{i_\ell})=c^2\mu-\mu=(1+c)(c-1)\mu\ ,$$
where $\mu=\omega_{i_j}-\sum\limits_{\ell=1}^ja_{i_\ell i_j}\omega_{i_\ell}$. Thus, it suffices to show that $(c-1)\mu=\alpha_{i_j}$. 

Indeed, $(c-1)\mu=(s_{i_1}\cdots s_{i_j}-1)\mu$. Furthermore, let $\theta_j=-\alpha_{i_j}+\sum\limits_{\ell=1}^n a_{i_\ell,i_j}\omega_{i_\ell}$. Clearly, $\theta_j$ is $W$-invariant and 
$\mu+\theta_j=s_{i_j}\omega_{i_j}-\sum\limits_{\ell>j} a_{i_\ell,i_j}\omega_{i_\ell}$. Combining these, we obtain
$$(c-1)\mu= (s_{i_1}\cdots s_{i_j}-1)\mu=(s_{i_1}\cdots s_{i_j}-1)(\mu+\theta_j)=(s_{i_1}\cdots s_{i_j}-1)(s_{i_j}\omega_{i_j})=\omega_{i_j}-s_{i_j}\omega_{i_j}=\alpha_{i_j} \ .$$
This finishes the proof of part (a).

Prove (b) now. Since ${X'}^{(k)}_k=X_k^{(k-1)}$, in view of part (a) 
it suffices to verify \eqref{eq:|Xpjk|} only for $j\in [1,n]\setminus \{k\}$. 

Since ${\tilde B}_\bfi^-=\left(\begin{array}{c}B_Q\\-I_n\end{array}\right)$, it is easy to see that 
\begin{equation}
\label{Bii- Bii+}
\mu_n\cdots \mu_1({\tilde B}_\bfi^-)=\left(\begin{array}{c}B_Q\\I_n\end{array}\right)={\tilde B}_\bfi^+\ .
\end{equation}

Without loss of generality we assume for the rest of the proof that $\ii_0=(1,\ldots,n)$. Since $\tilde B^{(k)}=\mu_k\cdots \mu_1({\tilde B}_\bfi^+)$, it is easy to show that its $j$-th column 
${\bf b}_j^{(k)}$, $j\ne k$ is given by:
$${\bf b}_j^{(k)}=\sum_{i=1}^n \sgn(j-k)\delta_i^{j,k}a_{ij}\varepsilon_i+d_i^{j,k}\varepsilon_{n+i}\ ,$$
where $s_1\cdots s_k\alpha_j=\sum\limits_{i=1}^n d_i^{j,k}\alpha_i$ and 
$\delta_i^{j,k}=\begin{cases} 
0 & \text{if $i=j$}\\
-1 & \text{if $i\in [\min(j,k)+1,\max(j-1,k)]$}\\
1 & \text{otherwise}
\end{cases}.$ 

In particular, $[\sgn(j-k)\cdot {\bf b}_j^{(k)}]_+=\sum_{i=1}^n [\delta_i^{j,k}a_{ij}]_+\cdot\varepsilon_i+[\sgn(j-k)d_i^{j,k}]_+\cdot\varepsilon_{n+i}
$. 
Using this, we can compute $|{X'_j}^{(k)}|$, $j\in [1,n]\setminus\{k\}$ by:
\begin{equation}
\label{eq:|Xpjk| via mutation}
 |{X'_j}^{(k)}|=|{X^{(k)}}^{[\sgn(j-k){\bf b}_j^{(k)}]_+-\varepsilon_j}|=
-|X^{(k)}_j|+\sum_{i=1}^n [\delta_i^{j,k}a_{ij}]_+\cdot|X_j^{(k)}|+[\sgn(j-k) d_i^{j,k}]_+\cdot|X_{n+i}^{(k)}| 
\ .
\end{equation}


Furthermore, if $\alpha:=s_1\cdots s_k\alpha_j<0$, i.e. all $d_i^{jk}\le 0$, then necessarily $j\le k$, $a_{j+1,j}=\cdots a_{kj}=0$ hence $s_1\cdots s_j\alpha_j=\alpha$ and we have:
$$|{X'_j}^{(k)}|=-|X^{(k)}_j|-\sum_{i=j+1}^k a_{ij}|{X_i}^{(k)}|-\sum_{i=1}^n d_i^{j,k}|X_{n+i}^{(k)}|
=cs_1\cdots s_j\alpha_j-(\alpha+c\alpha)=-\alpha$$
where we used the identity 
\begin{equation}
\label{eq:sum of coeffs}
\sum\limits_{i=1}^n d_i^{j,k}|X_{n+i}^{(k)}|=s_1\cdots s_k\alpha_j+cs_1\cdots s_k\alpha_j \ .
\end{equation}

It remains to consider the case when $\alpha:=s_1\cdots s_k\alpha_j>0$, then i.e. all $d_i^{jk}\ge 0$. If $j<k$, then 
$$|{X'_j}^{(k)}|=-|X^{(k)}_j|-\sum\limits_{i\in [j+1,k]} a_{ij}|{X_i}^{(k)}|=cs_1\cdots s_j\alpha_j-\sum\limits_{i\in [j+1,k]} a_{ij}c s_1\cdots s_{i-1}\alpha_i$$
$$=cs_1\cdots s_j\alpha_j-\sum\limits_{i\in [j+1,k]} c s_1\cdots s_{i-1}(\alpha_j-s_i\alpha_j)=cs_1\cdots s_j\alpha_j-(s_1\cdots s_j\alpha_i-c\alpha)=c\alpha
$$
by the telescopic summation argument since $a_{ij}\alpha_i=\alpha_j-s_i\alpha_j$. 

Finally, if $\alpha=s_1\cdots s_k\alpha_j>0$ and $j>k$, then 
$$|{X'_j}^{(k)}|=-|X^{(k)}_j|-\sum\limits_{i\in [k+1,j-1]} a_{ij}|{X_i}^{(k)}|+\sum_{i=1}^n d_i^{j,k}|X_{n+i}^{(k)}|$$
$$=cs_1\cdots s_j\alpha_j-\sum\limits_{i\in [k+1,j-1]} a_{ij} s_1\cdots s_{i-1}\alpha_i +\alpha+c\alpha
=cs_1\cdots s_j\alpha_j-(\alpha-s_i\cdots s_{j-1}\alpha_j) +(c\alpha+\alpha)=c\alpha$$
again by the telescopic summation argument and \eqref{eq:sum of coeffs}.

The lemma is proved. 
\end{proof}



Theorem \ref{th:cluster isomorphism}(b) implies that a given cluster variable $X$ of $\cU_\ii$ equals $\tilde X_V$ for some exceptional $V\in\rep_\FF(Q,\bfd)$ if and only if $|X|$ is a positive root. Moreover, \cite[Theorem 5.1]{cx} asserts that for each exceptional object $V\in\rep_\FF(Q,\bf d)$, the corresponding element $[V]^*\in\cH^*(\cC)$ belongs to $U_+^*$ and hence $\tilde X_V=\Psi_\bfi([V]^*)$ belongs to $\Psi_\ii(U_+^*)$.  
This and Lemma \ref{le:degrees of X}(a) imply that $X_j^{(k)}\in\Psi_\ii(U_+^*)$ for all $k\in [1,n-1]$, $j\in [1,n]$ because $|X_j^{(k)}|>0$. 
In view of the above and Lemma~\ref{le:degrees of X}(b), it remains to find $k\in [1,n-1]$ such that $|{X'_j}^{(k)}|>0$ for all $j\in [1,n]\setminus\{k,k+1\}$.

To do so we need some more notation.

Denote
$$I_0=\{k\in [1,n-1]\,|\,  c\alpha_{i_k}>0\}$$
and fix a source adapted sequence $\ii'_0$ for $Q$ such that that its prefix is any appropriate ordering of $I_0$ and the remainder -- 
any appropriate ordering of $[1,n]\setminus I_0$. 

Without loss of generality, we may relabel $Q$ and all cluster variables in such a way that $\ii'_0=(1,\ldots,n)$. Then, clearly, $I_0=[1,k]$, i.e., 
$$c\alpha_1>0,\ldots,c\alpha_k>0,~c\alpha_{k+1}<0,\ldots,c\alpha_n<0\ ,$$
e.g., $s_r\alpha_\ell=\alpha_\ell$ for all $r>\ell>k$.

Therefore, Lemma \ref{le:degrees of X}(b) implies that for $j\in [1,n]\setminus \{k,k+1\}$,  one has
$$|{X'_j}^{(k)}|=\begin{cases} 
-s_1\cdots s_k(\alpha_j) & \text{if $s_1\cdots s_k(\alpha_j) <0$}\\ 
cs_1\cdots s_k(\alpha_j) & \text{if $s_1\cdots s_k(\alpha_j) >0$}
\end{cases}.$$

If $j>k$, then $s_1\cdots s_k(\alpha_j)=-c\alpha_j>0$ and $cs_1\cdots s_k(\alpha_j)=cs_1\cdots s_{j-1}(\alpha_j)>0$ hence $|{X'_j}^{(k)}|>0$. Suppose that 
$j<k$ now. If $s_1\cdots s_k(\alpha_j)<0$, then $|{X'_j}^{(k)}|=
cs_1\cdots s_k(\alpha_j)>0$ and  we are done. In the remaining case, we have $s_1\cdots s_k(\alpha_j)>0$ and $|{X'_j}^{(k)}|=
cs_1\cdots s_k(\alpha_j)$. Taking into account that
$$ s_1\cdots s_k(\alpha_j)=\sum_{i=1}^k d_i\alpha_i$$
where all $d_i\in \ZZ_{\ge 0}$, we obtain:
$$|{X'_j}^{(k)}|=c(\sum_{i=1}^k d_i\alpha_i)=\sum_{i=1}^k d_ic\alpha_i>0\ .$$

The proposition is proved.
\end{proof}

Let $\Sigma'$ be as in Proposition \ref{pr:feigin pullback}. Since $\cU_\ii=\cU(\Sigma')$ and  $\Sigma'$ is acyclic,  Theorem \ref{th:lower} guarantees that $\UU_\ii$ is generated by ${\bf X}'\cup\bigcup\limits_{j=1}^n \mu_j({\bf X}')$  and the inverses of coefficients $\{X_{n+1}^{-1},\ldots,X_{2n}^{-1}\}$. On the other hand, Proposition \ref{pr:feigin pullback} guarantees that ${\bf X}'\subset \Psi_\ii(U_+^*)$ and $\mu_j({\bf X}')\subset \Psi_\ii(U_+^*)$ for $j=1\ldots,n$. In turn, this  implies the containment 
$$\UU_\ii\subseteq \Psi_\ii(U^*_+)[X_{n+1}^{-1},\ldots,X_{2n}^{-1}]$$
in ${\mathcal L}_\ii$.
But  Theorem~\ref{th:cluster isomorphism}(a) implies the opposite containment, which proves Theorem~\ref{th:cluster isomorphism}(c).

Therefore, Theorem~\ref{th:cluster isomorphism} is proved. \endproof

\subsection{Quantum twist and Proof of Theorem \ref{th:twist double coxeter}}
\label{subsec: proof of theorem twist double coxeter}

Let $(Q,\bfd)$ be an acyclic valued quiver on vertices $Q_0=\{1,\ldots,n\}$. Without loss of generality, we assume that  $\ii_0=(1,\ldots,n)$ is a complete source adapted sequence.  

\begin{definition}
\label{def:Vij}
For $1\le i\le j\le n$ denote by $V_{ij}$ the unique (up to isomorphism) indecomposable 
representation of $(Q,\bfd)$ with $|V_{ij}|=s_i\cdots s_{j-1} \alpha_j$. 
\end{definition}
It is well-known that each $V_{ij}$ is exceptional.

\begin{proposition} 
\label{pr:from Vij to Delta}
For $1\le i\le j\le n$ one has, under specialization $q:=|\FF|^{\half}$:
\begin{equation}
\label{eq:Vij* extremal}
[V_{ij}]^*=\Delta_{s_i\cdots s_j\omega_j} \ .
\end{equation}
\end{proposition}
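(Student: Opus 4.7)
The plan is to prove Proposition~\ref{pr:from Vij to Delta} by induction on $j-i$. For the base case $i=j$ we have $V_{jj}=S_j$, so I would verify $\Delta_{s_j\omega_j}=x_j=[S_j]^*$ by a direct computation with Lemma~\ref{le:action}: since $(\alpha_j^\vee,\omega_j)=1$, $v_{s_j\omega_j}=F_jK_j^{\half}(v_{\omega_j})$, and from $v_{\omega_j}x_j=q_j^{-1}x_jv_{\omega_j}$ and $K_j^{\pm1}(v_{\omega_j})=q_j^{\pm1}v_{\omega_j}$ one computes $v_{s_j\omega_j}=q_j^{-\half}x_jv_{\omega_j}$; combining with the prefactor $q^{-\half(s_j\omega_j-\omega_j,\omega_j)}=q_j^{\half}$ yields $\Delta_{s_j\omega_j}=x_j$.

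For the inductive step ($i<j$), assume $\Delta_{s_{i+1}\cdots s_j\omega_j}=[V_{i+1,j}]^*$. Because $\ii_0=(1,\ldots,n)$ is source-adapted and hence reduced, its subword $(i,i+1,\ldots,j)$ is a reduced expression for $s_i\cdots s_j$ and $\ell(s_{i+1}\cdots s_j)<\ell(s_i\cdots s_j)$, so the recursion~\eqref{eq:recursion minor explicit} applies with $w=s_i\cdots s_j$ and $\lambda=\omega_j$. Using $(\alpha_i^\vee,\omega_j)=0$ for $i<j$, it simplifies to
\[
\Delta_{s_i\cdots s_j\omega_j}=\frac{q_i^{N/2}}{(q_i-q_i^{-1})^N}\sum_{k=0}^{N}x_i^{[N-k]}\,[V_{i+1,j}]^*\,(-q_i^{-1}x_i)^{[k]},
\]
where $N=-(\alpha_i^\vee,|V_{i+1,j}|)\ge 0$ coincides with the coefficient of $\alpha_i$ in $|V_{ij}|=s_i|V_{i+1,j}|$. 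The proposition thus reduces to showing that this right-hand side equals $[V_{ij}]^*$.

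To complete the identification I would argue as follows. The source-adapted ordering forces every arrow of $Q$ to go from a smaller-indexed vertex to a larger, so $i$ is a source of the full subquiver on $\{i,\ldots,n\}$ and therefore $\Hom(V_{i+1,j},S_i)=\Ext^1(V_{i+1,j},S_i)=\Hom(S_i,V_{i+1,j})=0$ while $\dim_{\FF_i}\Ext^1(S_i,V_{i+1,j})=N$; the unique indecomposable non-split extension realizing $|V_{ij}|=|V_{i+1,j}|+N|S_i|$ is precisely $V_{ij}$, obtained from $V_{i+1,j}$ by BGP reflection at $i$. The right-hand side of the recursion is the explicit form of Lusztig's braid operator $T_i$ acting on the element $[V_{i+1,j}]^*\in U_+^*$ of weight $-|V_{i+1,j}|$ (so that $(\alpha_i^\vee,\text{weight})=N$), and by the Ringel--Green correspondence (together with \cite[Theorem 5.1]{cx}) between Lusztig's PBW root vectors for the reduced expression $(i,i+1,\ldots,j)$ and the dual Hall classes $[V_\beta]^*$ produced by successive BGP reflections, $T_i([V_{i+1,j}]^*)=[V_{ij}]^*$, completing the induction. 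The main obstacle will be matching the recursion's explicit formula with $T_i$ and the subsequent Ringel--Green identification along the BGP chain $V_{jj}\to V_{j-1,j}\to\cdots\to V_{ij}$, requiring careful bookkeeping of normalizations, signs, and bar-invariance; a viable alternative avoiding braid-group machinery is to expand the iterated products via Green's coproduct formula (Proposition~\ref{pr:green}) and the Hom/Ext vanishing above, verifying by telescoping that all contributions other than $[V_{ij}]^*$ cancel.
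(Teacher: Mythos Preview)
Your overall strategy---induction on $j-i$, matching the minor recursion~\eqref{eq:recursion minor explicit} against a parallel recursion for $[V_{ij}]^*$---is exactly the paper's. The base case and the reduction of the minor side are handled the same way.

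The difference lies in how you propose to obtain the recursion on the Hall side. You suggest identifying the right-hand side with Lusztig's braid operator $T_i$ and then invoking the Ringel--Green/BGP correspondence to conclude $T_i([V_{i+1,j}]^*)=[V_{ij}]^*$, flagging the normalization bookkeeping as the main obstacle. The paper bypasses this by citing \cite[Proposition~4.3.3]{cx} directly: that result already gives the recursion
\[
[V_{ij}]=v_i^N\sum_{k=0}^N [S_i]^{[N-k]}\,[V_{i+1,j}]\,(-v_i^{-1}[S_i])^{[k]}
\]
in $\cH(\rep_\FF(Q,\bfd))$, and the paper then transfers it to $\cH^*$ via $[V]\mapsto |\Aut(V)|\,\delta_{[V]}$ and computes the resulting constant explicitly (using $|\Aut(V_{ij})|=|\Aut(V_{i+1,j})|$ from \cite{dr} and $\langle V_{ij},V_{ij}\rangle=\langle V_{i+1,j},V_{i+1,j}\rangle$), arriving at precisely the formula needed. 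In effect, \cite[Proposition~4.3.3]{cx} \emph{is} the braid-operator statement you want, packaged with the normalizations already worked out; your route would reprove it. Your alternative via Green's coproduct and telescoping would also work but is considerably more laborious than either.
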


\begin{proof} We start with  the following corollary of \cite[Proposition 4.3.3]{cx}.

\begin{lemma}
\label{le:Vij* recursion}
For $1\le i<j\le n$, one has the recursion in $\cH^*(\rep_\FF(Q,\bfd))$:
$$[V_{ij}]^*=\frac{v_i^{\frac{N}{2}}}{(v_i-v_i^{-1})^N}\sum\limits_{k=0}^N x_i^{[N-k]}\cdot [V_{i+1,j}]^*\cdot (-v_i^{-1}x_i)^{[k]} \ ,$$
where $N:=(\alpha^\vee_i,|[V_{i+1,j}]^*|)=-(\alpha^\vee_i,|V_{i+1,j}|)$.
\end{lemma}

\begin{proof} 
Indeed, since  $(\alpha^\vee_i,|V_{i+1,j}|)\le 0$,   \cite[Proposition 4.3.3]{cx} implies that 
each $[V_{ij}]$ belongs to $U_+$ or, more precisely, one has the following recursion in $\cH(\rep_\FF(Q,\bfd))$ for $1\le i<j\le n$:
\begin{equation}
\label{eq:exceptional_recursion}
[V_{ij}]=v_i^N\sum\limits_{k=0}^N [S_i]^{[N-k]}\cdot [V_{i+1,j}]\cdot (-v_i^{-1}[S_i])^{[k]}\ ,
\end{equation}
where $N=-(\alpha^\vee_i,|V_{i+1,j}|)$ and $v_i=\langle S_i,S_i\rangle$. 

Using the isomorphism $\cH(\rep_\FF(Q,\bfd))\widetilde \to \cH^*(\rep_\FF(Q,\bfd))$ given by $[V]\mapsto |Aut(V)|\cdot \delta_{[V]}$ for $V\in \rep_\FF(Q,\bfd)$, we obtain a similar recursion for $\delta_{[V_{ij}]}\in \cH^*(\rep_\FF(Q,\bfd))$:
$$
\delta_{[V_{ij}]}=\frac{v_i^N|Aut(V_{i+1,j})|}{|Aut(V_{ij})|\cdot |Aut(S_i)|^N}\sum\limits_{k=0}^N x_i^{[N-k]}\cdot \delta_{[V_{i+1,j}]}\cdot (-v_i^{-1}x_i)^{[k]}
$$
hence
\begin{equation}
\label{eq:delta Vij recursion} 
[V_{ij}]^*=c'_{ij}\sum\limits_{k=0}^N x_i^{[N-k]}\cdot [V_{i+1,j}]^*\cdot (-v_i^{-1}x_i)^{[k]}\ ,
\end{equation}
where $c'_{ij}=\frac{v_i^N |Aut(V_{i+1,j})|}{|Aut(V_{ij})||Aut(S_i)|^N}\cdot\frac{{\langle V_{ij},V_{ij}\rangle^{-\half} f(|V_{ij}|)}}{{\langle V_{i+1,j},V_{i+1,j}\rangle^{-\half} f(|V_{i+1,j}|)}}$.  Since $|V_{ij}|=s_i|V_{i+1,j}|=|V_{i+1,j}|+N\alpha_i$ we have
$$f(|V_{ij}|)=f(|V_{i+1,j}|)\langle S_i,S_i\rangle^{\frac{N}{2}}\ .$$ 
Then using the fact that $|Aut(V_{i+1,j})|=|Aut(V_{ij})|$ (see e.g., \cite[Proposition 2.1]{dr}) and $\langle V_{ij},V_{ij}\rangle=\langle V_{i+1,j},V_{i+1,j}\rangle$, we obtain:
$$c'_{ij}=\frac{v_i^N \langle S_i,S_i\rangle^{\frac{N}{2}}}{|Aut(S_i)|^N}=\frac{v_i^{\frac{N}{2}}}{(v_i-v_i^{-1})^N}\ ,$$
because $\langle S_i,S_i\rangle=v_i$ and $|Aut(S_i)|=v_i^2-1$. 

This proves the lemma.
\end{proof}


We are now in a position to prove \eqref{eq:Vij* extremal} by induction in $j-i$. Indeed, for $j-i=0$ we will set $w=s_i=s_j$ and $\lambda=s_iw\lambda=\omega_i$ in \eqref{eq:recursion minor explicit} where $N=(\alpha_i^\vee,\omega_i)=1$.  Taking into account that $\Delta_{\omega_j}=1$, we have
$$\Delta_{s_j\omega_j}=\frac{q_i^\frac{1}{2}}{q_i-q_i^{-1}}\sum_{k=0}^1 (q_i^{\frac{1}{2}}x_i)^{[1-k]} \cdot (-q_i^{-1-\frac{1}{2}}x_i)^{[k]}=\frac{q_i^\frac{1}{2}}{(q_i-q_i^{-1})}
(q_i^{\frac{1}{2}}x_i-q_i^{-1-\frac{1}{2}}x_i)=x_j=[V_{jj}]^*\ .$$

Now assume that $j-i>0$, then specializing \eqref{eq:recursion minor explicit} at $q_i=v_i$, $\lambda=\omega_j$, $w=s_i\cdots s_j$ and using the fact that $(\alpha_i^\vee,\omega_j)=0$ if $i\ne j$, we obtain the following recursive formula for $\Delta_{s_i\cdots s_j\omega_j}$:
\begin{equation}
\label{eq:recursion minor explicit special}
\Delta_{s_i\cdots s_j\lambda}=\frac{v_i^\frac{N}{2}}{(v_i-v_i^{-1})^N}\sum_{k=0}^N x_i^{[N-k]} \Delta_{s_{i+1}\cdots s_j\omega_j}\cdot (-v_i^{-1}x_i)^{[k]}\ .
\end{equation}
Combining Lemma \ref{le:Vij* recursion} with the inductive hypotheses for $[V_{i+1,j}]^*$, we obtain by \eqref{eq:recursion minor explicit special}:
$$[V_{ij}]^*=\frac{v_i^{\frac{N}{2}}}{(v_i-v_i^{-1})^N}\sum\limits_{k=0}^N x_i^{[N-k]}\cdot \Delta_{s_{i+1}\cdots s_j\omega_j}
\cdot (-v_i^{-1}x_i)^{[k]}=\Delta_{s_i\cdots s_j\omega_j}\ .$$
The proposition is proved.
\end{proof}

To complete the proof of Theorem~\ref{th:twist double coxeter} we need to show that $\hat \eta_\ii(\cU_\ii)= \cU_\ii$, where  $\hat \eta_\ii:\cL_\ii\to Frac(\cL_\ii)$ is defined in Lemma~\ref{le:hat X commutation}.

First note that Lemma~\ref{le:hat X commutation} implies that $\hat\Sigma_\bfi=(\hat\bfX_\bfi,\tilde B_\bfi)$ is a quantum seed in $\cL_\bfi$.  Denote ${\hat\Sigma}_\bfi^+=({\hat\bfX}_\bfi^+,{\tilde B}_\bfi^+)$, where ${\hat\bfX}_\bfi^+=\rho_\bfi^+(\hat\bfX_\bfi)$ in the notation of Lemma~\ref{le:coeff_trans}. By Lemma \ref{le:coeff_trans}(a) it is a quantum seed for $\cL_\ii$.

\begin{lemma}
\label{le:hat seeds coincide}
${\hat\Sigma}_\bfi^+=\mu_n\cdots \mu_1(\Sigma_\ii^-)$.
\end{lemma}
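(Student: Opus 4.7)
The plan is to verify directly that the two quantum seeds agree in both their exchange matrices and their clusters. The exchange-matrix identity $\mu_n\cdots\mu_1(\tilde B_\bfi^-) = \tilde B_\bfi^+$ is recorded in \eqref{Bii- Bii+}, so that part is immediate. For the clusters, let $\bfX^{(k)}$ denote the cluster of $\mu_k\cdots\mu_1(\Sigma_\bfi^-)$. Since mutations at positions $k \in \ex = [1,n]$ do not touch the coefficient variables at positions $[n+1, 2n]$, one has $\bfX^{(n)}_\ell = X_\ell^-$ for $\ell \in [n+1, 2n]$, and the task splits into two subproblems: (i) show $\bfX^{(n)}_k = \hat X_k$ for $k \in [1,n]$, and (ii) show $X_\ell^- = \hat X^{\rho_\bfi^+(\varepsilon_\ell)}$ for $\ell \in [n+1,2n]$, where $\hat X^{\bfa}$ denotes the cluster monomial in the $\hat X$-cluster.

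For (i), I will invoke Theorem \ref{th:cluster isomorphism}(b): every non-initial cluster variable of $\cU_\bfi$ equals $\Psi_\bfi([V]^*)$ for a unique exceptional $V \in \rep_\FF(Q,\bfd)$, determined by its dimension vector. I will track the degree through the mutation sequence $\mu_1, \ldots, \mu_k$ applied to $\Sigma_\bfi^-$ by induction on $k$, using the standard degree mutation rule (in a manner parallel to the computation proving Lemma \ref{le:degrees of X}(a)), to show $|\bfX^{(k)}_k| = s_{i_1}\cdots s_{i_{k-1}}\alpha_{i_k} = |V_{1,k}|$. Combined with Proposition \ref{pr:from Vij to Delta}, which identifies $[V_{1,k}]^* = \Delta_{w_k\omega_{i_k}}$, this will yield $\bfX^{(n)}_k = \bfX^{(k)}_k = \Psi_\bfi(\Delta_{w_k\omega_{i_k}}) = \hat X_k$.

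For (ii), both sides are Laurent monomials in the quantum torus, and the identity reduces to a matching of $t$-monomials. The base case $\ell = 2n$ is immediate: by Proposition \ref{pr:psi tlambda} and Lemma \ref{le:tlambda properties}(a) one has $\hat X_{2n} = \Psi_\bfi(\Delta_{c^2\omega_{i_n}}) = t_{\omega_{i_n}}^{-1} = X_{2n}^{-1}$, which matches $X_{2n}^-$ computed directly from $\rho_\bfi^-$. For general $\ell \in (n, 2n)$, I would argue by descending induction on $2n - \ell$, using the $q$-commutation relations of generalized minors from Lemma \ref{le:v_wlambda commute} together with the explicit form of $\rho_\bfi^\pm$ from Lemma \ref{le:rho phi}. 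The main obstacle will be this coefficient computation: Proposition \ref{pr:psi tlambda} applies only when $\bfi$ is a reduced word for the subscript element, while for $\ell < 2n$ the element $w_\ell$ is a proper prefix of $c^2$, so the required Laurent monomial identity must be handled by more refined means, likely invoking the description of $\ker\Psi_\bfi$ from Lemma \ref{le:feigin reduced} or the detailed structure of the Feigin homomorphism from \cite{ber}.
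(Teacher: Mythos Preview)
Your treatment of the exchange matrices and of the exchangeable cluster variables (parts you label (i)) is essentially the paper's own argument: use \eqref{Bii- Bii+}, track degrees through the mutation sequence as in Lemma \ref{le:degrees of X}(a), then invoke Theorem \ref{th:cluster isomorphism}(b) and Proposition \ref{pr:from Vij to Delta}. That part is fine.

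The perceived obstacle in part (ii) is not real. You worry that Proposition \ref{pr:psi tlambda} only applies when the subscript is $w_m\lambda$, whereas for $\ell\in(n,2n)$ the weight $w_\ell\omega_{i_\ell}$ comes from a proper prefix $w_\ell$ of $c^2$. But every such $\ell$ is the \emph{last} occurrence of $i_\ell$ in $\bfi$, so $s_{i_{\ell+1}},\dots,s_{i_m}$ all fix $\omega_{i_\ell}$; hence $w_\ell\omega_{i_\ell}=w_m\omega_{i_\ell}=c^2\omega_{i_\ell}$ for every $\ell\in[n+1,2n]$, not just $\ell=2n$. Proposition \ref{pr:psi tlambda} therefore applies directly for each coefficient index, giving $\hat X_\ell=t_{\omega_{i_\ell}}^{-1}=t^{-\varphi_\bfi^{-1}(\varepsilon_\ell)}$ (cf.\ Lemma \ref{le:alambda}(e)). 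Since $\rho_\bfi^+(\varepsilon_\ell)$ is supported on $[n+1,2n]$ by Lemma \ref{le:rho phi}, one gets $(\hat\bfX_\bfi^+)_\ell=\hat X^{\rho_\bfi^+(\varepsilon_\ell)}=t^{-\varphi_\bfi^{-1}\rho_\bfi^+(\varepsilon_\ell)}$, and the identity $\rho_\bfi^-(\varepsilon_\ell)=-\rho_\bfi^+(\varepsilon_\ell)$ for $\ell\notin\ex$ (immediate from \eqref{eq:rhopm} since the $\ell=k$ term contributes $-a_{i_ki_k}\varepsilon_k=-2\varepsilon_k$) finishes the coefficient matching in one line. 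No induction, no $q$-commutation of minors, and no appeal to $\ker\Psi_\bfi$ is needed.

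Incidentally, your proposed ``descending induction on $2n-\ell$'' with base case $\ell=2n$ is misoriented: $\rho_\bfi^+(\varepsilon_{2n})$ involves all of $\varepsilon_{n+1},\dots,\varepsilon_{2n}$, so evaluating $\hat X^{\rho_\bfi^+(\varepsilon_{2n})}$ already requires knowing every $\hat X_{n+k}$. If you wanted an inductive argument, the natural base would be $\ell=n+1$ (where $\rho_\bfi^+(\varepsilon_{n+1})=\varepsilon_{n+1}$). But once you notice the observation above, the induction is unnecessary.
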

\begin{proof}
In the notation of Section \ref{subsect: proof of Theorem cluster isomorphism}, denote 
$$\Sigma'=({\bf X}',\tilde B'):=\mu_n\cdots \mu_1(\Sigma_\ii^-)\ .$$

Clearly, $\tilde B'={\tilde B}_\bfi^+$ by \eqref{Bii- Bii+}. 

Furthermore, repeating the argument from the proof of Proposition \ref{pr:feigin pullback}, we obtain
$$X'_j=\Psi_\ii([V_{1j}]^*)$$
for $1\le j\le n$, where $V_{1j}$ is  defined in Definition \ref{def:Vij}.

It follows from Proposition \ref{pr:from Vij to Delta} that 
$$X'_j=\Psi_\ii([V_{1j}]^*)=\Psi_\ii(\Delta_{s_1\cdots s_j\omega_j})=\hat X_j$$
for $j=1,\ldots,n$. Clearly, the coefficients 
$X'_{n+1},\ldots,X'_{2n}$ of ${\bf X}'$ are inherited from the  cluster ${\bfX}_\ii^-=\{X_1^-,\ldots,X_{2n}^-\}$ therefore 
\begin{equation}\label{eq:X' t}
 X_{n+k}'=X_{n+k}^-=t^{\varphi_\bfi^{-1}\rho_\bfi^-(\varepsilon_{n+k})}=t^{-\varphi_\bfi^{-1}\rho_\bfi^+(\varepsilon_{n+k})},
\end{equation}
where we use that $\rho_\bfi^-(\varepsilon_{n+k})=-\rho_\bfi^+(\varepsilon_{n+k})$.

Following Proposition~\ref{pr:psi tlambda} and Lemma~\ref{le:alambda}(e) we have
\begin{equation}
\label{eq:X hat t}
\hat X_{n+k}=\Psi_\bfi(\Delta_{c^2\omega_{i_{n+k}}})=t_{\omega_{i_{n+k}}}^{-1}=t^{-\varphi_\bfi^{-1}(\varepsilon_{n+k})}
\end{equation}
for $1\le k\le n$.  Combining \eqref{eq:X' t} and \eqref{eq:X hat t} we see that 
$$X_{n+k}'=\rho_\bfi^+(\hat\bfX_\bfi)_{n+k}=t^{-\varphi_\bfi^{-1}\rho_\bfi^+(\varepsilon_{n+k})}={\hat X}_{n+k}^+\ .$$
This proves that  $\bfX'={\hat\bfX}_\bfi^+$.  The proposition is proved.

\end{proof}

Now we can finish the proof of Theorem~\ref{th:twist double coxeter}. Indeed, by Lemma~\ref{le:coeff_trans}(b), $\cU({\hat\Sigma}_\bfi^+)=\cU({\hat\Sigma}_\bfi)$. On the other hand, $\cU({\hat\Sigma}_\bfi^+)=\cU({\Sigma}_\bfi^+)=\cU_\ii$ by Lemma \ref{le:hat seeds coincide}.  Therefore, Theorem~\ref{th:twist double coxeter} is proved. 

\endproof

\subsection{Proof of Theorem \ref{th:corollary bz}}
\label{subsec:proof corollary bz}

For simplicity, we set $\ii_0:=(1,\ldots,n)$ as above. Then Theorem \ref{th:cluster isomorphism} and \eqref{eq:Feigin localized} guarantee that if  $\ii=(\ii_0,\ii_0)$ is a reduced word for an element $c^2$, where $c=s_1\cdots s_n$, then $\underline \Psi_\ii$ is an isomorphism 
$$\underline \Psi_\ii:\kk_q[N^{c^2}]\widetilde \to \UU_\ii \ .$$
On the other hand, by Theorem \ref{th:twist double coxeter},  we obtain: 
$$\eta(X_j)=\Psi_\ii(\Delta_{s_1\cdots s_j\omega_j})=\underline \Psi_\ii(\underline \Delta_{s_1\cdots s_j\omega_j})$$
for $j=1,\ldots,n$.

Therefore, combining this with Lemma \ref{le:feigin reduced}(e) 
we see that if $\ii=(\ii_0,\ii_0)$ is a reduced word for $c^2$, the assignment 
$$X_j\mapsto \underline\Psi_\ii^{-1}(\eta(X_j))=
\begin{cases}
\underline \Delta_{s_1\cdots s_j\omega_j} & \text{if $j\le n$}\\
\underline \Delta_{s_1\cdots s_n s_1\cdots s_{j-n}\omega_{j-n}}^{-1} & \text{if $j> n$}\\
\end{cases}$$
for $j=1,\ldots,2n$ defines an injective homomorphism   
$$\cL_\ii\hookrightarrow Frac(\kk_q[N^{c^2}])$$
whose restriction to $\UU_\ii\subset \cL_\ii$ is an isomorphism $\UU_\ii\widetilde \to \kk_q[N^{c^2}]$. This proves Theorem  \ref{th:corollary bz}.

\section{Example}
 In this section we compute a complete example to illustrate our main results.  Consider the Cartan matrix $A=\begin{pmatrix} 2 & -3\\ -1 & 2\end{pmatrix}$ with symmetrizing matrix $D=\begin{pmatrix} 1 & 0\\ 0 & 3\end{pmatrix}$.  Our example will be built on the word $\bfi=(1,2,1,2)$.  In this case $\cL_\bfi$ is the algebra over $\ZZ[v^{\pm\half}]$ generated by $t_1^{\pm1}$, $t_2^{\pm1}$, $t_3^{\pm1}$, and $t_4^{\pm1}$ subject to the commutation relations:
 \[t_2t_1=v^{-3}t_1t_2,\quad t_3t_1=v^2t_1t_3,\quad t_4t_1=v^{-3}t_1t_4,\quad t_3t_2=v^{-3}t_2t_3,\quad t_4t_2=v^6t_2t_4,\quad t_4t_3=v^{-3}t_3t_4.\]

 We may compute the initial exchange matrix as $\tilde B_\bfi=\left(\begin{array}{cc}0 & 3\\ -1 & 0\\ 1 & -3\\ 0 & 1\end{array}\right)$ and the initial cluster $\bfX_\bfi=(X_1,X_2,X_3,X_4)\subset\cL_\bfi$ given by
 \begin{equation}
  \label{eq:mon_change_ex1}
  X_1=t^{-\varepsilon_1},\quad X_2=t^{-3\varepsilon_1-\varepsilon_2},\quad X_3=t^{-2\varepsilon_1-\varepsilon_2-\varepsilon_3},\quad X_4=t^{-3\varepsilon_1-2\varepsilon_2-3\varepsilon_3-\varepsilon_4}.
 \end{equation}
 We will also need another choice of coefficients $X_3^-$ and $X_4^-$ computed by:
 \begin{equation}
  \label{eq:mon_change_ex2}
  X_3^-=X^{-\varepsilon_3}=t^{2\varepsilon_1+\varepsilon_2+\varepsilon_3},\quad X_4^-=X^{3\varepsilon_3-\varepsilon_4}=t^{-3\varepsilon_1-\varepsilon_2+\varepsilon_4}.
 \end{equation}
 The variables of $\bfX_\bfi$ or $\bfX_\bfi^-=(X_1,X_2,X_3^-,X_4^-)$ (and their inverses) form another generating set for $\cL_\bfi$ and we may use them to express the generators $t_1$, $t_2$, $t_3$, and $t_4$ as follows:
 \begin{equation}
  \label{eq:mon_change_ex3}
  t_1=X^{-\varepsilon_1},\quad t_2=X^{3\varepsilon_1-\varepsilon_2},\quad t_3=X^{-\varepsilon_1+\varepsilon_2-\varepsilon_3}=X_-^{-\varepsilon_1+\varepsilon_2+\varepsilon_3},\quad t_4=X^{-\varepsilon_2+3\varepsilon_3-\varepsilon_4}=X_-^{-\varepsilon_2+\varepsilon_4},
 \end{equation}
 where we write $X_-^\bfa$ for bar-invariant monomials in the cluster $\bfX_\bfi^-$.  It is easy to see that the commutation matrix of $\bfX_\bfi$ is given by $\left(\begin{array}{cccc} 0 & 3 & 1 & 3\\ -3 & 0 & 0 & 3\\ -1 & 0 & 0 & 3\\ -3 & -3 & -3 & 0\end{array}\right)$ and this is compatible with $\tilde B_\bfi$, verifying Theorem~\ref{th:compatibility}.  Similarly the commutation matrix of $\bfX_\bfi^-$ is $\left(\begin{array}{cccc} 0 & 3 & -1 & 0\\ -3 & 0 & 0 & -3\\ 1 & 0 & 0 & 3\\ 0 & 3 & -3 & 0\end{array}\right)$ which is compatible with the exchange matrix $\tilde B_\bfi^-=\left(\begin{array}{cc}0 & 3\\ -1 & 0\\ -1 & 0\\ 0 & -1\end{array}\right)$.

 Inside $\kk_{v^2}[N]$ we have the generalized quantum minors $\Delta_{s_1\omega_1}$, $\Delta_{s_1s_2\omega_2}$, $\Delta_{s_1s_2s_1\omega_1}$, and $\Delta_{s_1s_2s_1s_2\omega_2}$ which provide the cluster variables and coefficients of the cluster $(\hat\bfX_\bfi,\tilde B_\bfi)$.  These generalized quantum minors can easily be computed using \eqref{eq:recursion minor explicit} as follows:
 \begin{align*}
  \Delta_{s_1\omega_1}&=x_1,\\
  \Delta_{s_1s_2\omega_2}&=\frac{v^{\frac{3}{2}}x_1^3x_2-v^{\half}[3]_vx_1^2x_2x_1+v^{-\half}[3]_vx_1x_2x_1^2-v^{-\frac{3}{2}}x_2x_1^3}{(v^3-v^{-3})(v^2-v^{-2})(v-v^{-1})},\\
  \Delta_{s_1s_2s_1\omega_1}&=\frac{-v^{\half}x_1^3x_2+(v^{\frac{7}{2}}+v^{-\half}+v^{-\frac{5}{2}})x_1^2x_2x_1-(v^{\frac{5}{2}}+v^{\half}+v^{-\frac{7}{2}})x_1x_2x_1^2+v^{-\half}x_2x_1^3}{(v^3-v^{-3})(v^2-v^{-2})(v-v^{-1})},\\
  \Delta_{s_1s_2s_1s_2\omega_2}&=\frac{v^{\frac{3}{2}}x_1^3\Delta_{s_2s_1s_2\omega_2}-v^{\half}[3]_vx_1^2\Delta_{s_2s_1s_2\omega_2}x_1+v^{-\half}[3]_vx_1\Delta_{s_2s_1s_2\omega_2}x_1^2-v^{-\frac{3}{2}}\Delta_{s_2s_1s_2\omega_2}x_1^3}{(v^3-v^{-3})(v^2-v^{-2})(v-v^{-1})},\\
 \end{align*}
 where we have used the notation $[3]_v=v^2+1+v^{-2}$.  

 Applying the Feigin homomorphism $\Psi_{(1,2,1,2)}$ we obtain the cluster variables as follows:
 \begin{align*}
  \hat X_1&=\Psi_{(1,2,1,2)}(\Delta_{s_1\omega_1})=t^{\varepsilon_1}+t^{\varepsilon_3},\\
  \hat X_2&=\Psi_{(1,2,1,2)}(\Delta_{s_1s_2\omega_2})=t^{3\varepsilon_1+\varepsilon_2}+t^{3\varepsilon_1+\varepsilon_4}+[3]_v t^{2\varepsilon_1+\varepsilon_3+\varepsilon_4}+[3]_v t^{\varepsilon_1+2\varepsilon_3+\varepsilon_4}+t^{3\varepsilon_3+\varepsilon_4},\\
  \hat X_3&=\Psi_{(1,2,1,2)}(\Delta_{s_1s_2s_1\omega_1})=t^{2\varepsilon_1+\varepsilon_2+\varepsilon_3}=X^{-\varepsilon_3},\\
  \hat X_4&=\Psi_{(1,2,1,2)}(\Delta_{s_1s_2s_1s_2\omega_2})=t^{3\varepsilon_1+2\varepsilon_2+3\varepsilon_3+\varepsilon_4}=X^{-\varepsilon_4}.
 \end{align*}
 Applying the monomial change \eqref{eq:mon_change_ex3} we see that 
 $$\hat X_1=\Psi_{(1,2,1,2)}(\Delta_{s_1\omega_1})=X_-^{-\varepsilon_1}+X_-^{-\varepsilon_1+\varepsilon_2+\varepsilon_3}$$
 is the new variable obtained by mutating the seed $(\bfX_\bfi^-,\tilde B_\bfi^-)$ in direction 1 and that mutating further in direction 2 produces the new cluster variable:
 \begin{align*}
  \hat X_2=\Psi_{(1,2,1,2)}(\Delta_{s_1s_2\omega_2})
  &=X_-^{-\varepsilon_2}+X_-^{-3\varepsilon_1-\varepsilon_2+\varepsilon_4}+(v^2+1+v^{-2}) X_-^{-3\varepsilon_1+\varepsilon_3+\varepsilon_4}\\
  &\quad+(v^2+1+v^{-2})X_-^{-3\varepsilon_1+\varepsilon_2+2\varepsilon_3+\varepsilon_4} +X_-^{-3\varepsilon_1+2\varepsilon_2+3\varepsilon_3+\varepsilon_4}.
 \end{align*}

\section{Appendix: Twisted Bialgebras in Braided Monoidal Categories}\label{sec:appendix}

Let $\kk$ be a field and $\Gamma$ an additive monoid. For any unitary bicharacter $\chi\st \Gamma\times \Gamma\to \kk^\times$ let ${\mathcal C}_\chi$ be the tensor category of $\Gamma$-graded vector spaces $V=\bigoplus\limits_{\gamma\in \Gamma} V(\gamma)$ such that each component  $V(\gamma)$ is finite-dimensional. Clearly, this category is braided via $\Psi_{U,V}\st U\otimes V\to V\otimes U$ given by 
$$\Psi_{U,V}(u_\gamma\otimes v_{\gamma'})=\chi(\gamma,\gamma') \cdot v_{\gamma'}\otimes u_\gamma$$
for any $u_\gamma\in U(\gamma)$, $v_{\gamma'}\in V(\gamma')$.  

Let ${\mathcal U}=\bigoplus\limits_{\gamma\in \Gamma}{\mathcal U}(\gamma)$ be a bialgebra in ${\mathcal C}_\chi$.  Denote by $\hat {\mathcal U}$ the completion of ${\mathcal U}$ with respect to the grading, 
that is, the space of all formal series $\tilde u=\sum\limits_{\gamma\in \Gamma} u_\gamma$, where $u_\gamma\in {\mathcal U}(\gamma)$. For each such $\tilde u$ denote by $Supp(\tilde u)$ the submonoid 
of $\Gamma$ generated by $\{\gamma\st u_\gamma\ne 0\}$.

From now on we assume that for any $\gamma\in \Gamma$ the set
$$A_\gamma=\{(\gamma',\gamma'')\st \gamma'+\gamma''=\gamma\}$$ 
of two-part partitions of $\gamma$ is finite.  It is easy to see that, under this assumption, $\hat {\mathcal U}$ has a well-defined multiplication.  The coproduct on ${\mathcal U}$ extends to $\hat \Delta\st  \hat {\mathcal U}\to {\mathcal U}\hat {\bigotimes} {\mathcal U}$ so that $\hat {\mathcal U}$ becomes a {\it complete bialgebra}.  The following fact is obvious. 

\begin{lemma}\label{le:grouplike}
Let $E=\sum\limits_{\gamma\in \Gamma} E^{(\gamma)}$ be a formal series, where each $ E^{(\gamma)}\in {\mathcal U}(\gamma)$. Then $E$ is grouplike in $\hat {\mathcal U}$ (i.e., $\hat\Delta(E)=E\otimes E$) if and only if 
$$\Delta(E^{(\gamma)})=\sum_{(\gamma',\gamma'')\in A_\gamma} E^{(\gamma')}\otimes E^{(\gamma'')}$$
for each $\gamma\in\Gamma$.
\end{lemma}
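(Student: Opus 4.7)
The plan is to reduce the identity $\hat\Delta(E) = E\otimes E$ in the completed tensor product $\mathcal{U}\hat\otimes\mathcal{U}$ to a bidegree-by-bidegree comparison, exploiting the fact that $\Delta$ is a morphism in $\mathcal{C}_\chi$ and hence preserves the $\Gamma$-grading.

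First I would unpack the two sides as $\Gamma\times\Gamma$-graded formal series. Since $\mathcal{U}$ is a bialgebra in $\mathcal{C}_\chi$, the comultiplication $\Delta\st\mathcal{U}\to\mathcal{U}\otimes\mathcal{U}$ is a morphism of $\Gamma$-graded objects, so for each $\gamma\in\Gamma$ we may write uniquely
\begin{equation*}
\Delta(E^{(\gamma)}) \;=\; \sum_{(\gamma',\gamma'')\in A_\gamma} \Delta_{\gamma',\gamma''}(E^{(\gamma)}), \qquad \Delta_{\gamma',\gamma''}(E^{(\gamma)})\in \mathcal{U}(\gamma')\otimes\mathcal{U}(\gamma''),
\end{equation*}
and the sum is finite by the standing hypothesis that $A_\gamma$ is finite. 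The extension $\hat\Delta$ is defined by declaring
\begin{equation*}
\hat\Delta(E)\;=\;\sum_{\gamma\in\Gamma}\Delta(E^{(\gamma)})\;=\;\sum_{(\gamma',\gamma'')\in\Gamma\times\Gamma}\Delta_{\gamma',\gamma''}\bigl(E^{(\gamma'+\gamma'')}\bigr),
\end{equation*}
which is a well-defined element of $\mathcal{U}\hat\otimes\mathcal{U}$ precisely because each bihomogeneous component $(\gamma',\gamma'')$ receives contributions only from the single term $\gamma=\gamma'+\gamma''$.

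Second, I would expand the right-hand side similarly:
\begin{equation*}
E\otimes E \;=\; \sum_{(\gamma',\gamma'')\in\Gamma\times\Gamma} E^{(\gamma')}\otimes E^{(\gamma'')},
\end{equation*}
whose component in bidegree $(\gamma',\gamma'')$ is simply $E^{(\gamma')}\otimes E^{(\gamma'')}$. By the universal property of $\mathcal{U}\hat\otimes\mathcal{U}$ as a direct product of its bihomogeneous pieces, the equality $\hat\Delta(E)=E\otimes E$ holds iff it holds in every bidegree $(\gamma',\gamma'')$, i.e.,
\begin{equation*}
\Delta_{\gamma',\gamma''}\bigl(E^{(\gamma'+\gamma'')}\bigr)\;=\;E^{(\gamma')}\otimes E^{(\gamma'')}.
\end{equation*}
Fixing $\gamma$ and summing these equalities over $(\gamma',\gamma'')\in A_\gamma$ yields the stated formula for $\Delta(E^{(\gamma)})$; conversely, that formula immediately recovers the bidegree identities. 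This completes the equivalence.

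Because everything is purely formal, there is no real obstacle — the only subtle point to state carefully is that $\Delta$ respects the $\Gamma$-grading (which is built into the definition of a bialgebra in $\mathcal{C}_\chi$) so that $\Delta(E^{(\gamma)})$ is supported on the finite set $A_\gamma$, ensuring both sides of the proposed equality live in the same completed tensor product and can be matched component-wise.
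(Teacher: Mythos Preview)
Your proof is correct. The paper offers no proof at all---it simply declares the lemma ``obvious''---so your bidegree-by-bidegree comparison is exactly the natural elaboration, and there is nothing to contrast.
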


As a corollary of Lemma~\ref{le:grouplike} we have the following well-known result.
\begin{lemma}
\label{le:primitive}
If $x\in\cU$ is primitive, i.e. $\Delta(x)=x\otimes1+1\otimes x$ and $\Psi_{\cU,\cU}(x\otimes x)=qx\otimes x$ for some non-root of unity $q\in \kk^\times$, then the braided exponential 
$$exp_q(x)=\sum_{k=0}^\infty \frac{1}{(k)_q!} x^k$$
of $x$ is grouplike in $\hat{\cU}$, where $(k)_q!=(1)_q\cdots (k)_q$ and $(\ell)_q=\frac{q^\ell-1}{q-1}$.
\end{lemma}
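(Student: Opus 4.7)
The plan is to verify the grouplike condition via Lemma~\ref{le:grouplike} applied to $E = \exp_q(x)$. Writing $\delta := |x|$, the hypothesis $\Psi_{\cU,\cU}(x\otimes x) = q\,x\otimes x$ gives $\chi(\delta,\delta) = q$, and since $q$ is not a root of unity, $\delta$ must be of infinite order in $\Gamma$ (otherwise $q^k = \chi(k\delta,\delta) = \chi(0,\delta) = 1$ for some $k > 0$). Consequently the multiples $k\delta$ are pairwise distinct, and $\exp_q(x)^{(k\delta)} = \frac{1}{(k)_q!}\,x^k$ for $k \ge 0$ are the only nonzero graded components of $\exp_q(x)$. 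By Lemma~\ref{le:grouplike}, it therefore suffices to establish, for every $n \ge 0$, the braided $q$-binomial identity
\begin{equation*}
\Delta(x^n) = \sum_{k=0}^{n} \binom{n}{k}_q x^k \otimes x^{n-k}, \qquad \text{where } \binom{n}{k}_q := \frac{(n)_q!}{(k)_q!\,(n-k)_q!}.
\end{equation*}

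I would prove this by induction on $n$, the cases $n=0,1$ being immediate from $\Delta(1)=1\otimes 1$ and the primitivity of $x$. For the inductive step, use $\Delta(x^{n+1}) = \Delta(x^n)\cdot\Delta(x)$ and compute in the braided tensor product $\cU \otimes \cU$, whose multiplication is $(a\otimes b)(c\otimes d) = \chi(|b|,|c|)\,ac\otimes bd$ on homogeneous elements. Combined with the primitivity of $x$, this yields
\begin{equation*}
(x^k\otimes x^{n-k})(x\otimes 1) = q^{n-k}\,x^{k+1}\otimes x^{n-k}, \qquad (x^k\otimes x^{n-k})(1\otimes x) = x^k\otimes x^{n-k+1},
\end{equation*}
where $\chi((n-k)\delta,\delta) = \chi(\delta,\delta)^{n-k} = q^{n-k}$. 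After relabelling, the coefficient of $x^j\otimes x^{n+1-j}$ in $\Delta(x^{n+1})$ becomes $q^{n+1-j}\binom{n}{j-1}_q + \binom{n}{j}_q$, which equals $\binom{n+1}{j}_q$ by the standard $q$-Pascal identity.

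The only delicate point is bookkeeping of the braiding factors during the induction, which reduces entirely to the single rule $\chi((n-k)\delta,\delta) = q^{n-k}$ and collapses cleanly via $q$-Pascal. The non-root-of-unity hypothesis plays a double role here: it makes all $q$-factorials invertible (so that $\exp_q(x)$ and the $q$-binomials are well-defined), and it guarantees that the support of $\exp_q(x)$ is a free sub-monoid of $\Gamma$ on the generator $\delta$, so that the partition sum appearing in Lemma~\ref{le:grouplike} truncates precisely to the $q$-binomial formula without stray cross-terms.
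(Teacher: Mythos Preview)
Your proof is correct and follows exactly the route the paper indicates: the paper states this lemma as a ``well-known'' corollary of Lemma~\ref{le:grouplike} without supplying details, and your argument fills those in via the braided $q$-binomial identity proved by induction and $q$-Pascal. One small remark: your passage from ``$\delta$ has infinite order'' to ``the multiples $k\delta$ are pairwise distinct'' implicitly treats $\Gamma$ as a group, whereas it is only assumed to be a monoid; but the bicharacter argument you already use gives this directly, since $k\delta=k'\delta$ forces $q^k=\chi(k\delta,\delta)=\chi(k'\delta,\delta)=q^{k'}$ and hence $k=k'$.
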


However, the product of grouplike elements is not always grouplike. We can sometimes restore the grouplike property of a product by twisting the factors with elements of an appropriate noncommutative algebra $\PP$ in $\cC_\chi$.  This, contained in Proposition~\ref{pr:product grouplike}, is the main idea behind the forthcoming theorem.

Now we define the {\it restricted dual} algebra ${\mathcal A}$ of ${\mathcal U}$. As a vector space, ${\mathcal A}$ is the set of all $\kk$-linear forms $x\st {\mathcal U}\to \kk$ such that $x$ vanishes on ${\mathcal U}(\gamma)$ for all but finitely many $\gamma\in\Gamma$. In other words, ${\mathcal A}\cong \bigoplus\limits_{\gamma\in\Gamma} {\mathcal A}(\gamma)$ where ${\mathcal A}(\gamma)={\Hom}_\kk({\mathcal U}(\gamma),\kk)$.  Clearly, ${\mathcal A}$ is an algebra in ${\mathcal C}_\chi$ with the product (resp. unit) adjoint of the coproduct $\Delta$ (resp. counit) on ${\mathcal U}$. 

Let ${\bf E}=(E_1,\ldots,E_m)$ be a family of grouplike elements 
$$E_k=\sum_{\gamma\in \Gamma} E_k^{(\gamma)}$$
in $\hat {\mathcal U}$. We say that ${\bf E}$ is $\PP$-adapted if for each $k=1,\ldots,m$ there exists a homomorphism $\tau_k$ from the monoid $Supp(E_k)$ to the multiplicative monoid of $\PP$ such that 
\begin{equation}\label{eq:tau commute}
\tau_\ell(\gamma_\ell)\tau_k(\gamma_k)=\chi(\gamma_k,\gamma_\ell)\cdot\tau_k(\gamma_k) \tau_\ell(\gamma_\ell)
\end{equation}
for all $k<\ell$ and $\gamma_k\in Supp(E_k)$.  For every $\PP$-adapted family ${\bf E}$ we define a map $\Psi_{\bf E}\st {\mathcal A} \to \PP$ by the formula 
\begin{equation}
\label{eq:abstract_Feigin}
\Psi_{\bf E}(x)=\sum_{\gamma_1\in Supp(E_1),\ldots,\gamma_m\in Supp(E_m)}x\big(E_1^{(\gamma_1)}\cdots E_m^{(\gamma_m)}\big)\tau_1(\gamma_1)\cdots \tau_m(\gamma_m)\ ,
\end{equation}
where we denote by $(x,u)\mapsto x(u)$ the natural non-degenerate evaluation pairing ${\mathcal A}\times {\mathcal U}\to \kk$.  Note that the sum in \eqref{eq:abstract_Feigin} is always finite because $x$ vanishes on all but finitely many homogeneous components of ${\mathcal U}$.  

\begin{theorem}
\label{th:abstract_Feigin} 
For any $\PP$-adapted family ${\bf E}$ of grouplike elements the map $\Psi_{\bf E}\st {\mathcal A} \to \PP$ defined by \eqref{eq:abstract_Feigin} is a homomorphism of $\Gamma$-graded algebras.
\end{theorem}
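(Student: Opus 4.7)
The plan is to verify the three conditions for $\Psi_{\bfE}$ to be a morphism of $\Gamma$-graded algebras --- grading preservation, unit preservation, and multiplicativity --- with multiplicativity being the only non-trivial step. For brevity I will write $\Xi(\boldsymbol\gamma):=E_1^{(\gamma_1)}\cdots E_m^{(\gamma_m)}\in\cU$ and $\Theta(\boldsymbol\gamma):=\tau_1(\gamma_1)\cdots\tau_m(\gamma_m)\in\PP$ for $\boldsymbol\gamma=(\gamma_1,\ldots,\gamma_m)$, so that $\Psi_{\bfE}(x)=\sum_{\boldsymbol\gamma}x(\Xi(\boldsymbol\gamma))\,\Theta(\boldsymbol\gamma)$. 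Grading preservation is immediate: for homogeneous $x\in\cA(\gamma)$, only summands with $\sum_k\gamma_k=\gamma$ contribute, and $\Theta(\boldsymbol\gamma)\in\PP(\gamma)$ for such $\boldsymbol\gamma$. Unit preservation reduces to $\Psi_{\bfE}(\epsilon)=1$ for the counit $\epsilon$: since $\epsilon$ is degree-zero only $\boldsymbol\gamma=0$ contributes, and the grouplike property of each $E_k$ (giving $\epsilon(E_k^{(0)})=1$) together with $\tau_k(0)=1$ yields the result.

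For multiplicativity, I would expand both sides of $\Psi_{\bfE}(xy)=\Psi_{\bfE}(x)\Psi_{\bfE}(y)$. The left-hand side is $\Psi_{\bfE}(xy)=\sum_{\boldsymbol\gamma}(x\otimes y)(\Delta(\Xi(\boldsymbol\gamma)))\,\Theta(\boldsymbol\gamma)$. Since $\Delta$ is an algebra morphism into the braided tensor product $\cU\otimes\cU$ with product $(a\otimes b)(c\otimes d)=\chi(|b|,|c|)\,ac\otimes bd$ for homogeneous $b,c$, one has $\Delta(\Xi(\boldsymbol\gamma))=\prod_k\Delta(E_k^{(\gamma_k)})$, and Lemma~\ref{le:grouplike} gives $\Delta(E_k^{(\gamma_k)})=\sum_{(\gamma_k',\gamma_k'')\in A_{\gamma_k}}E_k^{(\gamma_k')}\otimes E_k^{(\gamma_k'')}$. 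Iterating the braided product and invoking the bicharacter property of $\chi$ gathers the swap factors into
$$\Delta(\Xi(\boldsymbol\gamma))=\sum\Bigl(\prod_{k<\ell}\chi(\gamma_k'',\gamma_\ell')\Bigr)\,\Xi(\boldsymbol\gamma')\otimes\Xi(\boldsymbol\gamma''),$$
the outer sum taken over $(\gamma_k',\gamma_k'')\in A_{\gamma_k}$. Combined with $\Theta(\boldsymbol\gamma)=\tau_1(\gamma_1')\tau_1(\gamma_1'')\cdots\tau_m(\gamma_m')\tau_m(\gamma_m'')$ --- since each $\tau_k$ is a monoid homomorphism --- this displays $\Psi_{\bfE}(xy)$ as a double sum indexed by $(\boldsymbol\gamma',\boldsymbol\gamma'')$ with $\tau$-factors in the interleaved order $\gamma_1'\gamma_1''\gamma_2'\gamma_2''\cdots\gamma_m'\gamma_m''$.

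The right-hand side $\Psi_{\bfE}(x)\Psi_{\bfE}(y)$ is the analogous double sum but with $\tau$-factors in block order $\gamma_1'\cdots\gamma_m'\gamma_1''\cdots\gamma_m''$. The two sums will match termwise once $\Theta(\boldsymbol\gamma')\Theta(\boldsymbol\gamma'')$ is reorganized into interleaved form using \eqref{eq:tau commute}: for each $\ell$, moving $\tau_\ell(\gamma_\ell'')$ leftward past $\tau_{\ell+1}(\gamma_{\ell+1}'),\ldots,\tau_m(\gamma_m')$ contributes the factor $\prod_{k>\ell}\chi(\gamma_\ell'',\gamma_k')$, and the cumulative factor across all $\ell$ is precisely $\prod_{k<\ell}\chi(\gamma_k'',\gamma_\ell')$. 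This coincides exactly with the factor produced by the braided coproduct on the left, so the two sums match term-by-term. The crux of the argument --- and really the only potential obstacle --- is this combinatorial match between the braiding factors coming from $\Delta$ and the $\chi$-twisted commutation factors among the $\tau_k$; the $\PP$-adaptedness hypothesis \eqref{eq:tau commute} is calibrated precisely to make these compensations exact.
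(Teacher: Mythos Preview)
Your proof is correct. The core computation---that the braiding factors $\prod_{k<\ell}\chi(\gamma_k'',\gamma_\ell')$ arising from $\Delta$ on the left exactly match the commutation factors from reordering the $\tau_k$ via \eqref{eq:tau commute} on the right---is precisely what the paper's proof establishes as well.

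The paper organizes the argument differently, and it is worth noting the comparison. Rather than expand both sides term by term, the paper passes to the auxiliary algebra $\hat{\mathcal U}_\PP=\hat{\mathcal U}\otimes\PP$ (with the \emph{untwisted} tensor product), forms the single element $\tilde E=\tilde E_1\cdots\tilde E_m$ with $\tilde E_k=\sum_\gamma E_k^{(\gamma)}\tau_k(\gamma)$, and proves (Proposition~\ref{pr:product grouplike}) that $\tilde E$ is grouplike for the $\PP$-linear extension $\hat\Delta_\PP$. Multiplicativity of $\Psi_{\bfE}$ then follows in one line from $\Psi_{\bfE}(x)=x(\tilde E)$. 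The key step there, Lemma~\ref{le:Ek}(b), is exactly your reordering computation repackaged as the commutation $(1\otimes\tilde E_k)(\tilde E_\ell\otimes 1)=(\tilde E_\ell\otimes 1)(1\otimes\tilde E_k)$ for $k<\ell$. Your direct expansion is more elementary and self-contained; the paper's packaging via a grouplike element is more conceptual and explains \emph{why} the cancellation occurs, at the cost of introducing the auxiliary algebra $\hat{\mathcal U}_\PP$ and its completed tensor square.
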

 
\begin{proof} 
For any $\kk$-algebra $\PP$ denote ${\mathcal U}_\PP:={\mathcal U} \bigotimes \PP$ and view it as an algebra with the standard (NOT braided!) algebra structure.  We will often abbreviate $u\cdot t:=u\otimes t$ for $u\in {\mathcal U}$, $t\in \PP$. 

Denote by $\hat {\mathcal U}_\PP$ the completion of ${\mathcal U}_\PP$, i.e., $\hat {\mathcal U}_\PP=\hat {\mathcal U} \bigotimes \PP$  is the space of formal series of the form $\sum_{\gamma\in \Gamma} u_\gamma\cdot t_\gamma $, where $u_\gamma\in {\mathcal U}(\gamma)$ and $t_\gamma\in \PP$.  Consider the  tensor square ${\mathcal V}_\PP={\mathcal U}_\PP\bigotimes\limits_\PP {\mathcal U}_\PP$ where the left factor is regarded as a right $\PP$-module and the right factor as a left $\PP$-module. Note that ${\mathcal V}_\PP$ is a $\PP$-bimodule in which we can write $t(u\otimes v)=(tu)\otimes v=u\otimes (tv)=(u\otimes v)t$ for any $u,v\in {\mathcal U}, t\in \PP$. Under the standard identification 
$${\mathcal V}_\PP\cong({\mathcal U} \bigotimes  {\mathcal U})\bigotimes \PP\ ,$$ 
this bimodule ${\mathcal V}_\PP$ becomes an algebra.

We will also need the completed tensor square $\hat {\mathcal V}_\PP={\mathcal U}_\PP\hat {\bigotimes\limits_\PP} {\mathcal U}_\PP$.  There is a natural morphism of $\PP$-bimodules 
$$\hat \Delta_\PP\st \hat {\mathcal U}_\PP\to \hat {\mathcal V}_\PP$$
which is the $\PP$-linear extension of the coproduct $\hat \Delta$ on  $\hat {\mathcal U}$. Clearly, $\hat \Delta_\PP$ is an algebra homomorphism. 

For each $\PP$-adapted family ${\bf E}$ define an element $\tilde E \in \hat {\mathcal U}_\PP$ as follows:
$$\tilde E=\tilde E_1\cdots \tilde E_m\ ,$$  
where 
$$\tilde E_k=\sum_{\gamma\in Supp(E_k)} E_k^{(\gamma)} \cdot\tau_k(\gamma)\ .$$

\begin{proposition}
\label{pr:product grouplike}
For any $\PP$-adapted family of grouplike elements ${\bf E}$ the element $\tilde E\in \hat {\mathcal U}_\PP$ is grouplike, i.e., 
$$\Delta_\PP(\tilde E)=\tilde E\otimes \tilde E\ .$$
\end{proposition}

\begin{proof} We need the  following fact.  
\begin{lemma}
\label{le:Ek}
In the assumptions of Proposition \ref{pr:product grouplike} one has:

(a) each $\tilde E_k$ is a grouplike element in $\hat {\mathcal U}_\PP$. 

(b) $(1\otimes \tilde E_k)(\tilde E_\ell\otimes 1)=(\tilde E_\ell\otimes 1) (1 \otimes\tilde E_k)$ for any $1\le k<l\le m$.

\end{lemma}
   
\begin{proof} To prove (a), note that by Lemma \ref{le:grouplike} we have
$$
\hat \Delta_\PP(\tilde E_k)= \sum_{\gamma\in Supp(E_k)}\Delta (E_k^{(\gamma)})\cdot \tau_k(\gamma)=\sum_{\gamma',\gamma''\in Supp(E_k)}E_k^{(\gamma')}\otimes E_k^{(\gamma'')}\tau_k(\gamma'+\gamma'')$$
$$=\sum_{\gamma',\gamma''\in Supp(E_k)}E_k^{(\gamma')}\cdot \tau_k(\gamma')\otimes E_k^{(\gamma')}\tau_k(\gamma'')=\tilde E_k\otimes \tilde E_k \ ,$$
where we have used the multiplicativity of $\tau_k$: $\tau_k(\gamma'+\gamma'')=\tau_k(\gamma')\tau_k(\gamma'')$. 

To prove (b), abbreviate $\tilde E_k^{(\gamma)}:=E_k^{(\gamma)}\cdot \tau_k(\gamma)$ for $k=1,\ldots,m$, $\gamma\in Supp(E_k)$.  Then for $k<\ell$ and $\gamma_k\in Supp(E_k)$, $\gamma_\ell\in Supp(E_\ell)$  we deduce the following commutation relation using \eqref{eq:tau commute}:
$$(1\otimes \tilde E_k^{(\gamma_k)})(\tilde E_\ell^{(\gamma_\ell)}\otimes 1)=(1\otimes E_k^{(\gamma_k)})(E_\ell^{(\gamma_\ell)}\otimes 1)\cdot\tau_k(\gamma_k)\tau_\ell(\gamma_\ell)$$
$$=(E_\ell^{(\gamma_\ell)}\otimes 1)(1\otimes E_k^{(\gamma_k)})\cdot\chi(\gamma_k,\gamma_\ell)\tau_k(\gamma_k)\tau_\ell(\gamma_\ell)=(E_\ell^{(\gamma_\ell)}\otimes 1)(1\otimes E_k^{(\gamma_k)})\cdot \tau_\ell(\gamma_\ell)\tau_k(\gamma_k)$$
$$=(\tilde E_\ell^{(\gamma_\ell)}\otimes 1)(1\otimes \tilde E_k^{(\gamma_k)})\ .$$
Since $\tilde E_k=\sum\limits_{\gamma_k\in Supp(E_k)}\tilde E_k^{(\gamma_k)} $ and $\tilde E_\ell=\sum\limits_{\gamma_\ell\in Supp(E_\ell)} \tilde E_\ell^{(\gamma_\ell)}$, we obtain the desired result:
$$(1\otimes \tilde E_k)(\tilde E_\ell \otimes 1)=\sum\limits_{\gamma_k\in Supp(E_k),\gamma_\ell\in Supp(E_\ell)}(1\otimes \tilde E_k^{(\gamma_k)}) (\tilde E_\ell^{(\gamma_\ell)}\otimes 1)$$
$$=\sum\limits_{\gamma_k\in Supp(E_k),\gamma_\ell\in Supp(E_\ell)}(\tilde E_\ell^{(\gamma_\ell)}\otimes 1)(1\otimes \tilde E_k^{(\gamma_k)})=(\tilde E_\ell\otimes 1) (1 \otimes\tilde E_k) \ .$$
Lemma~\ref{le:Ek} is proved.
\end{proof}

Now we are ready to finish the proof of Proposition \ref{pr:product grouplike}.  Using  Lemma~\ref{le:Ek} and the identities $\tilde u\otimes  \tilde v=(\tilde u\otimes 1)(1\otimes \tilde v)$, $(\tilde u\otimes 1)(\tilde v\otimes 1)=\tilde u\tilde v\otimes 1$ and $(1\otimes \tilde u)(1\otimes \tilde v)=1\otimes \tilde u\tilde v$, for any $\tilde u,\tilde v\in \hat {\mathcal U}_\PP$, we compute
$$\hat \Delta_\PP(\tilde E) =\hat \Delta_\PP(\tilde E_1\cdots \tilde E_m)=\hat \Delta_\PP(\tilde E_1)\cdots \hat\Delta_\PP(\tilde E_m)=(\tilde E_1\otimes \tilde E_1)\cdots (\tilde E_m\otimes \tilde E_m)$$
$$=(\tilde E_1\otimes 1)(1\otimes \tilde E_1)\cdots (\tilde E_m\otimes 1)(1\otimes \tilde E_m)=\big((\tilde E_1\otimes 1)\cdots(\tilde E_m\otimes 1)\big)\big((1\otimes \tilde E_1)\cdots(1\otimes \tilde E_m)\big)$$
$$=(\tilde E\otimes 1)(1\otimes \tilde E)=\tilde E\otimes \tilde E\ .$$
Proposition  \ref{pr:product grouplike} is proved. 
\end{proof}

Finally, we define the pairing ${\mathcal A}\times \hat {\mathcal U}_\PP\to \PP$ by: 
$$x(\sum u_\gamma\cdot t_\gamma)=\sum x(u_\gamma)t_\gamma\ .$$  
Clearly, the pairing is well-defined because all but finitely many terms $x(u_\gamma)$ are $0$ for each $u\in {\mathcal A}$.  For every $\PP$-adapted family ${\bf E}$ we see that the map $\Psi_{\bf E}\st {\mathcal A} \to P$ defined in \eqref{eq:abstract_Feigin} is given by the formula $\Psi_{\bf E}(x):=x(\tilde E)$.

The definition of the multiplication in $\cA$ implies that $(xy)(\tilde u)=(x\otimes y)(\hat \Delta_\PP(\tilde u))$ for all $x,y\in {\mathcal A}$ and $\tilde u\in \hat {\mathcal U}_\PP$, where 
$$(x\otimes y)(\tilde u_1\otimes \tilde u_2):=x(\tilde u_1)y(\tilde u_2)$$ 
for any $\tilde u_1,\tilde u_2\in \hat{\mathcal U}_\PP$. Thus, we have 
$$\Psi_{\bf E}(xy)=(xy)(\tilde E)=(x\otimes y)(\hat \Delta_\PP (\tilde E))=(x\otimes y)(\tilde E\otimes \tilde E)=x(\tilde E)y(\tilde E)=\Psi_{\bf E}(x)\Psi_{\bf E}(y),$$
which finishes the proof of Theorem~\ref{th:abstract_Feigin}.
 \end{proof}

For each $u\in \UU$ define the linear operators $x\mapsto u(x)$ and $x\mapsto u^{op}(x)$ on ${\mathcal A}$ by:
$$u(x)(u')=x(u'u),~
u^{op}(x)(u')=x(uu')$$
for all $u'\in \UU$, $x\in {\mathcal A}$.

Clearly, the operators $x\mapsto u(x)$ and $x\mapsto u^{op}(x)$ define respectively the left and the right $\UU$-action on ${\mathcal A}$ and $u(x),u^{op}(x)\in \AA_{\gamma'-\gamma}$ for each homogeneous $u\in \UU_\gamma$ and $x\in \AA_{\gamma'}$.

Using this in the form $x(u_1\cdots u_m)=(u_1\cdots u_m(x))(1)=u_m^{op}\cdots u_1^{op}(x)(1)$, we rewrite \eqref{eq:abstract_Feigin} for any homogeneous $x\in \AA_\gamma$ as:
\begin{equation}
\label{eq:abstract_Feigin via left action}
\Psi_{\bf E}(x)=\sum E_1^{(\gamma_1)}\cdots E_m^{(\gamma_m)}(x)\cdot \tau_1(\gamma_1)\cdots \tau_m(\gamma_m)
\ ,
\end{equation}
\begin{equation}
\label{eq:abstract_Feigin via right action}
\Psi_{\bf E}(x)=\sum {E_m^{(\gamma_m)}}^{op}\cdots {E_1^{(\gamma_1)}}^{op}(x)\cdot \tau_1(\gamma_1)\cdots \tau_m(\gamma_m)\ ,
\end{equation}
where the summation is over all $(\gamma_1,\ldots,\gamma_m)\in Supp(E_1)\times \cdots \times  Supp(E_m)$ such that 
$\gamma_1+\cdots+\gamma_m=\gamma$.

We finish with the following obvious, however, useful fact. 

\begin{lemma}
\label{le:primitive actions} Let $E\in \UU_\alpha$ be any homogeneous primitive element. Then for any $x\in \AA_\gamma$ and $y\in \AA$ one has 
$$E(yx)=\chi(\gamma,\alpha)\cdot E(y)x+ yE(x),~E^{op}(xy)=E^{op}(x)y+ \chi(\alpha,\gamma)\cdot xE^{op}(y)\ .$$
\end{lemma}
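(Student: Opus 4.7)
The plan is to unwind the definitions of the left and right actions through the evaluation pairing and then use primitivity of $E$ together with the braided multiplication on $\UU \otimes \UU$. First, for any $u'\in \UU$ I would write
$$E(yx)(u') = (yx)(u'E) = (y\otimes x)\bigl(\Delta(u'E)\bigr) = (y\otimes x)\bigl(\Delta(u')\Delta(E)\bigr),$$
using that the product on $\AA$ is dual to the coproduct and that $\Delta$ is an algebra map in $\cC_\chi$. Since $\Delta(E)=E\otimes 1 + 1\otimes E$, the right-hand side splits into two sums.

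Next, I would apply the braided multiplication rule $(a\otimes b)(c\otimes d)=\chi(|b|,|c|)\,ac\otimes bd$ on homogeneous tensors. Writing $\Delta(u')=\sum u'_{(1)}\otimes u'_{(2)}$, the $(E\otimes 1)$-piece becomes $\sum \chi(|u'_{(2)}|,\alpha)\, u'_{(1)}E\otimes u'_{(2)}$, while the $(1\otimes E)$-piece is just $\sum u'_{(1)}\otimes u'_{(2)}E$ (no braiding swap). Pairing with $y\otimes x$, only terms with $|u'_{(2)}| = \gamma$ survive because $x\in \AA_\gamma$, so the braiding factor collapses to the constant $\chi(\gamma,\alpha)$. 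The two sums then reassemble via $y(u'_{(1)}E)=E(y)(u'_{(1)})$ and $x(u'_{(2)}E)=E(x)(u'_{(2)})$ into $\chi(\gamma,\alpha)(E(y)\cdot x)(u') + (y\cdot E(x))(u')$, evaluated against $\Delta(u')$, yielding the first identity.

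The second identity is entirely parallel, computing $E^{op}(xy)(u') = (x\otimes y)(\Delta(E)\Delta(u'))$. This time the $(E\otimes 1)\Delta(u')$ contribution requires no swap, whereas $(1\otimes E)\Delta(u')$ produces the factor $\chi(\alpha,|u'_{(1)}|)$, and the constraint $|u'_{(1)}|=\gamma$ forced by pairing with $x\in \AA_\gamma$ converts this to $\chi(\alpha,\gamma)$. The only mild obstacle is the bookkeeping of which tensor factor carries the braiding swap and which homogeneous component of $\Delta(u')$ survives the pairing with the fixed-degree element $x$; once this degree selection is carried out, both Leibniz-type identities fall out immediately, which is presumably why the authors label the lemma ``obvious''.
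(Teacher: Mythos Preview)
Your argument is correct and is exactly the direct verification the authors have in mind when they call the lemma ``obvious'': the paper gives no proof at all, so there is nothing to compare against beyond the definitions you have unwound. The only point worth stating more carefully is that the degree selection $|u'_{(2)}|=\gamma$ (respectively $|u'_{(1)}|=\gamma$) uses that $\AA_\gamma=\Hom_\kk(\UU(\gamma),\kk)$ vanishes on all other graded pieces, which you note; with that, your computation of the braided product $\Delta(u')\Delta(E)$ and $\Delta(E)\Delta(u')$ is accurate and the identities follow.
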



\begin{thebibliography}{99}

\bibitem
{ber}
A.~Berenstein, Group-Like Elements in Quantum Groups and Feigin's Conjecture, \texttt{arXiv:q-alg/9605016}.

\bibitem
{BFZ-cluster} 
A.~Berenstein,  S.~Fomin, A.~Zelevinsky, Cluster algebras III: Upper and lower bounds, {\em Duke Math. Journal}, vol.~ 126, {\bf 1} (2005), pp.~1--52.

\bibitem
{bz3}
A. Berenstein and A. Zelevinsky, Total positivity in Schubert varieties, {\em Comment. Math. Helv.} 72 (1997), pp.~128--166.

\bibitem
{bz4}
A.~Berenstein and A.~Zelevinsky, Tensor product multiplicities, canonical bases, and totally positive varieties, {\em Invent. Math.} vol. {\bf 143} (2001), pp.~77--128.

\bibitem
{bz5}
A.~Berenstein and A.~Zelevinsky, Quantum cluster algebras, {\em Adv. Math.}, vol. 195, {\bf 2} (2005), pp.~405--455.

\bibitem
{bz6}
A.~Berenstein and A.~Zelevinsky, Triangular bases in quantum cluster algebras, {\em Int. Math. Res. Not.}, doi: 10.1093/imrn/rns268. 

\bibitem
{cc}
P.~Caldero and F.~Chapoton, Cluster algebras as Hall algebras of quiver representations, {\em Comment. Math. Helv.} 81 (2006), pp.~595--616.

\bibitem
{ck}
P.~Caldero and B.~Keller, From Triangulated Categories to Cluster Algebras. II.,  {\em Ann. Sci. \'Ecole Norm. Sup.} (4)  39  (2006),  no. 6, pp.~983--1009.

\bibitem
{caldrein}
P.~Caldero and M.~Reineke, Quiver Grassmannian in the acyclic case, {\em J. Pure Appl. Algebra} {\bf 212} (2008), no. 11, pp.~2369--2380.

\bibitem
{cx}
X.~Chen and J.~Xiao, Exceptional Sequences in Hall Algebras and Quantum Groups, {\em Compos. Math.} 117, (1999), pp.~161--187. 

\bibitem
{dr}
V.~Dlab and C.M.~Ringel, Indecomposable Representations of Graphs and Algebras, {\em Mem.~Amer. Math. Soc.} \textbf{173} (1976).

\bibitem
{efimov} 
A.~Efimov, Quantum cluster variables via vanishing cycles, \texttt{arXiv:1112.3601}.

\bibitem{Fei} 
J.~Fei, Counting using Hall algebras I. Quivers, {\em Journal of Algebra}, Vol. 372, (2012), pp.~542–559.

\bibitem
{fz1}
S.~Fomin and A.~Zelevinsky, Cluster Algebras 1: Foundations, {\em J. Amer. Math. Soc.} 15 (2002), no. 2, pp.~497--529.

\bibitem
{fz2}
S.~Fomin and A.~Zelevinsky, Cluster Algebras 2: Finite Type Classification, {\em Invent. Math.} 154 (2003), no. 1, pp.~63--121.

\bibitem
{gls2}
C.~Geiss, B.~Leclerc, and J.~Schr\"oer, Generic Bases for Cluster Algebras and the Chamber Ansatz, {\em J. Amer. Math. Soc.} 25 (2012), no. 1, pp.~21--76.

\bibitem
{gls} 
C.~Geiss, B.~Leclerc, J.~Schr\"oer, Cluster Structures on Quantum Coordinate Rings, \texttt{arXiv:1104.0531}.

\bibitem
{gls3} 
C.~Geiss, B.~Leclerc, J.~Schr\"oer, Factorial Cluster Algebras, \texttt{arXiv:1110.1199}.

\bibitem
{green}
J.~Green, Hall Algebras, Hereditary Algebras, and Quantum Groups, {\em Invent. Math.} 120, (1995), pp.~361--377.

\bibitem
{hub} 
A.~Hubery, Ringel-Hall algebras, Lecture notes, http://www1.maths.leeds.ac.uk/$\,\tilde{}\,$ahubery/RHAlgs.pdf.

\bibitem
{im}
K.~Iohara, F.~Malikov,  Rings of Skew Polynomials and Gelfand-Kirillov Conjecture for Quantum Groups, {\em Commun. Math. Phys.} 164 (1994), pp.~217--238.

\bibitem
{jo}
A.~Joseph, Sur une conjecture de Feigin, {\em C. R. Acad. Sci. Paris S.I Math.} 320 (1995), no. 12, pp.~1441--1444. 

\bibitem
{kimura}
Y.~Kimura, Quantum unipotent subgroup and dual canonical basis, {\em Kyoto J. Math.} 52 (2012), no. 2, pp.~277--331.

\bibitem
{qin}
F.~Qin, Quantum Cluster Variables via Serre Polynomials,  {\em J. Reine Angew. Math.} vol. 668 (2012), pp.~149--190 (doi: 10.1515/CRELLE.2011.129).

\bibitem
{Lusztig-problems} 
G.~Lusztig, Problems on canonical bases. Algebraic groups and their generalizations: quantum and infinite-dimensional methods (University Park, PA, 1991), pp.~169--176, {\sl Proc. Sympos. Pure Math.}, {\bf 56}, Part 2, AMS,Providence, RI, 1994.

\bibitem
{reineke}
M.~Reineke, Counting rational points of quiver moduli, {\it Int. Math. Res. Not.}, 2006, pp.~1--19.

\bibitem
{ringel}
C. M.~Ringel, Tame Algebras and Integral Quadratic Forms, Lecture Notes in Mathematics, 1099. Springer-Verlag, Berlin, 1984.

\bibitem
{ringel2}
C. M. Ringel, Hall algebras revisited. Quantum deformations of algebras and their representations (Ramat-Gan, 1991/1992; Rehovot, 1991/1992), pp.~171--176, Israel Math. Conf. Proc., 7, Bar-Ilan Univ., Ramat Gan, 1993.

\bibitem
{ringel3}
C. M. Ringel, Green's theorem on Hall algebras. (English summary) Representation theory of algebras and related topics (Mexico City, 1994), pp.~185--245,
CMS Conf. Proc., 19, Amer. Math. Soc., Providence, RI, 1996.

\bibitem
{rupel1}
D. Rupel, On a Quantum Analog of the Caldero-Chapoton Formula, {\em Int. Math. Res. Not.}, 2011, no. 14, pp.~3207--3236.

\bibitem
{rupel2}
D.~Rupel, Quantum Cluster Characters of Valued Quivers, \texttt{arXiv:1109.6694}.

\bibitem
{rupel3}
D.~Rupel, Proof of the Kontsevich Non-Commutative Cluster Positivity Conjecture, \texttt{arXiv:1201.3426}.

\bibitem
{schiffmann}
O.~Schiffmann, Lectures on Hall Algebras, \texttt{arXiv:math/0611617v1}.


\end{thebibliography}
\end{document}